\theoremstyle{plain} 
 \newtheorem{thm}{Theorem}[section]
 \newtheorem{lem}[thm]{Lemma}
 \newtheorem{cor}[thm]{Corollary}
 \newtheorem{prop}[thm]{Proposition}
 \newtheorem{claim}[thm]{Claim}
\theoremstyle{definition}
  \newtheorem{defn}[thm]{Definition}
\theoremstyle{remark}
  \newtheorem{rem}[thm]{Remark}
\newcommand{\comm}{{\rm Comm}}
\renewcommand{\mod}{{\rm Mod}}
\renewcommand{\pmod}{{\rm PMod}}
\newcommand{\cal}{\mathcal}
\newcommand{\calc}{\mathcal{C}}
\newcommand{\calcp}{\mathcal{CP}}
\newcommand{\ci}[2]{\cite[#1]{#2}}
\begin{document}

\title{Commensurators of surface braid groups}
\author{Yoshikata Kida}
\author{Saeko Yamagata}
\address{Department of Mathematics, Kyoto University, 606-8502 Kyoto, Japan}
\email{kida@math.kyoto-u.ac.jp}
\address{Akashi National College of Technology, 674-8501 Akashi, Japan}
\email{yamagata@akashi.ac.jp}
\date{April 17, 2010, revised on February 20, 2011}
\subjclass[2010]{20E36, 20F36}
\keywords{Surface braid groups, mapping class groups}

\begin{abstract}
We prove that if $g$ and $n$ are integers at least two, then the abstract commensurator of the braid group with $n$ strands on a closed orientable surface of genus $g$ is naturally isomorphic to the extended mapping class group of a compact orientable surface of genus $g$ with $n$ boundary components.
\end{abstract}

\maketitle


\section{Introduction}

For a positive integer $n$ and a manifold $M$, we define $PB_n(M)$ to be the {\it pure braid group} of $n$ strands on $M$, i.e., the fundamental group of the space of ordered distinct $n$ points in $M$.
Let $S$ be a connected, compact and orientable surface of genus $g$ with $p$ boundary components.
Let $\bar{S}$ denote the closed surface obtained by attaching disks to all boundary components of $S$.
We define $P(S)$ as the kernel of the homomorphism $\iota \colon \pmod(S)\rightarrow \mod(\bar{S})$ associated with the inclusion of $S$ into $\bar{S}$, where $\pmod(S)$ denotes the pure mapping class group for $S$, and $\mod(\bar{S})$ denotes the mapping class group for $\bar{S}$ (see Section \ref{subsec-group} for a definition of these groups).
As discussed in Section 4.1 of \cite{birman}, the Birman exact sequence tells us that if $g\geq 2$ and $p\geq 1$, then $PB_p(\bar{S})$ is identified with $P(S)$.

Automorphisms of $P(S)$ are studied in \cite{belli}, \cite{iim} and \cite{zhang}. 
The aim of this paper is to describe any isomorphism between finite index subgroups of $P(S)$. 
The same problem is also considered for the subgroup, denoted by $P_s(S)$, of $P(S)$ generated by all HBC twists and all HBP twists about separating HBPs in $S$.
We denote by $\mod^*(S)$ the {\it extended mapping class group} for $S$, i.e., the group of isotopy classes of homeomorphisms from $S$ onto itself, where isotopy may move points of the boundary of $S$.
Since $P(S)$ and $P_s(S)$ are normal subgroups of $\mod^*(S)$, the conjugation by each element of $\mod^*(S)$ defines an automorphism of $P(S)$ and of $P_s(S)$. 
The following theorem says that any isomorphism between finite index subgroups of $P(S)$ and of $P_s(S)$ can be described in this way.

\begin{thm}\label{thm-comm}
Let $S$ be a connected, compact and orientable surface of genus $g$ with $p$ boundary components.
We assume $g\geq 2$ and $p\geq 2$.
Then the following assertions hold:
\begin{enumerate}
\item Let $\Gamma_1$ and $\Gamma_2$ be finite index subgroups of $P(S)$, and let $f\colon \Gamma_1\rightarrow \Gamma_2$ be an isomorphism.
Then there exists an element $\gamma$ of $\mod^*(S)$ with $f(x)=\gamma x\gamma^{-1}$ for any $x\in \Gamma_1$.
\item Let $\Lambda_1$ and $\Lambda_2$ be finite index subgroups of $P_s(S)$, and let $h\colon \Lambda_1\rightarrow \Lambda_2$ be an isomorphism.
Then there exists an element $\lambda$ of $\mod^*(S)$ with $h(y)=\lambda y\lambda^{-1}$ for any $y\in \Lambda_1$.
\end{enumerate}
\end{thm}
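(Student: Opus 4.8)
The plan is to follow the strategy, originating with Ivanov for mapping class groups and since developed for various subgroups, of recognizing a combinatorial structure inside the group algebraically, transporting it across the given isomorphism, and then appealing to a rigidity theorem for that structure.

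The first and central step is to characterize, in purely group-theoretic terms valid inside an arbitrary finite index subgroup of $P(S)$, the elements that are powers of HBC twists and of HBP twists, and, among these, the geometric relations recorded by the associated HBCs and HBPs: disjointness, one being contained in the subsurface cut off by another, and --- decisively for part (ii) --- whether an HBP is separating. As in the mapping class group case the relevant invariants are centralizers, the maximal rank of a free abelian subgroup commuting with a given element, and the pattern of intersections among such abelian subgroups. Since $\Gamma_1,\Gamma_2$ have finite index in $P(S)$, every HBC or HBP twist has a nontrivial power lying in $\Gamma_i$, so these characterizations descend to $\Gamma_1$ and $\Gamma_2$; hence $f$ carries powers of HBC/HBP twists to powers of HBC/HBP twists, compatibly with the listed relations. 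For part (ii) one runs the same argument using only HBC twists and separating HBP twists, which is exactly why the separating condition must be detected inside the subgroup.

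These bijections then assemble into a simplicial automorphism of the complex whose vertices are the HBCs and (for (ii): separating) HBPs in $S$ and whose simplices record disjointness. I would then invoke --- citing it or proving it in a preliminary section --- an Ivanov-type theorem identifying the automorphism group of this complex with $\mod^*(S)$; this produces $\gamma\in\mod^*(S)$ inducing the same automorphism of the complex as $f$. Replacing $f$ by $x\mapsto\gamma^{-1}f(x)\gamma$ (legitimate because $P(S)\trianglelefteq\mod^*(S)$), we may assume $f$ fixes every vertex, so $f$ sends each twist power $t^k\in\Gamma_1$ to a power of the same generator $t$. A short analysis of the exponents, using relations among twists such as chain and lantern-type relations and the behaviour of $f$ on commuting families, forces the exponent to be $+1$ throughout; hence $f$ is the identity on the subgroup generated by all $t^k\in\Gamma_1$ with $t$ an HBC or (separating) HBP twist, a finite index subgroup of $P(S)$ (resp. $P_s(S)$), since these twists generate the whole group.

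Finally, if $f$ is the identity on a finite index subgroup $\Delta\leq\Gamma_1$, then for $x\in\Gamma_1$ and $\delta\in\Delta\cap x^{-1}\Delta x$ one gets $x\delta x^{-1}=f(x\delta x^{-1})=f(x)\,\delta\, f(x)^{-1}$, so $x^{-1}f(x)$ centralizes a finite index subgroup of $P(S)$; as $P(S)$ (and $P_s(S)$) admits no nontrivial such element --- a fact one again reads off the action on curves, or from the structure of the surface braid group --- it follows that $f(x)=x$, i.e.\ the original $f$ is conjugation by $\gamma$. Part (ii) is formally identical once the rigidity statement for the separating-HBP complex is available. I expect the first step to be the main obstacle: the algebraic characterization of HBC and HBP twists must be robust enough to survive passage to an arbitrary finite index subgroup, where only powers of twists are visible, and the full combinatorial type --- in particular whether an HBP separates --- has to be reconstructed from the subgroup structure alone.
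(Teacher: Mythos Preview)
Your proposal is correct and follows essentially the same route as the paper: algebraically characterize powers of HBC and HBP twists inside an arbitrary finite index subgroup (the paper's Section~7, culminating in Theorem~\ref{thm-induce}), assemble these into an automorphism of the complex $\calcp(S)$ (resp.\ $\calcp_s(S)$), invoke the rigidity result that every automorphism of this complex comes from $\mod^*(S)$ (the paper's Sections~3--6, Corollary~\ref{cor-aut}), and finish with the standard Ivanov-style endgame, which the paper likewise only cites. One small point of framing: for part~(ii) you do not actually need to detect algebraically whether an HBP is separating, since by Lemma~\ref{lem-multitwist} the group $P_s(S)$ contains no nonzero power of a non-separating HBP twist, so every HBP twist visible inside $\Lambda_i$ is automatically separating.
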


For a group $\Gamma$, we define $F(\Gamma)$ to be the set of isomorphisms between finite index subgroups of $\Gamma$. 
We say that two elements $f$, $h$ of $F(\Gamma)$ are equivalent if there exists a finite index subgroup of $\Gamma$ on which $f$ and $h$ are equal. 
The composition of two elements $f\colon \Gamma_1\rightarrow \Gamma_2$, $h\colon \Lambda_1\rightarrow \Lambda_2$ of $F(\Gamma)$ given by $f\circ h\colon h^{-1}(\Gamma_1\cap \Lambda_2)\rightarrow f(\Lambda_2\cap \Gamma_1)$ induces the product operation on the quotient set of $F(\Gamma)$ by this equivalence relation. 
This makes it into the group called the {\it abstract commensurator} of $\Gamma$ and denoted by $\comm(\Gamma)$.

Since $P(S)$ and $P_s(S)$ are normal subgroups of $\mod^*(S)$, the homomorphisms
\[{\bf i}\colon \mod^*(S)\rightarrow \comm(P(S)),\quad {\bf i}_s\colon \mod^*(S)\rightarrow \comm(P_s(S))\]
are defined by conjugation.
Theorem \ref{thm-comm} shows that if $g\geq 2$ and $p\geq 2$, then ${\bf i}$ and ${\bf i}_s$ are surjective and thus isomorphisms by Lemma \ref{lem-faithful}.

For a positive integer $n$ and a manifold $M$, we define $B_n(M)$ as the {\it braid group} of $n$ strands on $M$, i.e., the fundamental group of the space of non-ordered distinct $n$ points in $M$.
The group $PB_n(M)$ is identified with a subgroup of $B_n(M)$ of index $n!$.
We note that if $\Gamma$ is a group and if $\Lambda$ is a finite index subgroup of $\Gamma$, then the natural homomorphism from $\comm(\Lambda)$ into $\comm(\Gamma)$ is an isomorphism.
We therefore obtain the following:

\begin{cor}
Let $g$ and $n$ be integers at least two.
Let $M$ be a connected, closed and orientable surface of genus $g$. 
Then $\comm(B_n(M))$ and $\comm(PB_n(M))$ are isomorphic to $\mod^*(S)$, where $S$ is a connected, compact and orientable surface of genus $g$ with $n$ boundary components.
\end{cor}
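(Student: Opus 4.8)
The plan is to derive the corollary formally from Theorem \ref{thm-comm} together with the two elementary observations already recorded in the introduction, so there is little new work to do. Fix $g,n\geq 2$, let $M$ be as in the statement, and let $S$ be a connected, compact, orientable surface of genus $g$ with $n$ boundary components, so that $\bar S=M$. Then $S$ satisfies the hypotheses $g\geq 2$ and $p=n\geq 2$ of Theorem \ref{thm-comm}. Since $g\geq 2$ and $n\geq 1$, the Birman exact sequence (Section 4.1 of \cite{birman}) identifies $PB_n(M)=PB_n(\bar S)$ with the normal subgroup $P(S)$ of $\mod^*(S)$; hence $\comm(PB_n(M))$ is canonically isomorphic to $\comm(P(S))$.

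Next I would observe that Theorem \ref{thm-comm}(i) is precisely the statement that the conjugation homomorphism $\mathbf{i}\colon \mod^*(S)\rightarrow \comm(P(S))$ is surjective: every isomorphism between finite index subgroups of $P(S)$, and therefore every element of $\comm(P(S))$, is represented by conjugation by some $\gamma\in\mod^*(S)$. Combined with the injectivity of $\mathbf{i}$ furnished by Lemma \ref{lem-faithful}, this shows $\mathbf{i}$ is an isomorphism, and so $\comm(PB_n(M))\cong\comm(P(S))\cong\mod^*(S)$.

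Finally, $PB_n(M)$ is a subgroup of $B_n(M)$ of finite index $n!$, so by the remark in the introduction the natural homomorphism $\comm(PB_n(M))\rightarrow\comm(B_n(M))$ is an isomorphism. Composing the three isomorphisms above yields $\comm(B_n(M))\cong\comm(PB_n(M))\cong\mod^*(S)$, as claimed. I do not expect any genuine obstacle here: the corollary is a formal consequence of Theorem \ref{thm-comm}, and the only points needing (routine) care are checking that the genus and boundary hypotheses of Theorem \ref{thm-comm} transfer to $S$ and that the Birman exact sequence gives the identification $PB_n(M)\cong P(S)$ in the relevant range, both of which are settled by the discussion preceding the theorem.
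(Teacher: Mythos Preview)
Your proof is correct and is essentially the same as the paper's: the corollary is not given a separate proof in the paper, but the paragraph preceding it spells out exactly the three ingredients you use (the Birman identification $PB_n(M)\cong P(S)$, the surjectivity of $\mathbf{i}$ from Theorem~\ref{thm-comm} together with injectivity from Lemma~\ref{lem-faithful}, and the finite-index fact giving $\comm(PB_n(M))\cong\comm(B_n(M))$).
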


Let us mention surfaces excluded in Theorem \ref{thm-comm}. 
\begin{itemize}
\item If $p=0$, then both $P(S)$ and $P_s(S)$ are trivial.
\item If $g\geq 2$ and $p=1$, then $P(S)$ is isomorphic to $\pi_1(\bar{S})$ by the Birman exact sequence, and $P_s(S)$ is identified with a subgroup of the commutator subgroup of $\pi_1(\bar{S})$ and thus isomorphic to a non-abelian free group.
\item If $g=0$, then we have $P(S)=P_s(S)=\pmod(S)$.
\item If $g=1$, then we have $P(S)=\cal{I}(S)$ and $P_s(S)=\cal{K}(S)$, where $\cal{I}(S)$ is the Torelli group for $S$, and $\cal{K}(S)$ is the Johnson kernel for $S$ (see \cite{kida-tor} for a definition of these groups).
\end{itemize}
When $g=0, 1$, a description of any isomorphism between finite index subgroups of $P(S)$ and of $P_s(S)$ is therefore given in \cite{kork-aut} and \cite{kida-tor}, respectively.

As studied in \cite{iva-aut}, \cite{kork-aut} and \cite{luo}, the complex of curves for $S$ plays an important role in the computation of the abstract commensurator of $\mod^*(S)$. 
Afterward, in \cite{bm}, \cite{bm-add}, \cite{farb-ivanov}, \cite{kida-tor} and \cite{mv}, automorphisms and the abstract commensurators of certain subgroups of $\mod^*(S)$ are understood through the study of appropriate variants of the complex of curves. 
To prove Theorem \ref{thm-comm}, we follow this strategy and introduce two simplicial complexes $\calcp(S)$ and $\calcp_s(S)$ associated to $S$, inspired by a work due to Irmak-Ivanov-McCarthy \cite{iim}. 
Vertices of those simplicial complexes are defined as isotopy classes of certain simple closed curves in $S$ and pairs of them.
Simplices are defined in terms of disjointness of curves in the same manner as the definition of the complex of curves. 
We then have the natural action of $\mod^*(S)$ on $\calcp(S)$ and on $\calcp_s(S)$. Theorem \ref{thm-comm} can be deduced by combining the following two assertions: If $g\geq 2$ and $p\geq 2$, then
\begin{itemize}
\item[(1)] any isomorphism between finite index subgroups of $P(S)$ (resp.\ $P_s(S)$) induces an automorphism of $\calcp(S)$ (resp.\ $\calcp_s(S)$); and
\item[(2)] any automorphism of $\calcp(S)$ (resp.\ $\calcp_s(S)$) is induced by an element of $\mod^*(S)$.
\end{itemize}
Assertion (1) is proved in Theorem \ref{thm-induce}, and assertion (2) is proved in Corollary \ref{cor-aut}. 
In the proof of assertion (1), we present an algebraic characterization of certain elements of $P(S)$ associated to vertices of $\calcp(S)$, based on \cite{iim}.

This paper is organized as follows.
In Section \ref{sec-pre}, we provide basic terminology and the definition of simplicial complexes and groups discussed above. 
In the final subsection of Section \ref{sec-pre}, we present an outline of the proof of assertion (2) given throughout Sections \ref{sec-basic}--\ref{supseinj_Phi}. 
In Section \ref{sec-cha}, we prove assertion (1). 
Moreover, we show that any injective homomorphism from a finite index subgroup of $P_s(S)$ into $P(S)$ induces a superinjective map from $\calcp_s(S)$ into $\calcp(S)$.
This result is used in our subsequent paper \cite{ky-cohop}, where we conclude that any injective homomorphism from a finite index subgroup of $P(S)$ into $P(S)$ is the conjugation by an element of $\mod^*(S)$ if $g\geq 2$ and $p\geq 2$.
The same conclusion is also proved for $P_s(S)$.


\section{Preliminaries}\label{sec-pre}

\subsection{Notation and terminology}\label{subsec-term}

Unless otherwise stated, we assume a surface to be connected, compact and orientable. 
Let $S=S_{g, p}$ be a surface of genus $g$ with $p$ boundary components. 
A simple closed curve in $S$ is said to be {\it essential} in $S$ if it is neither homotopic to a single point of $S$ nor isotopic to a component of $\partial S$.
When there is no confusion, we mean by a curve in $S$ either an essential simple closed curve in $S$ or the isotopy class of it. 
A curve $\alpha$ in $S$ is said to be {\it separating} in $S$ if $S\setminus \alpha$ is not connected.
Otherwise $\alpha$ is said to be {\it non-separating} in $S$.
These properties depend only on the isotopy class of $\alpha$.
We mean by a {\it holed sphere} a surface of genus zero with non-empty boundary.

\medskip

\noindent {\bf Hole-bounding curves (HBC).} A curve $\alpha$ in $S$ is called a {\it hole-bounding curve (HBC)} in $S$ if $\alpha$ is separating in $S$ and cuts off a holed sphere from $S$. 
When $g\geq 1$, if the holed sphere cut off by $\alpha$ from $S$ contains exactly $k$ components of $\partial S$, then we call $\alpha$ a {\it $k$-HBC} in $S$.
Note that we have $2\leq k\leq p$.

\medskip

\noindent {\bf Hole-bounding pairs (HBP).} A pair $\{ \alpha, \beta \}$ of curves in $S$ is called a {\it hole-bounding pair (HBP)} in $S$ if
\begin{itemize}
\item $\alpha$ and $\beta$ are disjoint and non-isotopic;
\item either $\alpha$ and $\beta$ are non-separating in $S$ or $\alpha$ and $\beta$ are separating in $S$ and are not an HBC in $S$; and
\item $S\setminus (\alpha \cup \beta)$ is not connected and has a component of genus zero.
\end{itemize}
We note that if $g\geq 2$, then the component of genus zero in the last condition, denoted by $Q$, uniquely exists. 
In this case, if $Q$ contains exactly $k$ components of $\partial S$, then we call the pair $\{ \alpha, \beta \}$ a {\it $k$-HBP} in $S$. 
Note that we have $1\leq k\leq p$.
An HBP in $S$ is said to be {\it non-separating} in $S$ if both curves in it are non-separating in $S$.
Otherwise it is said to be {\it separating} in $S$ (see Figure \ref{fig-hbchbp}). 

\begin{figure}
\begin{center}
\includegraphics[width=8cm]{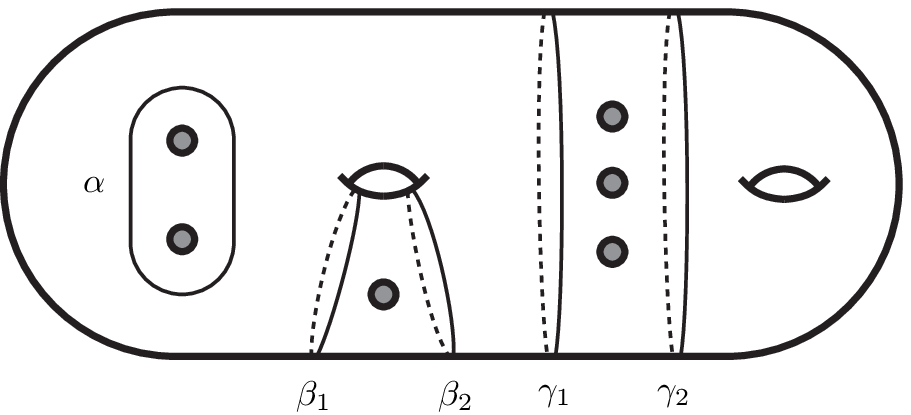}
\caption{$\alpha$ is an HBC, $\{ \beta_1, \beta_2\}$ is a non-separating HBP, and $\{ \gamma_1, \gamma_2\}$ is a separating HBP.}\label{fig-hbchbp}
\end{center}
\end{figure}

We define $V(S)$ as the set of isotopy classes of essential simple closed curves in $S$. 
We denote by $i\colon V(S)\times V(S)\rightarrow \mathbb{Z}_{\geq 0}$ the {\it geometric intersection number}, i.e., the minimal cardinality of the intersection of representatives for two elements of $V(S)$.
Let $\Sigma(S)$ denote the set of non-empty finite subsets $\sigma$ of $V(S)$ with $i(\alpha, \beta)=0$ for any $\alpha, \beta \in \sigma$.
For an element $\sigma$ of $\Sigma(S)$, we mean by a {\it representative} of $\sigma$ the union of mutually disjoint representatives of elements of $\sigma$.
We extend the function $i$ to the symmetric function on the square of $V(S)\sqcup \Sigma(S)$ with $i(\alpha, \sigma)=\sum_{\beta \in \sigma}i(\alpha, \beta)$ and $i(\sigma, \tau)=\sum_{\beta \in \sigma, \gamma \in \tau}i(\beta, \gamma)$ for any $\alpha \in V(S)$ and $\sigma, \tau \in \Sigma(S)$.
We say that two elements $\alpha$, $\beta$ of $V(S)\sqcup \Sigma(S)$ are {\it disjoint} if $i(\alpha, \beta)=0$. 
Otherwise, we say that $\alpha$ and $\beta$ {\it intersect}.
We say that two elements $\alpha$, $\beta$ of $V(S)$ {\it fill} $S$ if there exists no element of $V(S)$ disjoint from both $\alpha$ and $\beta$.

Let $\sigma$ be an element of $\Sigma(S)$.
We denote by $S_{\sigma}$ the surface obtained by cutting $S$ along all curves in $\sigma$.
When $\sigma$ consists of a single curve $\alpha$, we denote it by $S_{\alpha}$.
Each component of $S_{\sigma}$ is often identified with a complementary component of a tubular neighborhood of a one-dimensional submanifold representing $\sigma$ in $S$.
Let $R$ be a component of $S_{\sigma}$.
The set $V(R)$ is then identified with a subset of $V(S)$.


\subsection{Simplicial complexes associated to a surface}\label{subsec-complex}

Let $S$ be a surface.
We recall two complexes of curves and then introduce complexes of HBCs and HBPs.

\medskip

\noindent {\bf Complex $\calc(S)$.} We define $\calc(S)$ as the abstract simplicial complex such that the sets of vertices and simplices of $\calc(S)$ are $V(S)$ and $\Sigma(S)$, respectively, and call it the {\it complex of curves} for $S$.

\medskip

This complex was introduced by Harvey \cite{harvey}.
The following theorem says that for almost all surfaces $S$, any automorphism of $\calc(S)$ is generally induced by an element of $\mod^*(S)$.
This fact is fundamental in the study of the abstract commensurators of various subgroups of the mapping class group.

\begin{thm}[\cite{iva-aut}, \cite{kork-aut}, \cite{luo}]\label{thm-cc}
Let $S=S_{g, p}$ be a surface with $3g+p-4>0$.
Then the following assertions hold:
\begin{enumerate}
\item If $(g, p)\neq (1, 2)$, then any automorphism of $\calc(S)$ is induced by an element of $\mod^*(S)$.
\item If $(g, p)=(1, 2)$, then any automorphism of $\calc(S)$ preserving vertices which correspond to separating curves in $S$ is induced by an element of $\mod^*(S)$.
\end{enumerate}
\end{thm}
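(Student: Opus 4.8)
The plan is to reduce the statement to the single most difficult case and to invoke the literature for the rest. The theorem as stated is a synthesis of the work of Ivanov \cite{iva-aut}, Korkmaz \cite{kork-aut}, and Luo \cite{luo}, so the goal here is to explain how these pieces fit together rather than to reconstruct the full argument. First I would recall the baseline: Ivanov in \cite{iva-aut} treats all surfaces of sufficiently high genus, showing that $\mathrm{Aut}(\calc(S))$ is realized by $\mod^*(S)$ whenever $g\geq 2$ (with minor low-complexity adjustments); Korkmaz in \cite{kork-aut} extends this to genus $0$ and genus $1$ surfaces, again under the hypothesis $3g+p-4>0$ that guarantees $\calc(S)$ is connected and has enough combinatorial rigidity; and Luo in \cite{luo} gives a uniform treatment, in particular handling the sporadic surface $S_{1,2}$, whose curve complex is exceptional because it is isomorphic to the Farey graph and hence has extra automorphisms that do not come from homeomorphisms.

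The core of any such proof is the following local-to-global strategy. One shows that an automorphism $\phi$ of $\calc(S)$ preserves the topological type of vertices: non-separating curves go to non-separating curves, and a separating curve cutting off a subsurface of a given homeomorphism type goes to one of the same type. This is done by characterizing these types purely in terms of the link $\lk(\alpha)$ inside $\calc(S)$ — for instance, $\alpha$ is non-separating iff $S_\alpha$ is connected iff $\lk(\alpha)\cong \calc(S_\alpha)$ has the combinatorial features of a connected-surface curve complex, which can be detected by counting how top-dimensional simplices sit around $\alpha$. Once vertices of each type are recognized, one builds the candidate homeomorphism on a pants decomposition: choose a maximal simplex $\sigma$, note that $\phi(\sigma)$ is again maximal, and use the recognized topological types together with the adjacency pattern of curves that can be swapped across the pairs of pants ("elementary moves") to produce a homeomorphism $h$ of $S$ with $h_*\sigma=\phi(\sigma)$ and $h_*$ agreeing with $\phi$ on the star of $\sigma$. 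Finally, connectivity of the complex of pants decompositions (via elementary moves) propagates this local agreement: $h_*$ and $\phi$ agree on one maximal simplex and its neighbors, hence on all of $\calc(S)$, because two automorphisms agreeing on an open star of a maximal simplex agree everywhere by an induction along a path of moves.

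The main obstacle is precisely the exceptional case $(g,p)=(1,2)$, which is why the theorem splits into (i) and (ii). Here $3g+p-4=1>0$ so the complex is one-dimensional, and in fact $\calc(S_{1,2})$ is isomorphic to the Farey graph, whose automorphism group $\mathrm{PGL}_2(\mathbb{Z})$ is strictly larger than the image of $\mod^*(S_{1,2})$; the extra symmetries fail to respect the separating-versus-non-separating dichotomy. Thus in part (ii) one must add the hypothesis that $\phi$ preserves the set of vertices corresponding to separating curves, and then re-run the pants-decomposition argument — now with only two topological types of curve to track and the elementary-move graph replaced by the explicit edge structure of the Farey graph — to conclude that the restricted automorphism is geometric. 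All of this is carried out in \cite{luo}; for the purposes of the present paper we simply cite Theorem \ref{thm-cc} as the combined statement, since it is exactly the input needed for assertion (2) of the introduction via Corollary \ref{cor-aut}.
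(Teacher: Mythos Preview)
The paper does not prove Theorem~\ref{thm-cc} at all: it is stated with attribution to \cite{iva-aut}, \cite{kork-aut}, \cite{luo} and used as a black box, so there is no ``paper's own proof'' to compare against. Your proposal is therefore not competing with anything in the paper; it is an expository sketch of the cited literature, and in that role it is broadly accurate regarding the division of labor among Ivanov, Korkmaz, and Luo and the local-to-global strategy via topological types of vertices and pants decompositions.

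There is, however, a factual slip in your treatment of the exceptional case. The curve complex $\calc(S_{1,2})$ is \emph{not} the Farey graph; the Farey graph is $\calc(S_{1,1})\cong\calc(S_{0,4})$, whereas $\calc(S_{1,2})$ is a one-dimensional complex isomorphic to $\calc(S_{0,5})$ via the hyperelliptic quotient $S_{1,2}\to S_{0,5}$. The reason $S_{1,2}$ is exceptional is that $\mod^*(S_{0,5})$ acts on $\calc(S_{0,5})\cong\calc(S_{1,2})$ by automorphisms not all of which lift to $\mod^*(S_{1,2})$; those that fail to lift do not preserve the separating/non-separating distinction on $S_{1,2}$, which is exactly why the extra hypothesis in part~(ii) is needed. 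Your conclusion is right, but the mechanism you describe (extra $\mathrm{PGL}_2(\mathbb{Z})$ symmetries of a Farey graph) is the wrong one. If you keep this sketch, correct that paragraph; otherwise, since the paper itself simply cites the result, a bare citation would also be adequate.
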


Let $\bar{S}$ be the closed surface obtained from $S$ by attaching disks to all boundary components of $S$.
Let $\calc^*(\bar{S})$ be the simplicial cone over $\calc(\bar{S})$ with its cone point $\ast$.
Namely, $\calc^*(\bar{S})$ is the abstract simplicial complex such that the set of vertices of $\calc^*(\bar{S})$ is the disjoint union $V(\bar{S})\sqcup \{ \ast \}$; and the set of simplices of $\calc^*(\bar{S})$ is
\[\Sigma(\bar{S})\cup \{\, \sigma \cup \{ \ast \} \mid \sigma \in \Sigma(\bar{S})\cup \{ \emptyset \}\,\}.\]
We have the simplicial map $\pi \colon \calc(S)\rightarrow \calc^*(\bar{S})$ associated with the inclusion of $S$ into $\bar{S}$.
Note that $\pi^{-1}(\{ \ast \})$ consists of all HBCs in $S$.
An HBP in $S$ is identified with an edge of $\calc(S)$.
For each edge $\{ \alpha, \beta \}$ of $\calc(S)$, it is an HBP in $S$ if and only if we have $\pi(\alpha)=\pi(\beta)\neq \ast$.
Two disjoint HBPs $a$, $b$ in $S$ are said to be {\it equivalent} in $S$ if $\pi(a)=\pi(b)$.

\medskip

\noindent {\bf Complex $\calc_s(S)$.} Let $V_s(S)$ denote the set of all elements of $V(S)$ whose representatives are separating in $S$.
We define $\calc_s(S)$ as the full subcomplex of $\calc(S)$ spanned by $V_s(S)$ and call it the {\it complex of separating curves} for $S$.

\medskip

This complex (for closed surfaces) appears in \cite{bm}, \cite{bm-add}, \cite{farb-ivanov} and \cite{mv}.
Automorphisms of $\calc_s(S)$ are studied in \cite{bm}, \cite{bm-add} and \cite{kida-tor}.
We now introduce two simplicial complexes whose vertices are HBCs and HBPs in $S$, inspired by the work due to Irmak-Ivanov-McCarthy \cite{iim} characterizing elements of $P(S)$ associated with HBCs and HBPs in $S$ algebraically.

\medskip

\noindent {\bf Complexes $\calcp(S)$ and $\calcp_s(S)$.} Let $V_c(S)$ denote the set of all elements of $V(S)$ whose representatives are HBCs in $S$.
Let $V_p(S)$ denote the set of all elements of $\Sigma(S)$ whose representatives are HBPs in $S$.
 
We define $\calcp(S)$ as the abstract simplicial complex such that the set of vertices is the disjoint union $V_c(S)\sqcup V_p(S)$, and a non-empty finite subset $\sigma$ of $V_c(S)\sqcup V_p(S)$ is a simplex of $\calcp(S)$ if and only if any two elements of $\sigma$ are disjoint. 
We call elements of $V_c(S)$ and $V_p(S)$ {\it HBC-vertices} and {\it HBP-vertices} of $\calcp(S)$, respectively.

We define $\calcp_s(S)$ as the full subcomplex of $\calcp(S)$ spanned by all vertices whose representatives are either an HBC in $S$ and or a separating HBP in $S$.

\medskip

We note that if $S$ is of genus zero, then $\calcp(S)=\calcp_s(S)=\calc(S)=\calc_s(S)$.
If $S$ is of genus one, then $\calcp(S)$ is equal to the Torelli complex studied in \cite{bm} and \cite{kida-tor}, and the equality $\calcp_s(S)=\calc_s(S)$ holds.

\medskip

\noindent {\bf Superinjective maps.} Let $X$ and $Y$ be any of the four simplicial complexes introduced above. 
We denote by $V(X)$ and $V(Y)$ the sets of vertices of $X$ and $Y$, respectively. 
Note that a map $\phi \colon V(X)\rightarrow V(Y)$ defines a simplicial map from $X$ into $Y$ if and only if $i(\phi(a), \phi(b))=0$ for any $a, b\in V(X)$ with $i(a, b)=0$. 
We mean by a {\it superinjective map} $\phi \colon X\rightarrow Y$ a simplicial map $\phi \colon X\rightarrow Y$ satisfying $i(\phi(a), \phi(b))\neq 0$ for any $a, b\in V(X)$ with $i(a, b)\neq 0$. 
One can check that any superinjective map from $X$ into $Y$ is injective, along the proof of Lemma 3.1 in \cite{irmak1} proving that any superinjective map from $\calc(S)$ into itself is injective.


\subsection{Surface braid groups}\label{subsec-group}

Let $S$ be a surface.
The {\it mapping class group} $\mod(S)$ for $S$ is defined as the subgroup of $\mod^*(S)$ consisting of all isotopy classes of orientation-preserving homeomorphisms from $S$ onto itself. 
The {\it pure mapping class group} $\pmod(S)$ for $S$ is defined as the subgroup of $\mod^*(S)$ consisting of all isotopy classes of homeomorphisms from $S$ onto itself that preserve an orientation of $S$ and each component of $\partial S$ as a set. 
We refer to \cite{iva-mcg} for fundamentals of these groups.

For each $\alpha \in V(S)$, let $t_{\alpha}\in \pmod(S)$ denote the {\it (left) Dehn twist} about $\alpha$.
The Dehn twist about an HBC is called an {\it HBC twist}.
For each HBP $\{ \alpha, \beta \}$ in $S$, the elements $t_{\alpha}t_{\beta}^{-1}$, $t_{\beta}t_{\alpha}^{-1}$ of $\pmod(S)$ are called {\it HBP twists} about the HBP $\{ \alpha, \beta \}$.

Let $\bar{S}$ be the closed surface obtained by attaching disks to all boundary components of $S$. 
We then have the surjective homomorphism
\[\iota \colon \pmod(S)\rightarrow \mod(\bar{S})\]
associated with the inclusion of $S$ into $\bar{S}$.
We define $P(S)$ to be $\ker \iota$. 
The group $P(S)$ is known to be generated by all HBC twists and all HBP twists in $S$ (see Section 4.1 in \cite{birman}). 
We define $P_s(S)$ to be the subgroup of $P(S)$ generated by all HBC twists and all HBP twists about separating HBPs in $S$.

\begin{lem}\label{lem-faithful}
Let $S=S_{g, p}$ be a surface with $g\geq 2$ and $p\geq 1$.
Then the following assertions hold:
\begin{enumerate}
\item The actions of $\mod^*(S)$ on $\calcp(S)$ and on $\calcp_s(S)$ are faithful.
\item The homomorphisms
\[{\bf i}\colon \mod^*(S)\rightarrow \comm(P(S)),\quad {\bf i}_s\colon \mod^*(S)\rightarrow \comm(P_s(S))\]
defined by conjugation are injective.
\end{enumerate}
\end{lem}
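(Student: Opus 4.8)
The plan is to prove the two assertions of Lemma \ref{lem-faithful} essentially in the order stated, deriving (ii) from (i) together with the fact that $P(S)$ and $P_s(S)$ generate the Dehn twists in question inside $\mod^*(S)$. For assertion (i), suppose $\gamma \in \mod^*(S)$ acts trivially on $\calcp(S)$; I must show $\gamma = 1$. The key observation is that the HBC-vertices and HBP-vertices of $\calcp(S)$ remember enough of the curve structure of $S$ to pin down $\gamma$. First I would fix $\gamma$ in the kernel and note that $\gamma$ preserves the isotopy class of every $k$-HBC and every $k$-HBP. Since $g \geq 2$ and $p \geq 1$, there are many HBCs: for any two boundary components one gets a $2$-HBC cutting off an annulus-with-two-holes (a pair of pants), and these fill $S$ in the following sense — a homeomorphism fixing the isotopy classes of all HBCs fixes the isotopy class of every essential simple closed curve. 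The cleanest way to see the latter is to exhibit, for an arbitrary curve $\alpha \in V(S)$, a finite collection of HBCs whose union (after putting them in minimal position) determines $\alpha$ up to isotopy; equivalently, to show that the HBC-vertices, viewed inside $V(S)$, form a "filling" system in the sense of curve-complex rigidity arguments.

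The technical heart is therefore: \emph{a mapping class fixing every HBC and every HBP in $S$ (with $g\geq 2$, $p\geq 1$) is trivial.} I would argue this by first recovering arbitrary curves. Given a non-separating curve $\alpha$, I can find two HBCs $c_1, c_2$, each a $2$-HBC around a fixed pair of boundary components $\partial_1,\partial_2$, chosen so that $i(c_1,\alpha) \neq 0 \neq i(c_2,\alpha)$ while $\alpha$ is the unique curve disjoint from a suitable auxiliary HBP and meeting $c_1,c_2$ minimally — more robustly, I would use that $\mod^*(S)$ acts faithfully on $\calc(S)$ already (a standard fact for $3g+p-4>0$, which holds here), so it suffices to show $\gamma$ fixes every vertex of $\calc(S)$. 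To get that, note $\gamma$ fixes all HBCs, hence fixes the image $\pi(\calc(S))$-structure; combining HBCs with HBPs, $\gamma$ fixes the curves obtained by surgery. Concretely, for a separating curve $\delta$ that is not an HBC, the two sides of $\delta$ each have positive genus or extra holes, and one can write $\delta$ as the unique separating curve disjoint from explicitly chosen HBCs and HBPs lying on the two sides; fixing all of those forces $\gamma$ to fix $\delta$. For a non-separating curve $\alpha$, pick an HBP $\{\alpha,\beta\}$ (possible since $g\geq 2$: take $\beta$ parallel-ish to $\alpha$ across a one-holed torus, or choose $\beta$ so $\{\alpha,\beta\}$ cuts off a sphere with one or two holes); then $\gamma$ fixes the vertex $\{\alpha,\beta\}$ of $\calcp(S)$, and by varying $\beta$ among the several choices of HBP-partner (there is more than one, as $g\geq 2$ gives room), the only curve common to all these pairs is $\alpha$ itself, so $\gamma(\alpha)=\alpha$. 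Thus $\gamma$ fixes every vertex of $\calc(S)$, and by faithfulness of $\mod^*(S)\curvearrowright \calc(S)$ we get $\gamma=1$. For $\calcp_s(S)$ the same scheme works using only HBCs and separating HBPs; one uses that separating HBPs are still plentiful when $g\geq 2$ to recover all curves, or alternatively one recovers all of $\calc_s(S)$ and then uses that a mapping class fixing all separating curves is trivial (again a known fact in this range).

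For assertion (ii), I would show $\ker \mathbf{i} = 1$. Suppose $\gamma \in \mod^*(S)$ has $\mathbf{i}(\gamma)$ trivial in $\comm(P(S))$; then conjugation by $\gamma$ agrees with the identity on some finite index subgroup $\Gamma \leq P(S)$. Since $P(S)$ is generated by HBC twists and HBP twists, and a finite power of each such twist lies in $\Gamma$, we get that $\gamma$ commutes with $t_c^N$ for every HBC $c$ and with $(t_\alpha t_\beta^{-1})^N$ for every HBP $\{\alpha,\beta\}$, for a suitable $N \geq 1$. By the standard fact that $\delta t_c^N \delta^{-1} = t_{\delta(c)}^{N}$ (with a sign depending on orientation, but the conjugate of a power of a Dehn twist is a power of the Dehn twist about the image curve) and that $t_{c}^{N}=t_{c'}^{N}$ forces $c=c'$, commuting with $t_c^N$ forces $\gamma(c)=c$ for every HBC $c$; similarly $\gamma$ fixes every HBP (an HBP twist determines its underlying pair). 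Hence $\gamma$ lies in the kernel of the action on $\calcp(S)$, which is trivial by (i), so $\gamma = 1$; thus $\mathbf{i}$ is injective, and the identical argument with $P_s(S)$ and $\calcp_s(S)$ shows $\mathbf{i}_s$ is injective.

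The main obstacle I anticipate is the "filling" claim inside assertion (i): verifying carefully that HBCs alone (for $\calcp_s(S)$, HBCs and separating HBPs) determine every isotopy class of curve, i.e. reducing faithfulness on $\calcp(S)$ to the already-known faithfulness on $\calc(S)$ or $\calc_s(S)$. This requires a concrete case analysis — handling non-separating curves versus separating-but-not-HBC curves, and in the separating-HBP-only setting making sure enough separating HBPs exist around any given curve — which is where the hypotheses $g\geq 2$ and $p\geq 1$ get used in an essential way. Everything after that (assertion (ii)) is a routine application of the conjugation formula for Dehn twists and the injectivity already established.
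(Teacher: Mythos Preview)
Your approach is essentially the same as the paper's, and assertion (ii) is handled identically. One correction worth flagging: your opening move for (i) leans on HBCs (``there are many HBCs'' when $g\geq 2$ and $p\geq 1$), but when $p=1$ there are \emph{no} HBCs at all, since a $k$-HBC requires $2\leq k\leq p$. So the HBC-filling line of argument is a dead end in that case and should be dropped.

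Fortunately, the argument you sketch afterward---recovering a curve $\alpha$ as the unique common member of two HBPs $\{\alpha,\beta_1\}$, $\{\alpha,\beta_2\}$ containing it---is exactly what the paper does, and it works uniformly. The paper streamlines things by proving faithfulness on $\calcp_s(S)$ directly (which implies the $\calcp(S)$ case): for $\alpha\in V_s(S)\setminus V_c(S)$ it uses two separating HBPs through $\alpha$ as you describe; then, having shown every separating curve is fixed, it handles a non-separating curve $\gamma$ by choosing two separating curves $\delta_1,\delta_2$ disjoint from $\gamma$ that fill $S_\gamma$, so fixing $\delta_1,\delta_2$ forces fixing $\gamma$. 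This is precisely your ``alternatively one recovers all of $\calc_s(S)$ and then uses that a mapping class fixing all separating curves is trivial'' route, made explicit. Your variant for $\calcp(S)$ using non-separating HBPs through a non-separating $\alpha$ also works, but is unnecessary once the $\calcp_s(S)$ case is done.
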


\begin{proof}
Let $x$ be an element of $\mod^*(S)$ fixing any vertex of $\calcp_s(S)$.
Pick $\alpha \in V_s(S)\setminus V_c(S)$.
We choose separating HBPs $\{ \alpha, \beta_1 \}$, $\{ \alpha, \beta_2 \}$ in $S$ with $\beta_1\neq \beta_2$.
Since $x$ fixes these HBPs, it fixes $\alpha$.
Thus, $x$ fixes any element of $V_s(S)$.
For each non-separating curve $\gamma$ in $S$, we choose separating curves $\delta_1$, $\delta_2$ in $S$ disjoint from $\gamma$ and filling $S_{\gamma}$.
Since $x$ fixes $\delta_1$ and $\delta_2$, it fixes $\gamma$.
It follows that $x$ fixes any element of $V(S)$ and is thus neutral.
Assertion (i) is proved.

To prove assertion (ii), it suffices to show that ${\bf i}_s$ is injective.
Pick $y\in \mod^*(S)$ with ${\bf i}_s(y)$ neutral.
There exists a finite index subgroup of $P_s(S)$ such that $y$ commutes any element of it.
For any separating HBP $\{ \alpha, \beta \}$ in $S$, we then have a non-zero integer $n$ with $y(t_{\alpha}t_{\beta}^{-1})^ny^{-1}=(t_{\alpha}t_{\beta}^{-1})^n$.
Thus, $y$ fixes $\{ \alpha, \beta \}$.
Similarly, $y$ fixes any HBC in $S$.
By assertion (i), $y$ is neutral.
Assertion (ii) is proved.
\end{proof}

\begin{lem}\label{lem-multitwist}
Let $S$ be a surface of genus at least one.
Pick $\sigma \in \Sigma(S)$ and let $D_{\sigma}$ be the subgroup of $\pmod(S)$ generated by all Dehn twists about curves in $\sigma$.
Then the following assertions hold:
\begin{enumerate}
\item $D_{\sigma}\cap P(S)$ is generated by all Dehn twists about HBCs in $\sigma$ and all HBP twists about HBPs of two curves in $\sigma$.
\item $D_{\sigma}\cap P_s(S)$ is generated by all Dehn twists about HBCs in $\sigma$ and all HBP twists about separating HBPs of two curves in $\sigma$.
\end{enumerate}
In particular, $P_s(S)$ contains no non-zero power of the HBP twist about a non-separating HBP in $S$.
\end{lem}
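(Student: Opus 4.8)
The plan is as follows. Write $\sigma=\{\gamma_1,\dots,\gamma_m\}$, and recall that on a surface of positive genus the Dehn twists about a multicurve are independent, so $D_\sigma$ is free abelian of rank $m$ on $t_{\gamma_1},\dots,t_{\gamma_m}$. Write $\bar\gamma_i$ for the image of $\gamma_i$ in $\bar S$, so $\iota(t_{\gamma_i})=t_{\bar\gamma_i}$. This is trivial exactly when $\bar\gamma_i$ bounds a disk in $\bar S$, i.e.\ when $\gamma_i$ is an HBC in $S$; and for non-HBC indices $i\ne j$ one has $t_{\bar\gamma_i}=t_{\bar\gamma_j}$ exactly when $\bar\gamma_i$ and $\bar\gamma_j$ are isotopic in $\bar S$, which — since $\gamma_i,\gamma_j$ are disjoint and non-isotopic in $S$ — happens exactly when $\{\gamma_i,\gamma_j\}$ is an HBP in $S$ (its genus-zero region is the pre-image of the annulus between $\bar\gamma_i$ and $\bar\gamma_j$, and a non-HBC separating curve in $S$ stays separating in $\bar S$). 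Since the twists about the distinct isotopy classes occurring among the essential $\bar\gamma_i$ generate a free abelian subgroup of $\mod(\bar S)$, the group $D_\sigma\cap P(S)=\ker(\iota|_{D_\sigma})$ is the sublattice of $D_\sigma\cong\mathbb Z^m$ cut out by the conditions that, for each isotopy class realized among the $\bar\gamma_i$, the exponents of the $t_{\gamma_i}$ whose curves fall in that class sum to zero. Listing the curves of each such class $C$ in their parallel (nesting) order $\gamma^C_1,\dots,\gamma^C_{r_C}$ in $\bar S$, one obtains the explicit $\mathbb Z$-basis of this lattice consisting of the HBC twists $t_{\gamma_i}$ together with the elements $t_{\gamma^C_k}t_{\gamma^C_{k+1}}^{-1}$; the latter are HBP twists about HBPs of two curves in $\sigma$, and conversely all HBC twists about HBCs in $\sigma$ and all HBP twists about HBPs of two curves in $\sigma$ lie in $D_\sigma\cap P(S)$. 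This gives assertion~(i).

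For assertion~(ii), split the basis of $D_\sigma\cap P(S)$ just produced into the subset $\mathcal B_s$ of HBC twists and HBP twists about separating HBPs and the subset $\mathcal B_{ns}$ of HBP twists about non-separating HBPs, so that $D_\sigma\cap P(S)=\langle\mathcal B_s\rangle\oplus\langle\mathcal B_{ns}\rangle$ with $\langle\mathcal B_s\rangle\subseteq D_\sigma\cap P_s(S)$; writing an element of $D_\sigma\cap P_s(S)$ in this basis and subtracting its $\mathcal B_s$-part reduces (ii) to showing $\langle\mathcal B_{ns}\rangle\cap P_s(S)=0$. To prove the latter I would use, for $1\le j\le p$, the homomorphism $\phi_j\colon P(S)\to H_1(\bar S)$ coming from the Birman exact sequence: identifying $\pmod(S)$ with the pure mapping class group of $\bar S$ with the $p$ capping points marked and $\iota$ with the map forgetting all of them, the map forgetting all marked points but the $j$-th carries $P(S)=\ker\iota$ into the point-pushing subgroup $\pi_1(\bar S)$, and $\phi_j$ is the further composition with $\pi_1(\bar S)\to H_1(\bar S)$. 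Computing on generators: every HBC twist lies in $\ker\phi_j$ for all $j$ (after forgetting down to one marked point the twisting curve bounds a once-marked disk); an HBP twist $t_\alpha t_\beta^{-1}$ is the push of the marked points enclosed by the HBP along the class $[\bar\alpha]\in H_1(\bar S)$, so $\phi_j(t_\alpha t_\beta^{-1})$ is $\pm[\bar\alpha]$ when the $j$-th point is enclosed and $0$ otherwise; hence separating-HBP twists also lie in $\bigcap_j\ker\phi_j$ (then $[\bar\alpha]=0$ in $H_1(\bar S)$), while for a non-separating HBP $[\bar\alpha]\ne 0$ in $H_1(\bar S)$ and at least one point is enclosed (as $\alpha\not\simeq\beta$ in $S$). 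In particular $P_s(S)\subseteq\bigcap_j\ker\phi_j$, so it suffices to prove $(\phi_1,\dots,\phi_p)$ is injective on $\langle\mathcal B_{ns}\rangle$. (When $g=1$ the $\phi_j$ vanish identically; one argues instead with the action of $P(S)$ on $H_1(S;\mathbb Z)$, which again kills $P_s(S)$ but is faithful enough on $\langle\mathcal B_{ns}\rangle$ because a genus-one surface admits no non-separating HBP whose genus-zero region contains every component of $\partial S$.)

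The geometric core of this injectivity is that the genus-zero regions $Q^C_k$ of $S$ cobounded by consecutive curves $\gamma^C_k,\gamma^C_{k+1}$ of the various non-separating classes $C$ are pairwise distinct components of the cut surface $S_\sigma$: consecutiveness in the nesting order of $C$ in $\bar S$ forces no curve of $\sigma$ to lie in the interior of $Q^C_k$, so $Q^C_k$ survives intact as a component of $S_\sigma$, and a component of $S_\sigma$ is determined by the set of curves of $\sigma$ on its boundary. Since every component of $\partial S$ lies in a unique component of $S_\sigma$, the sets $P^C_k$ of components of $\partial S$ contained in $Q^C_k$ are pairwise disjoint, and each is non-empty (else $Q^C_k$ would be an annulus and $\gamma^C_k\simeq\gamma^C_{k+1}$ in $S$). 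Granting this, for any element $\sum c^C_k\, t_{\gamma^C_k}t_{\gamma^C_{k+1}}^{-1}$ of $\ker(\phi_1,\dots,\phi_p)$ (the sum over non-separating classes $C$) and any $j\in P^C_k$, evaluation of $\phi_j$ annihilates all terms but the $(C,k)$-term and leaves $c^C_k[\bar\gamma^C]=0$ in $H_1(\bar S)$, whence $c^C_k=0$; letting $(C,k)$ range proves the injectivity, and (ii) follows. The ``in particular'' clause is the case $\sigma=\{\alpha,\beta\}$ of a non-separating HBP, where (i) gives $D_\sigma\cap P(S)=\langle t_\alpha t_\beta^{-1}\rangle$ and (ii) gives $D_\sigma\cap P_s(S)=0$.

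I expect the main obstacle to lie in two routine-but-delicate points: making the identification ``HBP twist $=$ multi-point push'' (and the resulting formula for $\phi_j$, with its signs) fully rigorous; and the combinatorial verification that the regions $Q^C_k$ are genuinely disjoint components of $S_\sigma$ when $\sigma$ meets several distinct non-separating isotopy classes in $\bar S$, together with the separate handling of the genus-one case in which the point-pushing homomorphisms degenerate.
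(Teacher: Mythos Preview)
Your proof of (i) is correct and is the same argument the paper gives, only spelled out in full: the paper simply says ``(i) holds because any element of $P(S)$ induces the neutral element of $\mod(\bar S)$,'' leaving the linear algebra on $D_\sigma\cong\mathbb Z^m$ implicit.

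For (ii) you take a genuinely different route. The paper observes that $P_s(S)$ is contained in the Johnson kernel $\mathcal K(S)$ (its generators are products of twists about separating curves), and then invokes a result from \cite{kida-tor} asserting that $D_\sigma\cap\mathcal K(S)$ is generated by the twists about the separating curves in $\sigma$; combining this with (i) gives (ii) in one line. Your approach instead constructs explicit homomorphisms that kill $P_s(S)$ but detect the non-separating HBP twists: the strand-forgetting maps $\phi_j$ to $H_1(\bar S)$ for $g\ge 2$, and the action on $H_1(S;\mathbb Z)$ for $g=1$ (where, as you correctly note, point-pushing on the torus is trivial so the $\phi_j$ degenerate). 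This is longer and needs the case split, but it is self-contained and avoids the black-box about $D_\sigma\cap\mathcal K(S)$. Both arguments are ultimately homological --- your invariants factor through what the Johnson kernel already kills --- but yours makes the mechanism visible.

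One small correction: your claim that each $Q^C_k$ ``survives intact as a component of $S_\sigma$'' is not literally true, since HBCs of $\sigma$ may lie inside $Q^C_k$ and cut it further. What is true (and sufficient) is that no \emph{non-HBC} curve of $\sigma$ lies in the interior of $Q^C_k$: any such curve would separate $Q^C_k$, and would then be either an HBC (if $\gamma^C_k,\gamma^C_{k+1}$ lie on the same side) or HBP-equivalent to the curves of $C$ (if on opposite sides). Thus the $Q^C_k$, as subsurfaces of $S$, are pairwise disjoint (they are components of $S_{\sigma'}$ where $\sigma'=\sigma\setminus V_c(S)$), and that is all you need for the disjointness and non-emptiness of the sets $P^C_k$. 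With this adjustment your injectivity argument goes through.
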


\begin{proof}
Assertion (i) holds because any element of $P(S)$ induces the neutral element of $\mod(\bar{S})$.
We define $\cal{K}(S)$ to be the group generated by all Dehn twists about separating curves in $S$, and call it the {\it Johnson kernel} for $S$.
The group $P_s(S)$ is contained in $\cal{K}(S)$.
Theorem 6.1 (ii) in \cite{kida-tor} shows that $D_{\sigma}\cap \cal{K}(S)$ is generated by Dehn twists about curves in $\sigma \cap V_s(S)$.
Assertion (ii) thus follows because any element of $P_s(S)$ induces the neutral element of $\mod(\bar{S})$.
\end{proof}


\subsection{Plan of Sections \ref{sec-basic}--\ref{supseinj_Phi}}\label{subsec-plan}

Let $S=S_{g, p}$ be a surface with $g\geq 2$ and $p\geq 2$.
We explain an outline to prove that any automorphism $\phi$ of $\calcp(S)$ is induced by an element of $\mod^*(S)$.
This conclusion will be obtained by constructing an automorphism $\Phi$ of $\calc(S)$ inducing $\phi$.

In Section \ref{sec-basic}, we prove that
\begin{itemize}
\item $\phi$ preserves vertices corresponding to HBCs in $S$, non-separating HBPs in $S$ and separating HBPs in $S$, respectively; and
\item for any two disjoint HBPs $b_1$, $b_2$ in $S$ containing a common curve, the two HBPs $\phi(b_1)$, $\phi(b_2)$ also contain a common curve.
\end{itemize}
For each $\alpha \in V(S)$, we define $\Phi(\alpha)\in V(S)$ as follows.
If $\alpha$ is an HBC in $S$, then we put $\Phi(\alpha)=\phi(\alpha)$.
Otherwise, choosing two disjoint and distinct HBPs in $S$, say $b_1$ and $b_2$, containing $\alpha$, we define $\Phi(\alpha)$ to be the common curve of the two HBPs $\phi(b_1)$ and $\phi(b_2)$.
Sections \ref{sec-const} and \ref{sec-const-p3} are devoted to showing that this is well-defined.
In Section \ref{supseinj_Phi}, the map $\Phi$ is shown to be an automorphism of $\calc(S)$ and is thus induced by an element of $\mod^*(S)$ by Theorem \ref{thm-cc}.
In a similar way, for any automorphism $\psi$ of $\calcp_s(S)$, we construct a map $\Psi$ from $V_s(S)$ into itself and show it to be an automorphism of $\calc_s(S)$ and thus induced by an element of $\mod^*(S)$ by Theorem 1.2 in \cite{kida-tor}.


\section{Basic properties of automorphisms of $\calcp(S)$}\label{sec-basic}

For an automorphism $\phi$ of $\calcp(S)$, we show that $\phi$ preserves topological properties of HBCs and HBPs corresponding to vertices of $\calcp(S)$. 
In Section \ref{subsec-max}, simplices of $\calcp(S)$ of maximal dimension are completely described. It is obvious that such simplices are preserved by $\phi$.
In Section \ref{subsec-top}, using this fact, we show that $\phi$ preserves HBC-vertices and HBP-vertices, respectively, and preserves more detailed information on these vertices.

\subsection{Simplices of $\calcp(S)$ of maximal dimension}\label{subsec-max}

We say that two disjoint curves in $S$ are {\it HBP-equivalent} in $S$ if they either are equal or form an HBP in $S$. 
This defines an equivalence relation on each simplex $\sigma$ of $\calc(S)$ with $\sigma \cap V_c(S)=\emptyset$. 
For each simplex $\sigma$ of $\calc(S)$, we mean by an {\it HBP-equivalence class} in $\sigma$ an equivalence class in $\sigma \setminus V_c(S)$ with respect to this equivalence relation. 
Before describing simplices of $\calcp(S)$ of maximal dimension, we give several elementary observations on HBP-equivalence.

\begin{lem}\label{lem-hbp-dec}
Let $S=S_{g,p}$ be a surface with $g\geq2$ and $p\geq1$, and let $b\in \Sigma(S)$ be a simplex such that $|b|\geq 2$, $b\cap V_c(S)=\emptyset$ and any two curves of $b$ are HBP-equivalent in $S$.
Then the following assertions hold:
\begin{enumerate}
\item Either all curves of $b$ are non-separating in $S$ or they are separating in $S$.
\item If all curves of $b$ are non-separating in $S$, then each component of $S_b$ contains exactly two curves of $b$ as boundary components, and exactly one component of $S_b$ is of positive genus.
\item If all curves of $b$ are separating in $S$, then there exist exactly two components of $S_b$ of positive genus.
Moreover, those two components contain exactly one curve of $b$ as a boundary component, and any other component of $S_b$ contains exactly two curves of $b$ as boundary components.
\end{enumerate}
\end{lem}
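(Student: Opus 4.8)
The plan is to analyze the surface $S_b$ obtained by cutting $S$ along the curves of $b$, using the hypothesis that every pair of curves in $b$ is HBP-equivalent. The starting observation is that if $\{\alpha,\beta\}$ is an HBP in $S$ with $g\geq 2$, then the component $Q$ of $S\setminus(\alpha\cup\beta)$ of genus zero is unique, and $\alpha,\beta$ are either both separating or both non-separating in $S$ (this is exactly the dichotomy built into the definition of HBP, combined with the fact that an HBP is never an HBC). So first I would establish assertion (i): pick any two curves $\alpha,\beta\in b$; by HBP-equivalence they form an HBP (they are distinct since $|b|\geq 2$ and the elements are distinct vertices), so they are either both separating or both non-separating; since $b$ is connected under the relation "forms an HBP with" — indeed \emph{every} pair is HBP-equivalent — this common type propagates to all of $b$. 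The one subtlety is ruling out the mixed case, which cannot occur precisely because a single curve cannot be simultaneously separating and non-separating.

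For assertion (ii), suppose all curves of $b$ are non-separating. I would argue by building up $S_b$ one curve at a time, or more cleanly, by examining the dual graph $G$ whose vertices are components of $S_b$ and whose edges are the curves of $b$ (each curve of $b$, being non-separating in $S$ but possibly separating after earlier cuts, contributes an edge joining the one or two components adjacent to it). The key point is: for any two curves $\alpha,\beta\in b$, the genus-zero piece $Q$ cut off by the HBP $\{\alpha,\beta\}$ in $S$ is, after cutting along \emph{all} of $b$, a union of genus-zero components of $S_b$; and disjointness forces a rigid ladder-like structure. Concretely, I expect that $S_b$ consists of a chain (or necklace) of components, exactly one of which carries all the genus $g\geq 2$, each component meeting exactly two curves of $b$. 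The cleanest route is: since removing the genus-zero region $Q_{\alpha\beta}$ for a single HBP leaves a connected surface of genus $g$, and since all these $Q$'s are compatible (disjoint curves), the curves of $b$ must be "parallel" in the sense of cutting $S$ into a linear arrangement. Counting boundary components of $S_b$ coming from $b$: there are $|b|$ curves, each producing two boundary circles in $S_b$, so $2|b|$ such circles distributed among the components; the chain structure forces exactly two per component, and an Euler characteristic / genus count (total genus of $S_b$ equals $g$ since no separating cut was made in $S$, before accounting for how the cuts interact) pins the positive genus to a single component.

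For assertion (iii), the curves of $b$ are all separating in $S$. Here each $\alpha\in b$ cuts $S$ into two pieces, and for the HBP $\{\alpha,\beta\}$ the genus-zero piece $Q$ lies "between" $\alpha$ and $\beta$. I would use the nesting structure of a collection of disjoint separating curves: they partition $S$ into components arranged along a tree, and the hypothesis that every pair forms an HBP (hence every pair bounds a genus-zero region between them, and no curve is an HBC) forces this tree to be a \emph{path}. The two ends of the path are the two positive-genus components — each containing exactly one curve of $b$ as boundary (otherwise the outermost curve would be an HBC, or two curves at an end would fail to have a genus-zero region between them appropriately) — while interior components of the path meet exactly two curves of $b$ and have genus zero. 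The total genus $g\geq 2$ is split between the two ends, and since $g\geq 2$ each end can indeed carry positive genus in a way consistent with no end being a holed sphere cut off by an HBC; I would check that if one end had genus zero it would be cut off by its unique boundary curve, making that curve an HBC, contradicting $b\cap V_c(S)=\emptyset$ together with $b$ being an actual simplex of $\calc(S)$ with the stated properties.

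The main obstacle I anticipate is assertion (ii) and (iii)'s structural rigidity — specifically, proving that the dual graph is a path (or necklace) rather than having a branch vertex. The temptation is to invoke uniqueness of $Q$ for each pairwise HBP, but translating "every pair is an HBP" into "the whole configuration is linear" requires a careful local analysis at each component of $S_b$: if some component $R$ met three or more curves of $b$, I would pick two of them, say $\alpha,\gamma$, lying on $R$ in a way incompatible with $\{\alpha,\gamma\}$ bounding a genus-zero region whose closure is a union of $b$-components, deriving a contradiction with HBP-equivalence of $\alpha$ and $\gamma$. Making this genuinely watertight — handling the case where $R$ itself is the genus-carrying piece versus a genus-zero piece, and the necklace-versus-path distinction in the non-separating case — is where the real work lies; everything else is Euler characteristic bookkeeping.
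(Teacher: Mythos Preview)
Your approach for (i) matches the paper's: it is immediate from the definition of an HBP that both curves in such a pair are of the same type (both separating or both non-separating), and since this holds for every pair in $b$, the type is constant across $b$. There is no real ``subtlety'' to flag here.

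For (ii) and (iii) the paper takes a much shorter route than you do: it simply says that both assertions are verified by induction on $|b|$. The base case $|b|=2$ is the definition of an HBP, and for the inductive step one removes a single curve $\beta$ from $b$, applies the inductive hypothesis to $b'=b\setminus\{\beta\}$, and then locates $\beta$ inside one component $R$ of $S_{b'}$. The HBP condition between $\beta$ and a curve of $b'$ bounding $R$ forces $\beta$ to cut $R$ into exactly two pieces of the required kind, and the structure propagates. This avoids any global analysis of the dual graph.

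Your dual-graph strategy is a legitimate alternative, and your instincts are right: in the separating case the dual graph is a tree that must be a path, and in the non-separating case it is a single cycle. Your proposed contradiction for a branch vertex does go through. In case (iii), if some component $R$ meets three curves $\alpha,\beta,\gamma$ of $b$, then for each pair among them the ``middle'' region in $S$ (which must have genus zero by the HBP condition) contains the entire subtree on the third side; running this over all three pairs forces every piece of $S$ to have genus zero, contradicting $g\geq 2$. In case (ii), a vertex of degree at least three (or more generally any deviation from the single-cycle shape) forces two curves of $b$ whose complement in $S$ is connected, contradicting the HBP condition. So your outline can be completed, but it is more work than the one-line induction; the induction also handles the non-separating ``necklace versus path'' issue automatically, since adding one curve at a time never requires deciding the global shape in advance.
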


\begin{proof}
Assertion (i) follows from the definition of HBPs.
Assertions (ii) and (iii) are verified by induction on $|b|$.
\end{proof}

\begin{lem}\label{comp}
Let $S=S_{g,p}$ be a surface with $g\geq2$ and $p\geq1$, and let $b, c\in \Sigma(S)$ be simplices such that 
\begin{itemize}
\item $|b| \geq 2$, $b\cap c=\emptyset$ and $i(b, c)=0$;
\item $b$ consists of non-separating curves in $S$; and
\item each of $b$ and $c$ is an HBP-equivalence class in the simplex $b\cup c$.
\end{itemize}
Then any curve of $c$ is contained in the component of $S_b$ of positive genus.
\end{lem}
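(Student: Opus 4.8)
\textbf{Proof plan for Lemma \ref{comp}.}

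The plan is to analyze the cut surface $S_b$ using Lemma \ref{lem-hbp-dec}(ii). Since $b$ consists of non-separating curves that are pairwise HBP-equivalent in $S$ and $|b|\geq 2$, that lemma tells us $S_b$ has exactly one component $R_0$ of positive genus, while every component of $S_b$ has exactly two curves of $b$ among its boundary components; in particular, every component of $S_b$ other than $R_0$ is a holed sphere. Now take any curve $\gamma\in c$. Since $i(b,c)=0$ and $\gamma\notin b$, the curve $\gamma$ is (isotopic into) the interior of a unique component $R$ of $S_b$, and $\gamma$ remains an essential simple closed curve in $R$. The goal is to rule out $R\neq R_0$, i.e.\ to show $\gamma$ cannot lie in a holed-sphere component of $S_b$.

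First I would reduce to a statement purely inside $S_b$: suppose for contradiction that $\gamma$ lies in a holed-sphere component $R$ of $S_b$, which carries exactly two boundary curves coming from $b$, say $\beta_1,\beta_2$ (the remaining boundary components of $R$, if any, come from $\partial S$). An essential simple closed curve in a holed sphere is automatically separating in that holed sphere, and it separates the boundary components of $R$ into two nonempty groups. The key point is to track what this means back in $S$. If $\gamma$ separates $\{\beta_1,\beta_2\}$ from each other in $R$, then in $S$ the curve $\gamma$ together with the "boundary identifications" would force $\gamma$ to be separating in $S$ or would force $\{\beta_i,\gamma\}$ to behave in a way incompatible with $b\cup c$ being a simplex in which $c$ is a single HBP-equivalence class disjoint from $b$; if instead $\gamma$ does not separate $\beta_1$ from $\beta_2$ in $R$, then $\gamma$ cuts off a sub-holed-sphere of $R$ containing only components of $\partial S$, which means $\gamma$ is an HBC in $S$ — but one must check this is excluded. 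The cleanest route is: in the first case, re-gluing shows $\{\gamma,\beta_1\}$ or a curve of $b$ would have to be HBP-equivalent to $\gamma$, putting $\gamma$ into the $b$-class rather than a separate class, contradicting that $b$ and $c$ are distinct HBP-equivalence classes of $b\cup c$; and since $\gamma$ is non-separating-or-separating according to the dichotomy of Lemma \ref{lem-hbp-dec}(i) applied to whichever class it sits in, one gets a genus obstruction. In the second case $\gamma$ is an HBC in $S$, so $\gamma\in V_c(S)$, but then $\{\gamma\}$ is its own "class" and cannot be HBP-equivalent to curves of $c$; if $|c|=1$ this still needs the hypothesis that $c$ is an HBP-equivalence class in $b\cup c$ — and an HBP-equivalence class by definition lies in $(b\cup c)\setminus V_c(S)$, so $\gamma\notin V_c(S)$, immediately excluding this case.

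Assembling these observations, the only remaining possibility is $R=R_0$, the positive-genus component, which is exactly the assertion. I would organize the write-up as: (1) invoke Lemma \ref{lem-hbp-dec} to describe $S_b$; (2) fix $\gamma\in c$ and its host component $R$; (3) note $\gamma\notin V_c(S)$ because $c$ is an HBP-equivalence class; (4) assume $R$ is a holed sphere and derive that $\gamma$ is either an HBC in $S$ (contradicting (3)) or separates the two $b$-boundary curves of $R$, in which case re-gluing exhibits a curve of $b$ HBP-equivalent to $\gamma$ inside a class that should be disjoint from $b$, contradicting that $b$ and $c$ are \emph{distinct} HBP-equivalence classes.

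The main obstacle I anticipate is step (4): carefully controlling, under the gluing map $S_b\to S$, which essential curves in a holed-sphere component of $S_b$ become HBCs, which become non-separating in $S$, and which become separating-non-HBC in $S$, and then matching each case against the constraint that $c$ is an HBP-equivalence class in $b\cup c$ that is distinct from $b$. This is a finite case analysis but requires precise bookkeeping of boundary components; everything else is a routine application of Lemma \ref{lem-hbp-dec} and the definitions of HBC and HBP.
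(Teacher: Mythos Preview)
Your proposal is correct and follows essentially the same approach as the paper: the paper's proof is the single sentence ``any curve in a component of $S_b$ of genus zero is either an HBC in $S$ or HBP-equivalent to a curve in $b$,'' and your steps (3)--(4) are precisely a verification of this fact together with the observation that neither alternative is compatible with $\gamma$ lying in the HBP-equivalence class $c$. Your anticipated obstacle in step (4) is not a real difficulty---the dichotomy (does $\gamma$ separate $\beta_1$ from $\beta_2$ in $R$ or not) cleanly yields the two cases, and in the separating case $\{\gamma,\beta_1\}$ is immediately an HBP in $S$ since the piece of $R$ between them is a holed sphere.
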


\begin{proof}
This lemma follows from the fact that any curve in a component of $S_b$ of genus zero is either an HBC in $S$ or HBP-equivalent to a curve in $b$.
\end{proof}

\begin{lem}\label{comp-s}
Let $S=S_{g,p}$ be a surface with $g\geq2$ and $p\geq1$.
Pick a separating curve $\alpha$ in $S$ which is not an HBC in $S$, and pick a simplex $c\in \Sigma(S)$ such that 
\begin{itemize}
\item $|c| \geq 2$, $\alpha\not\in c$ and $i(\alpha, c)=0$; and
\item $c$ is an HBP-equivalence class in the simplex $\{ \alpha \}\cup c$.
\end{itemize}
Then there exists a component $Q$ of $S_{\alpha}$ such that $c$ belongs to $\Sigma(Q)$.
\end{lem}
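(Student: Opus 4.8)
The plan is to mimic the proof of Lemma \ref{comp}, adjusting for the fact that $\alpha$ is a single separating curve rather than a multicurve $b$ of non-separating curves, and that $\{\alpha\}\cup c$ need not have all curves HBP-equivalent to one another (only the curves of $c$ form an HBP-equivalence class, which happens to be disjoint from the class $\{\alpha\}$). Since $\alpha$ is separating in $S$, the surface $S_\alpha$ has exactly two components, say $Q_1$ and $Q_2$, each with $\alpha$ as a boundary component. Every curve of $c$ is disjoint from $\alpha$ and not equal to $\alpha$, hence is contained (up to isotopy) in $Q_1\sqcup Q_2$; what must be shown is that all curves of $c$ lie in a single one of these two components. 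Suppose not, so that some curve $\gamma_1\in c$ lies in $Q_1$ and some curve $\gamma_2\in c$ lies in $Q_2$.

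The key step is to derive a contradiction from the assumption that $c$ is an HBP-equivalence class in $\{\alpha\}\cup c$. First I would observe that by Lemma \ref{lem-hbp-dec}(i) applied to $c$ (which has $|c|\geq 2$, consists of curves pairwise HBP-equivalent in $S$, and contains no HBC), either all curves of $c$ are non-separating in $S$ or all are separating in $S$. Now consider $\gamma_1\in Q_1$ and $\gamma_2\in Q_2$. Since $\gamma_1,\gamma_2$ are HBP-equivalent and not equal, $\{\gamma_1,\gamma_2\}$ is an HBP in $S$, so $S\setminus(\gamma_1\cup\gamma_2)$ has a genus-zero component $R$. Because $g\geq 2$, this genus-zero component is unique. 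The curve $\alpha$ is disjoint from both $\gamma_1$ and $\gamma_2$, so $\alpha$ lies in some component of $S_{\{\gamma_1,\gamma_2\}}$; since $\alpha$ separates $\gamma_1$ from $\gamma_2$ in $S$ (they lie in different components of $S_\alpha$), a Mayer–Vietoris / Euler-characteristic count or a direct cut-and-paste argument shows that $\alpha$ must lie in the genus-zero piece $R$, and moreover that $\alpha$ together with one of $\gamma_1,\gamma_2$ bounds a holed sphere — i.e.\ $\alpha$ is HBP-equivalent to a curve of $c$, or $\alpha$ is itself an HBC. Either alternative contradicts a hypothesis: the first contradicts that $\{\alpha\}$ and $c$ are distinct HBP-equivalence classes in $\{\alpha\}\cup c$, and the second contradicts that $\alpha$ is not an HBC in $S$.

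The main obstacle will be the topological bookkeeping in the step above: one must argue carefully that once $\alpha$ separates two HBP-equivalent curves $\gamma_1,\gamma_2$, the curve $\alpha$ is forced into the genus-zero complementary component $R$ of $\{\gamma_1,\gamma_2\}$ and bounds a holed sphere with one of them. This is where the genus-zero structure of $R$ and the disjointness $i(\alpha,c)=0$ are used together. Once the contradiction is in hand, all curves of $c$ lie in a single component $Q$ of $S_\alpha$, and since the curves of $c$ are pairwise disjoint and essential in $S$ (hence essential in $Q$, as $Q$ is not an annulus or disk here because $\alpha$ is not an HBC and the other boundary data is nontrivial), we conclude $c\in\Sigma(Q)$, as required.
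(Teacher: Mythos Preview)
Your approach is sound and reaches the conclusion, but it is organized dually to the paper's: you cut $S$ along the HBP $\{\gamma_1,\gamma_2\}$ and locate $\alpha$ among the pieces, whereas the paper cuts $S$ along $\alpha$ and locates the curves of $c$ there. The paper then treats the non-separating and separating cases for $c$ by separate, very short arguments. When the curves of $c$ are non-separating, each $\gamma_i$ remains non-separating in its component of $S_\alpha$, so if those components were distinct then $S_{\{\gamma,\delta\}}$ would be connected, contradicting the HBP condition. When the curves of $c$ are separating, each $\beta\in c$ must separate its component of $S_\alpha$ into two pieces of \emph{positive} genus (using that $\beta$ is neither an HBC nor HBP-equivalent to $\alpha$), so two such curves sitting in different components of $S_\alpha$ could never cobound a holed sphere.

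One point in your sketch needs tightening: the terminal dichotomy ``$\alpha$ is HBP-equivalent to a curve of $c$, or $\alpha$ is itself an HBC'' does not cover the non-separating case. If the $\gamma_i$ are non-separating and you place $\alpha$ in the genus-zero piece $R$ separating $\gamma_1$ from $\gamma_2$ there, the positive-genus piece $R'$ of $S_{\{\gamma_1,\gamma_2\}}$ still carries \emph{both} $\gamma_1$ and $\gamma_2$ on its boundary; gluing $R'$ back reconnects the two sides of $\alpha$, forcing $\alpha$ to be non-separating in $S$. That is the contradiction here, not the HBP/HBC alternative you name (a separating $\alpha$ and a non-separating $\gamma_1$ can never form an HBP, and there is no reason $\alpha$ would become an HBC). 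Your dichotomy is exactly right in the separating case, where the analogous positive-genus pieces each meet only one of $\gamma_1,\gamma_2$. So the cut-and-paste you invoke does work, but it naturally bifurcates into the same two cases the paper treats, with the non-separating case yielding a contradiction of a different flavor than you anticipate.
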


\begin{proof}
Pick two distinct curves $\gamma$, $\delta$ of $c$.
Let $Q$ and $R$ be the components of $S_{\alpha}$ with $\gamma \in V(Q)$ and $\delta \in V(R)$. 
If all curves of $c$ are non-separating in $S$, then $\gamma$ and $\delta$ are non-separating in $Q$ and $R$, respectively. 
Since $S_{\{\gamma,\delta\}}$ is not connected, we have $Q=R$.
If all curves of $c$ are separating in $S$, then for each curve $\beta$ of $c$, since $\beta$ is not an HBC in $S$ and not equivalent to $\alpha$, the curve $\beta$ has to separate the component of $S_{\alpha}$ containing $\beta$ into two components of positive genus. 
Since any two curves in $c$ are HBP-equivalent, all curves of $c$ are contained in the same component of $S_{\alpha}$.
\end{proof}

\begin{lem}\label{number_HBC}
Let $S=S_{g,p}$ be a surface with $g \geq 2$ and $p \geq 2$, and let $\sigma$ be a simplex of $\calcp(S)$ consisting of HBC-vertices. 
Then the inequality $|\sigma| \leq p-1$ holds, and the equality can be attained. 
\end{lem}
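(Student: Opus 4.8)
The plan is to bound the size of a simplex of HBC-vertices by analyzing what configuration of disjoint HBCs looks like on $S$, and then to exhibit an explicit configuration realizing the bound. First I would fix a simplex $\sigma = \{\alpha_1, \dots, \alpha_k\}$ of $\calcp(S)$ all of whose vertices are HBC-vertices; by definition the $\alpha_i$ are pairwise disjoint (as curves in $S$, after choosing disjoint representatives) and each $\alpha_i$ cuts off a holed sphere from $S$. Choosing disjoint representatives, cut $S$ along $\alpha_1 \cup \dots \cup \alpha_k$ to get the surface $S_\sigma$. Since each $\alpha_i$ is separating and cuts off a holed sphere, the complement is a disjoint union of surfaces, and one can organize the picture by the partial order on the $\alpha_i$ coming from nesting of the holed spheres they bound: say $\alpha_i \prec \alpha_j$ if the holed sphere cut off by $\alpha_i$ is contained in the one cut off by $\alpha_j$. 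The key point is that distinct HBCs being disjoint forces them to be either nested or ``unlinked'' (bounding holed spheres with disjoint interiors), so this really is a forest structure on $\sigma$.

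Next I would count boundary components. Each $\alpha_i$ is an $\ell_i$-HBC for some $2 \le \ell_i \le p$, i.e. the holed sphere $P_i$ it cuts off contains $\ell_i$ components of $\partial S$. The combinatorics of a forest of nested/unlinked holed spheres inside $S_{g,p}$, together with the fact that every $P_i$ is genus zero and contains at least $2$ boundary components of $S$ (and, being essential, more than just those — each nested child ``uses up'' some of the count), should give the bound $|\sigma| \le p-1$ by an Euler-characteristic or direct leaf-counting argument: the ``innermost'' HBCs cut off pairwise disjoint holed spheres each absorbing at least two of the $p$ boundary components with no sharing, forcing at most $\lfloor p/2 \rfloor$ of them, and then each successive nesting level strictly increases the number of boundary components enclosed, so the total depth plus width is at most $p-1$. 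I expect the cleanest route is induction on $p$: remove an innermost HBC $\alpha_i$ together with the holed sphere it bounds and cap off, reducing $p$ appropriately and reducing $|\sigma|$ by one, matching the inductive hypothesis.

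For the sharpness claim I would construct an explicit nested chain. Take a pants decomposition-style picture: let $\delta_1, \dots, \delta_p$ be the boundary components of $S$, and let $\alpha_1$ be a $2$-HBC cutting off a pair of pants containing $\delta_1, \delta_2$; let $\alpha_2$ be a $3$-HBC enclosing $\alpha_1$ together with $\delta_3$; more generally let $\alpha_j$ be a $(j+1)$-HBC enclosing $\alpha_{j-1}$ and $\delta_{j+1}$, for $j = 1, \dots, p-1$. These are $p-1$ pairwise disjoint, pairwise non-isotopic HBCs (the curve $\alpha_{p-1}$ cuts off a holed sphere with $p$ boundary components, which since $g \ge 2$ is not all of $S$ and is essential), so they span a simplex of $\calcp(S)$ with $p-1$ vertices.

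The main obstacle I anticipate is the bookkeeping in the upper bound: making precise the claim that pairwise disjointness of HBCs yields the nested/unlinked forest structure, and then turning the forest-plus-boundary-count into the exact bound $p-1$ rather than something weaker like $2p-3$ or $p$. The genus hypothesis $g \ge 2$ enters only to guarantee the configurations are essential (an HBC enclosing all $p$ boundary components is still essential precisely because the complementary piece has genus $g \ge 1$), so the real content is the planar/tree combinatorics, which I would handle by the induction on $p$ sketched above, peeling off an innermost holed sphere.
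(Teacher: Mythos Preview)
Your proposal is correct and follows the same approach as the paper: induction on $p$ for the upper bound and an explicit nested chain of HBCs for the sharpness. The paper's own proof is a two-sentence sketch (``We can find a simplex $\tau$ of $\calcp(S)$ consisting of $p-1$ HBC-vertices. The inequality in the lemma is verified by induction on $p$''), so your version is considerably more detailed than what the authors recorded; the forest/nesting discussion and the explicit chain $\alpha_j$ enclosing $\partial_1,\dots,\partial_{j+1}$ are exactly the content one would fill in. One small point worth tightening in your inductive step: if by ``cap off'' you mean replacing the innermost holed sphere by a disk, then some of the remaining curves can become isotopic to each other or to a boundary component in the new surface, so the count needs a sentence of care; replacing the innermost holed sphere by an annulus (keeping one new boundary component) avoids this and drops $p$ by at least one while keeping the other $|\sigma|-1$ curves essential and pairwise non-isotopic, which makes the induction go through cleanly.
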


\begin{proof}
We can find a simplex $\tau$ of $\calcp(S)$ consisting of $p-1$ HBC-vertices.
The inequality in the lemma is verified by induction on $p$.
\end{proof}

We now describe simplices of $\calcp(S)$ of maximal dimension in the following:

\begin{prop}\label{max_dim}
Let $S=S_{g,p}$ be a surface with $g \geq 2$ and $p\geq 1$. 
Then we have
\[{\rm dim}(\calcp(S))=\binom{p+1}{2}-1.\]
Moreover, for any simplex $\sigma$ of $\calcp(S)$ of maximal dimension, 
there exists a unique simplex $s=\{\beta_1,\beta_2,\ldots,\beta_{p+1}\}$ of $\calc(S)$ such that
\begin{itemize}
\item[(a)] any two curves in $s$ are HBP-equivalent; and 
\item[(b)] $\sigma$ consists of all HBPs of two curves in $s$.
\end{itemize}
\end{prop}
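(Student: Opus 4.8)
The plan is to establish the dimension formula and the structural description simultaneously, by analyzing an arbitrary simplex $\sigma$ of $\calcp(S)$ and bounding its size from above, then exhibiting a configuration meeting the bound. First I would fix a simplex $\sigma$ and let $\sigma_c = \sigma \cap V_c(S)$ be its HBC-vertices and $\sigma_p = \sigma \setminus \sigma_c$ its HBP-vertices. Taking a simultaneous representative, I get a collection of pairwise disjoint curves in $S$: the HBCs from $\sigma_c$, together with the (at most $2|\sigma_p|$) curves appearing in the HBPs of $\sigma_p$. Let $s$ be the set of isotopy classes of all these curves; it is a simplex of $\calc(S)$. The key point is to count: each HBP in $\sigma_p$ is an edge of $\calc(S)$ contained in $s$, and two distinct HBPs can share at most one curve. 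So I want to bound $|\sigma_p|$ in terms of $|s|$, and $|s|$ in terms of $p$ and $g$, using the topology of $S_s$.

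The heart of the argument is the Euler-characteristic / cut-surface bookkeeping. Each curve of $s$, when we cut along it, either is an HBC (cutting off a holed sphere) or is HBP-equivalent to another curve of $s$ (by Lemma~\ref{lem-hbp-dec} and Lemmas~\ref{comp}, \ref{comp-s}, the HBP-equivalence classes in $s$ decompose the curves into non-separating pairs/chains and separating chains, each "using up" boundary components or genus). Concretely, I would argue that $|s| \le 2g + p - 1$ always, with the curves of $s$ partitioned into HBP-equivalence classes each of which, by Lemma~\ref{lem-hbp-dec}(ii)–(iii), behaves like a linear chain; and then, among simplices $\sigma$ of $\calcp(S)$, the number of HBP-vertices is maximized when $s$ is a single HBP-equivalence class of $p+1$ curves all lying on a subsurface of genus zero — more precisely, when $s$ consists of $p+1$ curves cutting $S$ into one genus-$g$ piece with no boundary of $S$ on it and a chain of holed spheres carrying all $p$ boundary components, so that every one of the $\binom{p+1}{2}$ pairs of curves in $s$ forms an HBP. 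In that situation $\sigma$ can contain all $\binom{p+1}{2}$ HBP-vertices and no HBC-vertex (by Lemma~\ref{number_HBC} an HBC-vertex competes poorly: it only contributes $1$ while consuming a piece that could instead carry several HBPs). This gives $\dim(\calcp(S)) = \binom{p+1}{2} - 1$ and shows any maximal simplex consists entirely of HBPs among the curves of such an $s$, giving properties (a) and (b).

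For uniqueness of $s$: given a maximal $\sigma$ satisfying (a) and (b), each curve $\beta_j \in s$ is recovered as the unique curve lying in every HBP of $\sigma$ that is "adjacent" to it, or more robustly, $s$ is exactly the set of curves appearing in at least one HBP of $\sigma$, and this set is intrinsic to $\sigma$. I would phrase this as: if $s$ and $s'$ both satisfy (a)–(b) for the same $\sigma$, then every HBP in $\sigma$ is simultaneously an edge of the simplex $s$ and of $s'$, so the vertex sets of $s$ and $s'$ both equal $\bigcup_{b \in \sigma} b$, hence $s = s'$.

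The main obstacle I anticipate is the careful upper bound argument — showing rigorously that no simplex of $\calcp(S)$ can have more than $\binom{p+1}{2}$ vertices, i.e., that mixing in HBC-vertices or using several HBP-equivalence classes never beats the single-chain configuration. This is essentially an optimization over the possible topological types of the cut surface $S_s$, and it requires the inductive counting lemmas (Lemmas~\ref{lem-hbp-dec}–\ref{number_HBC}) to control how each HBP-equivalence class and each HBC contributes, relative to how many pairwise-HBP relations it can generate. The genus-$g$ part is "wasted" in the sense that curves separating off genus do not pair with many others, so the extremal configuration pushes all the complexity into the planar part supporting the $p$ boundary components; making this trade-off precise is the delicate step.
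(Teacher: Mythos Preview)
Your outline has the right shape: collect the curves underlying $\sigma$ into a simplex $s$ of $\calc(S)$, separate HBCs from HBP-equivalence classes, argue that HBCs are wasteful and that a single class is optimal, and recover $s$ as $\bigcup_{b\in\sigma} b$ for uniqueness. The uniqueness argument is fine and matches the paper's.

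The gap is in the upper bound, which you correctly flag as ``the delicate step'' but do not actually address. Your proposed mechanism---a global bound $|s|\le 2g+p-1$ followed by direct optimization over topological types of $S_s$---does not do the job. The quantity $|s|$ is not what controls $|\sigma|$; the relevant quantity is $k+\sum_j\binom{m_j}{2}$, where $k$ counts HBCs and $m_1,\dots,m_l$ are the sizes of the HBP-equivalence classes, and the bound $2g+p-1$ is neither clearly correct nor useful for bounding this sum. More seriously, you give no concrete argument for the convexity-type inequality showing that a single class of size $p+1$ beats any split into several classes plus HBCs. The difficulty is that distinct HBP-equivalence classes do not simply partition the $p$ boundary components of $S$: by Lemmas~\ref{comp} and~\ref{comp-s} they are nested (each subsequent class lives in a positive-genus complementary piece of the previous one), so a na\"ive additive bookkeeping on boundary components does not immediately work.

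The paper supplies exactly the missing mechanism: induction on $p$. After exhibiting the $\binom{p+1}{2}$-vertex simplex, one takes a maximal $\sigma$ and proves two claims. First, $k=0$ by a direct replacement (Claim~\ref{k=0}): find an HBP in $\sigma$ whose genus-zero side contains all the HBCs, delete those HBCs, and insert additional HBP-equivalent curves there, producing a strictly larger simplex. Second, $l=1$ (Claim~\ref{l=1}): if $l\ge 2$, pass to the positive-genus piece $R$ left by the first class $b_1$, attach a handle (in the separating case) or glue the two $b_1$-boundaries (in the non-separating case) to get a surface of genus $\ge 2$ with strictly fewer boundary components, apply the inductive dimension bound there to control $\sum_{j\ge 2}\binom{m_j}{2}$, and again replace to produce a strictly larger simplex. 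The induction on $p$ is what makes the ``several classes are worse'' step go through; without it, or an equivalent explicit replacement argument, your plan does not close.
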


Before proving this proposition, we note that if $S$ is a surface of genus one, then $\calcp(S)$ is equal to the Torelli complex $\cal{T}(S)$ studied in \cite{kida-tor}, where simplices of $\cal{T}(S)$ of maximal dimension are described.
\begin{figure}
\begin{center}
\includegraphics[width=12cm]{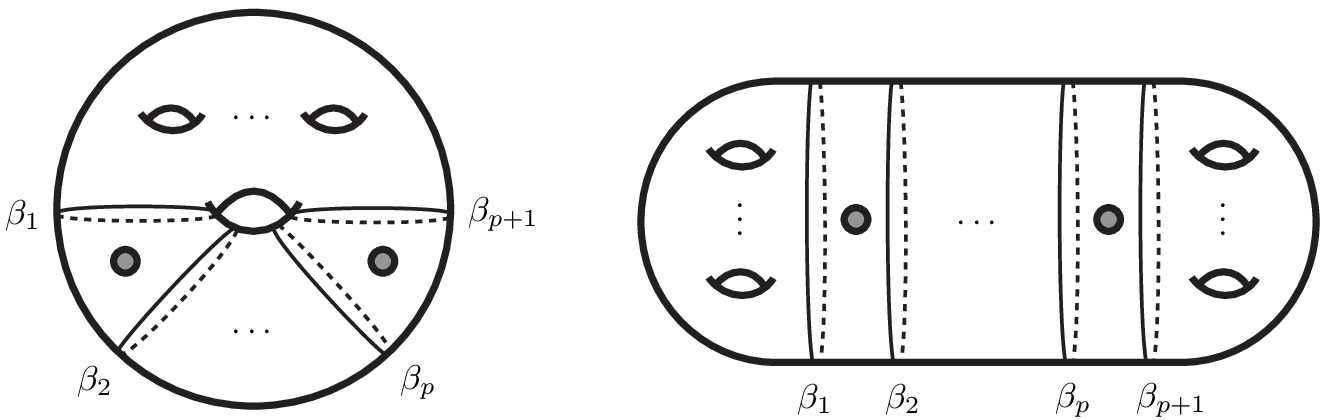}
\end{center}
\caption{Simplices of $\calcp(S)$ of maximal dimension}\label{fig-max}
\end{figure}
Proposition 3.4 in \cite{kida-tor} thus implies that if $S=S_{1, p}$ is a surface with $p\geq 2$, then we have
\[{\rm dim}(\calcp(S))=\binom{p}{2}-1.\]

\begin{proof}[Proof of Proposition \ref{max_dim}]
The $p+1$ curves in $S$ described in Figure \ref{fig-max} are mutually HBP-equivalent. 
It follows that the dimension of $\calcp(S)$ is not smaller than the right hand side of the equality in the proposition.

In what follows, we show the equality by induction on $p$. 
The equality obviously holds if $p=1$. 
We assume $p \geq 2$.
Pick a simplex $\sigma$ of $\calcp(S)$ of maximal dimension. 
We define the simplex of $\calc(S)$, denoted by
\[s=\{\alpha_1,\ldots,\alpha_k,\beta_{1 1},\ldots,\beta_{1 m_1},\beta_{2 1},\ldots,\beta_{l m_l}\},\]
so that
\begin{itemize}
\item $\{ \alpha_1,\ldots,\alpha_k\}$ is the collection of HBCs of $\sigma$;
\item $\{ \beta_{1 1},\ldots,\beta_{1 m_1},\beta_{2 1},\ldots,\beta_{l m_l}\}$ is the collection of curves in HBPs of $\sigma$; and 
\item for each $j=1,\ldots,l$, the set $b_j=\{\beta_{j 1},\ldots,\beta_{j m_j}\}$ is an HBP-equivalence class in $s$. 
\end{itemize} 
Since $\dim \sigma$ is maximal, $\sigma$ contains all HBPs of two curves in each $b_j$. 
We hence obtain the equality
\[|\sigma|=k+\sum_{j=1}^l\binom{m_j}{2}.\]

\begin{claim}\label{k=0}
We have $k=0$.
\end{claim}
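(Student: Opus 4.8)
The plan is to prove Claim \ref{k=0} by a counting/extremality argument: if some HBC-vertex $\alpha_i$ were present in the maximal simplex $\sigma$, we would exhibit a simplex of strictly larger dimension, contradicting maximality of $\sigma$. So assume $k\geq 1$. The HBC $\alpha_1$ cuts off from $S$ a holed sphere containing some $j$-many boundary components of $S$ with $2\le j\le p$; cutting along $\alpha_1$ decomposes $S$ into this holed sphere $P$ and a surface $S'=S_{g,\,p-j+1}$ (the genus is unchanged because $\alpha_1$ is an HBC). The curves of $\sigma$ other than $\alpha_1$ all lie in $S'$, since any essential curve in $P$ is either isotopic to $\alpha_1$ or is an HBC cutting off a sub-holed-sphere of $P$ — and I should check that any such HBC of $P$, together with $\alpha_1$, is in fact accounted for so that the part of $\sigma$ living in $P\cup(\text{nbhd of }\alpha_1)$ can be reorganized.

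The cleaner route is the following. First I would note that $\sigma\setminus\{\alpha_1\}$ is a simplex of $\calcp(S')$ after collapsing $P$: every HBC or HBP in $S$ disjoint from $\alpha_1$ and not equal to (a curve isotopic to) $\alpha_1$ either survives as an HBC/HBP in $S'$ or becomes trivial; conversely each HBC/HBP-vertex of $\calcp(S')$ gives one of $\calcp(S)$ disjoint from $\alpha_1$. Hence $|\sigma|-1\le \dim\calcp(S')+1=\binom{(p-j+1)+1}{2}=\binom{p-j+2}{2}$, by the inductive hypothesis applied to $S'$ (which has $p-j+1\ge 1$ boundary components, so induction applies, the base case $p=1$ being trivial). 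But now I claim we can also \emph{add} vertices: inside the holed sphere $P$ (which has $j\ge 2$ boundary components of $S$ plus the boundary curve isotopic to $\alpha_1$, i.e. $j+1$ boundary circles total), the curve $\alpha_1$ together with curves cutting $P$ into pieces behaves like curves in a holed sphere, and a holed sphere with $j+1$ boundary circles contributes $\binom{j+1}{2}-1$ curves that are mutually HBP-equivalent in $S$ (the configuration of Figure \ref{fig-max} restricted to $P$, using $\alpha_1$ as one of the curves). Combining, maximality forces
\[
\binom{p+1}{2}=|\sigma|\le \Bigl(\binom{p-j+2}{2}-1\Bigr)+\Bigl(\binom{j+1}{2}-1\Bigr)+1 .
\]
The main obstacle is exactly this inequality: I must verify that $\binom{p-j+2}{2}+\binom{j+1}{2}-1<\binom{p+1}{2}$ for all $j$ with $2\le j\le p$, which is a routine convexity estimate (the right side equals $\binom{(p-j+2)+(j-1)+1}{2}$ and strict superadditivity of binomial coefficients does the job whenever both blocks are nontrivial, i.e. $j\ge 2$ and $p-j+1\ge 1$), and that the two "pieces" of $\sigma$ — the part in $S'$ and the part involving $\alpha_1$ inside $P$ — genuinely recombine into one simplex of $\calcp(S)$ of the asserted size without double-counting. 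That contradiction gives $k=0$.

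Once $k=0$ is established, the induction closes quickly: then $|\sigma|=\sum_{j=1}^\ell\binom{m_j}{2}$ with $\sum m_j$ curves all lying in $s$, and one shows $\ell=1$ and $\sum m_j=p+1$ — a single HBP-equivalence class of $p+1$ curves, realized by Figure \ref{fig-max} — again by a convexity argument ($\binom{m_1}{2}+\binom{m_2}{2}<\binom{m_1+m_2}{2}$ when $m_1,m_2\ge 2$, so splitting into several classes is never optimal, and Lemma \ref{lem-hbp-dec} bounds the size of a single HBP-equivalence class by $p+1$). This simultaneously yields $\dim\calcp(S)=\binom{p+1}{2}-1$ and properties (a) and (b); uniqueness of $s$ follows because the curves of $s$ are recovered from $\sigma$ as the curves appearing in the HBP-vertices of $\sigma$. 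I expect the bookkeeping around cutting along $\alpha_1$ — precisely matching HBC/HBP-vertices of $\calcp(S)$ disjoint from $\alpha_1$ with those of $\calcp(S')$, and handling curves inside the holed sphere $P$ — to be the fiddliest part, though conceptually straightforward given Lemmas \ref{lem-hbp-dec}--\ref{number_HBC}.
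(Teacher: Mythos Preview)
Your argument has two linked problems. First, the ``add vertices'' step is wrong as stated: every essential simple closed curve lying in the holed sphere $P$ cut off by $\alpha_1$ is itself an HBC in $S$, so no curve in $P$ can be HBP-equivalent in $S$ to anything; the configuration of Figure~\ref{fig-max} does not restrict to $P$. Second, and as a consequence, your displayed inequality is not strict at $j=p$: one computes
\[
\binom{p+1}{2}-\Bigl[\binom{p-j+2}{2}+\binom{j+1}{2}-1\Bigr]=(p-j)(j-1),
\]
which vanishes when $\alpha_1$ is a $p$-HBC, so no contradiction is obtained in that case. You also silently drop the HBC-vertices of $\sigma$ that lie inside $P$ when passing to $\calcp(S')$; these do not correspond to anything in $S'$, so the bound $|\sigma|-1\le \binom{p-j+2}{2}$ is unjustified.

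The paper proceeds differently. It takes $\alpha_1$ to be an HBC of \emph{maximal} type $n$ among the $\alpha_i$, uses maximality of $\sigma$ to locate an $n$-HBP $b=\{\gamma_1,\gamma_2\}$ already in $\sigma$ whose genus-zero piece $Q$ contains exactly the boundary components encircled by $\alpha_1$, and then replaces all HBCs of $\sigma$ lying in $Q$ (there are at most $n-1$, and all are HBCs by choice of $n$) by $n-1$ new curves $\gamma_3,\ldots,\gamma_{n+1}\subset Q$ that are HBP-equivalent to $\gamma_1,\gamma_2$. The resulting simplex gains $\binom{n+1}{2}-1$ HBP-vertices while losing at most $n-1$ HBC-vertices, a strict increase since $n\ge 2$. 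Your inductive idea can be repaired: bound the HBCs of $\sigma$ inside $P$ by $j-2$ (the maximal number of disjoint essential curves in $S_{0,j+1}$) to get $|\sigma|\le \binom{p-j+2}{2}+(j-2)+1$, and check that $\binom{p+1}{2}$ exceeds this by $\tfrac{1}{2}(j-1)(2p-j)>0$ for all $2\le j\le p$; but that is not what you wrote.
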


\begin{proof}
We assume $k \geq 1$. 
For each $j=1,2,\ldots,k$, let $n_j$ be the integer with $\alpha_j$ an $n_j$-HBC in $S$, and put $n=\max\{n_1,n_2,\ldots,n_k\}$.
Exchanging indices if necessary, we may suppose that $\alpha_1$ is an $n$-HBC in $S$. 
Let $\partial_1,\ldots,\partial_n$ be the boundary components of $S$ contained in the component of $S_{\alpha_1}$ of genus zero. 
The maximality of $\dim \sigma$ and $n$ implies that there exists an $n$-HBP $b=\{\gamma_1, \gamma_2\}$ in $\sigma$ such that the component of $S_b$ of genus zero, denoted by $Q$, contains $\partial_1,\ldots,\partial_n$.
We note that $\sigma \cap V(Q)$ consists of HBCs in $S$ and that the inequality $|\sigma \cap V(Q)|\leq n-1$ holds.
Remove all curves in $\sigma \cap V(Q)$ from $\sigma$ and add curves $\gamma_3,\gamma_4\ldots,\gamma_{n+1} \in V(Q)$ to $\sigma$ such that for each $j=3,4,\ldots,n+1$, $\gamma_j$ is HBP-equivalent to $\gamma_1$ and $\gamma_2$ as a curve in $S$. 
This new collection of curves is denoted by $s'\in \Sigma(S)$ and associates the simplex $\sigma'$ of $\calcp(S)$ consisting of all HBCs in $s'$ and all HBPs of two curves in each HBP-equivalence class in $s'$.
We thus obtain the inequality  
\[|\sigma'| \geq |\sigma| - |\sigma \cap V(Q)|+\frac{\, (n+1)n\,}{2}-1\geq |\sigma|+\frac{\,n(n-1)\,}{2}> |\sigma|,\]
where the last inequality holds since we have $n\geq2$.
This contradicts the maximality of $\dim \sigma$. 
\end{proof}

\begin{claim}\label{l=1}
We have $l=1$.
\end{claim}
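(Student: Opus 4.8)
The plan is to assume $l\geq 2$ and derive a numerical contradiction with the maximality of $\dim\sigma$. First I would record some elementary consequences of $k=0$: the classes $b_1,\dots,b_l$ partition $s$, and every curve of $s$ occurs in an HBP of $\sigma$, so each $b_j$ has $m_j=|b_j|\geq 2$ and Lemma \ref{lem-hbp-dec} applies to it. By parts (ii) and (iii) of that lemma, exactly $m_j-1$ components of $S_{b_j}$ are holed spheres, each carrying exactly two curves of $b_j$ on its boundary; none of these is an annulus, since the two curves of $b_j$ on such an annulus would be isotopic in $S$, so each of these $m_j-1$ holed spheres contains at least one component of $\partial S$.

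The heart of the argument is the claim that each component $\partial_i$ of $\partial S$ lies in a holed-sphere component of $S_{b_j}$ for at most one index $j$. To prove this, fix $j\neq k$. No curve of $b_k$ can lie inside a holed-sphere component $Q$ of $S_{b_j}$: a curve contained in such a $Q$ is an HBC in $S$ or is HBP-equivalent to a curve of $b_j$ (the fact exploited in the proof of Lemma \ref{comp}), and for a curve of $b_k$ the first is impossible because $b_k$ contains no HBC, and the second is impossible because $b_j$ and $b_k$ are distinct HBP-equivalence classes. Hence each holed-sphere component of $S_{b_j}$ is already a component of $S_{b_j\cup b_k}$, and likewise for $S_{b_k}$. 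If $\partial_i$ belonged both to a holed-sphere component of $S_{b_j}$ and to one of $S_{b_k}$, these would both coincide with the component of $S_{b_j\cup b_k}$ containing $\partial_i$, hence with each other; but its boundary would then contain two curves of $b_j$ together with two curves of $b_k$, all distinct, whereas the boundary of a component of $S_{b_j}$ consists only of components of $\partial S$ and of exactly two curves of $b_j$.

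Combining the two previous paragraphs gives $\sum_{j=1}^{l}(m_j-1)\leq p$. Writing $x_j=m_j-1\geq 1$, we get $|\sigma|=\sum_{j=1}^{l}\binom{m_j}{2}=\sum_{j=1}^{l}\binom{x_j+1}{2}$ with $\sum_j x_j\leq p$; since the sum increases with each $x_j$, we may assume $\sum_j x_j=p$, whence $l\leq p$. As $\binom{x+1}{2}=x(x+1)/2$ is convex, the sum is maximal at $x_1=p-l+1$, $x_2=\dots=x_l=1$, so $|\sigma|\leq\binom{p-l+2}{2}+(l-1)$. On the other hand $\binom{p+1}{2}-\binom{p-l+2}{2}=\sum_{j=p-l+2}^{p}j\geq 2(l-1)>l-1$, the middle expression being a sum of $l-1$ integers each at least $p-l+2\geq 2$. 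Hence $|\sigma|<\binom{p+1}{2}$, contradicting the maximality of $\dim\sigma$, since the first part of the proof exhibits a simplex of $\calcp(S)$ with $\binom{p+1}{2}$ vertices; therefore $l=1$.

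The step I expect to be most delicate is the ``at most one index'' claim of the second paragraph, specifically the assertion that cutting $S_{b_j}$ further along the curves of another class $b_k$ does not subdivide its holed-sphere components; this rests on the structural input, borrowed from the proof of Lemma \ref{comp}, that a curve lying in a holed-sphere complementary piece must be an HBC in $S$ or be HBP-equivalent to one of the cutting curves.
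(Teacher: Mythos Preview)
Your argument is correct and takes a genuinely different route from the paper's.

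The paper argues by induction on $p$: assuming $l\geq 2$, it distinguishes the cases where $b_1$ consists of separating or of non-separating curves, locates the remaining classes $b_2,\dots$ inside a suitable subsurface $R$ of $S_{b_1}$, caps or glues $R$ to form a surface of genus at least two with fewer boundary components, and applies the inductive hypothesis there to bound $\sum_{j\geq 2}\binom{m_j}{2}$.  It then explicitly replaces the curves of the $b_j$ ($j\geq 2$) lying in $R$ by new curves HBP-equivalent to $\beta_{11}$ and checks that the resulting simplex is strictly larger.

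Your approach avoids both the induction and the separating/non-separating case split: you extract the single combinatorial inequality $\sum_{j=1}^{l}(m_j-1)\leq p$ from the disjointness of the holed-sphere pieces of the various $S_{b_j}$, and then finish with a convexity estimate.  This is cleaner and yields the bound $|\sigma|\leq\binom{p-l+2}{2}+(l-1)$ directly, at the cost of not exhibiting the larger simplex explicitly.  One small point: the structural fact you quote from the proof of Lemma~\ref{comp} (``any curve in a genus-zero component of $S_{b_j}$ is an HBC in $S$ or HBP-equivalent to a curve of $b_j$'') is stated there only for non-separating $b_j$, but you need it for separating $b_j$ as well; the proof is identical in that case and you might simply say so rather than cite Lemma~\ref{comp} alone.
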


\begin{proof}
Assume $l\geq2$. 
We deduce a contradiction in the following two cases: (i) $b_1$ consists of separating curves in $S$; and (ii) $b_1$ consists of non-separating curves in $S$. 
Let $R$ be the component of $S_{b_1}$ containing all curves in $b_2$, which exists by Lemmas \ref{comp} and \ref{comp-s}. 
Let $p_1$ denote the number of boundary components of $S$ contained in $R$. Since $b_1$ and $b_2$ are not equivalent, the inequality $1\leq p_1\leq p-1$ holds, and the genus of $R$ is positive.

In case (i), we may assume that $\beta_{11}$ is the element of $b_1$ corresponding to a component of $\partial R$.
Moreover, after exchanging indices, we may assume that there exists an integer $l'$ with $2\leq l'\leq l$ such that
\begin{itemize}
\item for each $j=2,\ldots, l'$, all curves in $b_j$ are contained in $R$; and
\item for each $j=l'+1,\ldots, l$, all curves in $b_j$ are contained in a component of $S_{b_1}$ distinct from $R$.
\end{itemize}

We mean by a {\it handle} a surface homeomorphic to $S_{1, 1}$.
Let $R'$ denote the surface obtained from $R$ by attaching a handle to the component of $\partial R$ corresponding to $\beta_{11}$. 
The number of boundary components of $R'$ is then equal to $p_1$. 
For each $j=2,\ldots, l'$, any curve of $b_j$ can be regarded as a curve in $R'$ via the inclusion of $R$ into $R'$, and any two curves of $b_j$ form an HBP in $R'$. 
We denote by $\sigma_{R'}$ the simplex of $\calcp(R')$ consisting of all HBPs in $R'$ associated with two curves in $b_j$ for $j=2,\ldots, l'$. 
Since the genus of $R'$ is at least two, we obtain
\[|\sigma_{R'}|=\sum_{j=2}^{l'}\binom{m_j}{2} \leq \binom{p_1+1}{2}\]
by the hypothesis of the induction.
We remove all curves in $b_2, b_3,\ldots, b_{l'}$ from $s$ and add to $s$ curves $\delta_1,\delta_2,\ldots,\delta_{p_1} \in V(R)$ which are mutually disjoint and HBP-equivalent to $\beta_{11}$ as curves in $S$. 
This new collection of curves is denoted by $s_1\in \Sigma(S)$. 
Let $\sigma_1$ be the simplex of $\calcp(S)$ consisting of all HBPs of two curves in HBP-equivalence classes of $s_1$. 
We then obtain the equality
\[|\sigma_1|=\binom{m_1+p_1}{2}+\sum_{j=l'+1}^l\binom{m_j}{2}\] 
and the inequality
\begin{align*}
|\sigma_1|-|\sigma|&=\binom{m_1+p_1}{2}+\sum_{j=l'+1}^l\binom{m_j}{2}
                                 -\sum_{j=1}^l \binom{m_j}{2}\\
                            &\geq \binom{m_1+p_1}{2}-\binom{p_1+1}{2}-\binom{m_1}{2}=(m_1-1)p_1>0.                  
\end{align*}
This contradicts the maximality of $\dim \sigma$.

Let us consider case (ii).
Without loss of generality, we may assume that $\beta_{11}$ and $\beta_{12}$ are the two elements of $b_1$ corresponding to components of $\partial R$. 
It follows from Lemma \ref{comp} that for each $j=2,\ldots, l$, any curve of $b_j$ is contained in $R$ as an essential one.
Let $R''$ be the surface obtained by identifying $\beta_{11}$ with $\beta_{12}$, whose genus is at least two.
For each $j=2,\ldots, l$, any curve of $b_j$ can be regarded as a curve in $R''$ via the natural map from $R$ into $R''$, and any two curves of $b_j$ form an HBP in $R''$. We denote by $\sigma_{R''}$ the simplex of $\calcp(R'')$ consisting of all HBPs in $R''$ associated with two curves in $b_j$ for $j=2,\ldots, l$. 
Along an argument of the same kind as in case (i), we can deduce a contradiction. 
\end{proof}

Since any HBP-equivalence class in a simplex of $\calc(S)$ consists of at most $p+1$ curves, Claims \ref{k=0} and \ref{l=1} imply the equality in the proposition. 
Existence and uniqueness of the simplex $s=\{\beta_1,\beta_2,\ldots,\beta_{p+1}\}$ in the proposition also follow.  
\end{proof}


\subsection{Topological properties preserved by $\phi$}\label{subsec-top}

The following three lemmas are obtained as a consequence of Proposition \ref{max_dim}.

\begin{lem}\label{HBP_to_HBP}
Let $S=S_{g,p}$ be a surface with $g \geq 2$ and $p \geq 2$, and let $\phi \colon \calcp(S)\rightarrow \calcp(S)$ be a superinjective map.
Then the following assertions hold:
\begin{enumerate}
\item The map $\phi$ preserves HBPs in $S$.
\item If $b_1$ and $b_2$ are disjoint and equivalent HBPs in $S$, then $\phi(b_1)$ and $\phi(b_2)$ are also equivalent.
\end{enumerate}
\end{lem}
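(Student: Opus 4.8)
The strategy is to exploit Proposition \ref{max_dim}: simplices of maximal dimension are exactly those of the form described there, built from a single HBP-equivalence class $s=\{\beta_1,\dots,\beta_{p+1}\}$, so such simplices are combinatorially recognizable and hence preserved by any superinjective map $\phi$ (superinjective maps being injective, they preserve dimension of such maximal simplices, and a superinjective self-map of $\calcp(S)$ sends a maximal simplex to one of the same cardinality, which by the dimension count must again be maximal). For assertion (i), the plan is: given an HBP $b$ in $S$, embed it in a maximal-dimension simplex $\sigma$, apply Proposition \ref{max_dim} to get the associated $(p+1)$-curve HBP-equivalence class $s$ with $\sigma$ consisting of all HBPs of two curves in $s$; then $\phi(\sigma)$ is again maximal, so there is a unique $(p+1)$-curve HBP-equivalence class $s'$ with $\phi(\sigma)$ the set of all HBPs of two curves in $s'$; in particular $\phi(b)\in\phi(\sigma)$ is one of those HBPs, hence an HBP. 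First I would check that every HBP-vertex of $\calcp(S)$ actually lies in some maximal-dimension simplex, which follows by completing $b$ to a full HBP-equivalence class of $p+1$ curves as in Figure \ref{fig-max} (this uses $g\geq 2$ so the genus-zero component can absorb all $p$ boundary components). Since $\phi$ preserves HBPs, it automatically sends HBC-vertices to HBC-vertices as well (an HBC-vertex is a vertex that is not an HBP-vertex), though for assertion (i) we only need that HBPs go to HBPs.

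For assertion (ii), the plan is to extend a pair of disjoint equivalent HBPs $b_1,b_2$ to a common maximal-dimension simplex. If $b_1$ and $b_2$ are equivalent, then by definition $\pi(b_1)=\pi(b_2)$, which forces $b_1\cup b_2$ to lie inside a single HBP-equivalence class: the four curves involved are pairwise disjoint and pairwise HBP-equivalent, so they form (part of) an HBP-equivalence class $c$ in the sense of Lemma \ref{lem-hbp-dec}. I would complete $c$ to a full HBP-equivalence class $s$ of $p+1$ curves and take $\sigma$ to be the maximal simplex of all HBPs of two curves in $s$, so that $b_1,b_2\in\sigma$. By the argument above, $\phi(\sigma)$ is maximal, with associated $(p+1)$-curve HBP-equivalence class $s'$, and $\phi(b_1),\phi(b_2)$ are HBPs built from pairs of curves of $s'$. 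Since $s'$ is a single HBP-equivalence class, $\pi$ is constant on $s'$, hence $\pi(\phi(b_1))=\pi(\phi(b_2))$, i.e. $\phi(b_1)$ and $\phi(b_2)$ are equivalent.

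The main obstacle I anticipate is verifying carefully that $\phi$ takes maximal-dimension simplices to maximal-dimension simplices and that the uniqueness clause of Proposition \ref{max_dim} can be transported across $\phi$. Concretely: a superinjective map is injective (as noted after the definition of superinjective maps), so $\phi(\sigma)$ is a simplex of the same cardinality $\binom{p+1}{2}$ as $\sigma$; by the dimension formula in Proposition \ref{max_dim} this is the maximum possible, so $\phi(\sigma)$ is of maximal dimension, and Proposition \ref{max_dim} applies to it. The only subtlety is that Proposition \ref{max_dim} is stated for automorphisms implicitly via its use, but in fact it is a purely combinatorial statement about $\calcp(S)$ itself, so it applies to the simplex $\phi(\sigma)$ directly. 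Once that is in hand, everything else is bookkeeping about which curves of $s'$ pair up to give $\phi(b_1)$ and $\phi(b_2)$, and the conclusion that $\pi$ is constant on an HBP-equivalence class (immediate from the definition, since HBP-equivalent curves $\alpha,\beta$ satisfy $\pi(\alpha)=\pi(\beta)$).
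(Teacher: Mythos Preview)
Your proposal is correct and follows essentially the same approach as the paper: embed the HBP (or the pair of equivalent HBPs) in a simplex of maximal dimension, use injectivity of $\phi$ to see that the image is again of maximal dimension, and invoke Proposition~\ref{max_dim} to conclude that every vertex of the image is an HBP and that all such HBPs are equivalent. One small caution: your parenthetical remark that ``$\phi$ preserves HBPs, so it automatically sends HBC-vertices to HBC-vertices'' does not follow for a merely superinjective (hence injective but not necessarily surjective) map, since an HBC could in principle land on an HBP; as you yourself note, this claim is not needed here, and indeed the paper proves it only for automorphisms (Lemma~\ref{HBC_to_HBC}).
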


\begin{proof}
Let $b$ be an HBP in $S$, and let $\sigma$ be a simplex of $\calcp(S)$ of maximal dimension containing $b$ as its vertex. 
Since $\phi$ is injective, the equality $|\phi(\sigma)|=|\sigma|$ holds, and $\phi(\sigma)$ is thus a simplex of maximal dimension. 
By Proposition \ref{max_dim}, each vertex of $\phi(\sigma)$ is an HBP in $S$.
Assertion (i) is proved.

Let $b_1$ and $b_2$ be disjoint and equivalent HBPs in $S$. By Proposition \ref{max_dim}, we can find a simplex $\tau$ of $\calcp(S)$ of maximal dimension containing $b_1$ and $b_2$ as its vertices. Since the dimension of $\phi(\tau)$ is maximal in $\calcp(S)$, all vertices of $\phi(\tau)$ are equivalent by Proposition \ref{max_dim}. 
Assertion (ii) is proved.
\end{proof}

\begin{lem}\label{HBC_to_HBC}
Let $S=S_{g,p}$ be a surface with $g \geq 2$ and $p \geq 2$.
Then any automorphism of $\calcp(S)$ preserves HBCs in $S$.
\end{lem}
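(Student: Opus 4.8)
The plan is to show that an automorphism $\phi$ of $\calcp(S)$ cannot send an HBC-vertex to an HBP-vertex, which combined with Lemma \ref{HBP_to_HBP}(i) (applied to $\phi^{-1}$, which is also a superinjective map) will force $\phi$ to preserve HBC-vertices. The key is to find a combinatorial invariant that distinguishes the two types of vertices, and the natural candidate is the local structure of maximal-dimensional simplices through a given vertex, as described by Proposition \ref{max_dim}. First I would fix an HBC-vertex $\alpha$ and an HBP-vertex $b$ and analyze, in each case, the set of maximal-dimensional simplices of $\calcp(S)$ containing that vertex; since $\phi$ carries maximal simplices to maximal simplices bijectively (as in the proof of Lemma \ref{HBP_to_HBP}), any such invariant is preserved.

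The main step is the following observation drawn from Proposition \ref{max_dim}: every maximal-dimensional simplex $\sigma$ of $\calcp(S)$ is built from a simplex $s=\{\beta_1,\dots,\beta_{p+1}\}$ of $\calc(S)$ whose $p+1$ curves are mutually HBP-equivalent, with $\sigma$ consisting of \emph{all} HBPs of two curves in $s$. In particular, a maximal simplex contains \emph{no} HBC-vertices at all (since $k=0$ in Claim \ref{k=0}). Hence an HBC-vertex $\alpha$ lies in \emph{no} maximal-dimensional simplex of $\calcp(S)$, whereas an HBP-vertex $b$ — using the configuration of $p+1$ mutually HBP-equivalent curves in Figure \ref{fig-max}, one of whose pairs is $b$ — does lie in a maximal-dimensional simplex. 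Since $\phi$ preserves the property of being a vertex of some maximal-dimensional simplex, $\phi$ cannot send an HBP-vertex to an HBC-vertex. Applying this to $\phi^{-1}$ as well, $\phi$ sends HBC-vertices to HBC-vertices and HBP-vertices to HBP-vertices.

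To carry this out cleanly I would: (1) record that $\phi$ and $\phi^{-1}$ are superinjective and that both preserve maximal-dimensional simplices of $\calcp(S)$ (same argument as in Lemma \ref{HBP_to_HBP}); (2) invoke Proposition \ref{max_dim} to note that no maximal simplex contains an HBC-vertex; (3) check that every HBP-vertex does lie in some maximal simplex, which is immediate from the construction in Figure \ref{fig-max} — given an HBP $b=\{\beta,\beta'\}$, one completes $\{\beta,\beta'\}$ to a chain of $p+1$ mutually HBP-equivalent curves and takes all pairwise HBPs; (4) conclude that $\phi$ restricts to a bijection of $V_c(S)$, i.e., preserves HBCs. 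The only point requiring mild care is step (3): one must verify that an arbitrary HBP (separating or not, and with any value of $k$) can indeed be extended to a full chain of $p+1$ mutually HBP-equivalent curves, which is exactly the content of the existence half of Proposition \ref{max_dim} together with a small surface-topology argument; I do not expect genuine difficulty here, only bookkeeping over the separating/non-separating cases already handled in the proof of that proposition.
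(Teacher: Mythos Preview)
Your proposal is correct and uses essentially the same idea as the paper: both arguments hinge on Proposition \ref{max_dim}, specifically that a maximal-dimensional simplex of $\calcp(S)$ consists entirely of HBP-vertices, so that membership in some maximal simplex distinguishes HBP-vertices from HBC-vertices and is preserved by automorphisms.

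The only difference is economy. The paper's proof is a single line: since $\phi$ and $\phi^{-1}$ are both superinjective, Lemma \ref{HBP_to_HBP}(i) gives that each sends HBP-vertices to HBP-vertices, so $\phi$ restricts to a bijection of $V_p(S)$ and hence also of $V_c(S)$. Your steps (2)--(4) are precisely a re-derivation of Lemma \ref{HBP_to_HBP}(i), which you already cite in your opening sentence. You can collapse your plan to: apply Lemma \ref{HBP_to_HBP}(i) to $\phi$ and to $\phi^{-1}$; done.
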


\begin{proof}
The lemma follows because for any automorphism $\phi$ of $\calcp(S)$, $\phi$ and $\phi^{-1}$ preserve HBPs in $S$ by Lemma \ref{HBP_to_HBP} (i).
\end{proof}

A verbatim proof shows the following:

\begin{lem}
Let $S=S_{g,p}$ be a surface with $g \geq 2$ and $p \geq 2$, and let $\psi \colon \calcp_s(S)\rightarrow \calcp_s(S)$ be a superinjective map.
Then the following assertions hold:
\begin{enumerate}
\item The map $\psi$ preserves HBPs in $S$.
\item If $b_1$ and $b_2$ are disjoint, equivalent and separating HBPs in $S$, then $\psi(b_1)$ and $\psi(b_2)$ are also equivalent.
\item If $\psi$ is an automorphism of $\calcp_s(S)$, then $\psi$ preserves HBCs in $S$.
\end{enumerate}
\end{lem}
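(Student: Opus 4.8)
The statement to be proved is that, for a surface $S=S_{g,p}$ with $g\geq 2$ and $p\geq 2$, any superinjective map $\psi\colon\calcp_s(S)\to\calcp_s(S)$ (i) preserves HBPs in $S$, (ii) sends disjoint equivalent separating HBPs to equivalent HBPs, and (iii) if it is an automorphism, preserves HBCs. The paper explicitly says ``A verbatim proof shows the following,'' so the plan is simply to recycle, line for line, the arguments used for $\calcp(S)$ in Lemmas \ref{HBP_to_HBP} and \ref{HBC_to_HBC}, checking at each step that everything used there is available inside $\calcp_s(S)$.

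\emph{Plan for (i) and (ii).} The key input is a description of the maximal-dimensional simplices of $\calcp_s(S)$ analogous to Proposition \ref{max_dim}. Since $\calcp_s(S)$ is the full subcomplex of $\calcp(S)$ spanned by HBC-vertices and separating-HBP-vertices, I would first record (by the same induction on $p$ as in Proposition \ref{max_dim}, restricted to the separating case) that every maximal-dimensional simplex of $\calcp_s(S)$ consists of all HBPs of two curves in a single HBP-equivalence class $s=\{\beta_1,\dots,\beta_{p+1}\}$ of \emph{separating} curves; the configuration in Figure \ref{fig-max} realizes $\binom{p+1}{2}$ such separating HBPs, so $\dim(\calcp_s(S))=\binom{p+1}{2}-1$ as well. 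Given this, the proof of (i) goes exactly as in Lemma \ref{HBP_to_HBP}(i): take a separating HBP $b$, embed it in a maximal simplex $\sigma$ of $\calcp_s(S)$, note $|\psi(\sigma)|=|\sigma|$ because superinjective maps are injective, so $\psi(\sigma)$ is again maximal and hence each of its vertices — in particular $\psi(b)$ — is an HBP. For (ii): given disjoint equivalent separating HBPs $b_1,b_2$, find a maximal simplex $\tau$ of $\calcp_s(S)$ containing both (they lie in a common HBP-equivalence class of separating curves, so the figure applies); then $\psi(\tau)$ is maximal, hence all its vertices are HBPs lying in one HBP-equivalence class, so they are pairwise equivalent, and in particular $\psi(b_1)$ and $\psi(b_2)$ are equivalent.

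\emph{Plan for (iii).} Exactly as in Lemma \ref{HBC_to_HBC}: if $\psi$ is an automorphism of $\calcp_s(S)$ then both $\psi$ and $\psi^{-1}$ are superinjective, so both preserve HBPs by part (i); hence a vertex $v$ is an HBC-vertex iff it is not an HBP-vertex iff $\psi(v)$ is not an HBP-vertex iff $\psi(v)$ is an HBC-vertex. Thus $\psi$ preserves HBCs.

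\emph{Main obstacle.} The only nontrivial point is confirming the $\calcp_s(S)$-analogue of Proposition \ref{max_dim} — i.e.\ that the maximal simplices of $\calcp_s(S)$ are still parametrized by HBP-equivalence classes of $p+1$ curves, all of which can be taken separating. One must check that Claim \ref{k=0} (no HBC-vertices in a maximal simplex) and Claim \ref{l=1} (a single HBP-equivalence class) survive when the ambient complex is restricted to the separating part: the surgeries performed there (enlarging an HBP-equivalence class, attaching a handle, identifying two boundary curves) all keep the new curves separating when the old ones were, and the case ``$b_1$ consists of non-separating curves'' simply does not arise in $\calcp_s(S)$, so in fact only the separating case of those claims is needed and it goes through verbatim. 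Once this is in hand, everything else is a mechanical transcription of the proofs already given.
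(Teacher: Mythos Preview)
Your proposal is correct and follows exactly the approach the paper intends: the paper gives no separate proof, only the phrase ``A verbatim proof shows the following,'' so transcribing the arguments of Lemma~\ref{HBP_to_HBP} and Lemma~\ref{HBC_to_HBC} with Proposition~\ref{max_dim} replaced by its $\calcp_s(S)$-analogue is precisely what is called for. Your check that Claims~\ref{k=0} and~\ref{l=1} go through in the separating setting (case~(ii) of Claim~\ref{l=1} being vacuous) is the right thing to verify, and it does; the one cosmetic point is that Figure~\ref{fig-max} may depict non-separating curves, but a separating configuration of $p+1$ mutually HBP-equivalent curves exists for $g\geq 2$ (cut off an $S_{1,1}$, then nest $p$ further separating curves each enclosing one more boundary component), so $\dim(\calcp_s(S))=\binom{p+1}{2}-1$ as you claim.
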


To show that an automorphism of $\calcp(S)$ preserves more topological information, let us introduce the following terminology.

\begin{defn}
Let $S$ be a surface, and let $\sigma$ be a simplex of $\calcp(S)$ consisting of HBP-vertices.
We say that $\sigma$ is {\it rooted} if there exists a curve $\alpha$ in $S$ contained in any HBP of $\sigma$.
In this case, if $|\sigma|\geq 2$, then $\alpha$ is uniquely determined and called the {\it root curve} of $\sigma$.
\end{defn}

Rooted simplices were introduced in \cite{kida-tor} for the Torelli complex analogously.

\begin{lem}\label{rooted}
Let $S=S_{g,p}$ be a surface with $g \geq 2$ and $p \geq 2$. 
Then any superinjective map from $\calcp(S)$ into itself preserves rooted simplices. 
Moreover, the same conclusion holds for any superinjective map from $\calcp_s(S)$ into itself.
\end{lem}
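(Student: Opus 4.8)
The plan is to give a purely combinatorial characterization of rootedness in terms of maximal-dimensional simplices, which is manifestly preserved by any superinjective map. Let $\sigma$ be a simplex of $\calcp(S)$ consisting of HBP-vertices, with $|\sigma|\geq 2$; the case $|\sigma|\leq 1$ is trivial since a single vertex or the empty simplex is always rooted. The key claim I would isolate is: $\sigma$ is rooted if and only if $\sigma$ is contained in a simplex $\sigma'$ of $\calcp(S)$ of maximal dimension such that all HBP-vertices of $\sigma'$ pairwise contain a common curve in the sense of Proposition \ref{max_dim}; equivalently, using the unique $s=\{\beta_1,\ldots,\beta_{p+1}\}$ attached to $\sigma'$ by that proposition, every HBP in $\sigma'$ — hence every HBP in $\sigma$ — is of the form $\{\beta_i,\beta_j\}$ with one index held fixed. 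I would first verify the "only if" direction: given a rooted $\sigma$ with root curve $\alpha$, I would build an explicit maximal-dimensional simplex containing $\sigma$ by taking $\alpha$ together with $p$ further curves mutually HBP-equivalent to $\alpha$ (as in Figure \ref{fig-max}), so that $s=\{\alpha,\gamma_1,\ldots,\gamma_p\}$ and $\sigma'$ consists of all HBPs $\{\alpha,\gamma_j\}$ and $\{\gamma_i,\gamma_j\}$; one then checks $\sigma\subseteq\sigma'$ because each HBP of $\sigma$ contains $\alpha$ and is disjoint from all the $\gamma_j$'s. Care is needed to arrange the $\gamma_j$ disjoint from $\sigma$, but since $\alpha$ separates off a subsurface in which $\sigma$ lives on one side, there is room on the $\alpha$-side complementary to $\sigma$'s curves to route the remaining $\gamma_j$.

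For the "if" direction, suppose $\sigma\subseteq\sigma'$ with $\sigma'$ of maximal dimension, and let $s=\{\beta_1,\ldots,\beta_{p+1}\}$ be as in Proposition \ref{max_dim}, so that $\sigma'$ is exactly the set of all HBPs $\{\beta_i,\beta_j\}$. The observation is that a subset of such HBPs has a common curve precisely when, viewing $\sigma$ as edges of the complete graph on $\{\beta_1,\ldots,\beta_{p+1}\}$, these edges share a vertex; and if they do not, then $\sigma$ contains either a path of length two with distinct endpoints or a triangle, from which one reads off that no single $\beta_i$ lies in all HBPs of $\sigma$. But I must rule out the degenerate possibility that $\sigma$, while not rooted with respect to the $\beta_i$-labeling, is nonetheless rooted with respect to some other curve $\alpha\notin s$ — this cannot happen because if $|\sigma|\geq 2$ the root curve is unique by definition, and it must be one of the curves actually appearing in the HBPs of $\sigma$, all of which lie in $s$. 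So the combinatorial condition on the graph is equivalent to rootedness, and since superinjective maps send maximal-dimensional simplices to maximal-dimensional simplices (injectivity preserves cardinality, and Proposition \ref{max_dim} pins down the structure) and preserve the incidence pattern of shared curves via Lemma \ref{HBP_to_HBP}, rootedness is preserved.

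The same argument applies verbatim to $\calcp_s(S)$: one needs the analogue of Proposition \ref{max_dim} describing maximal-dimensional simplices of $\calcp_s(S)$ (spanned by separating HBPs and HBCs), together with the separating-HBP version of Lemma \ref{HBP_to_HBP} already recorded, so the combinatorial characterization transfers. The main obstacle I anticipate is the "only if" direction — constructing, for an arbitrary rooted $\sigma$ with root $\alpha$, a maximal-dimensional simplex $\sigma'\supseteq\sigma$ whose attached curve system $s$ actually has $\alpha$ as one of its members; this requires a careful topological count of how many mutually HBP-equivalent curves through $\alpha$ one can fit alongside the existing curves of $\sigma$, using that $\sigma$ being rooted forces all its curves into a controlled position relative to $\alpha$. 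Once this existence statement is in hand, the rest is graph-theoretic bookkeeping about shared vertices of edges in a complete graph, which is routine.
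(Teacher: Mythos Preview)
Your proposal has a genuine gap at the crucial step. You write that superinjective maps ``preserve the incidence pattern of shared curves via Lemma \ref{HBP_to_HBP}'', but that lemma says only that $\phi$ sends HBPs to HBPs and preserves \emph{equivalence} of disjoint HBPs; it says nothing about whether two HBPs share a curve. Inside a fixed maximal-dimensional simplex $\sigma'$ with curve set $s=\{\beta_0,\ldots,\beta_p\}$, every pair of HBPs is already disjoint and equivalent, so from the data you allow yourself (containment in $\sigma'$, disjointness, equivalence) there is no way to distinguish the rooted edge $\{\{\beta_0,\beta_1\},\{\beta_0,\beta_2\}\}$ from the non-rooted edge $\{\{\beta_0,\beta_1\},\{\beta_2,\beta_3\}\}$ (when $p\geq 3$). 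Put differently, a bijection of the edge set of $K_{p+1}$ need not be induced by a bijection of its vertex set, and your characterization gives no leverage to force this. What you are trying to invoke is essentially the content of Lemma \ref{root_curve}, but in the paper that lemma is \emph{deduced from} Lemma \ref{rooted}, so appealing to it here would be circular.

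The paper's proof supplies exactly the missing ingredient: rather than working internally to a maximal simplex, it takes a maximal rooted simplex $\sigma=\{a_1,\ldots,a_p\}$ with $a_j=\{\alpha,\alpha_j\}$ and produces, for each $j$, an external HBP $b_j$ that intersects $a_j$ but is disjoint from every $a_k$ with $k\neq j$. Superinjectivity then transports this intersection pattern, which singles out in each $\phi(a_j)$ a curve $c_j$ that cannot lie in any other $\phi(a_k)$; the remaining curve $c_0$ of $\phi(a_1)$ is forced to be the common root. A smaller point: in your ``only if'' direction you assert that the root curve $\alpha$ ``separates off a subsurface'', but $\alpha$ may well be non-separating (non-separating HBPs exist), so that part of the construction also needs repair.
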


\begin{proof}
Let $\phi \colon \calcp(S)\rightarrow \calcp(S)$ be a superinjective map.
We note that the maximal dimension of a rooted simplex of $\calcp(S)$ is equal to $p-1$.
Let $\sigma$ be a rooted simplex of $\calcp(S)$ consisting of HBP-vertices $a_1,a_2,\ldots,a_p$.
It suffices to prove that $\phi(\sigma)$ is rooted. 

For each $j=1,2,\ldots,p$, there exists an HBP $b_j$ in $S$ with
\[i(a_j, b_j)\ne 0\quad \textrm{and}\quad i(a_k, b_j)=0 \quad \textrm{for\ any}\ k\in \{ 1,\ldots, p\} \setminus \{ j\}.\]
Since $\phi$ is superinjective, for each $j=1, 2,\ldots, p$, we have
\[i(\phi(a_j),\phi(b_j))\ne 0\quad \textrm{and}\quad i(\phi(a_k),\phi(b_j))=0\quad \textrm{for\ any}\ k\in \{ 1,\ldots, p\} \setminus \{ j\}.\] 
For each $j=1,2,\ldots,p$, there thus exists a curve $c_j \in \phi(a_j)$ with $i(c_j, \phi(b_j))\ne 0$.

We note that for each $k\in \{ 1,\ldots, p\} \setminus \{ j\}$, the HBP $\phi(a_k)$ does not contain $c_j$. 
Let $c_0$ be the curve of $\phi(a_1)$ distinct from $c_1$.  
It then follows that $c_0,c_1,\ldots, c_p$ are mutually distinct, disjoint and HBP-equivalent in $S$. 
Proposition \ref{max_dim} implies that the simplex of $\calcp(S)$ consisting of all HBPs of two of $c_0,c_1,\ldots, c_p$ is of maximal dimension. 
The simplex $\phi(\sigma)$ thus consists of $p$ pairs of two of $c_0,c_1,\ldots, c_p$. 
For each $j=1,\ldots, p$, since $\phi(a_j)$ contains $c_j$ and does not contain $c_k$ for any $k \in \{ 1,\ldots, p\} \setminus \{ j\}$, we obtain the equality $\phi(a_j)=\{c_0,c_j\}$.

Along an argument of the same kind, we can prove the assertion for any superinjective map from $\calcp_s(S)$ into itself.
\end{proof}

\begin{lem}\label{root_curve}
Let $S=S_{g,p}$ be a surface with $g \geq 2$ and $p \geq 2$, and let $\phi \colon \calcp(S) \rightarrow \calcp(S)$ be a superinjective map.
Pick a simplex $\sigma$ of $\calcp(S)$ of maximal dimension, and let $\{ \alpha_0,\alpha_1,\ldots, \alpha_p\}$ denote the collection of curves in HBPs of $\sigma$. 
Then there exists a collection of curves in $S$, $\{ \beta_0,\beta_1,\ldots, \beta_p\}$, satisfying the equality
\[\phi(\{ \alpha_j, \alpha_k\})=\{ \beta_j, \beta_k\}\]
for any distinct $j, k=0,1,\ldots, p$.
Moreover, the same conclusion holds for any superinjective map from $\calcp_s(S)$ into itself and any simplex of $\calcp_s(S)$ of maximal dimension.
\end{lem}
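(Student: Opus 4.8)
The plan is to combine Proposition \ref{max_dim} with a counting argument about which curves can appear in the images of the HBPs of $\sigma$. By Proposition \ref{max_dim}, the simplex $\sigma$ consists of \emph{all} HBPs of two curves in the HBP-equivalence class $s=\{\alpha_0,\alpha_1,\ldots,\alpha_p\}$, and since $\phi$ is superinjective, hence injective, $\phi(\sigma)$ is a simplex of $\calcp(S)$ of maximal dimension containing $\binom{p+1}{2}$ HBP-vertices. Applying Proposition \ref{max_dim} again, there is a unique HBP-equivalence class $s'=\{\beta_0,\beta_1,\ldots,\beta_p\}$ in $\calc(S)$ such that $\phi(\sigma)$ consists of exactly the HBPs of two curves in $s'$. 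What must be shown is that the bijection $\phi$ from the edge set of the complete graph on $s$ onto that on $s'$ is induced by a bijection on the vertex sets, i.e.\ a bijection $\{\alpha_0,\ldots,\alpha_p\}\to\{\beta_0,\ldots,\beta_p\}$ sending $\alpha_j$ to $\beta_j$ after reindexing.

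First I would fix the case $p\geq 3$, where the combinatorial statement is clean: any bijection between the edge sets of two complete graphs $K_{p+1}$ that preserves the relation ``two edges share a vertex'' is induced by a bijection of the vertex sets, provided $p+1\geq 4$. Concretely, for each vertex $\alpha_j$ of $s$ consider the set $E_j$ of $p$ HBPs of $\sigma$ containing $\alpha_j$; these are precisely the maximal rooted subsimplices of $\sigma$ with root curve $\alpha_j$. By Lemma \ref{rooted}, $\phi$ carries each $E_j$ to a rooted simplex of $\calcp(S)$ of dimension $p-1$, which by Proposition \ref{max_dim} (or the description in the proof of Lemma \ref{rooted}) must be the star of some vertex $\beta_{j}$ in the graph on $s'$. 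Injectivity of $\phi$ forces $j\mapsto\beta_j$ to be a bijection $\{\alpha_0,\ldots,\alpha_p\}\to\{\beta_0,\ldots,\beta_p\}$, and since $\phi(\{\alpha_j,\alpha_k\})$ lies in both $\phi(E_j)$ and $\phi(E_k)$, it is the unique common edge, namely $\{\beta_j,\beta_k\}$. This is exactly the desired conclusion, so I would reindex $s'$ by $\beta_j$.

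The main obstacle is the small case $p=2$, where $K_3$ has only three edges and three vertices and the line graph of $K_3$ is again $K_3$: an edge-permutation of $K_3$ need \emph{not} be induced by a vertex permutation (the exceptional automorphism of $\mathrm{Sym}$ on three edges). Here ``rooted of dimension $p-1=1$'' just means a single HBP, so Lemma \ref{rooted} gives no leverage. To handle $p=2$ I would use the finer structure available from the earlier lemmas: the three HBPs $\{\alpha_0,\alpha_1\},\{\alpha_1,\alpha_2\},\{\alpha_0,\alpha_2\}$ in $\sigma$ have specific topological types controlled by Lemma \ref{lem-hbp-dec} (whether the $\alpha_i$ are all non-separating or all separating, and in the separating case which curves bound the two positive-genus pieces), and by Lemmas \ref{HBP_to_HBP} and \ref{HBC_to_HBC} together with the preservation of separating/non-separating HBPs these types are preserved by $\phi$; comparing types pins down $\phi$ on the three edges up to a genuine vertex relabeling. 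Alternatively, and more uniformly, I would enlarge the picture: embed $\sigma$ (viewed via $s$) into the ambient $\calc(S)$ and use that $\phi$ together with the already-constructed data on overlapping maximal simplices forces consistency; since $g\geq 2$, one can always find a fourth curve HBP-equivalent configuration in a neighboring maximal simplex sharing two of $\alpha_0,\alpha_1,\alpha_2$, reducing $p=2$ to the generic argument. I expect the cleanest writeup treats $p\geq 3$ by the line-graph argument above and disposes of $p=2$ by the topological-type bookkeeping, and I would phrase the final statement uniformly as the existence of the collection $\{\beta_0,\ldots,\beta_p\}$ with $\phi(\{\alpha_j,\alpha_k\})=\{\beta_j,\beta_k\}$. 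The argument for $\calcp_s(S)$ is verbatim the same, using the $\calcp_s$-versions of Lemmas \ref{HBP_to_HBP} and \ref{rooted} and the fact that on $\calcp_s(S)$ all HBPs in sight are separating.
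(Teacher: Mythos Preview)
Your argument for $p\geq 3$ is exactly the paper's proof: define $\sigma_j$ as the rooted subsimplex of $\sigma$ with root $\alpha_j$, let $\beta_j$ be the root curve of $\phi(\sigma_j)$ (using Lemma \ref{rooted}), observe that $\beta_j\neq\beta_k$ because a rooted simplex has dimension at most $p-1$, and conclude $\phi(\{\alpha_j,\alpha_k\})=\{\beta_j,\beta_k\}$.

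Your concern about $p=2$, however, is misplaced on two counts. First, a rooted simplex of dimension $p-1=1$ has \emph{two} HBP-vertices, not one; so each $\sigma_j$ is a genuine edge and Lemma \ref{rooted} still applies (indeed it is automatic here, since any two HBPs among three pairwise HBP-equivalent curves share a curve). The distinctness argument then goes through verbatim: if $\beta_j=\beta_k$ for some $j\neq k$, then $\phi(\sigma_j)\cup\phi(\sigma_k)=\phi(\sigma)$ would be a rooted simplex of dimension $2>p-1$, a contradiction. Second, the Whitney-type obstruction you invoke does not exist for $K_3$: the symmetric group on the three vertices of $K_3$ acts faithfully on the three edges, so all $3!=6$ edge-bijections of $K_3$ are induced by vertex-bijections. (The classical Whitney exception concerns $K_3$ versus $K_{1,3}$, which is irrelevant here since both source and target are $K_{p+1}$.)

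Thus no case split is needed, and your proposed detours through topological types or neighbouring maximal simplices are unnecessary. The paper's proof is your $p\geq 3$ paragraph, stated once for all $p\geq 2$.
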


\begin{proof}
For each $j=0, 1,\ldots, p$, we define $\sigma_j$ to be the rooted subsimplex of $\sigma$ of dimension $p-1$ that consists of all HBPs in $\sigma$ containing $\alpha_j$.
Let $\beta_j$ denote the root curve of the rooted simplex $\phi(\sigma_j)$.
Note that we have $\beta_j\neq \beta_k$ for any $j\neq k$ because the maximal dimension of a rooted simplex of $\calcp(S)$ is equal to $p-1$.
For any $j\neq k$, since $\phi(\{ \alpha_j, \alpha_k\})$ contains $\beta_j$ and $\beta_k$, we have the equality $\phi(\{ \alpha_j, \alpha_k\})=\{ \beta_j, \beta_k\}$.

Along an argument of the same kind, we can prove the assertion for any superinjective map from $\calcp_s(S)$ into itself.
\end{proof}

Applying observations so far on rooted simplices, we prove the following:

\begin{lem}\label{lem-phi-ns-s}
Let $S=S_{g,p}$ be a surface with $g \geq 2$ and $p \geq 2$. Then any automorphism of $\calcp(S)$ preserves non-separating HBPs in $S$ and separating HBPs in $S$, respectively.
\end{lem}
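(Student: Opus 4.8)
The plan is to deduce the statement from the structure of maximal simplices of $\calcp(S)$ together with the rooted-simplex lemmas already established. By Lemma~\ref{HBP_to_HBP}(i) and Lemma~\ref{HBC_to_HBC}, applied to $\phi$ and to $\phi^{-1}$, the automorphism $\phi$ carries HBC-vertices to HBC-vertices and HBP-vertices to HBP-vertices, so the only issue is that it might interchange non-separating HBPs with separating ones. Since $\phi^{-1}$ is again an automorphism, it suffices to prove that $\phi$ sends every non-separating HBP to a non-separating HBP.

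The first step is to reduce to curve-sets of maximal simplices. Given a non-separating HBP $b$, Proposition~\ref{max_dim} provides a maximal simplex $\sigma$ of $\calcp(S)$ having $b$ among its vertices, with an associated curve-set $s_\sigma=\{\alpha_0,\dots,\alpha_p\}$ of mutually HBP-equivalent curves such that $\sigma$ is the set of all HBPs formed by two of them. Because $b\subseteq s_\sigma$ consists of non-separating curves, Lemma~\ref{lem-hbp-dec}(i) forces every $\alpha_j$ to be non-separating, hence every vertex of $\sigma$ to be a non-separating HBP. By Lemma~\ref{root_curve} the image $\phi(\sigma)$ is again a maximal simplex, with curve-set $s_{\phi(\sigma)}=\{\beta_0,\dots,\beta_p\}$ satisfying $\phi(\{\alpha_j,\alpha_k\})=\{\beta_j,\beta_k\}$ for all $j\neq k$, and again by Lemma~\ref{lem-hbp-dec}(i) the curves $\beta_j$ are either all non-separating or all separating. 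Thus the whole statement is equivalent to the following: no automorphism of $\calcp(S)$ sends a maximal simplex whose curve-set consists of non-separating curves onto one whose curve-set consists of separating curves.

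To prove this I would isolate an automorphism-invariant combinatorial property of the maximal simplex $\sigma$ that holds exactly when $s_\sigma$ is non-separating. The underlying topological fact, read off from Lemma~\ref{lem-hbp-dec}(ii)--(iii), is that $S_{s_\sigma}$ consists of $p$ pairs of pants together with a single positive-genus piece carrying two curves of $s_\sigma$ on its boundary in the non-separating case, and with two positive-genus pieces each carrying a single curve of $s_\sigma$ in the separating case; equivalently, the complementary pieces are arranged cyclically rather than along a path. I would detect this inside $\calcp(S)$ as follows: for a curve $\alpha_j\in s_\sigma$ consider whether there is an HBC-vertex $c$ whose set of neighbours in $\sigma$ is exactly the rooted subsimplex $\{\,a\in\sigma:\alpha_j\in a\,\}$, i.e.\ with $i(c,\alpha_l)=0$ for all $l\neq j$; such a $c$ exists precisely when the two pieces of $S_{s_\sigma}$ adjacent to $\alpha_j$ are pairs of pants (when one of them is the positive-genus piece, the union has positive genus and carries at most one boundary component of $S$, so it contains no HBC of $S$). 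Hence the set of curves $\alpha_j$ for which no such $c$ exists is a pair $\{\alpha_i,\alpha_k\}$ in both cases, and — using Lemma~\ref{rooted} to keep track of which HBP of $\sigma$ this pair spans — one must then show that $\{\alpha_i,\alpha_k\}$ bounds a common complementary piece in the non-separating case but not in the separating one. All the conditions used here refer only to vertices, simplices and rooted subsimplices of $\calcp(S)$ and to the action of $\phi$, so the resulting property is preserved by $\phi$, and the reduction above finishes the argument.

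The step I expect to be the main obstacle is exactly the last one: turning the difference between the ``cyclic'' and the ``path'' arrangement of the complementary pieces into a genuinely combinatorial statement about incidences of HBC- and HBP-vertices. The difficulty is that the genus of $S$ is carried by complementary pieces that contain no HBC or HBP of $S$, hence is invisible to $\calcp(S)$ directly, and one must instead recover it from the relative position of the two exceptional curves of $s_\sigma$; this seems to require a careful inventory of the components of $S$ cut along various subsets of $s_\sigma$ and of the HBC- and HBP-vertices lying in them.
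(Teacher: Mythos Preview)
Your reduction to maximal simplices and the identification of curves of $s_\sigma$ via rooted subsimplices is correct and useful. There is a small slip in your description of the HBC criterion: the set of neighbours of $c$ in $\sigma$ should be the \emph{complement} of the rooted subsimplex $\{a\in\sigma:\alpha_j\in a\}$, not the subsimplex itself, as your own ``i.e.'' clause makes clear. With that correction, your analysis of which $\alpha_j$ are ``exceptional'' is accurate: exactly two curves are exceptional in both the non-separating and the separating case.

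The genuine gap is the one you yourself flag: you have not produced a $\calcp(S)$-combinatorial invariant that distinguishes the cyclic from the path arrangement. Saying that the exceptional curves ``bound a common complementary piece'' is a topological statement, not yet a statement about vertices and simplices of $\calcp(S)$; your claim that ``all the conditions used here refer only to vertices, simplices and rooted subsimplices'' is therefore premature. In fact the obvious candidates fail: the exceptional pair $\{\alpha_i,\alpha_k\}$ is a $p$-HBP in both cases, and the positive-genus complementary piece(s) carry no components of $\partial S$ and hence no HBCs or HBPs of $S$, so there is nothing in $\calcp(S)$ sitting ``inside'' them to witness the difference directly. Without this step the argument does not close.

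The paper's proof avoids this obstacle entirely by going outside the simplex $\sigma$ rather than seeking an intrinsic invariant of it. Starting from a non-separating maximal simplex with curve-set $s=\{\alpha_0,\dots,\alpha_p\}$, one chooses three auxiliary non-separating curves $\beta_1,\beta_2,\beta_3$, pairwise HBP-equivalent and not equivalent to the curves of $s$, such that $\beta_2$ is disjoint from $s$ while $\{\beta_1,\beta_2\}$ intersects only $\alpha_0$ and $\{\beta_2,\beta_3\}$ intersects only $\alpha_p$. This is possible precisely because in the non-separating arrangement the single positive-genus piece is adjacent to both $\alpha_0$ and $\alpha_p$. One then passes to the images under $\phi$ and, assuming they are separating, uses Lemmas~\ref{comp} and~\ref{comp-s} to see that the image of $\beta_2$ is trapped in one of the two end pieces of the path arrangement, which prevents it from simultaneously forming HBPs intersecting the images of $\alpha_0$ and of $\alpha_p$. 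This is the contradiction. The moral is that the distinction is encoded not in the simplex $\sigma$ alone but in the external HBP-vertices that link into it, and the paper's argument exploits this directly.
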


\begin{proof}
Let $\phi$ be an automorphism of $\calcp(S)$.
It suffices to prove that $\phi$ preserves non-separating HBPs in $S$.
Pick a simplex $\sigma$ of $\calcp(S)$ of maximal dimension consisting of non-separating HBPs in $S$.
Let $s=\{\alpha_0, \alpha_1,\ldots,\alpha_p\}$ be the set of non-separating curves in $S$ such that
\begin{itemize}
\item $\sigma$ consists of all pairs of two curves in $s$; and
\item for each $j=0,\ldots,p-1$, $\{\alpha_j,\alpha_{j+1}\}$ is a $1$-HBP in $S$. 
\end{itemize}
\begin{figure}
\begin{center}
\includegraphics[width=6cm]{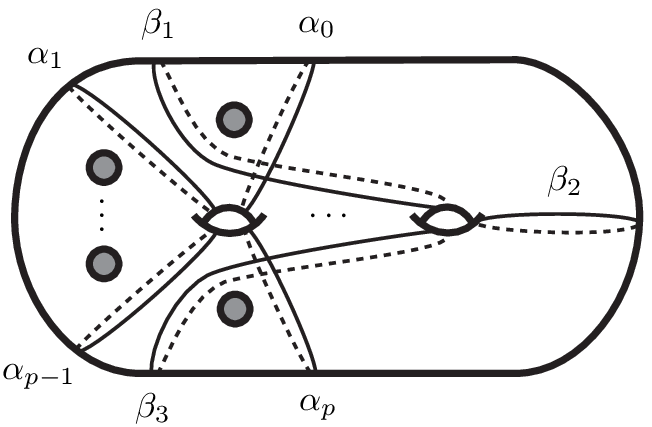}
\caption{}\label{fig-nshbp}
\end{center}
\end{figure}
As described in Figure \ref{fig-nshbp}, we can find non-separating curves $\beta_1$, $\beta_2$ and $\beta_3$ in $S$ such that
\begin{itemize}
\item $\beta_1$, $\beta_2$ and $\beta_3$ are mutually disjoint, distinct and HBP-equivalent in $S$;
\item $i(\beta_2, s)=i(\beta_1, s\setminus \{ \alpha_0\})=i(\beta_3, s\setminus \{ \alpha_p\})=0$;
\item $i(\beta_1, \alpha_0)\neq 0$ and $i(\beta_3, \alpha_p)\neq 0$;
\item $\beta_2$ is not HBP-equivalent to any curve of $s$; and
\item both $\{\beta_1, \beta_2\}$ and $\{ \beta_2, \beta_3\}$ are $1$-HBPs in $S$. 
\end{itemize}
By Lemma \ref{root_curve}, there exist curves $\gamma_j$, $\delta_k$ in $S$ with
\[\phi(\{ \alpha_{j_1}, \alpha_{j_2}\})=\{ \gamma_{j_1}, \gamma_{j_2}\},\quad \phi(\{ \beta_{k_1}, \beta_{k_2}\})=\{ \delta_{k_1}, \delta_{k_2}\}\]
for any distinct $j_1, j_2=0,\ldots, p$ and any distinct $k_1, k_2=1, 2, 3$.
We put $t=\{ \gamma_0,\ldots, \gamma_p\}$.
Since $\{ \delta_1, \delta_2\}$ intersects $\gamma_0$ and is disjoint from any curve in $t\setminus \{ \gamma_0\}$ and since $\{ \delta_2, \delta_3\}$ intersects $\gamma_p$ and is disjoint from any curve in $t\setminus \{ \gamma_p\}$, we see that $\delta_2$ is disjoint from any curve in $t$.
Since $\{ \delta_1, \delta_2\}$ is not equivalent to any HBP of two curves in $t\setminus \{ \gamma_0\}$, using Lemmas \ref{comp} and \ref{comp-s}, we see that $\{ \delta_1, \delta_2\}$ is a 1-HBP in $S$ and that there exists a $(p-1)$-HBP of two curves in $t\setminus \{ \gamma_0\}$ such that the other curves in it are contained in the holed sphere cut off by that $(p-1)$-HBP from $S$.
Similarly, $\{ \delta_2, \delta_3\}$ is a 1-HBP in $S$, and there exists a $(p-1)$-HBP of two curves in $t\setminus \{ \gamma_p\}$ such that the other curves in it are contained in the holed sphere cut off by that $(p-1)$-HBP from $S$.
It thus follows that $\{ \gamma_0, \gamma_p\}$ is a $p$-HBP in $S$.

We now assume that each $\gamma_j$ is a separating curve in $S$.
If $\delta_2$ lies in the component of $S_{\{ \gamma_0, \gamma_p\}}$ of positive genus that contains $\gamma_0$ as a boundary component, then the HBP $\{ \delta_2, \delta_3\}$ cannot intersect $\gamma_p$, kept disjoint from any curve in $t\setminus \{ \gamma_p\}$, by Lemma \ref{comp-s}. 
This is a contradiction.
In a similar way, we can deduce a contradiction if we assume that $\delta_2$ lies in the component of $S_{\{ \gamma_0, \gamma_p\}}$ of positive genus that contains $\gamma_p$ as a boundary component.
\end{proof}

\begin{lem}
Let $S=S_{g,p}$ be a surface with $g \geq 2$ and $p \geq 2$. Then for any integers $j$, $k$ with $2\leq j\leq p$ and $1\leq k\leq p$, any automorphism of $\calcp(S)$ preserves $j$-HBCs in $S$ and $k$-HBPs in $S$, respectively. 
Moreover, the same conclusion holds for any automorphism of $\calcp_s(S)$.
\end{lem}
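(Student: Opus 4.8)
We outline a proof. It suffices to treat an automorphism $\phi$ of $\calcp(S)$; the statement for $\calcp_s(S)$ is obtained by a parallel argument carried out inside $\calcp_s(S)$, using only HBCs and separating HBPs together with the $\calcp_s(S)$-versions of Lemmas \ref{HBP_to_HBP}, \ref{rooted} and \ref{root_curve}. Recall that $\phi$ already preserves HBC-vertices and HBP-vertices (Lemmas \ref{HBP_to_HBP} and \ref{HBC_to_HBC}), the class of non-separating HBPs and the class of separating HBPs (Lemma \ref{lem-phi-ns-s}), equivalence of HBPs (Lemma \ref{HBP_to_HBP}), and rooted simplices together with their root curves (Lemmas \ref{rooted} and \ref{root_curve}). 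The aim is to describe, for each admissible $j$ and $k$, the set of $j$-HBCs and the set of $k$-HBPs in $S$ by conditions formulated purely in terms of the incidence relation of $\calcp(S)$, i.e.\ in terms of which pairs of vertices are disjoint; invariance under $\phi$ is then automatic.

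The extreme cases are transparent and serve as anchors. If $\alpha$ is a $p$-HBC, then the holed sphere $Q$ it cuts off contains every boundary component of $S$, and its complement is homeomorphic to $S_{g,1}$; since $S_{g,1}$ carries no boundary component of $S$, one checks that no HBP in $S$ is disjoint from $\alpha$. Conversely, if $\alpha$ is a $j$-HBC with $j\le p-1$, then the complement of $Q$ in $S$ is homeomorphic to $S_{g,p-j+1}$ with $g\ge 2$ and $p-j+1\ge 2$, hence it contains a $1$-HBP surrounding one boundary component of $S$, which is disjoint from $\alpha$. Thus ``$\alpha$ is a $p$-HBC'' is equivalent to ``$\alpha$ is an HBC-vertex of $\calcp(S)$ that is disjoint from no HBP-vertex'', which is preserved by $\phi$. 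A dual discussion pins down $k=p$ and, for small $p$, the smallest $k$; for instance, when $p=2$ a $1$-HBP is precisely an HBP that is disjoint from no HBC-vertex, whereas a $2$-HBP is disjoint from some HBC-vertex (an HBC inside the genus-zero component $Q_b$), so these are distinguished combinatorially, settling the base case $p=2$.

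For the remaining values of $j$ and $k$ I would argue by induction, increasing $j$ and decreasing $k$, via the following recovery principle. If $\alpha$ is a $j$-HBC, then inside $Q\cong S_{0,j+1}$ one may choose HBCs of $S$ that cut off $2,3,\dots,j-1$ of the boundary components of $S$ lying in $Q$, nested one inside another; these form a simplex of $j-2$ pairwise disjoint HBC-vertices, all disjoint from $\alpha$ and supported in $Q$, and one checks $j-2$ is the maximal size among HBC-vertices supported in $Q$. Likewise, inside the genus-zero component $Q_b\cong S_{0,k+2}$ of $S_b$ for a $k$-HBP $b$, there is a nested chain of HBCs of $S$ cutting off $2,\dots,k$ boundary components, giving $k-1$ HBC-vertices disjoint from $b$ and supported in $Q_b$, again maximal among such. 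Hence $j$ and $k$ are read off once one can recognize, in the abstract complex $\calcp(S)$, which HBC-vertices disjoint from $\alpha$ (resp.\ from $b$) are supported in $Q$ (resp.\ in $Q_b$). This recognition is the crux and the main obstacle, since $\calcp(S)$ records only disjointness, not on which side of a curve a vertex lies; I expect to handle it by characterizing the relevant nesting relations among the holed spheres cut off by HBCs through auxiliary HBPs that ``sweep between'' the two curves in question, combined with the inductive hypothesis and the already established invariance of the non-separating/separating dichotomy and of rooted simplices, which is exactly what makes such nesting conditions expressible combinatorially. The small surfaces, $p=2$ and $p=3$ for the HBP statement, are treated directly, where everything reduces to the anchor characterizations; and the argument for $\calcp_s(S)$ is entirely analogous, with HBPs replaced throughout by separating HBPs.
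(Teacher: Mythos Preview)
Your anchor characterizations are correct: a $p$-HBC is exactly an HBC-vertex disjoint from no HBP-vertex, and for $p=2$ the split between $1$- and $2$-HBPs is exactly as you describe. This settles $j=p$ and the base case $p=2$. The gap is in the inductive step. You correctly identify the crux---recognizing, purely from disjointness, which HBC-vertices disjoint from $\alpha$ lie in the holed sphere $Q$ rather than in the positive-genus complement---and then write ``I expect to handle it by characterizing the relevant nesting relations''. That is a promissory note, not an argument, and it is not clear how to cash it: HBCs of small index can live on either side of $\alpha$, and nothing you have set up distinguishes the two sides. Since your whole scheme for recovering $j$ (and $k$) rests on that recognition, the proof is incomplete for $2\le j\le p-1$ and for $k$-HBPs when $p\ge 3$.

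The paper sidesteps this difficulty by choosing different invariants. For $j$-HBCs it never counts HBCs in $Q$ at all: every HBP in $S$ disjoint from a $j$-HBC $\alpha$ must lie in the positive-genus component $R\cong S_{g,p-j+1}$ (any curve in $Q$ is an HBC, hence cannot belong to an HBP), and conversely HBPs in $R$ are HBPs in $S$. Thus the maximal HBP-simplex of $\calcp(S)$ disjoint from $\alpha$ has exactly $\binom{p-j+2}{2}$ vertices by Proposition~\ref{max_dim}, and comparing this number for $\alpha$ and $\phi(\alpha)$ (and then for $\phi^{-1}$) pins down $j$ uniformly, with no side-recognition needed. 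For $k$-HBPs the paper runs an induction on $p$, not on $k$: it fixes a maximal simplex with curve set $\{\alpha_0,\dots,\alpha_p\}$, uses Lemma~\ref{root_curve} to get image curves $\{\beta_0,\dots,\beta_p\}$, cuts along a $2$-HBC $\gamma_1$ inside the pair $\{\alpha_0,\alpha_2\}$ (whose image is again a $2$-HBC by the HBC statement just proved), and applies the induction hypothesis to the positive-genus piece of $S_{\phi(\gamma_1)}$. A short contradiction using a second $2$-HBC $\gamma_2$ and a rooted $2$-simplex then rules out any $\{\beta_j,\beta_{j+1}\}$ being a $2$-HBP, forcing the $1$-HBP chain structure to survive. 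Both arguments avoid the ``which side'' question that blocks your approach.
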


\begin{proof}
Let $\phi$ be an automorphism of $\calcp(S)$.
Pick a $j$-HBC $\alpha$ in $S$ with $2\leq j\leq p$, and suppose that $\phi(\alpha)$ is a $j'$-HBC in $S$ with $2\leq j'\leq p$.
Let $R$ (resp.\ $R'$) be the component of $S_{\alpha}$ (resp.\ $S_{\phi(\alpha)}$) of positive genus, which is a surface of genus $g$ with $p-j+1$ (resp.\ $p-j'+1$) boundary components. We note that each HBP in $R$ can be identified with an HBP in $S$ via the inclusion of $R$ into $S$. The same thing holds for $R'$.
Let $\sigma$ be a simplex of $\calcp(R)$ of maximal dimension, which is identified with a simplex of $\calcp(S)$ consisting of HBP-vertices.
Since each HBP in $\phi(\sigma)$ is disjoint from the HBC $\phi(\alpha)$, we can identify $\phi(\sigma)$ with a simplex of $\calcp(R')$. 
We then obtain the inequality
\[\binom{p-j+2}{2}=|\sigma|=|\phi(\sigma)|\leq \binom{p-j'+2}{2},\]
which implies $j\geq j'$. The same argument for $\phi^{-1}$ shows $j\leq j'$.
We thus conclude the equality $j=j'$.

We prove that $\phi$ preserves $k$-HBPs in $S$ for each integer $k$ with $1\leq k\leq p$ by induction on $p$.
If $p=2$, then $\phi$ preserves 2-HBPs in $S$ because $\phi$ preserves 2-HBCs in $S$ and because any HBP in $S$ disjoint from a 2-HBC in $S$ is a 2-HBP in $S$.
It then follows that $\phi$ preserves 1-HBPs in $S$.

We next assume $p\geq 3$.
Pick a simplex $\sigma$ of maximal dimension in $\calcp(S)$ consisting of non-separating HBPs in $S$.
Let $s=\{ \alpha_0,\ldots, \alpha_p\}$ denote the collection of curves in HBPs of $\sigma$ so that $\{ \alpha_j, \alpha_{j+1}\}$ is a 1-HBP in $S$ for each $j=0,\ldots, p-1$.
By Lemma \ref{root_curve}, there exist curves $\beta_0,\ldots, \beta_p$ in $S$ with $\phi(\{ \alpha_j, \alpha_k\})=\{ \beta_j, \beta_k\}$ for any distinct $j, k=0, \ldots, p$.
Choose two distinct 2-HBCs $\gamma_1$, $\gamma_2$ in $S$ contained in the holed sphere cut off by $\{ \alpha_0, \alpha_2\}$ from $S$.
We now apply the hypothesis of the induction to the component of $S_{\gamma_1}$ of positive genus.
It then follows that each of $\{ \beta_0, \beta_2\}$ and $\{ \beta_j, \beta_{j+1}\}$ for any $j=2,\ldots, p-1$ is a 1-HBP in the component of $S_{\phi(\gamma_1)}$ of positive genus and that $\{ \beta_0, \beta_p\}$ is a $(p-1)$-HBP in that component.

Suppose that $\{ \beta_j, \beta_{j+1}\}$ is a 2-HBP in $S$ for some $j=2,\ldots, p-1$.
We choose a curve $\alpha$ in $S$ such that $\{ \alpha_0, \alpha \}$ is an HBP in $S$; and $\alpha$ intersects $\alpha_{j+1}$ and is disjoint from $\alpha_k$ for any $k\in \{ 0,\ldots, p\} \setminus \{ j+1\}$.
Note that $\{ \alpha_0, \alpha \}$ is disjoint from $\gamma_1$ and $\gamma_2$.
On the other hand, $\phi(\gamma_1)$ and $\phi(\gamma_2)$ fill the holed sphere cut off by the 2-HBP $\{ \beta_j, \beta_{j+1}\}$ in $S$.
Since the 2-simplex of $\calcp(S)$ consisting of $\{ \alpha_0, \alpha \}$, $\{ \alpha_0, \alpha_1\}$ and $\{ \alpha_0, \alpha_2\}$ is rooted, the HBP $\phi(\{ \alpha_0, \alpha\})$ contains $\beta_0$.
Another curve of $\phi(\{ \alpha_0, \alpha \})$ intersects $\beta_{j+1}$ and thus intersects $\phi(\gamma_1)$ or $\phi(\gamma_2)$.
This is a contradiction.

We thus proved that $\{ \beta_0, \beta_2\}$ is a 2-HBP in $S$ and that $\{ \beta_j, \beta_{j+1}\}$ is a 1-HBP in $S$ for each $j=2,\ldots, p-1$.
It follows that $\{ \beta_0, \beta_1\}$ and $\{ \beta_1, \beta_2\}$ are 1-HBPs in $S$.
For each $k=1,\ldots, p$, the map $\phi$ therefore preserves non-separating $k$-HBPs in $S$.

If non-separating HBPs are replaced by separating HBPs in the above argument, then we can prove that $\phi$ preserves separating $k$-HBPs in $S$ for each $k$.
A verbatim proof shows that any automorphism of $\calcp_s(S)$ preserves $j$-HBCs in $S$ for each $j$, and then shows that it also preserves separating $k$-HBPs in $S$ for each $k$.
\end{proof}


\section{Construction of $\Phi$ in the case $p=2$}\label{sec-const}

Let $S=S_{g, 2}$ be a surface with $g\geq 2$.
For an automorphism $\phi$ of $\calcp(S)$, we define a map $\Phi \colon V(S) \rightarrow V(S)$ as in Section \ref{subsec-plan}, which is shown to be well-defined throughout this section.
In Sections \ref{subsec-pent} and \ref{subsec-hex}, we study pentagons in $\calcp(S)$ and hexagons in $\calcp_s(S)$, respectively.
They are used to show that $\Phi$ is well-defined on the set of non-separating curves in $S$ and on the set of separating curves in $S$ which are not HBCs in $S$, respectively.
In Section \ref{subsec-defn}, we prove that $\Phi$ is well-defined.

\subsection{Pentagons in $\calcp(S)$}\label{subsec-pent}

We mean by a {\it pentagon} in $\calcp(S)$ the full subgraph of $\calcp(S)$ spanned by five vertices $v_1,\ldots, v_5$ with $i(v_k, v_{k+1})=0$ and $i(v_k, v_{k+2})\neq 0$ for each $k$ mod $5$ (see Figure \ref{fig_pen}).
\begin{figure}
\begin{center}
\includegraphics[height=5cm]{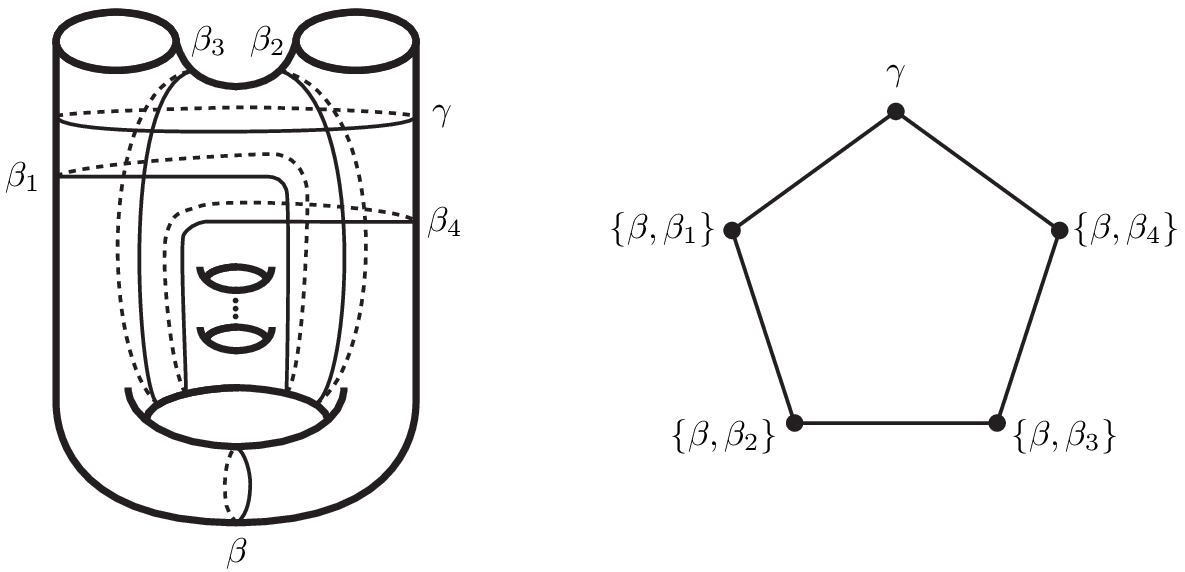}
\caption{A pentagon in $\calcp(S)$}\label{fig_pen}
\end{center}
\end{figure}
In this case, let us say that the pentagon is defined by the 5-tuple $(v_1,\ldots, v_5)$.

Let $\phi$ be an automorphism of $\calcp(S)$, and pick a non-separating curve $\alpha$ in $S$.
To define $\Phi(\alpha)\in V(S)$, we choose two disjoint and distinct HBPs $a_1=\{ \alpha, \alpha_1\}$ and $a_2=\{ \alpha, \alpha_2\}$ in $S$ and have to show that the root curve of the edge $\{ \phi(a_1), \phi(a_2)\}$ of $\calcp(S)$ depends only on $\alpha$.
We connect any two edges of $\calcp(S)$ consisting of two HBPs in $S$ containing $\alpha$ by a sequence of pentagons in $\calcp(S)$ such that
\begin{itemize}
\item each pentagon in that sequence is equal to the one described in Figure \ref{fig_pen} up to a homeomorphism of $S$; and
\item any two successive pentagons in that sequence share at least two HBPs. 
\end{itemize}
In Lemma \ref{lem-pen2}, considering the image of this sequence of pentagons via $\phi$, we obtain the aforementioned assertion.

\begin{lem}\label{lem-pen}
Let $S=S_{g, 2}$ be a surface with $g \geq 2$. 
For each $k=1,2,3$, let $a_k=\{\alpha, \alpha_k\}$ be a non-separating HBP in $S$ such that $\{ a_1, a_2\}$ and $\{ a_2, a_3\}$ are edges of $\calcp(S)$. 
Then there exists a sequence of pentagons in $\calcp(S)$, $\Pi_1,\Pi_2, \ldots, \Pi_n$, satisfying the following: For each $k=1,2,\ldots,n$,
\begin{enumerate}
\item up to a homeomorphism of $S$, $\Pi_k$ is equal to the pentagon in Figure \ref{fig_pen} that is defined by a $5$-tuple consisting of a $2$-HBC, a $2$-HBP, a $1$-HBP, a $1$-HBP and a $2$-HBP in this order; 
\item any of the four HBPs of $\Pi_k$ contains $\alpha$;
\item we have $a_1 \in \Pi_1, a_3 \in \Pi_n$ and $a_2 \in \Pi_k$; and
\item if $k<n$, then $\Pi_k$ and $\Pi_{k+1}$ share at least two HBPs.
\end{enumerate}
\end{lem}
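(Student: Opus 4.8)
The plan is to reduce the statement to the connectivity of an arc complex on a positive-genus subsurface and then to realize the required pentagons by an explicit local model together with the change-of-coordinates principle for surfaces.

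\textbf{Reduction to a subsurface.} Since $a_1$, $a_2$ and $a_3$ all contain $\alpha$, I would first cut $S$ along $\alpha$. The resulting surface $R=S_\alpha$ is connected of genus $g-1\ge1$ with four boundary components, two of which, say $\alpha^+$ and $\alpha^-$, are the copies of $\alpha$, the other two being the boundary components of $S$. A non-separating HBP $\{\alpha,\beta\}$ in $S$ corresponds to an essential curve $\beta$ in $R$ separating $R$ into a holed sphere --- containing exactly one of $\alpha^+,\alpha^-$ and one or both components of $\partial S$ --- and a complementary piece carrying all the genus; it is a $1$-HBP or a $2$-HBP according as that holed sphere meets one or both components of $\partial S$, and $\{\alpha,\beta\}$ and $\{\alpha,\beta'\}$ are disjoint in $\calcp(S)$ exactly when $i(\beta,\beta')=0$. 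Cutting $S$ further along $\alpha_2$ produces a holed sphere $Q$ and a piece $T$ of genus $g-1\ge1$. When $a_2$ is a $1$-HBP, $Q$ is a pair of pants, $T\cong S_{g-1,3}$ has boundary circles $\alpha$, $\alpha_2$ and a component $\partial_0$ of $\partial S$, the curves $\alpha_1$ and $\alpha_3$ lie in $T$, and the non-separating HBPs through $\alpha$ disjoint from $a_2$ are exactly those $\{\alpha,\beta\}$ with $\beta\subset T$ cutting off a pair of pants from $T$ whose other two boundary circles are $\{\alpha,\partial_0\}$ (giving a $1$-HBP) or $\{\alpha_2,\partial_0\}$ (a $2$-HBP). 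Capping $\alpha$ and $\alpha_2$ off with once-punctured disks, such curves $\beta$ become regular neighbourhoods of simple arcs joining $\partial_0$ to the two resulting punctures, so disjoint arcs yield disjoint HBPs. When $a_2$ is a $2$-HBP there are only two non-separating HBPs through $\alpha$ disjoint from $a_2$, and that case is small and can be treated directly.

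\textbf{The base pentagon and the chain.} I would exhibit once and for all, by an explicit picture near $\alpha\cup\partial S$, a single pentagon realizing the combinatorial pentagon of Figure \ref{fig_pen} of the prescribed type --- a $2$-HBC, a $2$-HBP, a $1$-HBP, a $1$-HBP and a $2$-HBP in cyclic order, all four HBPs containing $\alpha$ --- and observe that, since the homeomorphism types of the complementary pieces are then forced, any two configurations in $S_{g,2}$ realizing this pattern differ by a homeomorphism of $S$; this gives condition (i) for every member of the sequence to be constructed. Next, using the connectivity of the arc complex of the capped surface of genus $g-1\ge1$ obtained from $T$, I would join $a_1$ to $a_3$ by a finite sequence of non-separating HBPs through $\alpha$, beginning at $a_1$ and ending at $a_3$, each occurring as a vertex of a pentagon of the prescribed type containing $a_2$, so arranged that two consecutive pentagons share $a_2$ together with the HBP common to the two overlapping moves. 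Concatenating those pentagons produces $\Pi_1,\dots,\Pi_n$, and conditions (ii)--(iv) are then immediate: every $\Pi_k$ contains $a_2$ and has all of its HBPs through $\alpha$; $a_1\in\Pi_1$ and $a_3\in\Pi_n$; and successive pentagons share the two HBPs $a_2$ and the intermediate HBP.

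\textbf{Main obstacle.} The delicate point is the interaction between this connectivity argument and the rigid internal structure of the prescribed pentagon. That pentagon has exactly two $1$-HBP slots, which must be disjoint from each other, exactly two $2$-HBP slots, which must intersect, and the $2$-HBC, disjoint from both $2$-HBPs and meeting both $1$-HBPs; in particular no pentagon of this type contains three pairwise disjoint HBPs. The chain supplied by the connectivity of the arc complex must therefore be refined so that consecutive members alternate between $1$-HBPs disjoint from $a_2$ and $2$-HBPs meeting $a_2$, with a short prescribed transition at each end to absorb the possibility that $a_1$ or $a_3$ is itself a $2$-HBP, so that every consecutive triple --- and hence every pentagon along the chain --- genuinely has the prescribed type. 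Checking that the arc-complex connectivity survives these constraints, and that the auxiliary $2$-HBC and $2$-HBP completing each pentagon exist (once more by change of coordinates), is where most of the work lies.
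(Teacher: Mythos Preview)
Your overall strategy---reduce to a fixed subsurface cut out by $\alpha$ and $\alpha_2$, build one model pentagon, and then propagate it by some connectivity statement---is reasonable in spirit, and it is genuinely different from what the paper does. The paper never invokes an arc complex. Instead, once a single pentagon $\Pi$ containing $a_1$ and $a_2$ is fixed, it acts on $\Pi$ by an explicit finite generating set of a suitable mapping class group (half twists and Dehn twists when $a_2$ is a $2$-HBP; Dehn twists about the Gervais generators of $\pmod(R)$ when $a_2$ is a $1$-HBP). Each generator is chosen so that it fixes $a_2$ together with at least one other HBP of $\Pi$; thus consecutive translates of $\Pi$ automatically share two HBPs, and transitivity of the group on the relevant curve type carries $\alpha_1$ to $\alpha_3$. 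This buys exactly the structure you flag as the ``main obstacle'': the alternation and the overlap conditions come for free from the choice of generators, rather than having to be engineered inside an arc complex.

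There is, however, a concrete error in your proposal. When $a_2$ is a $2$-HBP you assert that ``there are only two non-separating HBPs through $\alpha$ disjoint from $a_2$''. This is false. The genus-zero piece $Q$ of $S_{a_2}$ is a four-holed sphere with boundary circles $\alpha,\alpha_2,\partial_1,\partial_2$, and every essential curve $\beta\subset Q$ that separates $\alpha$ from $\alpha_2$ gives a non-separating $1$-HBP $\{\alpha,\beta\}$ disjoint from $a_2$; there are infinitely many such $\beta$. (Indeed this is precisely why the paper works in $\mathrm{Mod}(Q;\alpha,\alpha_2)$ in that case.) So the $2$-HBP case is not ``small'' and cannot be disposed of by inspection; a genuine connectivity or transitivity argument is needed there as well.

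Even in the $1$-HBP case your outline leaves the hard step undone. Connectivity of an arc complex on the capped surface gives a path of arcs, but you still have to show that each step along that path can be completed to a pentagon of the prescribed type containing $a_2$, and that consecutive pentagons overlap in two HBPs. Your ``main obstacle'' paragraph identifies this correctly but does not resolve it. The paper's MCG-action argument sidesteps the issue entirely; if you want to push the arc-complex route through, you will need an explicit local move (analogous to applying a single Dehn twist) that replaces one vertex of the pentagon at a time while fixing $a_2$ and one more HBP.
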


\begin{proof}
We first deal with the case where $a_2$ is a $2$-HBP and subsequently the case where $a_2$ is a $1$-HBP. 

Let us assume that $a_2$ is a $2$-HBP. 
Since any distinct two $2$-HBPs in $S$ intersect, $a_1$ and $a_3$ are both $1$-HBPs. 
We can find a pentagon $\Pi$ in $\calcp(S)$ containing $a_1$ and $a_2$ as its vertices and equal to the one in Figure \ref{fig_pen} up to a homeomorphism of $S$. 
Let $b$ denote the $2$-HBC in $\Pi$, and let $Q$ denote the component of $S_{a_2}$ of genus zero, which is homeomorphic to $S_{0,4}$. 
The set $V(Q)$ then contains $\alpha_1$ and $\alpha_3$. 
Let $h \in \mod(S)$ be the half twist about $b$ exchanging the two components of $\partial S$.
Let $x \in \mod(S)$ be the Dehn twist about $\alpha_1$. 
Define $\Gamma$ to be the subgroup of $\mod(S)$ generated by $h$ and $x$. 
Since $a_2$ is fixed by $\Gamma$, there exists a natural homomorphism $p \colon \Gamma \rightarrow \mod(Q)$. 

We denote by $\mod(Q;\alpha,\alpha_2)$ the subgroup of $\mod(Q)$ consisting of all elements that fix each of the two components of $\partial Q$ corresponding to $\alpha$ and $\alpha_2$. 
We show the equality $p(\Gamma)=\mod(Q;\alpha,\alpha_2)$. 
The element $p(h)$ is a half twist about $b \in V(Q)$, and $p(x)$ is the Dehn twist about $\alpha_1 \in V(Q)$. 
We thus have the inclusion $p(\Gamma)<\mod(Q;\alpha,\alpha_2)$. 
Since the Dehn twists about $b$ and $\alpha_1$ generate $\pmod(Q)$ and since $p(h)$ exchanges the two components of $\partial S$, we obtain the inclusion $\mod(Q;\alpha,\alpha_2)<p(\Gamma)$. 
Our claim follows. 

\begin{figure}
\begin{center}
\includegraphics[width=7cm]{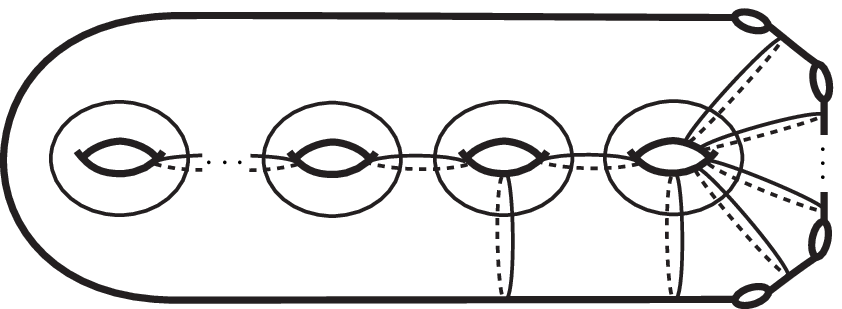}
\end{center}
\caption{$\pmod$ is generated by Dehn twists about these curves.}\label{fig-mcg}
\end{figure}

We put $H=\{ h^{\pm 1}, x^{\pm 1}\}$.
We note that $\alpha_1$ and $\alpha_3$ lie in $V(Q)$ and that $\alpha$ and $\alpha_2$ are contained in distinct components of $Q_{\alpha_1}$ (resp.\ $Q_{\alpha_3}$) as boundary components.
It follows that $\alpha_1$ and $\alpha_3$ lie in the same orbit for the action of $\mod(Q;\alpha,\alpha_2)$ on $V(Q)$. 
We can thus find $h_1,\ldots, h_n\in H$ with $\alpha_3=p(h_1)\cdots p(h_n)\alpha_1$.
Let us consider the following sequence of pentagons, 
\[\Pi,\; h_1\Pi,\; h_1h_2\Pi,\; \ldots,\; h_1h_2\cdots h_n\Pi.\]
This sequence satisfies conditions (i), (ii) and (iii) in the lemma. We now prove that it satisfies condition (iv). 
Let $c_1$ and $c_2$ denote the two vertices of $\Pi$ except $a_1, a_2$ and $b$ so that $c_j$ is a $j$-HBP for each $j=1,2$. 
It then follows that $\Pi$ is the $5$-tuple consisting of the five vertices $b,a_2,a_1,c_1,c_2$ in this order. 
Since $a_2$ and $c_2$ are disjoint from $b$, the three vertices $a_2$, $c_2$ and $b$ are fixed by $h$. 
Since $a_2$ and $c_1$ are disjoint from $a_1$, the three vertices $a_2$, $a_1$ and $c_1$ are fixed by $x$. 
Our sequence of pentagons therefore satisfies condition (iv).

We next suppose that $a_2$ is a $1$-HBP. 
Let $R$ denote the component of $S_{a_2}$ of positive genus, which is homeomorphic to $S_{g-1,3}$.
Note that $\alpha_1$ and $\alpha_3$ are curves in $R$. 
We prove the existence of a sequence of pentagons in the lemma in the following three cases: (a) $a_1$ and $a_3$ are both $1$-HBPs; (b) $a_1$ and $a_3$ are both $2$-HBPs; and (c) one of $a_1$ and $a_3$ is a $1$-HBP and another is a $2$-HBP.

(a) Suppose that $a_1$ and $a_3$ are both $1$-HBPs. 
Pick a pentagon $\Pi$ containing $a_1$ and $a_2$ as its vertices and equal to the one in Figure \ref{fig_pen} up to a homeomorphism of $S$.
Let $b$, $c$ and $d$ denote the $2$-HBC of $\Pi$, the $2$-HBP of $\Pi$ disjoint from $a_2$ and the other $2$-HBP of $\Pi$, respectively. 
It then follows that $\Pi$ is defined by the $5$-tuple consisting of the five vertices $b$, $c$, $a_2$, $a_1$ and $d$ in this order. We put $c=\{\alpha,\gamma\}$.

Recall that in general, if $X$ is a surface of positive genus described in Figure \ref{fig-mcg}, then the set of Dehn twists about curves in Figure \ref{fig-mcg} generates $\pmod(X)$ (see \cite{gervais}).  
One can then find a set of curves in $R$, denoted by $U$, satisfying the following two conditions (see Figure \ref{fig-penta}): 
\begin{itemize}
\item The set of Dehn twists about curves in $U$ generates $\pmod(R)$;
\item There exists a curve $\delta_1$ in $U$ with $i(\alpha_1, \delta_1)\neq 0$ and $i(\alpha_1, \delta)=0$ for any $\delta \in U\setminus \{ \delta_1\}$; and
\item There exists a curve $\delta_2$ in $U$ with $i(\gamma, \delta_2)\neq 0$ and $i(\gamma, \delta)=0$ for any $\delta \in U\setminus \{ \delta_2\}$.
\end{itemize}
Let $T$ denote the subset of $\pmod(S)$ consisting of all Dehn twists about curves in $U$ and their inverses, where curves in $R$ are naturally identified with curves in $S$.
Let $\Lambda$ be the subgroup of $\pmod(S)$ generated by $T$, and define $q\colon \Lambda \rightarrow \pmod(R)$ as the natural homomorphism. 
Since $\alpha_1$ and $\alpha_3$ are both $2$-HBCs in $R$ cutting off a pair of pants containing $\alpha$ and a component of $\partial S$ as boundary components, they lie in the same orbit for the action of $\pmod(R)$ on $V(R)$. 
Choose $g_1,\ldots, g_m\in T$ with $\alpha_3=q(g_1)\cdots q(g_m)\alpha_1$. 
The sequence of pentagons,
\[\Pi,  \; g_1\Pi, \; g_1g_2\Pi,\; \ldots, \; g_1g_2\cdots g_m\Pi,\]
then satisfies conditions (i), (ii) and (iii) in the lemma. 
Note that $a_1$ is fixed by each element of $T$ except $t_{\delta_1}$ and its inverse and that $c$ is fixed by each element of $T$ except $t_{\delta_2}$ and its inverse. 
Since $a_2$ is fixed by $\Lambda$, the above sequence of pentagons satisfies condition (iv).

(b) Suppose that $a_1$ and $a_3$ are both $2$-HBPs. 
Pick a pentagon $\Pi$ containing $a_1$ and $a_2$ as its vertices and equal to the one in Figure \ref{fig_pen} up to a homeomorphism of $S$.
Let $c$ denote the $1$-HBP of $\Pi$ disjoint and distinct from $a_2$, and put $c=\{ \alpha, \gamma \}$. 
Along an argument similar to that in case (a), we obtain a desired sequence of pentagons (see also Figure \ref{fig-penta}). 

\begin{figure}
\begin{center}
\includegraphics[width=10cm]{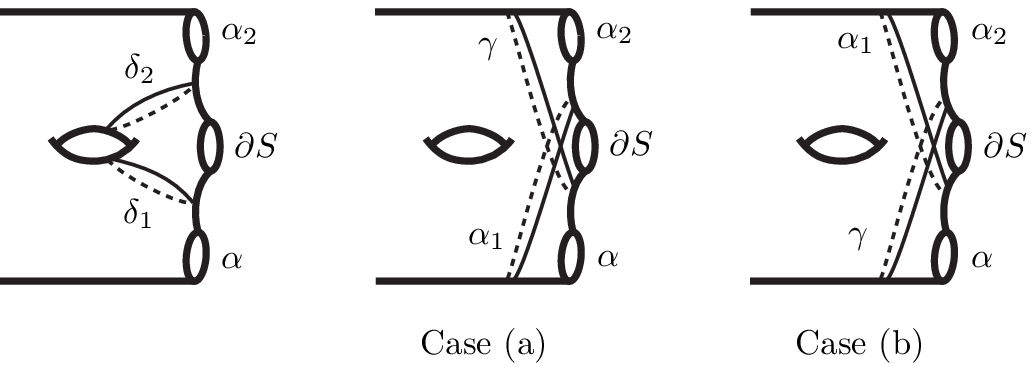}
\end{center}
\caption{}\label{fig-penta}
\end{figure}

(c) Finally, we assume that one of $a_1$ and $a_3$ is a $1$-HBP and another is a $2$-HBP.
We may assume that $a_1$ is a $1$-HBP and $a_3$ is a $2$-HBP. 
Choose a pentagon $\Pi$ containing $a_1$ and $a_2$ as its vertices and equal to the one in Figure \ref{fig_pen} up to a homeomorphism of $S$. 
We denote by $b$, $c$ and $d$ the $2$-HBC of $\Pi$, the $2$-HBP of $\Pi$ disjoint from $a_2$ and the other $2$-HBP of $\Pi$, respectively. 
The pentagon $\Pi$ is then defined by the $5$-tuple consisting of $b$, $c$, $a_2$, $a_1$ and $d$ in this order.
We put $c=\{\alpha,\gamma\}$. Note that $\gamma$ and $\alpha_3$ are in the same orbit for the action of $\pmod(R)$ on $V(R)$.
Along an argument of the same kind as in case (a) where $\alpha_1$ and $\alpha_3$ are in the same orbit for the action of $\pmod(R)$ on $V(R)$, we obtain the lemma.
\end{proof}

Let us introduce terminology used in the subsequent proposition.
Let $X=S_{g, p}$ be a surface with $p\geq 2$.
\begin{itemize}
\item Given two distinct components $\partial_1$ and $\partial_2$ of $\partial X$ and a separating curve $\alpha$ in $X$, we say that $\alpha$ {\it separates} $\partial_1$ and $\partial_2$ if $\partial_1$ and $\partial_2$ are contained in distinct components of $X_{\alpha}$.
\item Let $\partial_1,\ldots, \partial_k$ be pairwise distinct components of $\partial X$.
We say that an HBC $\beta$ in $X$ {\it encircles} $\partial_1,\ldots, \partial_k$ if $\beta$ cuts off from $S$ a holed sphere containing $\partial_1,\ldots, \partial_k$ and homeomorphic to $S_{0, k+1}$.
\end{itemize}

\begin{prop}\label{connected}
Let $X=S_{g,p}$ be a surface with $g\geq 1$ and $p\geq 4$, and let $\partial_1$ and $\partial_2$ be two distinct components of $\partial X$.
Then the full subcomplex $\cal{D}$ of $\calc(X)$ spanned by all vertices that correspond to HBCs in $X$ separating $\partial_1$ and $\partial_2$ is connected.
\end{prop}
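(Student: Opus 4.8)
The plan is to show that any two HBCs $\beta, \beta'$ in $X$ separating $\partial_1$ and $\partial_2$ can be joined by an edge-path in $\cal{D}$. The natural strategy is a two-step reduction: first reduce to the case where $\beta$ and $\beta'$ both encircle exactly the two boundary components $\partial_1$ and $\partial_2$ (i.e.\ cut off a copy of $S_{0,3}$), and then treat that special case directly. For the first step, given an arbitrary HBC $\beta$ separating $\partial_1$ and $\partial_2$, I would take the side of $X_\beta$ containing $\partial_1$; it is a holed sphere containing $\partial_1$ but not $\partial_2$, hence (using $p\geq 4$ and $g\geq 1$ so that there is ``room'' on the other side) one can find an HBC $\beta_1$ lying inside that holed sphere which encircles exactly $\partial_1$ together with the remaining holes on that side except one, and iterating, a sequence of HBCs each disjoint from the previous one, each separating $\partial_1$ from $\partial_2$, ending at an HBC encircling exactly $\{\partial_1,\partial_2\}$ — wait, more carefully: I want to peel holes off one at a time, each time replacing $\beta$ by a disjoint HBC cutting off a strictly smaller holed sphere still containing $\partial_1$, not $\partial_2$. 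Each consecutive pair is disjoint, so this is an edge-path in $\cal{D}$, and it terminates at an HBC encircling exactly $\{\partial_1,\partial_2\}$. (One must check at each stage that the ambient surface on the complementary side still has genus and enough holes to realize the disjoint curve; since $g\geq 1$ the genus is never the obstruction, and the count of holes only needs $p\geq 3$ here, which is implied.)

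For the second step I must connect any two HBCs $\beta,\beta'$ that each encircle exactly $\{\partial_1,\partial_2\}$, cutting off copies $P,P'$ of $S_{0,3}$. If $\beta$ and $\beta'$ are disjoint, we are done. Otherwise, consider the complement $Y$ of a regular neighborhood of $\partial_1\cup\partial_2$ together with an arc joining them — equivalently, the subsurface of $X$ on which both $\beta$ and $\beta'$ are determined by curves of $\overline{X}$ (capping $\partial_1,\partial_2$) that are homotopic to a point; the isotopy classes of HBCs encircling exactly $\{\partial_1,\partial_2\}$ correspond bijectively to isotopy classes of simple arcs in $X$ connecting $\partial_1$ to $\partial_2$, via taking the boundary of a neighborhood of the arc. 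Under this correspondence, disjointness of a pair of such HBCs corresponds to the two arcs being realizable disjointly. So the statement reduces to: the arc complex of $X$ with endpoints on $\partial_1$ and $\partial_2$ (one endpoint on each) is connected. This is a standard fact (any two such arcs can be connected by a sequence of elementary surgeries reducing intersection number); alternatively, one can invoke connectivity of the relevant arc complex directly, or give the classical surgery argument: pick an innermost intersection point, surger one arc along a subarc of the other to reduce $|\beta\cap\beta'|$, check the surgered arc is again essential and connects $\partial_1$ to $\partial_2$, and that it is disjoint from (a pushed-off copy of) the original — hence connected by an edge in $\cal{D}$ — then induct on intersection number.

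Finally I would assemble the two steps: given arbitrary $\beta,\beta'$ in $\cal D$, use Step 1 to connect $\beta$ to some $\beta_0$ encircling exactly $\{\partial_1,\partial_2\}$ and $\beta'$ to some $\beta_0'$ encircling exactly $\{\partial_1,\partial_2\}$, then use Step 2 to connect $\beta_0$ to $\beta_0'$; concatenating gives a path in $\cal D$. The main obstacle I anticipate is Step 2 — specifically, carrying out the surgery argument while staying inside the class of curves that still separate $\partial_1$ from $\partial_2$ and still encircle exactly those two holes, and verifying that the surgered arc remains embedded and essential; the bookkeeping of which side a surgered piece lands on is the delicate part, though it is entirely standard two-dimensional topology. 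Step 1 is routine once one notes $g\geq 1$ guarantees the complementary side is never a sphere with too few holes to host the next curve.
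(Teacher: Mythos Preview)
There is a genuine gap: the curves you reduce to in Step~1 and work with throughout Step~2 are \emph{not in $\cal{D}$}. An HBC encircling exactly $\{\partial_1,\partial_2\}$ cuts off a pair of pants containing \emph{both} $\partial_1$ and $\partial_2$, so $\partial_1$ and $\partial_2$ lie on the same side of it; such a curve does not separate $\partial_1$ from $\partial_2$ and hence is not a vertex of $\cal{D}$. Your peeling procedure in Step~1, which shrinks the holed-sphere side containing $\partial_1$ (and not $\partial_2$), can only terminate at an HBC encircling $\partial_1$ together with some other component $\partial_j$ with $j\neq 2$, never at one encircling $\{\partial_1,\partial_2\}$. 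Consequently the entire arc-complex argument in Step~2, which treats arcs from $\partial_1$ to $\partial_2$, is aimed at the wrong set of curves.

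The fix is simple in outline: pick a third boundary component $\partial_3$ and reduce instead to HBCs encircling exactly $\{\partial_2,\partial_3\}$ (or $\{\partial_1,\partial_3\}$); these \emph{do} separate $\partial_1$ from $\partial_2$. Your Step~1 then needs extra care --- the holed-sphere side of the initial $\beta$ may contain $\partial_2$ rather than $\partial_1$, and the terminal $2$-HBC you reach may encircle $\{\partial_1,\partial_j\}$ for a varying $j$, so you still must connect these to a fixed model. (The paper handles exactly this in its first claim via a short explicit path using a fourth component $\partial_4$.) With this correction your Step~2, recast as connectivity of the arc complex of arcs joining $\partial_2$ to $\partial_3$, would go through and would give a genuinely different route from the paper's, which instead uses Putman's technique (transitivity of $\pmod(X)$ on the target $2$-HBCs together with a generating set of Dehn twists each of which moves the model curve only a bounded distance in $\cal{D}$). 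Your arc-surgery approach is more elementary and self-contained; the paper's approach is more systematic and generalizes readily to the other connectivity statements used later.
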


\begin{proof}
The proof is based on Lemma 2.1 of \cite{putman-conn}, which presents a technique to prove connectivity of a simplicial complex on which $\pmod(X)$ acts.
Pick two distinct components $\partial_3$ and $\partial_4$ of $\partial X$ other than $\partial_1$ and $\partial_2$.

\begin{claim}\label{claim-d}
For any HBC $\alpha$ in $X$ separating $\partial_1$ and $\partial_2$, there exists a path in $\cal{D}$ connecting $\alpha$ to a vertex corresponding to an HBC in $X$ encircling $\partial_2$ and $\partial_3$. 
\end{claim}

\begin{proof}
If the component of $X_{\alpha}$ containing $\partial_2$ contains $\partial_3$, then one can find an HBC in $X$ disjoint from $\alpha$ and encircling $\partial_2$ and $\partial_3$.
Otherwise, one can find a path in $\cal{D}$, $\alpha$, $\alpha_1$, $\alpha_2$, $\alpha_3$, $\alpha_4$, such that $\alpha_1$ encircles $\partial_1$ and $\partial_3$; $\alpha_2$ encircles $\partial_1$, $\partial_3$ and $\partial_4$; $\alpha_3$ encircles $\partial_1$ and $\partial_4$; and $\alpha_4$ encircles $\partial_2$ and $\partial_3$.
\end{proof}

\begin{claim}\label{claim-23-conn}
For any two HBCs $\alpha$ and $\beta$ in $X$ encircling $\partial_2$ and $\partial_3$, there exists a path in $\cal{D}$ connecting $\alpha$ and $\beta$.
\end{claim}

\begin{proof}
Let $U$ be the set of curves in $X$ described in Figure \ref{fig-mcg}.
As already mentioned in the proof of Lemma \ref{lem-pen}, $\pmod(X)$ is generated by Dehn twists about curves in $U$.
After labeling $\partial_1,\ldots, \partial_p$ appropriately in Figure \ref{fig-mcg}, one can find an HBC $\gamma$ in $X$ encircling $\partial_2$ and $\partial_3$ such that
\begin{itemize}
\item there exists a unique curve $\delta$ in $U$ intersecting $\gamma$; and
\item there exists an HBC $\gamma'$ in $X$ which separates $\partial_1$ and $\partial_2$ and is disjoint from $\gamma$ and $\delta$.
\end{itemize}
The vertices $t_{\delta}\gamma$, $\gamma'$ and $\gamma$ of $\cal{D}$ form a path in $\cal{D}$.
Note that $\pmod(X)$ sends $\gamma$ to any HBC in $X$ encircling $\partial_2$ and $\partial_3$.
The claim is now obtained in the same way as in the construction of a sequence of pentagons in the proof of Lemma \ref{lem-pen}.
\end{proof}

The proposition follows from Claims \ref{claim-d} and \ref{claim-23-conn}.
\end{proof}

\begin{lem}\label{non-sep_conn}
Let $Y=S_{g, p}$ be a surface with $g \geq 2$ and $p \geq 2$, and pick a non-separating curve $\alpha$ in $Y$. 
Then the full subcomplex of $\calcp(Y)$ spanned by all vertices that correspond to HBPs in $Y$ containing $\alpha$ is connected.
\end{lem}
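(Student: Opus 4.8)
\textbf{Proof proposal for Lemma \ref{non-sep_conn}.}

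The plan is to cut $Y$ along $\alpha$ and translate the statement into a connectivity question about complexes of HBCs inside the resulting surface, where Proposition \ref{connected} applies. Cutting $Y$ along the non-separating curve $\alpha$ produces a surface $Y_\alpha \cong S_{g-1, p+2}$; let $\partial_1$ and $\partial_2$ denote the two boundary components of $Y_\alpha$ created by the cut. An HBP $\{\alpha, \beta\}$ in $Y$ containing $\alpha$ corresponds, after cutting, to a single curve $\hat\beta$ in $Y_\alpha$: indeed, since $S\setminus(\alpha\cup\beta)$ is disconnected with a genus-zero component $Q$, the curve $\hat\beta$ is separating in $Y_\alpha$, cuts off a holed sphere, and that holed sphere must contain exactly one of $\partial_1, \partial_2$ (it contains $Q$ together with one side of the cut locus of $\alpha$). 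Conversely, an HBC $\hat\beta$ in $Y_\alpha$ separating $\partial_1$ and $\partial_2$ glues up to an HBP $\{\alpha,\beta\}$ in $Y$ when its complementary holed sphere contains exactly one of $\partial_1,\partial_2$; a brief case-check (using $g \geq 2$, so that the non-$Q$ side of $\hat\beta$ in $Y$ has positive genus, forcing $\{\alpha,\beta\}$ to be a genuine HBP rather than an HBC or an inessential pair) shows this correspondence is a bijection between HBP-vertices of $\calcp(Y)$ containing $\alpha$ and vertices of the subcomplex $\cal{D} \subset \calc(Y_\alpha)$ spanned by HBCs separating $\partial_1$ and $\partial_2$.

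Next I would check that this bijection is a \emph{graph} isomorphism on the relevant $1$-skeleta: two HBPs $\{\alpha,\beta_1\}, \{\alpha,\beta_2\}$ are disjoint in $Y$ (hence joined by an edge in the subcomplex of $\calcp(Y)$) if and only if the curves $\hat\beta_1, \hat\beta_2$ can be realized disjointly in $Y_\alpha$, i.e. $i(\hat\beta_1,\hat\beta_2) = 0$, which is precisely the edge relation in $\cal{D}$. (One must be slightly careful that $\beta_1 \ne \beta_2$ corresponds to $\hat\beta_1 \ne \hat\beta_2$, but this is immediate from the bijection.) Connectivity of a simplicial complex depends only on its $1$-skeleton, so it suffices to know $\cal{D}$ is connected. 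Now apply Proposition \ref{connected} to $X = Y_\alpha$: we need $g(Y_\alpha) = g-1 \geq 1$, which holds since $g \geq 2$, and $p(Y_\alpha) = p+2 \geq 4$, which holds since $p \geq 2$. Hence $\cal{D}$ is connected, and therefore so is the full subcomplex of $\calcp(Y)$ spanned by HBP-vertices containing $\alpha$.

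The main obstacle I anticipate is not the abstract reduction but the bookkeeping in the correspondence between HBPs through $\alpha$ and HBCs in $Y_\alpha$ separating $\partial_1$ and $\partial_2$: one has to verify in both directions that the essentiality and ``genuine HBP, not HBC'' conditions from the definitions in Section \ref{subsec-term} match up exactly with ``separates $\partial_1$ and $\partial_2$ and cuts off a holed sphere'' in $Y_\alpha$, and that no separating-vs-non-separating subtleties are lost. The hypothesis $g \geq 2$ is what guarantees the uniqueness of the genus-zero component $Q$ (as noted after the definition of HBP) and that the other side carries positive genus, so the matching is clean; I would state this verification as the content of the proof, invoke Proposition \ref{connected}, and conclude.
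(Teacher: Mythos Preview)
Your proposal is correct and follows essentially the same approach as the paper: cut $Y$ along $\alpha$ to obtain $X=Y_\alpha\cong S_{g-1,p+2}$, identify HBPs in $Y$ containing $\alpha$ with HBCs in $X$ separating the two new boundary components, and invoke Proposition~\ref{connected}. The paper's proof is in fact a one-sentence version of exactly this argument, so your more detailed bookkeeping on the correspondence is fine but not strictly necessary.
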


\begin{proof}
We set $X=Y_{\alpha}$, which is homeomorphic to $S_{g-1, p+2}$.
Using the natural one-to-one correspondence between HBPs in $Y$ containing $\alpha$ and HBCs in $X$ which separates the two boundary components of $X$ corresponding to $\alpha$, one can deduce the lemma from Proposition \ref{connected}. 
\end{proof}

The following lemma gives information on the image of the pentagon in Figure \ref{fig_pen} via an automorphism $\phi$ of $\calcp(S)$.

\begin{lem}\label{root_penta}
Let $S=S_{g, 2}$ be a surface with $g\geq 2$. Let $(a, b_2, b_1, c_1, c_2)$ be a $5$-tuple defining a pentagon $\Pi$ in $\calcp(S)$ such that
\begin{itemize}
\item $b_j$ and $c_j$ are non-separating $j$-HBPs in $S$ for each $j=1, 2$; and
\item each of the three edges $\{ b_2, b_1\}$, $\{ b_1, c_1\}$ and $\{ c_1, c_2\}$ is rooted. 
\end{itemize}
Then the root curves of the three edges in the second condition are equal.
\end{lem}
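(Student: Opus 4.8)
Write $a = \{\alpha, \alpha_0\}$ for the $2$-HBC-related vertex — no, more carefully: the pentagon is defined by $(a, b_2, b_1, c_1, c_2)$ where $a$ is a $2$-HBC. By hypothesis the three edges $\{b_2,b_1\}$, $\{b_1,c_1\}$, $\{c_1,c_2\}$ are rooted, say with root curves $\mu$, $\nu$, $\rho$ respectively. Thus $\mu$ is the common curve of the HBPs $b_2$ and $b_1$, $\nu$ the common curve of $b_1$ and $c_1$, and $\rho$ the common curve of $c_1$ and $c_2$. Since $b_1$ is a $1$-HBP, it contains exactly two curves; both $\mu$ and $\nu$ lie in $b_1$, so either $\mu = \nu$ (and we are partway done) or $b_1 = \{\mu, \nu\}$. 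Similarly $c_1 = \{\nu, \rho\}$ or $\nu = \rho$. The goal is to rule out the genuinely distinct case and conclude $\mu = \nu = \rho$.

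First I would analyze the geometry directly in $S = S_{g,2}$. The key structural input is the concrete model of the pentagon in Figure~\ref{fig_pen}: up to a homeomorphism of $S$, the $5$-tuple consists of a $2$-HBC, a $2$-HBP, a $1$-HBP, a $1$-HBP, and a $2$-HBP. I would fix such a model and identify all five curves $\alpha, \alpha_1, \ldots$ appearing in the HBPs explicitly, together with the $2$-HBC $a$. Using the adjacency and non-adjacency relations encoded in the pentagon — $i(b_2,b_1)=0$, $i(b_1,c_1)=0$, $i(c_1,c_2)=0$, but $i(b_2,c_1)\neq 0$, $i(b_1,c_2)\neq 0$, $i(b_2,c_2)\neq 0$ (and the edges to $a$) — I would pin down which curves the HBPs share. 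The claim is that in this model one literally reads off that the shared curve of $b_2,b_1$, the shared curve of $b_1,c_1$, and the shared curve of $c_1,c_2$ are all the same curve, namely the curve I called $\alpha$ that persists through the whole pentagon (condition (ii) of Lemma~\ref{lem-pen} says every HBP of a pentagon in the relevant sequence contains $\alpha$, and the model pentagon here should have the same feature).

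The heart of the argument: suppose toward a contradiction that $\mu \neq \nu$, so $b_1 = \{\mu,\nu\}$ with $\mu,\nu$ both non-separating (as $b_1$ is a non-separating HBP). Then $c_1$ shares $\nu$ with $b_1$; since $\{b_2,c_1\}$ is \emph{not} an edge, $b_2$ intersects $c_1$, so $c_1 \neq \{\mu,\nu\}$ and hence $c_1$ contains a new curve. If also $\nu \neq \rho$ then $c_1 = \{\nu,\rho\}$. Now $b_2$ contains $\mu$ (shared with $b_1$) and a second curve; $c_2$ contains $\rho$ and a second curve; and we need $i(b_2,c_2)\neq 0$, $i(a,b_2)=i(a,b_1)=i(a,c_1)=i(a,c_2)=0$. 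I would use Lemma~\ref{comp-s} or a cut-surface argument: cutting $S$ along the $2$-HBC $a$ gives a genus-$g$ surface $R$ with one boundary component (plus a four-holed sphere; but since $p=2$, $a$ is a $2$-HBC and the holed sphere cut off is $S_{0,3}$), and all four HBPs lie in the positive-genus component $R \cong S_{g,1}$... actually I'd set $R$ to be the component of $S_a$ of positive genus, which is $S_{g,1}$, since $a$ is a $2$-HBC cutting off $S_{0,3}$. Wait — a $2$-HBC in $S_{g,2}$ cuts off a pair of pants $S_{0,3}$ containing both boundary components, leaving $S_{g,1}$. All four HBPs of $\Pi$ are disjoint from $a$, hence live in $S_{g,1}$ — but in $S_{g,1}$ there is only one boundary component, so genuine HBPs in $S_{g,1}$ are non-separating pairs cutting off $S_{0,2}$-type pieces... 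I would instead work in $S$ directly with the explicit model. The main obstacle, and where I'd spend most effort, is the case analysis showing that a "broken chain" $\mu \to \nu \to \rho$ with two distinct links forces one of the required non-adjacencies ($i(b_2,c_2)\neq 0$ etc.) to fail, or forces one of $b_2, c_2$ to intersect $a$ — contradiction.

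Alternatively, and perhaps more cleanly, I would exploit that the three rooted edges lie in a \emph{single} pentagon: the vertex $b_1$ and the vertex $c_1$ are adjacent ($i(b_1,c_1)=0$) and $b_1,c_1$ are both $1$-HBPs in $S_{g,2}$; two disjoint $1$-HBPs that share a curve $\nu$ must, by Lemma~\ref{lem-hbp-dec}(ii) applied to $b_1 \cup c_1$ (a simplex of HBP-equivalent non-separating curves — but $b_1,c_1$ need not be HBP-equivalent to each other... they are disjoint though), have their cut-off genus-zero pieces interact in a constrained way. If $b_1 = \{\mu,\nu\}$ and $c_1 = \{\nu,\rho\}$ with $\mu \neq \nu \neq \rho$, then $\{\mu,\nu,\rho\}$ is a simplex and each of $\{\mu,\nu\}$, $\{\nu,\rho\}$ is a $1$-HBP; the curve $\nu$ is then a boundary curve of two different genus-zero pieces. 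One then checks, using that $b_2$ (disjoint from $b_1$, intersecting $c_1$) shares $\mu$ with $b_1$, that $b_2$ must be "on the far side" of $\mu$ from $\nu$, while $c_2$ (sharing $\rho$ with $c_1$, disjoint from $c_1$, intersecting $b_1$) must be on the far side of $\rho$ — but then $b_2$ and $c_2$ lie in disjoint subsurfaces and cannot intersect, contradicting $i(b_2,c_2)\neq 0$. So either $\mu=\nu$ or $\nu=\rho$; a symmetric run (using the edge-to-$a$ relations and the non-adjacency $i(b_2,c_1)\neq 0$) handles the remaining possibility, yielding $\mu = \nu = \rho$. I expect the genus bookkeeping in identifying which subsurface each of $b_2, c_2$ is forced into to be the delicate point, and I would organize it by cutting $S$ along $b_1$ (which, being a non-separating HBP, disconnects $S$ into a genus-zero piece and a positive-genus piece) and tracking where $c_1, c_2, b_2$ and $a$ land.
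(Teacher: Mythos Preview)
Your proposal contains a fundamental error in the pentagon adjacency relations. In a pentagon $(a, b_2, b_1, c_1, c_2)$ we have $i(v_k, v_{k+1})=0$ and $i(v_k, v_{k+2})\neq 0$ cyclically, so the edges incident to $a$ are $\{a,b_2\}$ and $\{a,c_2\}$ only; in particular $i(a,b_1)\neq 0$ and $i(a,c_1)\neq 0$. Your line ``$i(a,b_2)=i(a,b_1)=i(a,c_1)=i(a,c_2)=0$'' and the subsequent claim that ``all four HBPs of $\Pi$ are disjoint from $a$, hence live in $S_{g,1}$'' are therefore false, and the cut-along-$a$ strategy collapses.

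Your alternative ``cleaner'' approach also fails. In the case $\mu,\nu,\rho$ pairwise distinct with $b_1=\{\mu,\nu\}$, $c_1=\{\nu,\rho\}$, write $b_2=\{\mu,\mu'\}$ and $c_2=\{\rho,\rho'\}$. From $i(b_2,c_1)\neq 0$ and $i(b_2,b_1)=0$ one gets that $\mu'$ is disjoint from $\mu,\nu$ but intersects $\rho$; symmetrically $\rho'$ is disjoint from $\nu,\rho$ but intersects $\mu$. Both $\mu'$ and $\rho'$ therefore lie in the \emph{same} positive-genus component of $S_{\{\mu,\nu,\rho\}}$, not in disjoint subsurfaces, so there is no obstruction to $i(b_2,c_2)\neq 0$ and no contradiction arises. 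The vertex $a$ is not optional decoration here; it is the source of the contradiction. The paper's argument runs a short case analysis on which of $\mu,\nu,\rho$ coincide, and in the decisive cases uses precisely that $i(a,b_1)\neq 0$ or $i(a,c_1)\neq 0$: for instance, if $\mu=\nu\neq\rho$ then $c_1=\{\nu,\rho\}=\{\mu,\rho\}$, and since $\mu\in b_2$, $\rho\in c_2$ with $i(a,b_2)=i(a,c_2)=0$, both $\mu$ and $\rho$ are disjoint from $a$, forcing $i(a,c_1)=0$, a contradiction. You should rebuild the argument around the two nonadjacencies $i(a,b_1)\neq 0$ and $i(a,c_1)\neq 0$.
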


\begin{proof}
Let $\alpha$, $\beta$ and $\gamma$ be the root curves of the edges $\{ b_2, b_1\}$, $\{ b_1, c_1\}$ and $\{ c_1, c_2\}$, respectively. 
Because of $i(b_1,c_1)=0$, the equality $i(\alpha,\beta)=i(\beta, \gamma)=i(\gamma, \alpha)=0$ holds. 
We deduce a contradiction by assuming that either (a) $\alpha$, $\beta$ and $\gamma$ are mutually distinct; (b) $\alpha=\beta \ne \gamma$ or $\beta=\gamma \ne \alpha$ holds; or (c) $\alpha=\gamma \ne \beta$ holds.

(a) We assume that $\alpha$, $\beta$ and $\gamma$ are mutually distinct. 
The equalities $b_1=\{\alpha,\beta\}$ and $c_1=\{\beta,\gamma\}$ then holds.
The HBP $\{\alpha,\gamma \}$ is a $2$-HBP because $b_1$ and $c_1$ are both $1$-HBPs. 
Since $b_1$ intersects $a$ and since $b_2$ is disjoint from $a$ and contains $\alpha$, we see that $\beta$ intersects $a$. 
Put $b_2=\{\alpha,\alpha_2\}$ and denote by $R$ the component of $S_{b_2}$ of genus zero, which is homeomorphic to $S_{0, 4}$. 
Note that $a$ (or a curve in $a$ if $a$ is an HBP in $S$) and $\beta$ lie in $R$ and fill $R$.
Since $\gamma$ is disjoint from $a$ and $\beta$, the equality $\alpha_2=\gamma$ holds. 
This is a contradiction because the equality implies that $b_2$ and $c_1$ are disjoint. 

(b) We suppose $\alpha=\beta \ne \gamma$. 
The equality $c_1=\{\beta, \gamma \}$ then holds. 
It follows from $i(b_2, a)=i(c_2, a)=0$ that we have $i(\alpha, a)=i(\gamma, a)=0$ and thus $i(c_1, a)=0$. This is a contradiction. 
We can also deduce a contradiction in the same manner if $\beta=\gamma \ne \alpha$ holds. 

\begin{figure}
\begin{center}
\includegraphics[height=5cm]{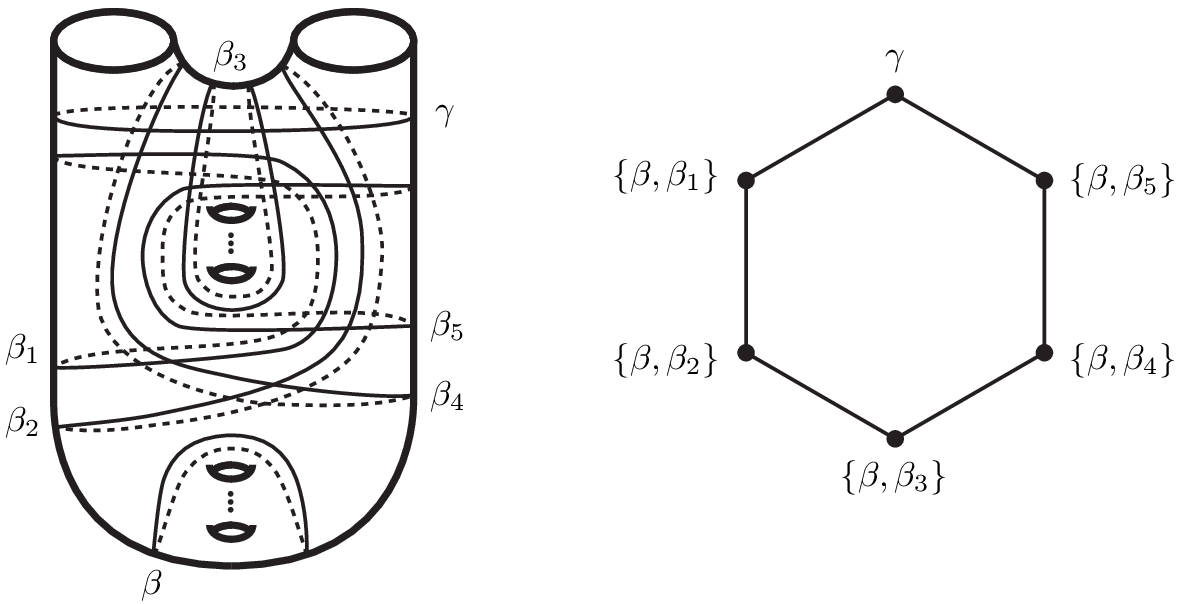}
\end{center}
\caption{A hexagon in $\calcp_s(S)$}\label{fig_hex}
\end{figure}

(c) If $\alpha=\gamma \ne \beta$, then we have $b_1=\{\alpha, \beta \}=c_1$.
This is a contradiction.
\end{proof}

\begin{lem}\label{lem-pen2}
Let $S=S_{g, 2}$ be a surface with $g \geq 2$, and let $\phi$ be an automorphism of $\calcp(S)$. 
For each $k=1,2,3,4$, let $a_k=\{\alpha, \alpha_k\}$ be a non-separating HBP in $S$ such that $\{ a_1, a_2\}$ and $\{ a_3, a_4\}$ are edges of $\calcp(S)$. 
Then the root curves of the two edges $\{ \phi(a_1), \phi(a_2)\}$ and $\{ \phi(a_3), \phi(a_4)\}$ of $\calcp(S)$ are equal. 
\end{lem}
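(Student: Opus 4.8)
The plan is to reduce the statement to the special case $a_2=a_3$ by a connectivity argument and then to handle that case with the pentagon machinery of Lemmas \ref{lem-pen} and \ref{root_penta}. First I would note that $\alpha$ is non-separating (because $a_1$ is a non-separating HBP), so every HBP in $S$ containing $\alpha$ is non-separating, and that for any two distinct disjoint HBPs $b$, $b'$ in $S$ containing $\alpha$ the edge $\{ b, b'\}$ of $\calcp(S)$ is rooted with root curve $\alpha$; hence $\{ \phi(b), \phi(b')\}$ is rooted by Lemma \ref{rooted}, and I write $r(\{ b, b'\})$ for its root curve. Using Lemma \ref{non-sep_conn}, the full subcomplex of $\calcp(S)$ spanned by HBPs containing $\alpha$ is connected, so $a_2$ and $a_3$ are joined there by an edge-path $a_2=d_0,d_1,\ldots,d_m=a_3$; prepending $a_1$ and appending $a_4$ produces a chain of rooted edges in which consecutive ones share an HBP-vertex. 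It therefore suffices to prove that $r(\{ P, P'\})=r(\{ P', P''\})$ whenever $\{ P, P'\}$ and $\{ P', P''\}$ are edges of $\calcp(S)$ between HBPs in $S$ containing $\alpha$.

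To prove this I would apply Lemma \ref{lem-pen} to $P$, $P'$, $P''$ and obtain a sequence of pentagons $\Pi_1,\ldots,\Pi_n$, each of which (up to a homeomorphism of $S$) is defined by a $5$-tuple $(b_k, X_k, Y_k, Z_k, W_k)$ that is a $2$-HBC, a non-separating $2$-HBP, a non-separating $1$-HBP, a non-separating $1$-HBP and a non-separating $2$-HBP in this order, with $X_k, Y_k, Z_k, W_k$ all containing $\alpha$, and such that $P\in \Pi_1$, $P''\in \Pi_n$, $P'\in \Pi_k$ for every $k$, and $\Pi_k$, $\Pi_{k+1}$ share at least two HBP-vertices. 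Since $\phi$ is an automorphism preserving $2$-HBCs and non-separating $1$- and $2$-HBPs, and preserving rooted simplices (Lemma \ref{rooted} and the results of Section \ref{sec-basic}), the image $\phi(\Pi_k)$ is again a pentagon meeting the hypotheses of Lemma \ref{root_penta}, with the three edges $\{ X_k, Y_k\}$, $\{ Y_k, Z_k\}$, $\{ Z_k, W_k\}$ rooted; that lemma then yields a single curve $\beta_k$ which is the root curve of each of $\phi(\{ X_k, Y_k\})$, $\phi(\{ Y_k, Z_k\})$ and $\phi(\{ Z_k, W_k\})$.

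The next step is to observe that, since these three edges together involve all four HBP-vertices of $\Pi_k$, the curve $\beta_k$ lies in $\phi(Q)$ for every HBP-vertex $Q$ of $\Pi_k$. When $\Pi_k$ and $\Pi_{k+1}$ share two distinct HBP-vertices $Q$, $Q'$, both $\beta_k$ and $\beta_{k+1}$ lie in $\phi(Q)\cap \phi(Q')$; as $\phi(Q)$ and $\phi(Q')$ are distinct HBPs, their intersection has at most one element, whence $\beta_k=\beta_{k+1}$ and thus $\beta_1=\cdots=\beta_n$. Finally $P$ and $P'$ are disjoint HBP-vertices of $\Pi_1$, so $\{ P, P'\}$ is one of the three edges $\{ X_1, Y_1\}$, $\{ Y_1, Z_1\}$, $\{ Z_1, W_1\}$ and $r(\{ P, P'\})=\beta_1$; similarly $r(\{ P', P''\})=\beta_n$, and these agree. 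Running the chain of edges constructed above then gives $r(\{ a_1, a_2\})=r(\{ a_3, a_4\})$, which is the assertion.

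I expect the main obstacle to be the propagation step in the third paragraph: two consecutive pentagons in the sequence from Lemma \ref{lem-pen} need not share an edge between two HBP-vertices — the two shared HBPs may well intersect — so one cannot directly carry the root curve across the overlap. The key that makes it work is the observation that the common root $\beta_k$ of the three edges of $\Pi_k$ is in fact contained in the $\phi$-image of every HBP-vertex of $\Pi_k$, combined with the elementary fact that two distinct HBPs meet in at most one curve. Verifying that $\phi(\Pi_k)$ satisfies the precise hypotheses of Lemma \ref{root_penta} should be routine once the structural results of Section \ref{sec-basic} are in hand.
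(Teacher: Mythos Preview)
Your proposal is correct and follows essentially the same route as the paper's proof: both use Lemma~\ref{non-sep_conn} to join $a_2$ to $a_3$ by a path of HBPs containing $\alpha$, apply Lemma~\ref{lem-pen} to each consecutive triple to obtain a chain of pentagons of the standard shape, invoke Lemma~\ref{root_penta} on the image pentagons to produce a single curve $\beta_k$ lying in all four HBP-images of $\Pi_k$, and propagate $\beta_k=\beta_{k+1}$ via the two shared HBP-vertices. Your write-up is in fact more explicit than the paper's about why $\phi(\Pi_k)$ meets the hypotheses of Lemma~\ref{root_penta} and about the ``two distinct HBPs meet in at most one curve'' step that drives the propagation; the paper compresses this into the single sentence that the four HBPs of $\phi(\Pi_k)$ share a common curve.
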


\begin{proof}
By Lemma \ref{non-sep_conn}, there exists a sequence of non-separating HBPs in $S$, $a_2=b_2, b_3,\ldots,b_{m-1}=a_3$, such that any two successive HBPs are disjoint and each $b_k$ contains $\alpha$.
We set $b_1=a_1$ and $b_m=a_4$.  
For each $k=1,\ldots, m-2$, if we have $i(b_k,b_{k+2})\ne 0$, then there exists a sequence of pentagons in $\calcp(S)$ connecting the two edges $\{ b_k, b_{k+1}\}$, $\{ b_{k+1}, b_{k+2}\}$ and satisfying conditions in Lemma \ref{lem-pen}.
Otherwise at least two of $b_k$, $b_{k+1}$ and $b_{k+2}$ are equal. 
We then obtain a sequence of pentagons in $\calcp(S)$, $\Pi_1, \Pi_2,\ldots, \Pi_n$, with $a_1, a_2\in \Pi_1$ and $a_3, a_4\in \Pi_n$.
By Lemma \ref{root_penta}, $\phi(\Pi_k)$ is a pentagon consisting of one HBC and four HBPs sharing a single non-separating curve in $S$. 
Since $\phi(\Pi_k)$ and $\phi(\Pi_{k+1})$ share at least two HBPs for each $k$, the non-separating curve shared by all HBPs of $\phi(\Pi_k)$ is equal to that of $\phi(\Pi_{k+1})$. 
The lemma follows. 
\end{proof}


\subsection{Hexagons and squares in $\calcp_s(S)$}\label{subsec-hex}

We mean by a {\it hexagon} in $\calcp_s(S)$ the full subgraph of $\calcp_s(S)$ spanned by six vertices $v_1,\ldots, v_6$ with $i(v_k,v_{k+1})=0$, $i(v_k,v_{k+2})\ne 0$ and $i(v_k,v_{k+3})\ne 0$ for each $k$ mod $6$ (see Figure \ref{fig_hex}).
In this case, let us say that the hexagon is defined by the $6$-tuple $(v_1,\ldots,v_6)$.

Similarly, we mean by a {\it square} in $\calcp_s(S)$ the full subgraph of $\calcp_s(S)$ spanned by four vertices $v_1,\ldots,v_4$ with $i(v_k,v_{k+1})=0$ and $i(v_k,v_{k+2})\neq 0$ for each $k$ mod $4$ (see Figure \ref{fig_squ}).
\begin{figure}
\begin{center}
\includegraphics[height=5cm]{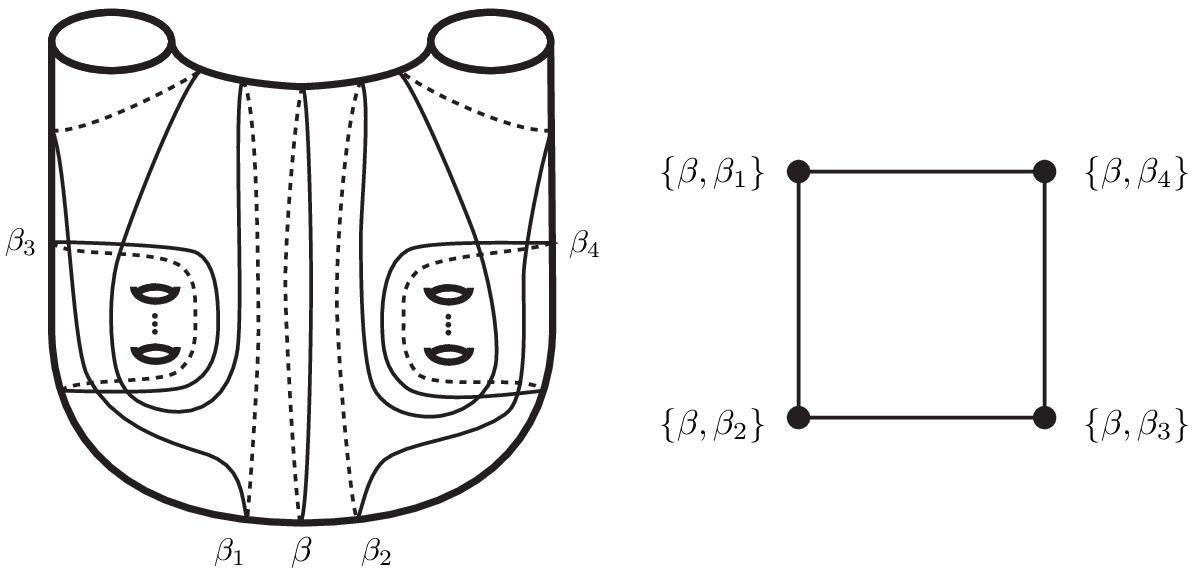}
\caption{A square in $\calcp_s(S)$}\label{fig_squ}
\end{center}
\end{figure}
In this case, we say that the square is defined by the $4$-tuple $(v_1,\ldots, v_4)$.

Hexagons and squares in $\calcp_s(S)$ are used to show that $\Phi(\alpha)$ is well-defined for each separating curve $\alpha$ in $S$ which is not an HBC in $S$.
A conclusion similar to Lemma \ref{lem-pen2} for such an $\alpha$ is established in Lemma \ref{sep_HBP_rooted}.
To prove it, we connect any two edges of $\calcp_s(S)$ consisting of two HBPs in $S$ containing $\alpha$ by a sequence of hexagons and squares in $\calcp_s(S)$.

\begin{lem}\label{a_2_2-HBP}
Let $S=S_{g,2}$ be a surface with $g\geq 2$. 
For each $k=1,2,3$, let $a_k=\{\alpha,\alpha_k\}$ be a separating HBP in $S$ such that $\{ a_1, a_2\}$ and $\{ a_2, a_3\}$ are edges of $\calcp_s(S)$. 
We assume that $\partial S$ is contained in a single component of $S_{\alpha}$. 
Then there exists a sequence of hexagons in $\calcp_s(S)$, $\Pi_1, \Pi_2, \ldots, \Pi_n$, satisfying the following: For each $k=1,2, \ldots, n$,
\begin{enumerate}
\item $\Pi_k$ is defined by a $6$-tuple consisting of a $2$-HBC, a $2$-HBP, a $1$-HBP, a $2$-HBP, a $1$-HBP and a $2$-HBP in this order;
\item any of the five HBPs of $\Pi_k$ contains $\alpha$;
\item $a_1 \in \Pi_1$, $a_3 \in \Pi_n$ and $a_2 \in \Pi_k$; 
\item if $k<n$, then $\Pi_k$ and $\Pi_{k+1}$ share at least two HBPs. 
\end{enumerate}
\end{lem}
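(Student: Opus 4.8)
The goal is, given three separating HBPs $a_1 = \{\alpha,\alpha_1\}$, $a_2 = \{\alpha,\alpha_2\}$, $a_3 = \{\alpha,\alpha_3\}$ in $S = S_{g,2}$ all sharing the curve $\alpha$ (with $\partial S$ on one side of $\alpha$), with $\{a_1,a_2\}$ and $\{a_2,a_3\}$ edges of $\calcp_s(S)$, to build a sequence of hexagons of the prescribed type joining $a_1$ to $a_3$ while always passing through $a_2$, with consecutive hexagons overlapping in at least two HBPs. The strategy is the one already used for Lemma \ref{lem-pen}: localize to a complementary subsurface, realize the passage from $a_1$ to $a_3$ by a product of Dehn (or half) twists lying in a subgroup that fixes $a_2$, and then translate a single fixed hexagon $\Pi$ by the partial products of those twists.

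\textbf{Step 1: the local picture.} Since $\partial S$ lies in one component of $S_\alpha$, the curve $\alpha$ bounds on its other side a one-holed subsurface $R$ of genus $g' \geq 1$, and each $\alpha_k$ is a separating curve in $R$ (equivalently an HBP in $S$ paired with $\alpha$ corresponds to a separating curve in $R$). The constraint that $\{a_1,a_2\}$ and $\{a_2,a_3\}$ are edges of $\calcp_s(S)$ says $\alpha_2$ is disjoint from both $\alpha_1$ and $\alpha_3$ inside $R$. I would first exhibit, by an explicit picture analogous to Figure \ref{fig-penta}, a hexagon $\Pi$ of the required type (a $2$-HBC, then $2$-HBP, $1$-HBP, $2$-HBP, $1$-HBP, $2$-HBP in cyclic order) containing $a_1$ and $a_2$ as two of its HBP-vertices, with all five of its HBPs containing $\alpha$; this is a local construction inside $R$ together with the fixed disk/boundary configuration on the other side of $\alpha$.

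\textbf{Step 2: a transitive group action fixing $a_2$.} Let $\Lambda \leq \pmod(S)$ be generated by Dehn twists about a Gervais-type generating set $U$ for $\pmod(R)$ (the curves of Figure \ref{fig-mcg} placed in $R$), chosen — exactly as in case (a) of Lemma \ref{lem-pen} — so that exactly one curve $\delta_1 \in U$ meets $\alpha_1$ and exactly one curve $\delta_2 \in U$ meets the ``partner'' curve of the other $2$-HBP of $\Pi$ adjacent to $a_2$, all other curves of $U$ being disjoint from these. Every element of $\Lambda$ fixes $\alpha$ and hence fixes $a_2$ provided the generators are disjoint from $\alpha_2$; I would arrange the generating set so that this holds (this is the place where the hypothesis that $a_2$ is disjoint from $a_1$ and $a_3$ and the freedom in choosing $\Pi$ are used). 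Under the natural surjection $q \colon \Lambda \to \pmod(R)$, the curves $\alpha_1$ and $\alpha_3$ lie in the same $\pmod(R)$-orbit of $V(R)$: both are separating curves in $R$ of the same topological type (each cutting off from $R$ a one-holed torus, or whatever type is forced by "$2$-HBP vs. $1$-HBP"), so the change-of-coordinates principle gives a $w \in \Lambda$ with $q(w)\alpha_1 = \alpha_3$. Writing $w = t_{\delta}^{\pm 1}\cdots$ as a word $g_1\cdots g_m$ in the chosen generators and their inverses yields the sequence of hexagons $\Pi,\, g_1\Pi,\, g_1g_2\Pi,\, \ldots,\, g_1g_2\cdots g_m\Pi$.

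\textbf{Step 3: verifying the four conditions.} Condition (i) is automatic because each $g_i$ is a homeomorphism and $\Pi$ has the prescribed type. Condition (ii) holds because every $g_i$ fixes $\alpha$. Condition (iii): $a_1 = \{\alpha,\alpha_1\} \in \Pi$ by construction and $a_3 = \{\alpha,\alpha_3\} = \{\alpha, q(w)\alpha_1\} \in g_1\cdots g_m\Pi$; and $a_2$ lies in every hexagon of the sequence since $\Lambda$ fixes $a_2$. Condition (iv), the overlap in at least two HBPs, is the delicate bookkeeping step: as in Lemma \ref{lem-pen}, one checks that each generator $t_\delta^{\pm1}$ with $\delta \in U$ fixes all but at most one of the HBP-vertices of $\Pi$ — only $t_{\delta_1}^{\pm1}$ can move the HBP built from $\alpha_1$, only $t_{\delta_2}^{\pm1}$ can move the other adjacent $2$-HBP, and $a_2$ together with the remaining HBPs and the $2$-HBC are fixed — so $\Pi$ and $g_i\Pi$ always share at least the $2$-HBC plus three HBPs, and hence $g_1\cdots g_{i-1}\Pi$ and $g_1\cdots g_i\Pi$ share at least two HBPs. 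I expect this last verification — confirming that the chosen generating set of $\pmod(R)$ can be arranged so that each generator fixes all but one HBP of $\Pi$ while still fixing $\alpha_2$, and handling the sub-cases according to whether $a_1,a_3$ are $1$-HBPs or $2$-HBPs (as in cases (a), (b), (c) of Lemma \ref{lem-pen}) — to be the main obstacle; it is mostly a matter of choosing the pictures carefully and invoking the Gervais generating set, with no genuinely new idea beyond Lemma \ref{lem-pen}.
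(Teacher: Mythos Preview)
Your localization in Step~1 is to the wrong side of $\alpha$. Since the holed sphere cut off by any separating HBP $\{\alpha,\alpha_k\}$ must contain at least one component of $\partial S$, each $\alpha_k$ lies in the component of $S_\alpha$ that \emph{contains} $\partial S$, not in the one-holed side $R$ you describe; indeed, a curve in the one-holed side that is HBP-equivalent to $\alpha$ would bound with $\alpha$ a planar region with no component of $\partial S$, hence an annulus. Consequently your group $\pmod(R)$ acts trivially on all of the $\alpha_k$, and the transitivity argument of Step~2 never gets started.

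The paper's proof instead cuts along $a_2$ (not $\alpha$) and splits into two genuinely different cases. When $a_2$ is a $2$-HBP, both $a_1$ and $a_3$ are forced to be $1$-HBPs and $\alpha_1,\alpha_3$ lie in the four-holed sphere $Q$ cut off by $a_2$; here Dehn twists alone are \emph{not} enough, because $\alpha_1$ and $\alpha_3$ may pair $\alpha$ with different components $\partial_1,\partial_2$ of $\partial S$ inside $Q$ and hence lie in distinct $\pmod(Q)$-orbits. The paper uses the half twist about the $2$-HBC of $\Pi$ (swapping $\partial_1$ and $\partial_2$) together with $t_{\alpha_1}$ to generate $\mod(Q;\alpha,\alpha_2)$, as in the first case of Lemma~\ref{lem-pen}. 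When $a_2$ is a $1$-HBP, the hypothesis on $\partial S$ forces $a_1,a_3$ to be $2$-HBPs, and the relevant subsurface is the positive-genus \emph{two-holed} component of $S_{a_2}$ containing the remaining boundary circle; only in this case does a Gervais-type generating set work, and even then two generators must be allowed to meet the partner curve $\beta_3$ of the adjacent $2$-HBP (Figure~\ref{fig-mcgr}), so the overlap bookkeeping for condition~(iv) is a bit more delicate than your sketch suggests. The shape of your Steps~2--3 is correct once the right subsurface and the half twist are in place.
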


\begin{proof}
We find a desired sequence of hexagons in the following two cases: (a) $a_2$ is a $2$-HBP; and (b) $a_2$ is a $1$-HBP.

(a) If $a_2$ is a $2$-HBP, then $a_1$ and $a_3$ are $1$-HBPs since any two distinct separating $2$-HBPs in $S$ intersect. 
We denote by $Q$ the component of $S_{a_2}$ homeomorphic to $S_{0,4}$. 
Since we have $i(\alpha_1,\alpha_2)=i(\alpha_3,\alpha_2)=0$, $\alpha_1$ and $\alpha_3$ are elements of $V(Q)$. 
As in Figure \ref{fig_hex}, we can find a hexagon $\Pi$ defined by a $6$-tuple $(v_1,\ldots, v_6)$ with $v_1$ a $2$-HBC; $v_2=a_2$; and $v_3=a_1$.
We note that $v_4$ is a $2$-HBP; $v_5$ is a $1$-HBP; and $v_6$ is a $2$-HBP.
The vertex $v_1$ lies in $V(Q)$ since it is disjoint from $v_2=a_2$. 
Let $h \in \mod(S)$ be the half twist about $v_1$ exchanging the two components of $\partial S$.
Let $x \in \mod(S)$ be the Dehn twist about $\alpha_1$. 
Define $\Gamma$ to be the subgroup of $\mod(S)$ generated by $h$ and $x$. 
Since $a_2$ is fixed by $\Gamma$, there exists a natural homomorphism $p \colon \Gamma \rightarrow \mod(Q)$. 
We denote by $\mod(Q;\alpha,\alpha_2)$ the subgroup of $\mod(Q)$ consisting of all elements that fix each of the two components of $\partial Q$ corresponding to $\alpha$ and $\alpha_2$. 
As in the proof of Lemma \ref{lem-pen}, we obtain the equality $p(\Gamma)=\mod(Q;\alpha,\alpha_2)$. 
Since $\alpha$ and $\alpha_2$ are contained in distinct components of $Q_{\alpha_1}$ (resp.\ $Q_{\alpha_3}$), $\alpha_3$ lies in the orbit of $\alpha_1$ for the action of $\mod(Q;\alpha,\alpha_2)$ on $V(Q)$. 
Setting $H=\{ h^{\pm 1}, x^{\pm 1}\}$, we can thus find $h_1,\ldots, h_n\in H$ with $\alpha_3=p(h_1)\cdots p(h_n)\alpha_1$. 
Along an argument of the same kind as in Lemma \ref{lem-pen}, we obtain a desired sequence of hexagons.

(b) We next suppose that $a_2$ is a $1$-HBP. 
Since $\partial S$ is contained in a single component of $S_{\alpha}$, both $a_1$ and $a_3$ are $2$-HBPs. 
Let $R$ denote the component of $S_{a_2}$ of positive genus and containing a component of $\partial S$. 
Note that the number of boundary components of $R$ is equal to two and that $\alpha_1$ and $\alpha_3$ are elements of $V(R)$. 
As in Figure \ref{fig_hex}, we can find a hexagon $\Pi$ defined by a $6$-tuple $(v_1,\ldots, v_6)$ with $v_1=a_1$; $v_2=a_2$; and $v_6$ a $2$-HBC.
Note that $v_3$ is a $2$-HBP; $v_4$ is a $1$-HBP; and $v_5$ is a $2$-HBP.
We put $v_3=\{\alpha,\beta_3\}$. 
It then follows that $\beta_3$ lies in $V(R)$. 

\begin{figure}
\begin{center}
\includegraphics[width=9cm]{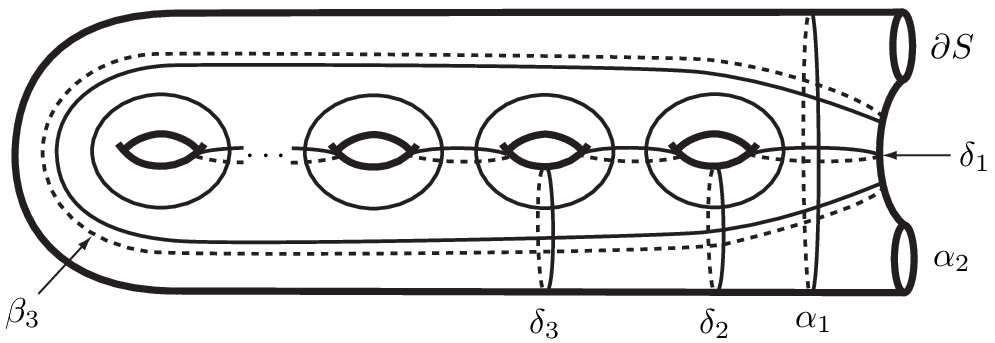}
\end{center}
\caption{}\label{fig-mcgr}
\end{figure}

Let $U$ be the set of the curves in $R$ described in Figure \ref{fig-mcgr} other than $\alpha_1$ and $\beta_3$, which satisfies the following two conditions: 
\begin{itemize}
\item The set of Dehn twists about curves in $U$ generates $\pmod(R)$; and
\item The curves $\delta_1$, $\delta_2$ and $\delta_3$ in $U$ satisfy $i(\alpha_1, \delta_1)\neq 0$ and $i(\alpha_1, \delta)=0$ for any $\delta \in U\setminus \{ \delta_1\}$; and $i(\beta_3, \delta_2)\neq 0$, $i(\beta_3, \delta_3)\neq 0$ and $i(\beta_3, \delta)=0$ for any $\delta \in U\setminus \{ \delta_2, \delta_3\}$.
\end{itemize}
Note that $\alpha_1$ and $\alpha_3$ lie in the same orbit for the action of $\pmod(R)$ on $V(R)$. Along an argument of the same kind as in Lemma \ref{lem-pen}, we obtain a desired sequence of hexagons.   
\end{proof}

\begin{lem}\label{a_2_1-HBP}
Let $S=S_{g,2}$ be a surface with $g\geq 2$. For each $k=1,2,3$, let $a_k=\{\alpha,\alpha_k\}$ be a separating HBP in $S$ such that $\{ a_1, a_2\}$ and $\{ a_2, a_3\}$ are distinct edges of $\calcp_s(S)$. 
We assume that each component of $S_{\alpha}$ contains a component of $\partial S$. 
Then there exists a square $\Pi$ in $\calcp_s(S)$ such that
\begin{itemize}
\item $a_1$, $a_2$ and $a_3$ are vertices of $\Pi$; and
\item the other vertex of $\Pi$ is a $1$-HBP in $S$ containing $\alpha$. 
\end{itemize}
\end{lem}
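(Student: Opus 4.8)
The plan is to analyze the topological picture on the surface $X = S_\alpha$ obtained by cutting along the root curve $\alpha$. By hypothesis, $\alpha$ is separating and not an HBC, and each of the two components of $S_\alpha$ contains a component of $\partial S$; since $g \geq 2$, write $S_\alpha = Q_1 \sqcup Q_2$ where $Q_i$ contains the boundary component $\partial_i$ of $S$ together with a copy of $\alpha$, and at least one of $Q_1, Q_2$ has positive genus. A separating HBP in $S$ containing $\alpha$ corresponds exactly to choosing a second curve $\beta$ disjoint from $\alpha$ such that $\{\alpha,\beta\}$ cuts off a genus-zero piece; since the genus-zero piece of $S_{\{\alpha,\beta\}}$ must contain exactly one of $\partial_1,\partial_2$ (it cannot contain both, as then $\alpha$ would be an HBC, contradicting that $a_1,a_2,a_3$ are HBPs with root $\alpha$ that is not an HBC — wait, more precisely the HBP is a $1$-HBP or $2$-HBP), the curve $\beta$ lies in one of $Q_1$ or $Q_2$ and is an HBC there encircling the relevant boundary component(s). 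First I would reduce to understanding: the two HBPs $a_1$ and $a_3$ each have their second curve $\alpha_1, \alpha_3$ in $Q_1$ or $Q_2$; since $a_1 \neq a_3$ the curves $\alpha_1,\alpha_3$ are distinct but both disjoint from $\alpha_2$.

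The key step is to produce the fourth vertex, a $1$-HBP $\{\alpha, \delta\}$ in $S$, completing the square $(a_1, a_2, a_3, \{\alpha,\delta\})$. Here ``$1$-HBP in $S$ containing $\alpha$'' forces the genus-zero piece of $S_{\{\alpha,\delta\}}$ to contain no component of $\partial S$; that is, $\delta$ must live in the positive-genus component of $S_\alpha$ and bound (together with $\alpha$) a genus-zero subsurface of $S_\alpha$ with no boundary component of $S$ inside — in other words $\delta$ is isotopic in $S_\alpha$ to a curve ``parallel'' to $\alpha$ across a handle. I would pick such a $\delta$ disjoint from $\alpha_1$, $\alpha_2$, and $\alpha_3$ simultaneously: since all of $\alpha_1,\alpha_2,\alpha_3$ are disjoint from $\alpha$ they live in $S_\alpha$, and on whichever of $Q_1, Q_2$ has positive genus one has enough room (the positive-genus component has genus $\geq 1$, possibly after noting that if one component has genus $0$ the other has genus $g \geq 2$) to find a curve parallel to $\alpha$ avoiding all of them. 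The requirements $i(\{\alpha,\delta\}, a_1) \neq 0$ and $i(\{\alpha,\delta\}, a_3) \neq 0$ and $i(\{\alpha,\delta\}, a_2) = 0$ that make the four vertices into a square (as in Figure \ref{fig_squ}) then need to be verified: disjointness from $a_2$ is arranged by construction, and the intersections with $a_1$ and $a_3$ should follow because $\{\alpha,\delta\}$ separates $Q_1$ from $Q_2$ in a way incompatible with $\alpha_1$ and $\alpha_3$ being on the ``wrong side'' — this uses that $\alpha_1$ and $\alpha_3$ are HBCs inside $S_\alpha$ cutting off the relevant boundary component, hence cannot be disjoint from a curve parallel to $\alpha$ that is ``closer'' to that boundary component.

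The main obstacle I anticipate is the careful bookkeeping needed to guarantee that the chosen $\delta$ really does intersect \emph{both} $a_1$ and $a_3$ while missing $a_2$, especially in the degenerate configurations: when $\alpha_1$ and $\alpha_3$ lie in the same component $Q_i$ versus different components, and when that component has small genus. I would handle this by a case split on the positions of $\alpha_1, \alpha_3$ among $Q_1, Q_2$ and, in each case, exhibit an explicit $\delta$ using the standard curve pictures (as in Figures \ref{fig-mcg} and \ref{fig-mcgr}), checking the three intersection/disjointness conditions directly. The fact that we only need \emph{existence} of one square, not a whole sequence, makes this considerably lighter than Lemmas \ref{lem-pen} and \ref{a_2_2-HBP}; no mapping-class-group generation argument is required, just one well-chosen curve.
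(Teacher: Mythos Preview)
Your proposal contains a genuine error in the square conditions, and it misses the structural observation that makes the lemma almost immediate.

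First, the intersection requirements are reversed. For the $4$-tuple $(a_1,a_2,a_3,a_4)$ to define a square in $\calcp_s(S)$ you need $i(a_4,a_1)=i(a_4,a_3)=0$ and $i(a_4,a_2)\neq 0$, since $a_4$ is adjacent to $a_1,a_3$ and diagonal to $a_2$. You wrote the opposite, asking for $i(\{\alpha,\delta\},a_1)\neq 0$, $i(\{\alpha,\delta\},a_3)\neq 0$ and $i(\{\alpha,\delta\},a_2)=0$, and accordingly tried to make $\delta$ disjoint from $\alpha_2$. Relatedly, your reading of ``$1$-HBP'' as requiring the genus-zero piece to contain \emph{no} component of $\partial S$ is wrong; a $1$-HBP is one whose genus-zero piece contains exactly one component of $\partial S$. (You had this right earlier in your own write-up and then contradicted it.)

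Second, the case split on where $\alpha_1,\alpha_3$ lie is unnecessary because their location is forced. Since $p=2$ and each component $Q_1,Q_2$ of $S_\alpha$ contains exactly one component of $\partial S$, every $a_k$ is a $1$-HBP, and $\alpha_k$ is a curve in some $Q_i$ cutting off a pair of pants containing both boundary components of $Q_i$. Any two distinct such curves in the \emph{same} $Q_i$ must intersect (the complement of one in $Q_i$ is a pair of pants union a one-holed surface, neither of which contains another such curve). Since $i(\alpha_1,\alpha_2)=i(\alpha_3,\alpha_2)=0$, the curves $\alpha_1,\alpha_3$ are forced into the component of $S_\alpha$ \emph{opposite} to $\alpha_2$; in particular $i(a_1,a_3)=i(\alpha_1,\alpha_3)\neq 0$ automatically. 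The paper then simply chooses any curve $\alpha_4$ in the component $R$ containing $\alpha_2$ with $\{\alpha,\alpha_4\}$ a $1$-HBP and $i(\alpha_4,\alpha_2)\neq 0$; disjointness of $a_4$ from $a_1$ and $a_3$ is automatic since $\alpha_4$ lives in the other component. No pictures, no case analysis, no generating-set argument --- just this one observation.
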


\begin{proof}
We first note that for each $k=1,2,3$, $a_k$ is a $1$-HBP in $S$ because each component of $S_{\alpha}$ contains a component of $\partial S$. 
It follows that $\alpha_1$ and $\alpha_3$ lie in the same component of $S_{\alpha}$ and that $\alpha_2$ lies in another component of $S_{\alpha}$, denoted by $R$. 
Choose a curve $\alpha_4$ in $R$ with $i(\alpha_4, \alpha_2)\neq 0$ and $a_4=\{\alpha,\alpha_4\}$ a $1$-HBP in $S$. 
The $4$-tuple $(a_1, a_2, a_3, a_4)$ then defines a square in $\calcp_s(S)$. 
\end{proof}

The following lemma is a variant of Lemma \ref{non-sep_conn} for separating HBPs.

\begin{lem}\label{conn_sep_2}
Let $X=S_{g,p}$ be a surface with $g\geq 2$ and $p\geq 2$, and pick a separating curve $\alpha$ in $X$ which is not an HBC in $X$.
Then the full subcomplex $\cal{E}$ of $\calcp_s(X)$ spanned by all vertices that correspond to separating HBPs in $X$ containing $\alpha$ is connected. 
\end{lem}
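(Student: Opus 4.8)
The plan is to imitate the strategy of Lemma \ref{non-sep_conn}, but now the complement $X_{\alpha}$ has two components rather than one, so the correspondence with HBCs in a single subsurface is no longer available. Instead I would first reduce to a statement about two subsurfaces separately. Write $X_{\alpha}=Y_1\sqcup Y_2$, where $Y_i$ has genus $g_i\geq 1$ (both genera are positive because $\alpha$ is not an HBC and the two curves of any separating HBP containing $\alpha$ are required to be separating but not HBCs in $X$, forcing each side to carry genus). A separating HBP $\{\alpha,\beta\}$ in $X$ containing $\alpha$ corresponds to a separating curve $\beta$ lying in one of the $Y_i$ that separates that $Y_i$ into two pieces of positive genus, with the side not containing $\partial Y_i\cap\alpha$ being exactly the genus-zero part of the HBP's complement — more precisely $\beta$ is a curve in $Y_i$ such that, after capping off the boundary curve of $Y_i$ coming from $\alpha$, $\beta$ bounds on one side a holed sphere containing some of the original boundary components of $X$. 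So I would set up, for $i=1,2$, the full subcomplex $\mathcal{E}_i$ of vertices of $\mathcal{E}$ whose non-$\alpha$ curve lies in $Y_i$, observe $\mathcal{E}=\mathcal{E}_1\cup\mathcal{E}_2$, and prove three things: (1) $\mathcal{E}_1$ is connected; (2) $\mathcal{E}_2$ is connected; (3) some vertex of $\mathcal{E}_1$ is joined by an edge of $\mathcal{E}$ (i.e.\ is disjoint in $X$) to some vertex of $\mathcal{E}_2$. Claim (3) is easy: choose $\beta_1\subset Y_1$ and $\beta_2\subset Y_2$ of the required type; since $Y_1$ and $Y_2$ are disjoint subsurfaces of $X$, the curves $\beta_1$ and $\beta_2$ are disjoint, so $\{\alpha,\beta_1\}$ and $\{\alpha,\beta_2\}$ span an edge of $\mathcal{E}$.

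For (1) and (2), by symmetry it suffices to treat $\mathcal{E}_1$. Here I would invoke Proposition \ref{connected}. Let $\bar{Y}_1$ be the surface obtained from $Y_1$ by capping the boundary circle $\partial_\alpha$ that was created by cutting along $\alpha$; then $\bar{Y}_1$ has genus $g_1\geq1$ and $p_1:=p-(\text{number of }\partial X\text{ in }Y_2)\geq?$ boundary components, all of which are original components of $\partial X$. The vertices of $\mathcal{E}_1$ are in bijection with HBCs of $Y_1$ separating $\partial_\alpha$ from at least one original boundary component — equivalently, after the capping, HBCs in $\bar Y_1$ — but the cleanest route is: a separating HBP $\{\alpha,\beta\}$ with $\beta\subset Y_1$ corresponds exactly to an HBC $\beta$ in $Y_1$ separating $\partial_\alpha$ from the rest of $\partial Y_1$, and the incidence relation (disjointness of the $\beta$'s) is unchanged. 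So $\mathcal{E}_1$ is isomorphic to the subcomplex $\mathcal{D}$ of $\calc(Y_1)$ spanned by HBCs in $Y_1$ separating $\partial_\alpha$ from a fixed boundary component $\partial_\ast$ of $Y_1$ coming from $\partial X$. If $p_1\geq4$, Proposition \ref{connected} applies directly with $X:=Y_1$, $\partial_1:=\partial_\alpha$, $\partial_2:=\partial_\ast$, giving connectivity. If $p_1<4$, one passes to a subsurface trick: every such HBC in $Y_1$ is already disjoint from (or, by a short explicit argument, joined to) a distinguished one, since with few boundary components the combinatorics of holed spheres separating two marked boundary curves is elementary — I would handle the cases $p_1=2,3$ by hand, exhibiting for any two such HBCs a common disjoint HBC of the required kind or a length-two path between them, much as in Claim \ref{claim-d} and Claim \ref{claim-23-conn}.

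Combining, $\mathcal{E}=\mathcal{E}_1\cup\mathcal{E}_2$ with both pieces connected and an edge between them, hence $\mathcal{E}$ is connected.

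The main obstacle I anticipate is \emph{not} the gluing step (3), which is trivial, but verifying the hypotheses needed to invoke Proposition \ref{connected} on each side — in particular confirming that each $Y_i$ has positive genus and that, when $p_1$ or $p_2$ is small, the subcomplex $\mathcal{E}_i$ is still connected (Proposition \ref{connected} requires $p\geq4$, and a capped-off side may have as few as $2$ boundary components if most of $\partial X$ lies on the other side or is absorbed into the genus-zero part of an HBP). So the technical heart of the argument is a careful case analysis of these low-complexity subsurfaces, showing directly that the relevant HBCs separating two fixed boundary curves form a connected complex; everything else is bookkeeping translating between HBPs in $X$ containing $\alpha$ and HBCs in the two capped pieces.
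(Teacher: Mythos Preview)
Your decomposition $\mathcal{E}=\mathcal{E}_1\cup\mathcal{E}_2$ and the gluing step (3) are fine (modulo the harmless caveat that one $\mathcal{E}_i$ may be empty when all of $\partial X$ lies on one side of $\alpha$). The real problem is your identification of $\mathcal{E}_1$ with a subcomplex of $\calc(Y_1)$. A curve $\beta\subset Y_1$ gives a separating HBP $\{\alpha,\beta\}$ in $X$ if and only if $\beta$ is an HBC in $Y_1$ whose \emph{holed-sphere side contains $\partial_\alpha$}; there is no distinguished second boundary component $\partial_*$ in this description. Your claimed bijection with ``HBCs in $Y_1$ separating $\partial_\alpha$ from a fixed $\partial_*$'' is wrong in both directions: it includes HBCs whose holed-sphere side contains $\partial_*$ but not $\partial_\alpha$ (such a $\beta$ is an HBC in $X$, hence not part of any HBP), and it excludes HBCs whose holed-sphere side contains both $\partial_\alpha$ and $\partial_*$ (which \emph{do} give HBPs). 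So Proposition~\ref{connected} is not the right tool; the relevant statement is Proposition~\ref{conn_1_3}, which treats exactly the complex of HBCs cutting off a holed sphere containing one prescribed boundary component.

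Even with the correct proposition, your plan still leaves the low-complexity gap you flag: Proposition~\ref{conn_1_3} requires $Y_i$ to have at least three boundary components, i.e.\ at least two components of $\partial X$ on that side, and you do not actually carry out the case $|\partial X\cap Y_i|=1$. The paper avoids this case analysis altogether by a simpler route. Rather than proving each $\mathcal{E}_i$ connected, it observes that if $a_1=\{\alpha,\alpha_1\}$ and $a_2=\{\alpha,\alpha_2\}$ intersect then $\alpha_1,\alpha_2$ lie in the \emph{same} component $R$ of $X_\alpha$. If the other component $R'$ contains some component of $\partial X$, pick any $\beta\subset R'$ with $\{\alpha,\beta\}$ an HBP; this is disjoint from both $a_1$ and $a_2$, giving a length-two path. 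If $R'$ contains no component of $\partial X$, then $R$ contains all $p\geq 2$ of them, so $R$ has at least three boundary components and Proposition~\ref{conn_1_3} applies directly to $R$. This ``go through the other side'' trick makes the low-complexity analysis unnecessary.
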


To prove this lemma, we need the following:

\begin{prop}\label{conn_1_3}
Let $Y=S_{g, p}$ be a surface with $g \geq 1$ and $p\geq 3$, and choose a component $\partial$ of $\partial Y$.
Then the full subcomplex $\cal{F}$ of $\calc(Y)$ spanned by all vertices that correspond to HBCs in $Y$ cutting off a holed sphere containing $\partial$ from $Y$ is connected.
\end{prop}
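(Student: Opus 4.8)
The plan is to follow the same strategy as in Proposition \ref{connected}, using Lemma 2.1 of \cite{putman-conn} on connectivity of simplicial complexes acted on by $\pmod(Y)$. First I would fix a convenient ``model'' vertex of $\cal{F}$: since $p\geq 3$, there is another component $\partial'$ of $\partial Y$ distinct from $\partial$, and I would single out those HBCs in $Y$ that \emph{encircle} $\partial$ and $\partial'$ (in the sense defined after Lemma \ref{lem-pen}), i.e. cut off a copy of $S_{0,3}$ containing exactly $\partial$ and $\partial'$. The argument then splits into two steps, mirroring Claims \ref{claim-d} and \ref{claim-23-conn}: (a) any HBC $\alpha$ in $\cal{F}$ can be joined by a path in $\cal{F}$ to some HBC encircling $\partial$ and $\partial'$; and (b) any two HBCs encircling $\partial$ and $\partial'$ can be joined by a path in $\cal{F}$.

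For step (a): given an HBC $\alpha$ in $Y$ cutting off a holed sphere $Q$ containing $\partial$, I would induct on the number of boundary components of $\partial Y$ lying in $Q$ (equivalently, on the ``size'' of $\alpha$). If $Q$ contains only $\partial$, then $\alpha$ already encircles $\partial$ alone, and since $p\geq 3$ one can enlarge it: choose an HBC $\beta$ disjoint from $\alpha$ that cuts off a holed sphere containing $\partial$ and one further boundary component $\partial'$; then inside that $S_{0,3}$ one finds an HBC $\gamma$ encircling $\partial$ and $\partial'$ disjoint from $\beta$, giving a path $\alpha,\beta,\gamma$ in $\cal{F}$. If $Q$ contains $\partial$ together with other boundary components, I would produce inside $Q$ a \emph{smaller} HBC $\alpha'$ still cutting off a holed sphere containing $\partial$ (e.g. cut off a sub-holed-sphere containing $\partial$ and fewer boundary components), with $\alpha'$ disjoint from $\alpha$, so that $\{\alpha,\alpha'\}$ is an edge of $\cal{F}$; then apply induction to $\alpha'$. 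A short separate case-check, as in Claim \ref{claim-d}, handles the situation where the needed smaller curve is not automatically disjoint from $\alpha$, by routing through one or two intermediate HBCs.

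For step (b): here I would invoke, as in the proof of Lemma \ref{lem-pen} and Claim \ref{claim-23-conn}, that $\pmod(Y)$ is generated by Dehn twists about the curves $U$ of Figure \ref{fig-mcg}, and that $\pmod(Y)$ acts transitively on the set of HBCs encircling $\partial$ and $\partial'$. After labeling the boundary components of Figure \ref{fig-mcg} appropriately, I would exhibit one HBC $\gamma$ encircling $\partial$ and $\partial'$ such that exactly one curve $\delta\in U$ meets $\gamma$, and moreover there is an HBC $\gamma''$ in $\cal{F}$ (cutting off a holed sphere containing $\partial$) disjoint from both $\gamma$ and $\delta$; then $t_\delta^{\pm1}\gamma,\ \gamma'',\ \gamma$ is a path in $\cal{F}$, while every other generating twist fixes $\gamma$. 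Lemma 2.1 of \cite{putman-conn} then upgrades these local moves into connectivity of $\cal{F}$. The main obstacle I anticipate is bookkeeping in step (a): verifying that the ``smaller'' HBC used in the induction can always be taken disjoint from the original one, or else handling the finitely many exceptional configurations by hand, exactly as the extra path $\alpha,\alpha_1,\alpha_2,\alpha_3,\alpha_4$ does in Claim \ref{claim-d}; the hypothesis $p\geq 3$ is what guarantees enough room to carry this out.
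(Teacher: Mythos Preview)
Your plan is correct and follows essentially the same route as the paper: Putman's connectivity lemma, reduction to a single orbit of ``small'' HBCs (those encircling $\partial$ and one other fixed component), and then checking the generator moves. Two small points where your write-up deviates from the paper are worth noting. First, your base case in step (a) is vacuous: an HBC whose holed sphere contains only $\partial$ would bound an annulus and hence be inessential, so the induction should start at $2$-HBCs; the paper simply gives the explicit short path $\beta,\beta_1,\beta_2,\beta_3$ (via a $3$-HBC encircling $\partial_1,\partial_2,\partial_j$) instead of inducting. Second, in step (b) the paper does not use your $3$-vertex path $t_\delta\gamma,\gamma'',\gamma$ but a $5$-vertex one $\gamma,\gamma_1,\gamma_2,t_\delta\gamma_1,t_\delta\gamma$ with $t_\delta\gamma_2=\gamma_2$; this is because for the smallest admissible surfaces (in particular $g=1$, $p=3$) there may be no single vertex of $\cal{F}$ disjoint from both $\gamma$ and $\delta$, so an extra intermediate curve is needed. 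Your plan already anticipates such bookkeeping, so this is only a matter of adjusting the path length, not the strategy.
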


\begin{proof}
The proof is based on Lemma 2.1 in \cite{putman-conn} as in the proof of Proposition \ref{connected}.
Label components of $\partial Y$ as $\partial_1,\ldots, \partial_p$ with $\partial =\partial_1$.
We first claim that for any curve $\beta$ in $Y$ corresponding to a vertex of $\cal{F}$, there exists a path in $\cal{F}$ connecting $\beta$ to an HBC in $Y$ encircling $\partial_1$ and $\partial_2$.
If the holed sphere cut off by $\beta$ from $Y$ contains $\partial_2$, then one can find an HBC in $Y$ disjoint from $\beta$ and encircling $\partial_1$ and $\partial_2$.
Otherwise, choose $j\in \{ 3,\ldots, p\}$ so that $\partial_j$ is contained in the holed sphere cut off by $\beta$ from $Y$.
One can then find a path in $\cal{F}$, $\beta$, $\beta_1$, $\beta_2$, $\beta_3$, such that $\beta_1$ encircles $\partial_1$ and $\partial_j$; $\beta_2$ encircles $\partial_1$, $\partial_2$ and $\partial_j$; and $\beta_3$ encircles $\partial_1$ and $\partial_2$.
The claim follows.

To prove the proposition, it suffices to show that if $\gamma$ and $\delta$ are the curves described in Figure \ref{fig-econn}, then $t_{\delta}\gamma$ and $\gamma$ are connected by a path in $\cal{F}$.
\begin{figure}
\begin{center}
\includegraphics[width=10cm]{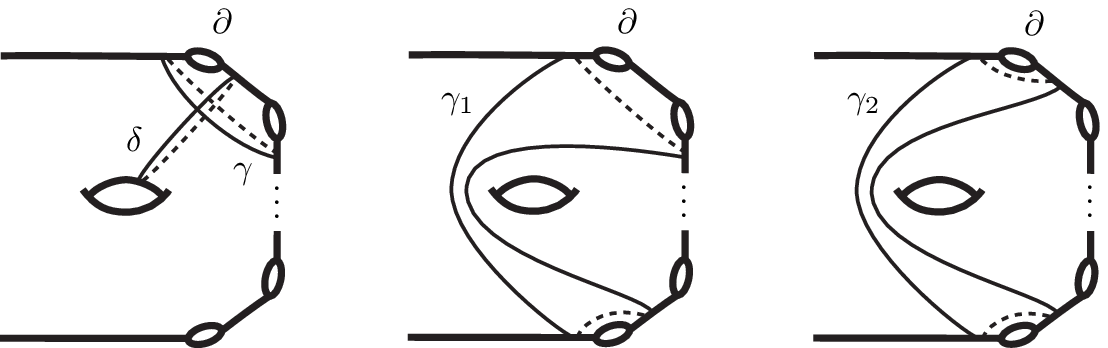}
\caption{}\label{fig-econn}
\end{center}
\end{figure}
Choose the two HBCs $\gamma_1$, $\gamma_2$ in $Y$ described in Figure \ref{fig-econn}.
The sequence $\gamma$, $\gamma_1$, $\gamma_2=t_{\delta}\gamma_2$, $t_{\delta}\gamma_1$, $t_{\delta}\gamma$ is then a desired path in $\cal{F}$.
\end{proof}

\begin{proof}[Proof of Lemma \ref{conn_sep_2}]
Let $a_1=\{ \alpha, \alpha_1\}$ and $a_2=\{ \alpha, \alpha_2\}$ be vertices of $\cal{E}$. To find a path in $\cal{E}$ connecting $a_1$ and $a_2$, we may assume $i(a_1, a_2) \ne 0$. 
Note that $\alpha_1$ and $\alpha_2$ lie in the same component of $X_\alpha$. 
Let $R$ denote that component of $X_\alpha$ and $R'$ denote another component of $X_\alpha$. 

If $R'$ contains a component of $\partial X$, then there exists a curve $\beta$ in $R'$ with $\{\alpha, \beta\}$ an HBP in $X$.
Since $\{\alpha, \beta\}$ is then disjoint from $a_1$ and $a_2$, we obtain the path $a_1$, $\{ \alpha, \beta \}$, $a_2$ in $\cal{E}$.

Suppose that $R'$ contains no component of $\partial X$. 
It then follows that $R$ contains at least two components of $\partial X$. 
By Proposition \ref{conn_1_3}, there exists a sequence $\alpha_1=\beta_1,\beta_2,\ldots,\beta_n=\alpha_2$ of HBCs in $R$ cutting off from $R$ a holed sphere containing the component of $\partial R$ corresponding to $\alpha$. 
The pair $\{\alpha,\beta_k\}$, denoted by $b_k$, is an HBP in $X$, and the sequence $a_1=b_1,b_2,\ldots,b_n=a_2$ is thus a path in $\cal{E}$.  
\end{proof}

The following two lemmas give us information on the images of the hexagons and squares constructed in Lemmas \ref{a_2_2-HBP} and \ref{a_2_1-HBP}, respectively, via an automorphism $\phi$ of $\calcp_s(S)$.

\begin{lem}\label{hex_root}
Let $S=S_{g, 2}$ be a surface with $g\geq 2$.
Let $(v_1,\ldots, v_6)$ be a $6$-tuple defining a hexagon in $\calcp_s(S)$ such that
\begin{itemize}
\item $v_1$, $v_3$ and $v_5$ are $2$-HBPs; $v_2$ and $v_4$ are $1$-HBPs; and
\item each of the four edges $\{ v_1, v_2\}$, $\{ v_2, v_3\}$, $\{ v_3, v_4\}$ and $\{ v_4, v_5\}$ is rooted.
\end{itemize}
Then the root curves of the four edges in the second condition are equal.
\end{lem}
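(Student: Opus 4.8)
The strategy mirrors the proof of Lemma \ref{root_penta}. Let $\beta_{12}$, $\beta_{23}$, $\beta_{34}$, $\beta_{45}$ denote the root curves of the four edges $\{v_1,v_2\}$, $\{v_2,v_3\}$, $\{v_3,v_4\}$, $\{v_4,v_5\}$, respectively. Since $v_2$ and $v_3$ are disjoint, we have $i(\beta_{12},\beta_{23})=i(\beta_{23},\beta_{34})=0$ and similarly all four root curves are pairwise disjoint (using also $i(v_3,v_4)=0$ and $i(v_4,v_5)=0$, and the fact that consecutive edges share a common vertex of the hexagon). The goal is to show $\beta_{12}=\beta_{23}=\beta_{34}=\beta_{45}$; by symmetry of the hexagon it suffices to prove $\beta_{12}=\beta_{23}$ and $\beta_{23}=\beta_{34}$, because proving equality of the root curves of any two adjacent edges of this shape, applied along the chain, yields the result.

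First I would show $\beta_{23}=\beta_{34}$. The vertex $v_3$ is a $2$-HBP, so it contains the two root curves $\beta_{23}$ and $\beta_{34}$ unless they coincide; thus either $v_3=\{\beta_{23},\beta_{34}\}$ or $\beta_{23}=\beta_{34}$. Suppose $v_3=\{\beta_{23},\beta_{34}\}$ with $\beta_{23}\ne\beta_{34}$. Since $v_2=\{\alpha,\beta_{23}\}$ for the common root curve $\alpha$ of $v_1,v_2$ — wait, more carefully: $v_2$ is a $1$-HBP containing $\beta_{12}$ and $\beta_{23}$, and $v_4$ is a $1$-HBP containing $\beta_{34}$ and $\beta_{45}$. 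I would argue as in case (a) of Lemma \ref{root_penta}: $v_2$ being a $1$-HBP forces the two curves of $v_2$ to be $\beta_{12}$ and $\beta_{23}$ (if they are distinct), and then the complementary surface $S_{v_3}$ of the $2$-HBP $v_3$ has its genus-zero piece $R\cong S_{0,4}$ carrying $v_2$ and $v_4$; using that $v_1$ and $v_5$ are disjoint from the appropriate curves while $v_2$ intersects $v_4$ (since $i(v_2,v_4)\ne 0$ in the hexagon), one derives the same kind of contradiction that appeared in Lemma \ref{root_penta}: the configuration forces two of the $2$-HBPs among $v_1,v_3,v_5$ to be disjoint, which is false. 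Hence $\beta_{23}=\beta_{34}$.

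Next I would show $\beta_{12}=\beta_{23}$. Here $v_2$ is a $1$-HBP containing both $\beta_{12}$ and $\beta_{23}$; if they are distinct then $v_2=\{\beta_{12},\beta_{23}\}$. Now $v_1$ is a $2$-HBP disjoint from $v_2$ and rooted-adjacent to it via $\beta_{12}$, so $v_1=\{\beta_{12},\beta_{01}\}$ for some curve $\beta_{01}$; from $i(v_1,v_3)\ne 0$ and $i(v_1,v_4)\ne 0$ (hexagon conditions) together with the disjointness $i(v_3,v_4)=0$ and $v_3\ni\beta_{23}=\beta_{34}\in v_4$ one locates $\beta_{12}$, $\beta_{01}$ relative to the genus-zero region cut off by $v_3$ (or by $v_1$) and shows that $\beta_{23}$ would have to intersect $v_1$, contradicting $i(v_1,v_2)=0$ and $\beta_{23}\in v_2$. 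This is the same mechanism as parts (b) and (c) of Lemma \ref{root_penta}: once $v_2$ is pinned as $\{\beta_{12},\beta_{23}\}$, the position of a vertex disjoint from $v_2$ but intersecting the $2$-HBPs on either side becomes impossible. Therefore $\beta_{12}=\beta_{23}$.

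Combining, $\beta_{12}=\beta_{23}=\beta_{34}=\beta_{45}$, which is the claim. The main obstacle I anticipate is bookkeeping the topological positions: the hexagon has one more edge than the pentagon of Lemma \ref{root_penta}, so there are more cases for which pairs among the four root curves could a priori coincide or be distinct, and one must be careful that the disjointness/intersection data read off from the hexagon (namely $i(v_k,v_{k+1})=0$, $i(v_k,v_{k+2})\ne 0$, $i(v_k,v_{k+3})\ne 0$) is strong enough to rule out every bad configuration. I expect the $2$-HBP vertices $v_1$, $v_3$, $v_5$ to carry the weight of the argument, exactly as the $2$-HBP $b_2$ and the curve $\alpha_2$ did in the proof of Lemma \ref{root_penta}, since two distinct separating $2$-HBPs in $S_{g,2}$ always intersect and this rigidity is what forces the collapse.
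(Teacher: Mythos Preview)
Your proposal has a genuine gap: you never use the sixth vertex $v_6$, and without it the argument for $\beta_{23}=\beta_{34}$ cannot be completed. Concretely, suppose $\beta_{23}\neq\beta_{34}$, so $v_3=\{\beta_{23},\beta_{34}\}$. One can build a legitimate 5-tuple $(v_1,\dots,v_5)$ in $\calcp_s(S_{g,2})$ satisfying all the intersection constraints you invoke (take $\mu_2,\mu_4$ to be the two crossing curves in the $S_{0,4}$ cut off by $v_3$, set $v_2=\{\beta_{23},\mu_2\}$, $v_4=\{\beta_{34},\mu_4\}$, and choose $v_1=\{\beta_{23},\nu_1\}$, $v_5=\{\beta_{34},\nu_5\}$ to be 2-HBPs with $\nu_1$ crossing $\beta_{34}$ and $\nu_5$ crossing $\beta_{23}$). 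All of $i(v_1,v_3)$, $i(v_1,v_4)$, $i(v_2,v_4)$, $i(v_2,v_5)$, $i(v_3,v_5)$, $i(v_1,v_5)$ are nonzero, so nothing forces two of the 2-HBPs $v_1,v_3,v_5$ to be disjoint. The mechanism you cite from Lemma~\ref{root_penta} simply does not fire here.

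The paper's proof runs in the opposite order and uses $v_6$ in an essential way. It first records the local fact that if $\{u_1,u_2\}$ and $\{u_2,u_3\}$ are rooted with $u_1,u_3$ 2-HBPs and $u_2$ a 1-HBP, then the two root curves agree: if not, $u_2=\{\gamma,\delta\}$ is the 1-HBP, so one boundary component of $S$ lies in the pair of pants between $\gamma$ and $\delta$ and the other lies on one side, making it impossible for either $u_1\ni\gamma$ or $u_3\ni\delta$ (both disjoint from $u_2$) to be a 2-HBP. This immediately gives $\beta_{12}=\beta_{23}=:\alpha$ and $\beta_{34}=\beta_{45}=:\beta$. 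Then, since $v_6$ is disjoint from $v_1\ni\alpha$ and from $v_5\ni\beta$, one has $i(v_6,\alpha)=i(v_6,\beta)=0$; if $\alpha\neq\beta$ then $v_3=\{\alpha,\beta\}$ and hence $i(v_6,v_3)=0$, contradicting the hexagon condition. So the ``easy'' equalities are $\beta_{12}=\beta_{23}$ and $\beta_{34}=\beta_{45}$, not $\beta_{23}=\beta_{34}$; and the latter genuinely requires the extra vertex $v_6$.
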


\begin{proof}
We note that in general, if $\{ u_1, u_2\}$ and $\{ u_2, u_3\}$ are rooted edges of $\calcp_s(S)$ such that $u_1$ and $u_3$ are $2$-HBPs and $u_2$ is a $1$-HBP, then the root curves of $\{ u_1, u_2\}$ and $\{ u_2, u_3\}$ are equal. 
This fact implies that the root curves of $\{ v_1, v_2\}$ and $\{ v_2, v_3\}$ are equal and denoted by $\alpha$.
Similarly, the root curves of $\{ v_3, v_4\}$ and $\{ v_4, v_5\}$ are equal and denoted by $\beta$.
The equality $i(v_6, \alpha)=i(v_6, \beta)=0$ then holds because we have $\alpha \in v_1$ and $\beta \in v_5$.
If $\alpha\neq \beta$, then the equality $v_3=\{ \alpha, \beta \}$ would hold.
This contradicts $i(v_3, v_6)\neq 0$.
We thus have $\alpha =\beta$.
\end{proof}

\begin{lem}\label{quad_root}
Let $S=S_{g, 2}$ be a surface with $g\geq 2$.
Let $(v_1,\ldots, v_4)$ be a $4$-tuple defining a square in $\calcp_s(S)$ such that
\begin{itemize}
\item each of $v_1,\ldots, v_4$ is an HBP; and
\item each of the four edges $\{ v_1, v_2\}$, $\{ v_2, v_3\}$, $\{ v_3, v_4\}$ and $\{ v_4, v_1\}$ is rooted.
\end{itemize}
Then the root curves of the four edges in the second condition are equal.
\end{lem}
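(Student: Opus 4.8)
The plan is to prove this purely combinatorially, using only that an HBP is a pair of disjoint curves and that consecutive vertices of the square span an edge of $\calcp_s(S)$; no topology of curves is needed. Denote by $a$, $b$, $c$, $d$ the root curves of the four edges $\{v_1,v_2\}$, $\{v_2,v_3\}$, $\{v_3,v_4\}$, $\{v_4,v_1\}$, so that $a\in v_1\cap v_2$, $b\in v_2\cap v_3$, $c\in v_3\cap v_4$ and $d\in v_4\cap v_1$. (Note that $v_1,\dots,v_4$ are pairwise distinct, since consecutive ones are the endpoints of an edge and $i(v_k,v_{k+2})\ne 0$.) The first step is to record that any two of $a,b,c,d$ are disjoint: $i(a,b)=i(b,c)=i(c,d)=i(d,a)=0$ because each such pair of root curves lies in a single HBP among $v_1,\dots,v_4$; $i(a,c)=0$ because $a\in v_1$, $c\in v_4$ and $\{v_4,v_1\}$ is an edge, so every curve of $v_1$ is disjoint from every curve of $v_4$; and $i(b,d)=0$ because $b\in v_2$, $d\in v_1$ and $\{v_1,v_2\}$ is an edge.

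Now assume, for contradiction, that $a,b,c,d$ are not all equal. If $a\ne d$ and $b\ne c$, then $v_1=\{a,d\}$ and $v_3=\{b,c\}$; since $i$ vanishes on every pair of curves drawn from $\{a,b,c,d\}$, this would force $i(v_1,v_3)=0$, contradicting the square condition $i(v_1,v_3)\ne 0$. Hence $a=d$ or $b=c$. Running the identical argument with $v_2,v_4$ and $i(v_2,v_4)\ne 0$ in place of $v_1,v_3$ and $i(v_1,v_3)\ne 0$ gives $a=b$ or $c=d$. Combining the two dichotomies, in each of the four resulting cases at least three of $a,b,c,d$ coincide; since they are not all equal, exactly three equal a common curve $x$ and the fourth equals a curve $y\ne x$. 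Let $\{v_i,v_{i+1}\}$ be the unique edge whose root curve is $y$. Then $v_i$ contains $y$ and also contains $x$, the root curve of the preceding edge $\{v_{i-1},v_i\}$, so $v_i=\{x,y\}$; likewise $v_{i+1}$ contains $y$ and the root curve $x$ of the following edge $\{v_{i+1},v_{i+2}\}$, so $v_{i+1}=\{x,y\}$. Thus $v_i=v_{i+1}$, which is impossible because these are the two distinct endpoints of an edge. Therefore $a=b=c=d$, as asserted.

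There is no substantial obstacle; the only care needed is (i) matching each of the six pairs of root curves correctly with either a common vertex of the square or one of its four edges, so as to see that $a,b,c,d$ are mutually disjoint, and (ii) the bookkeeping in the final ``exactly three coincide'' step. One might expect a topological argument in the subcase where $a,b,c,d$ are all distinct, analogous to case (a) in the proof of Lemma \ref{root_penta}, but in fact that subcase never occurs: all four distinct would force $v_1=\{a,d\}$ and $v_3=\{b,c\}$ with pairwise disjoint curves, hence $i(v_1,v_3)=0$, contrary to hypothesis. Unlike Lemma \ref{hex_root}, no auxiliary fact about $2$-HBPs and $1$-HBPs is needed here, because the four rooted edges form a cycle rather than a path.
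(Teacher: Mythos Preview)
Your proof is correct and follows essentially the same combinatorial idea as the paper: if the two root curves meeting at some $v_k$ differ, then $v_k$ equals that pair and one derives $i(v_k,v_{k+2})=0$. The paper's version is a bit more direct---it shows in one step that the roots of $\{v_4,v_1\}$ and $\{v_1,v_2\}$ coincide (since $a\in v_2$ and $d\in v_4$ are each disjoint from $v_3$ via the edges $\{v_2,v_3\}$ and $\{v_3,v_4\}$, forcing $i(v_1,v_3)=0$ whenever $v_1=\{a,d\}$), and then repeats cyclically---so your case analysis is avoidable, but the argument stands.
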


\begin{proof}
Let $\alpha$ and $\beta$ denote the root curves of $\{ v_1, v_2\}$ and $\{ v_4, v_1\}$, respectively.
If $\alpha\neq \beta$, then we would have $v_1=\{\alpha,\beta\}$.
Since $v_2$ and $v_4$ are disjoint from $v_3$, so is $v_1$.
This is a contradiction.
The lemma is obtained by repeating this argument.
\end{proof}

\begin{lem}\label{sep_HBP_rooted}
Let $S=S_{g,2}$ be a surface with $g \geq 2$, and let $\phi$ be an automorphism of $\calcp_s(S)$.
For each $k=1,2,3,4$, let $a_k=\{\alpha, \alpha_k\}$ be a separating HBP in $S$ such that $\{ a_1, a_2\}$ and $\{ a_3, a_4\}$ are edges of $\calcp_s(S)$. 
Then the root curves of the two edges $\{ \phi(a_1), \phi(a_2)\}$ and $\{ \phi(a_3), \phi(a_4)\}$ of $\calcp_s(S)$ are equal. 
\end{lem}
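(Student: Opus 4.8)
\emph{Proof proposal.} The plan is to follow the proof of Lemma \ref{lem-pen2} almost verbatim, with hexagons and squares in $\calcp_s(S)$ playing the role of pentagons in $\calcp(S)$ and with Lemmas \ref{hex_root} and \ref{quad_root} replacing Lemma \ref{root_penta}. Throughout, I will use that $\phi$ preserves the distinctions between HBC-vertices and HBP-vertices, between separating and non-separating HBPs, and between $1$-HBPs and $2$-HBPs in $S$ (Section \ref{sec-basic}), that $\phi$ and $\phi^{-1}$ preserve (non)intersection of pairs of vertices, and that $\phi$ carries rooted simplices to rooted simplices (Lemma \ref{rooted}). Since $a_1$ and $a_2$ are disjoint, distinct separating HBPs both containing the curve $\alpha$, the edge $\{a_1,a_2\}$ is rooted with root curve $\alpha$; hence $\{\phi(a_1),\phi(a_2)\}$ is a rooted edge of $\calcp_s(S)$, whose root curve I denote by $\gamma_{12}$, and similarly $\{\phi(a_3),\phi(a_4)\}$ has a root curve $\gamma_{34}$. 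The goal is $\gamma_{12}=\gamma_{34}$.

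First I would connect $a_2$ and $a_3$. All of $a_1,\dots,a_4$ are separating HBPs containing $\alpha$, hence vertices of the complex $\cal{E}$ of Lemma \ref{conn_sep_2}, which is connected; choose a path $a_2=b_1,b_2,\dots,b_m=a_3$ in $\cal{E}$, so that consecutive $b_i$'s are disjoint and each $b_i$ is a separating HBP containing $\alpha$. Putting $b_0=a_1$ and $b_{m+1}=a_4$, every consecutive pair $\{b_i,b_{i+1}\}$ with $0\le i\le m$ is an edge of $\calcp_s(S)$, rooted with root curve $\alpha$. It then suffices to prove that for each $i=0,\dots,m-1$ the root curve of $\{\phi(b_i),\phi(b_{i+1})\}$ equals that of $\{\phi(b_{i+1}),\phi(b_{i+2})\}$; chaining these equalities yields $\gamma_{12}=\gamma_{34}$. (If $b_i=b_{i+2}$ the two edges coincide and there is nothing to prove.)

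Now fix such an $i$ with $b_i\ne b_{i+2}$ and split according to $\alpha$, which is the same curve throughout. If $\partial S$ lies in a single component of $S_\alpha$, Lemma \ref{a_2_2-HBP} applied to the triple $b_i,b_{i+1},b_{i+2}$ produces a sequence of hexagons $\Pi_1,\dots,\Pi_n$ of the prescribed type, each with all five of its HBPs containing $\alpha$, with consecutive hexagons sharing at least two HBPs, and with $b_i,b_{i+1}\in\Pi_1$ and $b_{i+1},b_{i+2}\in\Pi_n$. Applying $\phi$, each $\phi(\Pi_k)$ is again a hexagon of the same combinatorial type, and the four of its edges joining two HBPs are rooted (images of rooted edges, by Lemma \ref{rooted}); Lemma \ref{hex_root} then forces these four root curves to coincide with a common value $\delta_k$, so in particular all five HBPs of $\phi(\Pi_k)$ contain $\delta_k$. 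Since consecutive hexagons $\phi(\Pi_k)$ and $\phi(\Pi_{k+1})$ share at least two distinct HBPs, each of which contains both $\delta_k$ and $\delta_{k+1}$, we must have $\delta_k=\delta_{k+1}$ (otherwise those two HBPs would both equal $\{\delta_k,\delta_{k+1}\}$, contradicting their distinctness); hence all $\delta_k$ are equal. Finally $b_i,b_{i+1}$ are disjoint HBPs of the hexagon $\Pi_1$, hence adjacent in it, so they span one of its HBP-to-HBP edges and the root curve of $\{\phi(b_i),\phi(b_{i+1})\}$ is $\delta_1$; likewise that of $\{\phi(b_{i+1}),\phi(b_{i+2})\}$ is $\delta_n$; and $\delta_1=\delta_n$. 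If instead each component of $S_\alpha$ contains a component of $\partial S$, then Lemma \ref{a_2_1-HBP} gives a square $\Pi$ with vertices $b_i,b_{i+1},b_{i+2}$ and a fourth vertex which is a $1$-HBP containing $\alpha$, all four of its edges being rooted with root curve $\alpha$; then $\phi(\Pi)$ is a square all of whose vertices are HBPs and all of whose edges are rooted, so Lemma \ref{quad_root} makes the four root curves of $\phi(\Pi)$ equal, and in particular the root curves of $\{\phi(b_i),\phi(b_{i+1})\}$ and $\{\phi(b_{i+1}),\phi(b_{i+2})\}$ agree.

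I expect the only delicate points to be exactly those already handled in the pentagon case: verifying that the combinatorial data required by Lemmas \ref{hex_root} and \ref{quad_root} (which vertices are HBCs, $1$-HBPs, $2$-HBPs, and which edges join two HBPs) survive under $\phi$, and that sharing at least two HBPs between consecutive hexagons propagates the common root curve. These are routine given Section \ref{sec-basic} and Lemma \ref{rooted}, so the genuine content is organizational: setting up the connecting path via Lemma \ref{conn_sep_2} and invoking Lemmas \ref{a_2_2-HBP} and \ref{a_2_1-HBP} correctly in the two cases determined by $\alpha$.
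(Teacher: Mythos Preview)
Your proposal is correct and follows essentially the same approach as the paper's proof: connect $a_2$ to $a_3$ by a path of separating HBPs through $\alpha$ via Lemma \ref{conn_sep_2}, bridge consecutive triples by the hexagons of Lemma \ref{a_2_2-HBP} or the square of Lemma \ref{a_2_1-HBP} (the choice being dictated by whether $\partial S$ lies in one component of $S_\alpha$), and then use Lemmas \ref{hex_root} and \ref{quad_root} together with the ``share at least two HBPs'' condition to propagate the common root curve through the images. The only organizational difference is that the paper concatenates all the hexagons/squares into a single sequence $\Pi_1,\dots,\Pi_n$ running from $\{a_1,a_2\}$ to $\{a_3,a_4\}$, whereas you handle each triple $(b_i,b_{i+1},b_{i+2})$ separately and chain the resulting equalities of root curves; both are fine.
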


\begin{proof}
Using Lemma \ref{conn_sep_2}, one can find a path in $\calcp_s(S)$, $a_2=b_2,b_3,\ldots,b_{m-1}=a_3$, such that each $b_k$ is an HBP containing $\alpha$.
We put $b_1=a_1$ and $b_m=a_4$.
For each $k=1,\ldots, m-2$, if $i(b_k,b_{k+2})=0$, then at least two of $b_k$, $b_{k+1}$ and $b_{k+2}$ are equal.
If $i(b_k,b_{k+2}) \ne 0$, we apply either Lemma \ref{a_2_2-HBP} or Lemma \ref{a_2_1-HBP} to $b_k$, $b_{k+1}$ and $b_{k+2}$. 
We then obtain a sequence $\Pi_1,\Pi_2,\ldots,\Pi_n$ of hexagons or squares in $\calcp_s(S)$ such that we have $a_1, a_2 \in \Pi_1$ and $a_3, a_4 \in \Pi_n$; and for each $k$, $\Pi_k$ and $\Pi_{k+1}$ share at least two HBPs.
It follows from Lemmas \ref{hex_root} and \ref{quad_root} that for each $k$, there exists a curve shared by all HBPs in $\phi(\Pi_k)$.
Since $\phi(\Pi_k)$ and $\phi(\Pi_{k+1})$ share at least two HBPs, the curve shared by all HBPs of $\phi(\Pi_k)$ is equal to that of $\phi(\Pi_{k+1})$.
The root curve of $\{ \phi(a_1), \phi(a_2)\}$ is therefore equal to that of $\{ \phi(a_3), \phi(a_4)\}$.
\end{proof}


\subsection{Definition of $\Phi$}\label{subsec-defn}

Let $S=S_{g, 2}$ be a surface with $g\geq 2$, and let $\phi$ be an automorphism of $\calcp(S)$.
We define a bijection $\Phi$ from $V(S)$ onto itself as follows.
If $\alpha$ is an HBC in $S$, then we set $\Phi(\alpha)=\phi(\alpha)$.

If $\beta$ is a non-separating curve in $S$, then choose disjoint and distinct curves $\beta_1$, $\beta_2$ in $S$ such that $\{\beta, \beta_1\}$ and $\{\beta, \beta_2\}$ are both HBPs in $S$, and define $\Phi(\beta)$ to be the root curve of the edge in $\calcp(S)$ consisting of $\phi(\{\beta, \beta_1\})$ and $\phi(\{\beta, \beta_2\})$.
This is well-defined by Lemma \ref{lem-pen2}.

In a similar way, if $\gamma$ is a separating curve in $S$ which is not an HBC in $S$, then choose disjoint and distinct curves $\gamma_1$, $\gamma_2$ in $S$ such that $\{\gamma, \gamma_1\}$ and $\{\gamma, \gamma_2\}$ are both HBPs in $S$, and define $\Phi(\gamma)$ to be the root curve of the edge in $\calcp(S)$ consisting of $\phi(\{\gamma, \gamma_1\})$ and $\phi(\{\gamma, \gamma_2\})$.
Lemma \ref{sep_HBP_rooted} shows that this is well-defined since $\phi$ induces an automorphism of $\calcp_s(S)$ by Lemmas \ref{HBC_to_HBC} and \ref{lem-phi-ns-s}.

We thus obtain a map $\Phi \colon V(S)\rightarrow V(S)$.
Considering $\phi^{-1}$, we see that $\Phi$ is a bijection.
Note that if $\psi$ is an automorphism of $\calcp_s(S)$, then we obtain a bijection $\Psi \colon V_s(S)\rightarrow V_s(S)$ by applying the argument in the previous paragraph.


\section{Construction of $\Phi$ in the case $p \geq 3$}\label{sec-const-p3}

Let $S=S_{g,p}$ be a surface with $g \geq 2$ and $p \geq 3$. 
For an automorphism $\phi$ of $\calcp(S)$, we define a map $\Phi \colon V(S) \rightarrow V(S)$ as in Section \ref{subsec-plan}.
Most of this section is devoted to showing that $\Phi$ is well-defined as in the previous section.
To define $\Phi(\alpha)\in V(S)$ for each curve $\alpha$ in $S$ which is not an HBC in $S$, choosing any two edges $e_1$, $e_2$ of $\calcp(S)$ consisting of two HBPs containing $\alpha$, we have to show that the root curves of $\phi(e_1)$ and $\phi(e_2)$ are equal.
To show it, we connect $e_1$ and $e_2$ by a sequence of rooted $2$-simplices of $\calcp(S)$ whose root curve is equal to $\alpha$.
Note that rooted $2$-simplices of $\calcp(S)$ exist thanks to the assumption $p\geq 3$.

\begin{lem}\label{conn_non-sep}
Let $S=S_{g,p}$ be a surface with $g\geq 2$ and $p\geq 3$. 
For each $k=1,2,3$, let $a_k=\{\alpha, \alpha_k\}$  be a non-separating HBP in $S$ such that $\{ a_1, a_2\}$ and $\{ a_2, a_3\}$ are edges of $\calcp(S)$.
Then there exists a sequence of non-separating HBPs in $S$, $a_1=b_1, b_2,\ldots,b_n=a_3$, such that for each $k=1,2,\ldots,n-1$, the set $\{ a_2, b_k, b_{k+1}\}$ is a rooted $2$-simplex of $\calcp(S)$ whose root curve is equal to $\alpha$.
\end{lem}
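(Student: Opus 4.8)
The plan is to cut along $\alpha$, reduce the lemma to the connectivity of a subcomplex of the complex of curves of $S_\alpha$, and then establish that connectivity by the same Putman-style method already used for Propositions \ref{connected} and \ref{conn_1_3}, together with one extra elementary planar case.

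First I would dispose of the degenerate cases: if $a_1=a_3$ take $n=1$, and if $a_1\neq a_3$ but $i(a_1,a_3)=0$ take $n=2$ with $(b_1,b_2)=(a_1,a_3)$, noting that $\{a_2,a_1,a_3\}$ is then a $2$-simplex of $\calcp(S)$ whose three vertices are distinct non-separating HBPs all containing $\alpha$ and sharing no other curve, hence rooted with root curve $\alpha$. So assume $i(a_1,a_3)\neq 0$. Now set $X=S_\alpha\cong S_{g-1,p+2}$ and let $\partial^+,\partial^-$ be the two boundary components of $X$ coming from $\alpha$. As in the proof of Lemma \ref{non-sep_conn}, non-separating HBPs in $S$ containing $\alpha$ correspond bijectively to HBCs in $X$ separating $\partial^+$ from $\partial^-$, and this correspondence preserves geometric disjointness. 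Writing $c,c_1,c_3$ for the HBCs corresponding to $a_2,a_1,a_3$, the lemma reduces to showing that the full subcomplex $\mathcal{G}$ of $\calc(X)$ spanned by the HBCs of $X$ that separate $\partial^+$ from $\partial^-$ and are disjoint from and distinct from $c$ is connected: a path in $\mathcal{G}$ from $c_1$ to $c_3$ pulls back to a sequence $b_1=a_1,\ldots,b_n=a_3$ in which every triple $\{a_2,b_k,b_{k+1}\}$ is, as above, a rooted $2$-simplex with root curve $\alpha$ (one uses here that the $b_k$ are disjoint from and distinct from $a_2$, and that consecutive $b_k$ are disjoint). Also $c_1,c_3\in\mathcal{G}$, since $a_1,a_3$ are disjoint from and, as $i(a_1,a_3)\neq 0$, distinct from $a_2$.

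To prove $\mathcal{G}$ connected, write $X_c=Q\sqcup R$ with $Q$ the holed sphere cut off by $c$; exactly one of $\partial^\pm$, say $\partial^+$, lies in $Q$, and $R$ then has genus $g-1\geq 1$ and contains $\partial^-$. Let $q\geq 1$ be the number of boundary components of $S$ in $Q$, so $Q\cong S_{0,q+2}$ and $R\cong S_{g-1,p-q+2}$. A vertex of $\mathcal{G}$ lies entirely in $Q$ or entirely in $R$, and a short surgery argument identifies the $Q$-vertices of $\mathcal{G}$ with the essential curves of $Q$ separating $\partial^+$ from $c$, and the $R$-vertices of $\mathcal{G}$ with the HBCs of $R$ separating $\partial^-$ from $c$. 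If $q=1$ there are no $Q$-vertices and $\mathcal{G}$ is precisely the complex of HBCs of $R\cong S_{g-1,p+1}$ separating $c$ from $\partial^-$, which is connected by Proposition \ref{connected} (here $g-1\geq 1$ and $p+1\geq 4$). If $q\geq 2$, fix a boundary component $\partial_1$ of $S$ inside $Q$ and let $v_0\in\mathcal{G}$ be the HBC of $X$ bounding in $Q$ a pair of pants with boundary $\{\partial^+,\partial_1,v_0\}$, chosen disjoint from $c$. Every $R$-vertex of $\mathcal{G}$ is disjoint from $v_0$, hence joined to it by an edge. Every $Q$-vertex lies in the complex $\mathcal{F}_Q$ of essential curves of $Q\cong S_{0,q+2}$ separating $\partial^+$ from $c$; if $q\geq 3$ this planar complex is connected and contains $v_0$, and if $q=2$ then $q<p$, so $R$ carries a boundary component of $S$ and hence an $R$-vertex, which is adjacent to both vertices of the two-element complex $\mathcal{F}_Q$. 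In every case each vertex of $\mathcal{G}$ is connected to $v_0$, so $\mathcal{G}$ is connected and the lemma follows.

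The step I expect to be the main obstacle is the planar case buried in ``$\mathcal{F}_Q$ is connected'', in its extreme form $q=p$: then all boundary components of $S$ lie on the genus-zero side of $a_2$, so $R\cong S_{g-1,2}$ carries no vertex of $\mathcal{G}$, all of $\mathcal{G}$ lives in the planar surface $Q\cong S_{0,p+2}$, and Propositions \ref{connected} and \ref{conn_1_3}, which require positive genus, do not apply. One must supply a direct argument that the essential curves of $S_{0,n}$ separating two fixed boundary components form a connected graph; this holds precisely because $n=p+2\geq 5$, by encoding such a curve as a subset $A$ of the $n$ boundary components that contains one marked boundary and not the other with $2\leq|A|\leq n-2$, observing that disjointness of the curves corresponds to a nesting/disjointness condition on these subsets, that every such $A$ contains a valid two-element such subset, and that any two two-element subsets lie in a common three-element subset — the last step being exactly where $n\geq 5$ enters. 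The remaining small-$q$ bookkeeping is routine once one notes that $Q$-vertices and $R$-vertices of $\mathcal{G}$ are automatically disjoint, so a ``small'' side can always be routed through the other.
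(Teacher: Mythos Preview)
Your argument is correct and, once the detour through $X=S_\alpha$ is unwound, is essentially the paper's own proof. Cutting $X$ along $c$ gives back $S_{a_2}$, and your $Q$- and $R$-sides are exactly the paper's genus-zero component $R$ and positive-genus component $R'$. The paper simply splits on whether $\alpha_1,\alpha_3$ lie in the genus-zero piece or the positive-genus piece, and then uses either Proposition~\ref{prop-44} (your $q\geq 3$ planar case), a single bridge vertex on the opposite side (your $q=2$ case and your ``$R$-vertices are adjacent to $v_0$'' observation), or Proposition~\ref{connected} (your $q=1$ case). Your version proves the slightly stronger statement that the whole link $\mathcal{G}$ is connected rather than just connecting $c_1$ to $c_3$, but the ingredients and case structure coincide.

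Two small corrections. First, your ``main obstacle'' --- connectivity of the curves in $S_{0,n}$ separating two fixed boundary components for $n\geq 5$ --- is precisely Proposition~\ref{prop-44}, already quoted in the paper, so no new argument is needed; moreover your sketch for it is not quite right, since an essential curve in $S_{0,n}$ is \emph{not} determined by the partition of boundary components it induces (there are infinitely many isotopy classes realizing each $2{+}(n-2)$ split), so the set-combinatorics picture does not encode the complex. Second, for $q=2$ the complex $\mathcal{F}_Q$ is not two-element but infinite; fortunately your argument there does not actually use finiteness, since any $R$-vertex is disjoint from \emph{every} $Q$-vertex, which is all you need.
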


To prove this lemma, we use the following:

\begin{prop}[\ci{Proposition 4.4}{kida-tor}]\label{prop-44}
Let $X=S_{0, p}$ be a surface with $p\geq 5$, and choose two distinct components $\partial_1$, $\partial_2$ of $\partial X$.
Then the full subcomplex of $\calc(X)$ spanned by all vertices that correspond to curves in $X$ separating $\partial_1$ and $\partial_2$ is connected.
\end{prop}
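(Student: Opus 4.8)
The plan is to prove connectivity by the same mechanism used in Propositions \ref{connected} and \ref{conn_1_3}, namely Lemma 2.1 of \cite{putman-conn} applied to the pure mapping class group $\pmod(X)$ acting on the full subcomplex $\cal{G}$ in the statement. Since $X$ has genus zero, every essential curve is separating and cuts off a holed sphere on each side, so each vertex of $\cal{G}$ is an HBC and the encircling terminology introduced before Proposition \ref{connected} applies. For a vertex $\alpha$ write $P_\alpha$ for the side containing $\partial_1$ and $Q_\alpha$ for the side containing $\partial_2$. The isotopy class of $\alpha$ determines the partition of $\partial X$ into these two sides, and conversely $\pmod(X)$ acts transitively on the curves inducing a given partition by the change of coordinates principle; thus the $\pmod(X)$-orbits on vertices of $\cal{G}$ correspond to the admissible partitions. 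The hypothesis $p\ge 5$ is exactly what guarantees that $\cal{G}$ has edges: when $p=4$ any two non-isotopic curves in $\calc(X)$ intersect, so $\cal{G}$ would be totally disconnected.

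As in the cited propositions the argument splits into two steps. First I would show that every vertex of $\cal{G}$ can be joined within $\cal{G}$ to a \emph{standard} curve, meaning one encircling exactly $\partial_1$ and $\partial_3$. If $\partial_3\in P_\alpha$, then $P_\alpha$ is a holed sphere containing $\partial_1$ and $\partial_3$, so one cuts off from $P_\alpha$ a pair of pants containing $\partial_1,\partial_3$; the boundary $\alpha'$ of that pair of pants encircles $\partial_1,\partial_3$, is disjoint from $\alpha$, and lies in $\cal{G}$, giving an edge $\{\alpha,\alpha'\}$. If instead $\partial_3\in Q_\alpha$, then since $P_\alpha$ contains a component of $\partial X$ other than $\partial_1$, and since $\partial_2,\partial_3\notin P_\alpha$, that component is some $\partial_j$ with $j\ge 4$; I would then use the length-three path $\alpha,\alpha_1,\alpha_2,\alpha_3$ in $\cal{G}$ with $\alpha_1$ encircling $\{\partial_1,\partial_j\}$ inside $P_\alpha$, with $\alpha_2$ encircling $\{\partial_1,\partial_j,\partial_3\}$, and with $\alpha_3$ encircling $\{\partial_1,\partial_3\}$. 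One checks that consecutive curves can be nested and hence made disjoint, and that each of them separates $\partial_1$ from $\partial_2$, so all four are vertices of $\cal{G}$.

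Second, I would prove that any two standard curves are joined in $\cal{G}$. Here transitivity of $\pmod(X)$ on standard curves reduces the problem, via Lemma 2.1 of \cite{putman-conn}, to a per-generator claim. Taking the generating set of $\pmod(X)$ consisting of Dehn twists about curves in a configuration of the type in Figure \ref{fig-mcg}, and labeling $\partial_1,\ldots,\partial_p$ appropriately, one finds a standard curve $\gamma$ met by exactly one generating curve $\delta$, together with an auxiliary standard curve $\gamma'$ disjoint from both $\gamma$ and $\delta$. Then $t_\delta\gamma'=\gamma'$, so $t_\delta\gamma,\gamma',\gamma$ is a path in $\cal{G}$; concatenating such paths along a word in the generators, exactly as in the construction of the sequence of pentagons in the proof of Lemma \ref{lem-pen}, connects $\gamma$ to $g\gamma$ for every $g\in\pmod(X)$. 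Combined with the first step this shows $\cal{G}$ is connected.

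The change of coordinates and the nesting of encircling curves are routine. The real work, and the step I expect to be the main obstacle, is the per-generator claim in the second step: producing, for each generating twist $t_\delta$, an auxiliary standard curve simultaneously disjoint from $\delta$ and from $\gamma$. This is precisely where the hypothesis $p\ge 5$ is used to create enough room, and it is the genus-zero analogue of Claim \ref{claim-23-conn} in the proof of Proposition \ref{connected}.
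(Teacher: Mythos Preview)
The paper does not prove this proposition; it is quoted from \cite{kida-tor} without argument. Your overall strategy, modeled on the proofs of Propositions \ref{connected} and \ref{conn_1_3}, is sound, and Step 1 is fine. However, Step 2 contains a genuine error specific to genus zero.

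You ask for an auxiliary \emph{standard} curve $\gamma'$ disjoint from both $\gamma$ and $\delta$, where ``standard'' means encircling exactly $\partial_1$ and $\partial_3$. But on a planar surface two disjoint essential curves inducing the same partition of $\partial X$ are isotopic: if $\gamma'$ is disjoint from $\gamma$ it lies in one component of $X_\gamma$, and if it induces the same partition it must cobound an annulus with $\gamma$. Thus a standard $\gamma'$ disjoint from $\gamma$ would equal $\gamma$, contradicting the requirement that $\gamma'$ be disjoint from $\delta$ while $\gamma$ is not. This is exactly the point where genus zero differs from the situation in Claim \ref{claim-23-conn}, where positive genus allows non-isotopic disjoint curves with the same boundary partition.

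The fix is straightforward: Putman's lemma only requires $\gamma'$ to be a vertex of $\cal{G}$, not a standard one. With a linear ordering such as $\partial_1,\partial_3,\partial_4,\ldots,\partial_p,\partial_2$ and the chain generators $\delta_i$ encircling consecutive pairs, the standard curve $\gamma$ encircling $\{\partial_1,\partial_3\}$ meets only one generator $\delta$ (the one encircling $\{\partial_3,\partial_4\}$), and the curve $\gamma'$ encircling $\{\partial_p,\partial_2\}$ lies in $\cal{G}$ and is disjoint from both $\gamma$ and $\delta$; this uses $p\ge 5$. A minor related point: Figure \ref{fig-mcg} and the Gervais generators are for positive genus, so you should cite a genus-zero generating set instead.
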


\begin{proof}[Proof of Lemma \ref{conn_non-sep}]
If $i(a_1, a_3)=0$, then the lemma obviously holds. 
We assume $i(a_1, a_3) \neq 0$. 
Let $R$ denote the component of $S_{a_2}$ of genus zero, and let $R'$ denote another component of $S_{a_2}$.
Because of $i(a_1, a_2)=i(a_2, a_3)=0$ and $i(a_1, a_3) \ne 0$, either (a) $\alpha_1, \alpha_3 \in V(R)$; or (b) $\alpha_1, \alpha_3 \in V(R')$ occurs.

(a) Suppose $\alpha_1, \alpha_3 \in V(R)$. 
We note that $R$ contains at least two components of $\partial S$.
If $R$ contains at least three components of $\partial S$, then we can find a desired sequence of HBPs in $S$ by using Proposition \ref{prop-44}. 

We now suppose that $R$ contains exactly two components of $\partial S$. 
It then follows that $R'$ contains at least one component of $\partial S$. 
There exists a curve $\beta_2$ in $R'$ with $\{\alpha, \beta_2\}$ an HBP in $S$. 
The sequence $a_1$, $\{\alpha, \beta_2\}$, $a_3$ is a desired one. 

(b) Suppose $\alpha_1, \alpha_3\in V(R')$. 
If $a_2$ is not a $1$-HBP in $S$, then we can find a curve $\beta_2$ in $R$ with $\{\alpha, \beta_2\}$ an HBP in $S$. 
The sequence $a_1$, $\{\alpha, \beta_2\}$, $a_3$ is a desired one. 

If $a_2$ is a $1$-HBP in $S$, then $R'$ contains at least two components of $\partial S$ and thus has at least four boundary components. 
Note that both $\alpha_1$ and $\alpha_3$ are HBCs in $R'$ which separate $\alpha$ and $\alpha_2$ as curves in $R'$. 
By Proposition \ref{connected}, there exists a sequence $\alpha_1=\gamma_1,\gamma_2,\ldots,\gamma_n=\alpha_3$ of HBCs in $R'$ such that each $\gamma_j$ separates $\alpha$ and $\alpha_2$; and any two successive HBCs in that sequence are disjoint and distinct. 
For each $k=1,2, \ldots,n$, the pair $\{\alpha,\gamma_k\}$, denoted by $b_k$, is an HBP in $S$. 
The sequence $a_1=b_1,b_2,\ldots,b_n=a_3$ then satisfies the condition in the lemma.
\end{proof}

The following lemma is an analogue of Lemma \ref{conn_non-sep}, dealing with separating HBPs in $S$ in place of non-separating ones.

\begin{lem}\label{conn_sep}
Let $S=S_{g,p}$ be a surface with $g\geq 2$ and $p\geq 3$. 
For each $k=1,2,3$, let $a_k=\{\alpha, \alpha_k\}$  be a separating HBP in $S$ such that $\{ a_1, a_2\}$ and $\{ a_2, a_3\}$ are edges of $\calcp_s(S)$.
Then there exists a sequence of separating HBPs in $S$, $a_1=b_1, b_2,\ldots,b_n=a_3$, such that for each $k=1,2,\ldots,n-1$, the set $\{ a_2, b_k, b_{k+1}\}$ is a rooted $2$-simplex of $\calcp_s(S)$ whose root curve is equal to $\alpha$.
\end{lem}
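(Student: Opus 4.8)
The statement mirrors Lemma~\ref{conn_non-sep}, with separating HBPs replacing non-separating ones, so I would follow the same outline, adapting each step to the separating case. The plan is to reduce to the case $i(a_1,a_3)\neq 0$, cut $S$ along $\alpha$, and then analyze how $\alpha_1$, $\alpha_3$ (and the curves $\alpha_2$) are distributed among the components of $S_{\alpha}$. Since $\alpha$ is separating but not an HBC, $S_{\alpha}$ has exactly two components, each of positive genus; call them $R$ and $R'$. Because each $a_k=\{\alpha,\alpha_k\}$ is a separating HBP containing $\alpha$, each curve $\alpha_k$ lies in one of $R$, $R'$ and is either an HBC in that component or a separating curve in it that, together with the $\alpha$-boundary, cuts off a subsurface of genus zero. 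The hypotheses $i(a_1,a_2)=i(a_2,a_3)=0$ force $\alpha_1,\alpha_3$ to lie in the same component of $S_{\alpha}$ (the one not forced empty by $\alpha_2$'s position, exactly as in Lemma~\ref{comp-s}), and I would split into the cases (a) $\alpha_1,\alpha_3\in V(R)$ and (b) $\alpha_1,\alpha_3\in V(R')$, where $R$ is, say, the component containing fewer boundary curves of $S$ — though since $\alpha$ is not an HBC the cases are symmetric and the real dichotomy is whether the component \emph{not} containing $\alpha_1,\alpha_3$ contains a component of $\partial S$.

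First I would treat the easy subcase: if the component of $S_{\alpha}$ \emph{not} containing $\alpha_1$ and $\alpha_3$ contains a component of $\partial S$, then I can pick a curve $\beta_2$ in that component with $\{\alpha,\beta_2\}$ a separating HBP in $S$, and then $\{\alpha,\beta_2\}$ is automatically disjoint from both $a_1$ and $a_3$; so $a_1,\{\alpha,\beta_2\},a_3$ is the desired length-two sequence, with $\{a_2,b_k,b_{k+1}\}$ rooted at $\alpha$ by construction. The remaining subcase is where the component containing $\alpha_1,\alpha_3$, say $R$, contains all of $\partial S$ — equivalently $R'$ is closed after capping, i.e.\ $R'$ has only the $\alpha$-boundary. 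Here I must move $\alpha_1$ to $\alpha_3$ inside $R$ through separating HBPs of $S$ all containing $\alpha$, i.e.\ through curves $\gamma$ in $R$ that are separating in $S$ and cut off (with the $\alpha$-boundary) a genus-zero piece; keeping disjointness from $a_2$ (which, if $a_2$ is a $1$-HBP with $\alpha_2\in V(R)$, is itself such a curve). This is where I would invoke the connectivity input.

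The connectivity tool should be an analogue of Proposition~\ref{conn_1_3} / Proposition~\ref{connected}: I want that the full subcomplex of $\calc(R)$ spanned by separating curves of $R$ that, together with a fixed boundary component $\partial$ (the one glued to $\alpha$), cut off a holed sphere containing a prescribed further boundary component — or more precisely, HBCs of $R$ encircling $\partial$ together with the set of $\partial S$-components on one designated side — is connected. Proposition~\ref{conn_1_3} gives exactly connectivity of HBCs of $R$ cutting off a holed sphere containing $\partial$; one then translates an HBC $\gamma$ of $R$ containing $\partial$ in its holed sphere into the separating HBP $\{\alpha,\gamma\}$ of $S$, and successive disjoint HBCs in $R$ give successive disjoint HBPs in $S$, so consecutive triples $\{a_2,b_k,b_{k+1}\}$ are rooted $2$-simplices of $\calcp_s(S)$ with root $\alpha$. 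If $a_2$ itself is a $2$-HBP or has $\alpha_2\in V(R')$ with $R'$ of positive genus, one can instead insert a single $\beta_2\in V(R')$ as above and reduce to the previous bullet, so the hard subcase is genuinely $a_2$ a $1$-HBP with $\alpha_2$ and $\alpha_1,\alpha_3$ on the same side $R$.

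The main obstacle, I expect, is the bookkeeping needed to guarantee that the path in $\calc(R)$ supplied by Proposition~\ref{conn_1_3} can be chosen to keep every intermediate vertex \emph{disjoint from $\alpha_2$} — i.e., that $\gamma_k$ not only cuts off a holed sphere containing the $\alpha$-boundary of $R$ but also avoids $\alpha_2$; this requires passing to the subsurface of $R$ cut along $\alpha_2$, or reformulating the connectivity statement relative to two marked boundary components (as in Proposition~\ref{connected}), and checking that the genus and boundary count of the relevant piece still meet the hypotheses $g\geq 1$, enough boundary components, needed to apply the proposition. Once that relative-connectivity statement is in hand, assembling the sequence $b_1,\dots,b_n$ and verifying each $\{a_2,b_k,b_{k+1}\}$ is a rooted $2$-simplex of $\calcp_s(S)$ with root $\alpha$ is routine, and the lemma follows.
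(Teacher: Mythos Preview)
Your overall strategy---reduce to $i(a_1,a_3)\neq 0$ and then invoke a connectivity result---matches the paper's, but you cut along the wrong object, and this creates a genuine gap. The paper cuts $S$ along $a_2=\{\alpha,\alpha_2\}$, not along $\alpha$ alone. That yields three components $R$ (genus zero), $R'$ (positive genus, with $\alpha$ on its boundary), $R''$ (positive genus, with $\alpha_2$ on its boundary), and every curve subsequently chosen inside one of them is automatically disjoint from $a_2$, so the requirement that $\{a_2,b_k,b_{k+1}\}$ be a $2$-simplex comes for free. With your decomposition along $\alpha$ you must enforce disjointness from $\alpha_2$ by hand, and this breaks your ``easy'' subcase. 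If $\alpha_2$ lies in the component $R'$ of $S_\alpha$ not containing $\alpha_1,\alpha_3$, your intermediate curve $\beta_2\in V(R')$ must also avoid $\alpha_2$, and that can be impossible: take $a_2$ a separating $1$-HBP with $\alpha_2\in V(R')$ and $R'$ containing exactly one component of $\partial S$. Then the genus-zero piece of $R'_{\alpha_2}$ containing the $\alpha$-boundary is a pair of pants with no essential curves, and the other piece of $R'_{\alpha_2}$ has only the $\alpha_2$-boundary, so no curve $\beta_2\in V(R')$ disjoint from $\alpha_2$ makes $\{\alpha,\beta_2\}$ an HBP in $S$. Your outline does not cover this configuration, and your ``hard'' subcase (where $R'$ contains no component of $\partial S$) does not apply to it either.

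Cutting along $a_2$ also dissolves the obstacle you flag in your last paragraph: once $\alpha_1,\alpha_3$ land in one of $R,R',R''$, one applies Proposition~\ref{prop-44} in the genus-zero piece, Proposition~\ref{conn_1_3} in a positive-genus piece, or simply inserts a single intermediate HBP $\{\alpha,\beta_2\}$ with $\beta_2$ chosen in one of the \emph{other} two components---no relative-to-$\alpha_2$ refinement of the connectivity statements is needed. A minor correction: the reason $\alpha_1$ and $\alpha_3$ lie in the same component of $S_\alpha$ is simply that $i(\alpha_1,\alpha_3)\neq 0$; the hypotheses $i(a_1,a_2)=i(a_2,a_3)=0$ and Lemma~\ref{comp-s} are not what gives you this.
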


\begin{proof}
If $i(a_1, a_3)=0$, then the lemma obviously holds. 
We assume $i(a_1, a_3)\neq 0$. 
We denote by $R$, $R'$ and $R''$ the three components of $S_{a_2}$ so that $R$ is of genus zero, $R'$ and $R''$ are of positive genus; and $R'$ (resp.\ $R''$) has the boundary component corresponding to $\alpha$ (resp.\ $\alpha_2$). 
Note that either (a) $\alpha_1, \alpha_3\in V(R)$; (b) $\alpha_1, \alpha_3 \in V(R')$; or (c) $\alpha_1, \alpha_3 \in V(R'')$ occurs. 

(a) Suppose $\alpha_1, \alpha_3\in V(R)$. 
We note that $R$ contains at least two components of $\partial S$.
If $R$ contains at least three components of $\partial S$, then a desired sequence of HBPs can be obtained as an application of Proposition \ref{prop-44} since as a curve in $R$, each of $\alpha_1$ and $\alpha_3$ separates the two components of $\partial R$ corresponding to $\alpha$ and $\alpha_2$.

If $R$ contains exactly two components of $\partial S$, then $R'$ or $R''$ contains at least one component of $\partial S$. 
Using this component of $\partial S$, we can find a curve $\beta_2$ in $R'$ or $R''$ with $\{\alpha,\beta_2\}$ an HBP in $S$. 
The sequence $a_1$, $\{\alpha,\beta_2\}$, $a_3$ is then a desired one.

(b) Suppose $\alpha_1, \alpha_3\in V(R')$.
If $a_2$ is not a $1$-HBP in $S$, then pick a curve $\beta_2$ in $R$ with $\{\alpha, \beta_2\}$ an HBP in $S$. 
The sequence $a_1$, $\{\alpha,\beta_2\}$, $a_3$ is a desired one. 

Assume that $a_2$ is a $1$-HBP in $S$. 
If $R''$ contains at least one component of $\partial S$, then there exists a curve $\gamma_2$ in $R''$ with $\{\alpha, \gamma_2\}$ an HBP in $S$. 
The sequence $a_1$, $\{\alpha, \gamma_2\}$, $a_3$ is then a desired one. 
If $R''$ contains no component of $\partial S$, then $R'$ contains at least two components of $\partial S$ and thus has at least three boundary components. 
Note that $\alpha_1$ and $\alpha_3$ are HBCs in $R'$ cutting off from $R'$ a holed sphere containing $\alpha$ as a boundary component. 
It follows from Proposition \ref{conn_1_3} that there exists a sequence $\alpha_1=\delta_1,\delta_2,\ldots,\delta_n=\alpha_3$ of HBCs in $R'$ such that each $\delta_k$ cuts off from $R'$ a holed sphere containing $\alpha$ as a boundary component; and any two successive HBCs in that sequence are disjoint and distinct. 
Since for each $k=1, 2,\ldots, n$, the pair $\{\alpha,\delta_k\}$, denoted by $b_k$, is an HBP in $S$, the sequence $a_1=b_1,b_2,\ldots,b_n=a_3$ satisfies the condition of the lemma.

(c) If $\alpha_1, \alpha_3\in V(R'')$, then we can find a desired sequence of HBPs in essentially the same way as in case (b).
\end{proof}

Let $S=S_{g, p}$ be a surface with $g \geq 2$ and $p \geq 3$, and let $\phi$ be an automorphism of $\calcp(S)$.
We define a map $\Phi \colon V(S)\rightarrow V(S)$ in the same way as in Section \ref{subsec-defn}.
This is well-defined thanks to the following:

\begin{lem}\label{root_sep_HBP}
Let $S=S_{g, p}$ be a surface with $g \geq 2$ and $p \geq 3$.
Let $\phi$ be an automorphism of $\calcp(S)$, and for each $k=1,2,3,4$, let $a_k=\{\alpha, \alpha_k\}$ be an HBP in $S$ such that $\{ a_1, a_2\}$ and $\{ a_3, a_4\}$ are edges of $\calcp(S)$.
Then the root curves of the two edges $\{ \phi(a_1), \phi(a_2)\}$ and $\{ \phi(a_3), \phi(a_4)\}$ of $\calcp(S)$ are equal.
\end{lem}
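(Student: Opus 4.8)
The plan is to establish the statement that makes $\Phi$ well defined: for any two edges $e$ and $e'$ of $\calcp(S)$, each consisting of two HBPs in $S$ containing the fixed curve $\alpha$, the root curves of $\phi(e)$ and $\phi(e')$ coincide. Since every $a_k$ contains $\alpha$ and an HBC is never a curve of an HBP, $\alpha$ is either non-separating in $S$ or separating in $S$ and not an HBC; in the first case all of $a_1,\dots,a_4$ are non-separating HBPs, in the second they are all separating HBPs. In the separating case I would first note that $\phi$ restricts to an automorphism of $\calcp_s(S)$ by Lemmas \ref{HBC_to_HBC} and \ref{lem-phi-ns-s}, and run the whole argument inside $\calcp_s(S)$; the two cases are otherwise parallel.

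The key observation I would use is a consequence of the fact that $\phi$ preserves rooted simplices (Lemma \ref{rooted}): if $\{u,v,w\}$ is a rooted $2$-simplex of $\calcp(S)$ (resp.\ $\calcp_s(S)$) made of three pairwise distinct HBP-vertices with root curve $\alpha$, then $\phi(u),\phi(v),\phi(w)$ are HBPs by Lemma \ref{HBP_to_HBP}(i) (resp.\ its $\calcp_s(S)$-version) and $\{\phi(u),\phi(v),\phi(w)\}$ is again rooted; writing $c$ for its root curve, the fact that two distinct HBPs share at most one curve forces $c$ to be the root curve of each of the edges $\phi(\{u,v\})$, $\phi(\{v,w\})$, $\phi(\{u,w\})$. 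Thus for a rooted triangle with root $\alpha$, the three image edges all have the same root curve.

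Next I would connect $e=\{a_1,a_2\}$ to $e'=\{a_3,a_4\}$ by a chain of edges fed through this observation. By Lemma \ref{non-sep_conn} (resp.\ Lemma \ref{conn_sep_2}) I can choose a path of HBPs in $S$ containing $\alpha$, say $a_2=c_1,c_2,\dots,c_r=a_3$, with consecutive terms disjoint, and then consider the sequence $a_1,\,a_2=c_1,\,c_2,\dots,\,c_r=a_3,\,a_4$, whose consecutive terms are disjoint because $\{a_1,a_2\}$ and $\{a_3,a_4\}$ are edges by hypothesis and the $\{c_i,c_{i+1}\}$ are edges by construction. For each triple $(x,y,z)$ of consecutive terms I would show that $\phi(\{x,y\})$ and $\phi(\{y,z\})$ have the same root curve, and then compose these equalities along the sequence to obtain the lemma. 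If $x,y,z$ are not pairwise distinct the two edges coincide or the statement is vacuous; otherwise $\{x,y\}$ and $\{y,z\}$ are edges, all three HBPs contain $\alpha$, and either $i(x,z)=0$, so that $\{x,y,z\}$ is a rooted $2$-simplex with root $\alpha$ and the observation above applies directly, or $i(x,z)\neq 0$, in which case Lemma \ref{conn_non-sep} (resp.\ Lemma \ref{conn_sep}) supplies HBPs $x=d_1,\dots,d_s=z$ in $S$ with each $\{y,d_j,d_{j+1}\}$ a rooted $2$-simplex with root $\alpha$, so that the observation applied to each such simplex and composed gives that $\phi(\{x,y\})$ and $\phi(\{y,z\})$ have the same root curve.

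I expect essentially no serious obstacle here, since the substantive facts — connectivity of the auxiliary curve complexes (Propositions \ref{connected}, \ref{conn_1_3} and Lemmas \ref{conn_non-sep}, \ref{conn_sep}, \ref{non-sep_conn}, \ref{conn_sep_2}) and the preservation of rooted simplices (Lemma \ref{rooted}) — are already available, and the proof merely assembles them. The one point needing care is the key observation in the second paragraph: I must use precisely that a rooted simplex with at least two vertices has a unique root curve and that two distinct HBPs share at most one curve, so that ``the root curve of an image edge'' is unambiguous and transports correctly along the chain. The remaining bookkeeping is only the split into the separating and non-separating cases and the harmless degenerate triples in the connecting sequence.
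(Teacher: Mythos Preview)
Your proposal is correct and follows essentially the same route as the paper: connect $a_2$ to $a_3$ by a path of HBPs containing $\alpha$ via Lemmas \ref{non-sep_conn}/\ref{conn_sep_2}, then for each consecutive triple use Lemmas \ref{conn_non-sep}/\ref{conn_sep} to interpolate by rooted $2$-simplices with root $\alpha$, and finally invoke Lemma \ref{rooted} so that the root curve is carried along the chain. The only cosmetic difference is that the paper does not bother to pass to $\calcp_s(S)$ in the separating case (the rooted $2$-simplices produced by Lemma \ref{conn_sep} already lie in $\calcp(S)$, and Lemma \ref{rooted} applies there directly), but your restriction is harmless.
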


\begin{proof}
By Lemmas \ref{non-sep_conn} and \ref{conn_sep_2}, there exists a sequence of HBPs in $S$, $a_1=b_1, a_2=b_2, b_3,\ldots, b_{n-1}=a_3, b_n=a_4$, such that for each $k$, we have $i(b_k, b_{k+1})=0$, $b_k\neq b_{k+1}$ and $\alpha \in b_k$. 
Applying Lemmas \ref{conn_non-sep} and \ref{conn_sep} to the three HBPs $b_{k-1}$, $b_k$ and $b_{k+1}$ for each $k=2,\ldots, n-1$, we can find a sequence of HBPs in $S$, $b_{k-1}=b_k^1, b_k^2,\ldots, b_k^{m_k}=b_{k+1}$, such that for each $l=1,\ldots, m_k-1$, the set $\{ b_k, b_k^l, b_k^{l+1}\}$, denoted by $\sigma_k^l$, is a rooted $2$-simplex of $\calcp(S)$ whose root curve is equal to $\alpha$.
For any $k$ and $l$, each of $\sigma_k^l\cap \sigma_k^{l+1}$ and $\sigma_k^{m_k-1}\cap \sigma_{k+1}^1$ contains at least two HBPs. 
It turns out that the root curve of the edge $\{ \phi(a_1), \phi(a_2)\}$ is equal to that of the $2$-simplex $\phi(\sigma_k^l)$ for any $k$ and $l$ and is thus equal to that of the edge $\{ \phi(a_3), \phi(a_4)\}$.
\end{proof}

Exchanging symbols appropriately in the above proof, we obtain the following lemma. For any automorphism $\psi$ of $\calcp_s(S)$, if we define a map $\Psi \colon V_s(S)\rightarrow V_s(S)$ in the same way as in Section \ref{subsec-defn}, then the lemma shows that $\Psi$ is well-defined.

\begin{lem}
Let $S=S_{g, p}$ be a surface with $g \geq 2$ and $p \geq 3$.
Let $\psi$ be an automorphism of $\calcp_s(S)$, and for each $k=1,2,3,4$, let $b_k=\{\beta, \beta_k\}$ be a separating HBP in $S$ such that $\{ b_1, b_2\}$ and $\{ b_3, b_4\}$ are edges of $\calcp_s(S)$.
Then the root curves of the two edges $\{ \psi(b_1), \psi(b_2)\}$ and $\{ \psi(b_3), \psi(b_4)\}$ of $\calcp_s(S)$ are equal.
\end{lem}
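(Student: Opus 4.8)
The plan is to follow the proof of Lemma \ref{root_sep_HBP} verbatim, replacing the complex $\calcp(S)$ by its full subcomplex $\calcp_s(S)$ and every HBP under consideration by a separating HBP in $S$. The statement is the separating-curve analogue of Lemma \ref{root_sep_HBP}, and all the ingredients needed for that proof have separating-curve counterparts already recorded in the excerpt.

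First I would invoke Lemma \ref{conn_sep_2} in place of the pair of Lemmas \ref{non-sep_conn} and \ref{conn_sep_2}: since $\beta$ is a separating curve in $S$ (and, being one of the two curves of a separating HBP that is a vertex of $\calcp_s(S)$, it is not an HBC in $S$), Lemma \ref{conn_sep_2} gives a path $b_1, b_2=\{\beta,\beta_{2}\}, b_3,\ldots, b_{n-1}=b_3', b_n=b_4$ of separating HBPs in $S$ containing $\beta$, with $i(b_k,b_{k+1})=0$ and $b_k\neq b_{k+1}$ for each $k$ — after first setting the endpoints to the four given HBPs, exactly as in the proof of Lemma \ref{root_sep_HBP}. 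Then for each interior triple $b_{k-1},b_k,b_{k+1}$ I would apply Lemma \ref{conn_sep} (rather than Lemmas \ref{conn_non-sep} and \ref{conn_sep} together) to obtain a refining sequence $b_{k-1}=b_k^1, b_k^2,\ldots, b_k^{m_k}=b_{k+1}$ of separating HBPs such that each $\{b_k, b_k^l, b_k^{l+1}\}$ is a rooted $2$-simplex of $\calcp_s(S)$ with root curve $\beta$; this uses $p\geq 3$ so that such rooted $2$-simplices exist.

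Finally, since $\psi$ is superinjective it preserves rooted simplices by Lemma \ref{rooted}, so each $\psi(\{b_k, b_k^l, b_k^{l+1}\})$ is a rooted $2$-simplex of $\calcp_s(S)$; consecutive $2$-simplices in the chain (both the $l\to l+1$ steps and the $k\to k+1$ junctions) share at least two HBP-vertices, hence their root curves under $\psi$ agree, and propagating this equality along the whole chain shows that the root curve of $\{\psi(b_1),\psi(b_2)\}$ equals that of $\{\psi(b_3),\psi(b_4)\}$. There is essentially no new obstacle here: the only point requiring a moment's care is the bookkeeping that the endpoints $b_1,b_n$ are adjoined to the path produced by Lemma \ref{conn_sep_2} and that $b_1\neq b_2$, $b_3'\neq b_4$ so that the triples to which Lemma \ref{conn_sep} is applied consist of genuinely distinct HBPs; this is handled exactly as in the proof of Lemma \ref{root_sep_HBP}, which is why the excerpt simply says ``exchanging symbols appropriately.''
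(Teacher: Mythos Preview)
Your proposal is correct and follows exactly the approach the paper intends: as the text states just before the lemma, one obtains it by ``exchanging symbols appropriately'' in the proof of Lemma \ref{root_sep_HBP}, which amounts precisely to using Lemma \ref{conn_sep_2} alone for the connecting path, Lemma \ref{conn_sep} alone for the refining rooted $2$-simplices, and the $\calcp_s(S)$-version of Lemma \ref{rooted} for the propagation of root curves under $\psi$.
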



\section{Simpliciality of $\Phi$}\label{supseinj_Phi}

Let $S=S_{g, p}$ be a surface with $g\geq 2$ and $p\geq 2$.
We show that the bijection $\Phi$ from $V(S)$ onto itself associated to an automorphism of $\calcp(S)$, defined in Sections \ref{sec-const} and \ref{sec-const-p3}, induces an automorphism of $\calc(S)$.

\begin{thm}\label{thm-phi-g2}
Let $S=S_{g,p}$ be a surface with $g \geq 2$ and $p\geq 2$, and let $\phi$ be an automorphism of $\calcp(S)$. 
Then the bijection $\Phi$ from $V(S)$ onto itself associated to $\phi$ induces an automorphism of $\calc(S)$.
\end{thm}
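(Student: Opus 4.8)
The plan is to show that $\Phi$ is simplicial and that its inverse is too, so that $\Phi$ is an automorphism of $\calc(S)$; since the actual content is simpliciality, I would focus there. To that end, I must prove: if $\alpha$ and $\beta$ are curves in $S$ with $i(\alpha,\beta)=0$, then $i(\Phi(\alpha),\Phi(\beta))=0$. Because $\Phi$ restricted to HBC-vertices equals $\phi$, and $\phi$ is simplicial, the case where both $\alpha$ and $\beta$ are HBCs is immediate. The remaining cases split according to how many of $\alpha,\beta$ are HBCs, whether they are separating or non-separating, and (for two disjoint curves) what the complementary regions look like. The basic mechanism in every case is the same: given disjoint curves $\alpha,\beta$, I would like to exhibit HBPs $a\ni\alpha$ and $b\ni\beta$ that are themselves disjoint; then $\{a,b\}$ is an edge of $\calcp(S)$, so $\{\phi(a),\phi(b)\}$ is an edge, hence the curves in $\phi(a)$ are disjoint from those in $\phi(b)$; in particular the root curve $\Phi(\alpha)\in\phi(a)$ (obtained by pairing $\phi(a)$ with the image of a second HBP through $\alpha$) is disjoint from $\Phi(\beta)\in\phi(b)$. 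Making this precise requires choosing the auxiliary HBPs compatibly, and this is where the topology of $S_{\{\alpha,\beta\}}$ enters.

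Concretely, I would first handle the generic situation: $\alpha,\beta$ both non-HBC and disjoint, with enough room in $S\setminus(\alpha\cup\beta)$ to find a single HBP containing $\alpha$ and a single HBP containing $\beta$ that are disjoint from each other, together with a second HBP through each of $\alpha,\beta$ used only to pin down the root curves. Using $g\ge 2$ and $p\ge 2$ and the definition of $\Phi$ via root curves (Lemmas \ref{lem-pen2}, \ref{sep_HBP_rooted}, \ref{root_sep_HBP}), one reads off $i(\Phi(\alpha),\Phi(\beta))=0$ from $i(\phi(a),\phi(b))=0$ via Lemma \ref{HBP_to_HBP}. The degenerate configurations — where $S\setminus(\alpha\cup\beta)$ is too small to host disjoint HBPs through each curve, e.g. when $p=2$ and $\alpha,\beta$ sit in a tight position, or when one of them is an HBC and the complement of the other is a once-punctured or low-genus piece — need to be treated separately, either by choosing the HBPs to share a curve and invoking the rootedness arguments of Section \ref{sec-basic}, or by passing to $\calcp_s(S)$ and using that $\phi$ restricts to an automorphism there (Lemmas \ref{HBC_to_HBC}, \ref{lem-phi-ns-s}). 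The mixed case (one curve an HBC, one not) uses that $\Phi$ on HBCs is literally $\phi$: pick an HBP $b\ni\beta$ disjoint from the HBC $\alpha$, note $\{\alpha,b\}$ is an edge of $\calcp(S)$, so $\{\phi(\alpha),\phi(b)\}$ is an edge, and again $\Phi(\beta)\in\phi(b)$ is disjoint from $\Phi(\alpha)=\phi(\alpha)$.

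Once simpliciality of $\Phi$ is established, the same argument applied to $\phi^{-1}$ — whose associated bijection is readily checked to be $\Phi^{-1}$, since the construction via root curves is manifestly symmetric — shows $\Phi^{-1}$ is simplicial as well. Hence $\Phi$ is an automorphism of $\calc(S)$. I would also remark that the analogous statement for $\psi$ an automorphism of $\calcp_s(S)$ and $\Psi\colon V_s(S)\to V_s(S)$ follows by the same reasoning with $\calc_s(S)$ in place of $\calc(S)$ and the separating-curve versions of the lemmas of Section \ref{sec-basic}.

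The main obstacle I anticipate is the bookkeeping in the degenerate low-complexity configurations: when $\alpha,\beta$ are disjoint but the complementary subsurface has genus or boundary count so small that one cannot simultaneously realize disjoint HBPs through both curves while also having second HBPs available to read off both root curves. There one must be careful to produce the auxiliary HBPs inside a fixed complementary component (using $g\ge 2$ to supply handles, as in Lemma \ref{lem-pen}) and to verify that the relevant simplices of $\calcp(S)$ have the maximal or rooted structure needed to apply Proposition \ref{max_dim} and Lemmas \ref{rooted}--\ref{root_curve}; the essential point throughout is that $i(\phi(a),\phi(b))\ne 0$ whenever $i(a,b)\ne 0$ and $=0$ whenever $i(a,b)=0$, transported to root curves.
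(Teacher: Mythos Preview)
Your overall plan is right and matches the paper's: reduce to simpliciality, treat the HBC/HBC case by $\Phi=\phi$ there, and in the generic non-HBC cases produce disjoint HBPs $a\ni\alpha$, $b\ni\beta$ so that $i(\phi(a),\phi(b))=0$ forces $i(\Phi(\alpha),\Phi(\beta))=0$. The inverse-via-$\phi^{-1}$ remark is also fine.

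The gap is in the ``degenerate configurations,'' and it is not just bookkeeping. Take $\alpha$ non-separating, $\beta$ separating and not an HBC, and suppose the component $R$ of $S_\beta$ containing $\alpha$ meets no component of $\partial S$. Then \emph{there is no HBP through $\alpha$ disjoint from $\beta$}: any $\alpha'$ with $\{\alpha,\alpha'\}$ an HBP must lie on the same side of $\pi(\beta)$ in $\bar S$ as $\alpha$, but every curve of $R$ that is HBP-equivalent to $\alpha$ would have to cobound a holed sphere containing a component of $\partial S$, and $R$ has none; while an $\alpha'$ in the other component $R'$ is separated from $\alpha$ by $\pi(\beta)$ in $\bar S$ and hence cannot be isotopic to $\alpha$ there. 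So your generic mechanism is unavailable, and neither of your proposed fixes applies: ``HBPs sharing a curve'' is impossible since $\alpha$ is non-separating and $\beta$ is separating (no curve can be HBP-equivalent to both), and ``passing to $\calcp_s(S)$'' is irrelevant because $\alpha$ is non-separating. The analogous obstruction occurs in the separating/separating case when all of $\partial S$ lies in a single outer component of $S_{\{\alpha,\beta\}}$.

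What the paper actually does in these cases is a filling argument you are missing. One fixes a $2$-HBP $b=\{\beta,\beta'\}$ with $\beta'\in R'$, so the genus-zero piece $Q$ of $S_{\phi(b)}$ is an $S_{0,4}$. One then chooses two curves $\beta_1,\beta_2$ in the genus-zero piece of $S_b$ with $i(\beta_1,\beta_2)\neq 0$, together with HBPs $a_k=\{\alpha,\alpha_k\}$ (which \emph{do} intersect $\beta$) satisfying $i(a_k,\{\beta',\beta_k\})=0$. From $\phi$ one reads off that $\Phi(\beta_1),\Phi(\beta_2)\in V(Q)$ and that they intersect, hence fill $Q$; and that $\Phi(\alpha)$ is disjoint from both $\Phi(\beta_1)$ and $\Phi(\beta_2)$. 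Therefore $\Phi(\alpha)\notin V(Q)$, which forces $i(\Phi(\alpha),\Phi(\beta))=0$. The same trick, with the roles of $\alpha$ and $\beta$ adjusted, handles the bad separating/separating configuration. Incorporating this filling step is the missing idea; the rest of your outline is sound.
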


\begin{proof}
It suffices to show that $\Phi$ is simplicial.
Note that by the definition of $\Phi$ and Lemma \ref{root_curve}, we have $\phi(\{ \alpha, \beta \})=\{ \Phi(\alpha), \Phi(\beta)\}$ for each HBP $\{ \alpha, \beta \}$ in $S$.

Let $\alpha$ and $\beta$ be distinct curves in $S$ with $i(\alpha,\beta)=0$. 
We prove the equality $i(\Phi(\alpha), \Phi(\beta))=0$ in the following three cases: (a) both $\alpha$ and $\beta$ are non-separating in $S$; (b) $\alpha$ is non-separating in $S$ and $\beta$ is separating in $S$; and (c) both $\alpha$ and $\beta$ are separating in $S$. 
 
(a) Suppose that both $\alpha$ and $\beta$ are non-separating in $S$.  
If $\alpha$ and $\beta$ are HBP-equivalent, then $\phi(\{\alpha,\beta\})=\{\Phi(\alpha), \Phi(\beta)\}$ is an HBP in $S$, and thus $i(\Phi(\alpha), \Phi(\beta))=0$.
Suppose that $\alpha$ and $\beta$ are not HBP-equivalent. 
We choose non-separating curves $\alpha'$, $\beta'$ in $S$ such that $a=\{\alpha,\alpha'\}$ and $b=\{\beta,\beta'\}$ are disjoint $1$-HBPs in $S$. 
We then have $i(\phi(a), \phi(b))=0$ and thus $i(\Phi(\alpha), \Phi(\beta))=0$.

(b) Suppose that $\alpha$ is non-separating in $S$ and that $\beta$ is separating in $S$. 
If $\beta$ is an HBC in $S$, then we can choose a non-separating curve $\alpha'$ in $S$ disjoint from $\beta$ such that $a=\{\alpha,\alpha'\}$ is an HBP in $S$. 
We then obtain the equality $i(\phi(a),\phi(\beta))=0$ and thus $i(\Phi(\alpha),\Phi(\beta))=0$. 
 
We assume that $\beta$ is not an HBC in $S$. 
Let $R$ denote the component of $S_{\beta}$ containing $\alpha$, and let $R'$ denote another component of $S_{\beta}$. 
If $R$ contains at least one component of $\partial S$, then we choose a curve $\alpha'$ in $R$ and a curve $\beta'$ in $S$ such that $a=\{\alpha,\alpha'\}$ and $b=\{\beta,\beta'\}$ are disjoint HBPs in $S$. 
We then obtain the equality $i(\phi(a),\phi(b))=0$ and thus $i(\Phi(\alpha),\Phi(\beta))=0$.

If $R$ contains no component of $\partial S$, then we choose a curve $\beta'$ in $R'$ with $b=\{\beta,\beta'\}$ a $2$-HBP in $S$.
\begin{figure}
\begin{center}
\includegraphics[width=12cm]{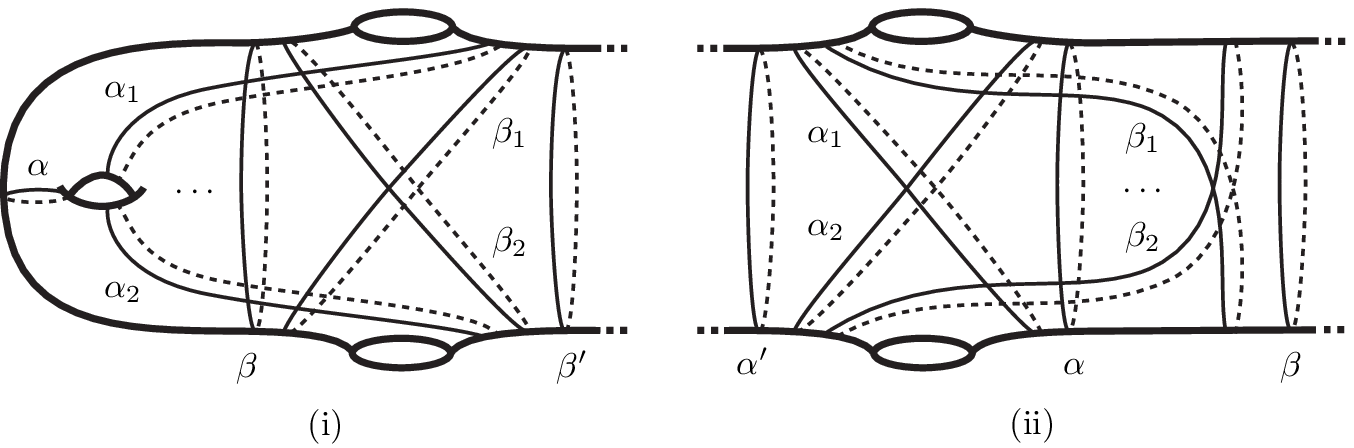}
\caption{}\label{fig-simp}
\end{center}
\end{figure}
As in Figure \ref{fig-simp} (i), we can find curves $\alpha_1$, $\alpha_2$ in $S$ and curves $\beta_1$, $\beta_2$ in $R'$ such that for each $k=1, 2$,
\begin{itemize}
\item each of $a_k=\{\alpha,\alpha_k\}$, $b_k=\{\beta,\beta_k\}$ and $b_k'=\{\beta',\beta_k\}$ is a $1$-HBP in $S$; and  
\item $i(a_k,b_k')=0$ and $i(\beta_1, \beta_2)\neq 0$.
\end{itemize}
Let $Q$ denote the component of $S_{\phi(b)}$ homeomorphic to $S_{0,4}$. 
Since $\phi(b)$ is a $2$-HBP in $S$ and since for each $k=1,2$, both $\phi(b_k)=\{\Phi(\beta),\Phi(\beta_k)\}$ and $\phi(b_k')=\{\Phi(\beta'),\Phi(\beta_k)\}$ are $1$-HBPs in $S$, the curve $\Phi(\beta_k)$ lies in $V(Q)$. 
The two curves $\Phi(\beta_1)$ and $\Phi(\beta_2)$ intersect because $b_1$ and $b_2$ intersect.
Hence, $\Phi(\beta_1)$ and $\Phi(\beta_2)$ fill $Q$. 
Since $\phi(a_k)$ and $\phi(b_k')$ are disjoint for each $k=1,2$, the curve $\Phi(\alpha)$ is disjoint from both $\Phi(\beta_1)$ and $\Phi(\beta_2)$. 
It follows that $\Phi(\alpha)$ is disjoint from $\Phi(\beta)$. 

(c) We assume that both $\alpha$ and $\beta$ are separating in $S$. 
Either if both $\alpha$ and $\beta$ are HBCs in $S$ or if neither $\alpha$ nor $\beta$ is an HBC in $S$ and they are HBP-equivalent, then we have $i(\Phi(\alpha), \Phi(\beta))=0$.

Suppose that $\alpha$ is an HBC in $S$ and that $\beta$ is not an HBC in $S$. 
We choose a curve $\beta'$ disjoint from $\alpha$ such that $\{\beta,\beta'\}$ is an HBP in $S$. 
Since $\phi(\alpha)$ and $\phi(\{\beta,\beta'\})$ are then disjoint, $\Phi(\alpha)$ and $\Phi(\beta)$ are disjoint. 

Finally, we suppose that neither $\alpha$ nor $\beta$ is an HBC in $S$ and they are not HBP-equivalent.
We denote by $R$, $R'$ and $R''$ the three components of $S_{\{\alpha,\beta\}}$ so that $R$ contains $\alpha$ as a boundary component, but does not contain $\beta$ as a boundary component; and $R''$ contains $\alpha$ and $\beta$ as boundary components.
Unless $\partial S$ is contained in either $R$ or $R'$, then there exist two curves $\alpha'$, $\beta'$ in $S$ such that $\{\alpha,\alpha'\}$ and $\{\beta,\beta'\}$ are disjoint HBPs in $S$. 
We thus have $i(\Phi(\alpha),\Phi(\beta))=0$. 

Suppose that $\partial S$ is contained in either $R$ or $R'$. 
Without loss of generality, we may assume that $\partial S$ is contained in $R$. 
Let $\alpha'$ be a curve in $R$ with $a=\{\alpha,\alpha'\}$ a $2$-HBP in $S$. 
As in Figure \ref{fig-simp} (ii), we can find curves $\beta_1$, $\beta_2$ in $S$ and curves $\alpha_1$, $\alpha_2$ in $R$ such that for each $k=1, 2$,
\begin{itemize}
\item each of $a_k=\{\alpha,\alpha_k\}$, $a_k'=\{\alpha',\alpha_k\}$ and $b_k=\{\beta,\beta_k\}$ is a $1$-HBP in $S$; and 
\item $i(a_k', b_k)=0$ and $i(\alpha_1, \alpha_2)\neq 0$. 
\end{itemize} 
We denote by $Q$ the component of $S_{\phi(a)}$ homeomorphic to $S_{0,4}$, which contains $\Phi(\alpha_1)$ and $\Phi(\alpha_2)$ because both $\phi(a_k)=\{\Phi(\alpha),\Phi(\alpha_k)\}$ and $\phi(a_k')=\{\Phi(\alpha'),\Phi(\alpha_k)\}$ are $1$-HBPs in $S$ for each $k=1, 2$.
The curves $\Phi(\alpha_1)$ and $\Phi(\alpha_2)$ fill $Q$ because they intersect.
Since $\Phi(\beta)$ is disjoint from $\Phi(\alpha_1)$ and $\Phi(\alpha_2)$, it is disjoint from $\Phi(\alpha)$. 
\end{proof}

The argument in case (c) in the above proof shows the following:

\begin{thm}\label{thm-aut-bs}
Let $S=S_{g,p}$ be a surface with $g \geq 2$ and $p\geq 2$, and let $\psi$ be an automorphism of $\calcp_s(S)$. 
Then the bijection $\Psi$ from $V_s(S)$ onto itself associated to $\psi$, defined in Sections \ref{sec-const} and \ref{sec-const-p3}, induces an automorphism of $\calc_s(S)$.
\end{thm}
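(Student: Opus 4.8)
The plan is to show that the bijection $\Psi \colon V_s(S)\rightarrow V_s(S)$ associated to $\psi$ preserves disjointness of separating curves, hence defines a simplicial self-map of $\calc_s(S)$; applying this to both $\psi$ and $\psi^{-1}$ (whose associated bijection is $\Psi^{-1}$, as in Section \ref{subsec-defn}) will then show that $\Psi$ induces an automorphism of $\calc_s(S)$. As in the proof of Theorem \ref{thm-phi-g2}, the first step is to record that $\psi(\{ \alpha, \beta \})=\{ \Psi(\alpha), \Psi(\beta)\}$ for every separating HBP $\{ \alpha, \beta \}$ in $S$; this follows from the definition of $\Psi$, from Lemma \ref{root_curve} applied to $\psi$, and from the well-definedness of $\Psi$ established in Lemma \ref{sep_HBP_rooted} when $p=2$ and in its analogue at the end of Section \ref{sec-const-p3} when $p\geq 3$.

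I would then take two distinct curves $\alpha, \beta \in V_s(S)$ with $i(\alpha, \beta)=0$ and prove $i(\Psi(\alpha), \Psi(\beta))=0$, running through the subcases of case (c) in the proof of Theorem \ref{thm-phi-g2}. The observation that keeps the whole discussion inside $\calcp_s(S)$ is that any HBP in $S$ one of whose two curves is separating is a separating HBP, since a non-separating HBP consists of two non-separating curves; hence every auxiliary HBP built below out of $\alpha$, $\beta$, or out of further separating curves, is a separating HBP, i.e.\ a vertex of $\calcp_s(S)$ on which $\psi$ acts. If $\alpha$ and $\beta$ are both HBCs, then $\{ \alpha, \beta \}$ is an edge of $\calcp_s(S)$, so $\{ \Psi(\alpha), \Psi(\beta)\}=\{ \psi(\alpha), \psi(\beta)\}$ is an edge and we are done. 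If neither is an HBC and they are HBP-equivalent, then $\psi(\{ \alpha, \beta \})=\{ \Psi(\alpha), \Psi(\beta)\}$ is a separating HBP and so consists of disjoint curves. If $\alpha$ is an HBC and $\beta$ is not, one chooses a separating curve $\beta'$ disjoint from $\alpha$ with $\{ \beta, \beta'\}$ an HBP in $S$, and then $\psi(\alpha)$ and $\psi(\{ \beta, \beta'\})$ are disjoint, whence $\Psi(\alpha)$ and $\Psi(\beta)$ are disjoint.

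The remaining, and main, case is that neither $\alpha$ nor $\beta$ is an HBC and they are not HBP-equivalent, and here I would follow the case analysis of case (c) of Theorem \ref{thm-phi-g2}. In the typical subconfiguration one produces disjoint (separating) HBPs $\{ \alpha, \alpha'\}$ and $\{ \beta, \beta'\}$ in $S$ and concludes directly from $i(\psi(\{ \alpha, \alpha'\}), \psi(\{ \beta, \beta'\}))=0$. The delicate subconfiguration is the one in which $\partial S$ is confined to a single component of $S_{\{ \alpha, \beta \}}$; there one reproduces the filling argument verbatim: choose a separating curve $\alpha'$ with $a=\{ \alpha, \alpha'\}$ a $2$-HBP, and separating curves $\alpha_1, \alpha_2, \beta_1, \beta_2$ realizing the configuration of Figure \ref{fig-simp}(ii) (with $\{ \alpha, \alpha_k\}$, $\{ \alpha', \alpha_k\}$, $\{ \beta, \beta_k\}$ all $1$-HBPs, $i(\alpha_1, \alpha_2)\neq 0$, and $i(\{ \alpha', \alpha_k\}, \{ \beta, \beta_k\})=0$ for $k=1,2$); then $\Psi(\alpha_1)$ and $\Psi(\alpha_2)$ both lie in the four-holed-sphere component $Q$ of $S_{\psi(a)}$ and, since they intersect, fill $Q$, while $\Psi(\beta)$ is disjoint from both, which forces $\Psi(\beta)$ to be disjoint from the boundary curve of $Q$ equal to $\Psi(\alpha)$. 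I expect the main obstacle to be only the bookkeeping: one must verify that all curves produced in this last configuration can be taken separating in $S$, so that $\Psi$ is defined on them and the identity $\psi(\{ \cdot, \cdot \})=\{ \Psi(\cdot), \Psi(\cdot)\}$ applies, and that $\psi$, as an automorphism of $\calcp_s(S)$, preserves HBCs and the distinction between $1$-HBPs and $2$-HBPs used when identifying $Q$ --- all of which is already settled in Section \ref{sec-basic}. Granting these, $\Psi$ is simplicial, and hence induces an automorphism of $\calc_s(S)$.
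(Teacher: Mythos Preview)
Your proposal is correct and follows exactly the approach indicated in the paper, which simply says that the argument in case (c) of the proof of Theorem \ref{thm-phi-g2} proves Theorem \ref{thm-aut-bs}. You have faithfully unpacked that case-by-case argument and correctly identified the one extra point needed, namely that every auxiliary HBP built from a separating curve is automatically a separating HBP and hence a vertex of $\calcp_s(S)$.
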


Theorem 1.2 in \cite{kida-tor} shows that if $S=S_{g, p}$ is a surface with $g\geq 2$ and $p\geq 2$, then any automorphism of $\calc_s(S)$ is induced by an element of $\mod^*(S)$. 
Combining Theorem \ref{thm-cc}, we obtain the following:

\begin{cor}\label{cor-aut}
Let $S=S_{g,p}$ be a surface with $g \geq 2$ and $p\geq 2$.
Then any automorphism of $\calcp(S)$ is induced by an element of $\mod^*(S)$.
Moreover, the same conclusion holds for any automorphism of $\calcp_s(S)$.
\end{cor}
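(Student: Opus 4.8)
The plan is to feed the two simpliciality theorems just proved, Theorems \ref{thm-phi-g2} and \ref{thm-aut-bs}, into the known recognition theorems for the complexes $\calc(S)$ and $\calc_s(S)$, and then to check that the mapping class obtained in this way actually induces the \emph{given} automorphism of $\calcp(S)$ (resp.\ $\calcp_s(S)$), not merely some automorphism of it.

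First I would handle $\calcp(S)$. Let $\phi$ be an automorphism of $\calcp(S)$ and let $\Phi\colon V(S)\to V(S)$ be the associated bijection constructed in Sections \ref{sec-const} and \ref{sec-const-p3}. By Theorem \ref{thm-phi-g2}, $\Phi$ induces an automorphism of $\calc(S)$. Since $g\geq 2$ and $p\geq 2$ we have $3g+p-4\geq 4>0$ and $(g,p)\neq(1,2)$, so Theorem \ref{thm-cc} (i) produces $\gamma\in\mod^*(S)$ acting on $V(S)$ as $\Phi$. It then remains to see that $\gamma$ induces $\phi$ on $\calcp(S)$. Being realized by a homeomorphism, $\gamma$ preserves HBCs and HBPs in $S$ and sends disjoint curves to disjoint curves, so it acts simplicially on $\calcp(S)$; for an HBC-vertex $\alpha$ we get $\gamma\cdot\alpha=\Phi(\alpha)=\phi(\alpha)$ by the definition of $\Phi$, and for an HBP-vertex $\{\alpha,\beta\}$ we get $\gamma\cdot\{\alpha,\beta\}=\{\Phi(\alpha),\Phi(\beta)\}=\phi(\{\alpha,\beta\})$, the last equality being the identity $\phi(\{\alpha,\beta\})=\{\Phi(\alpha),\Phi(\beta)\}$ recorded at the beginning of the proof of Theorem \ref{thm-phi-g2}. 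Hence $\gamma$ and $\phi$ agree on every vertex of $\calcp(S)$, so $\gamma$ induces $\phi$; uniqueness of $\gamma$ follows from Lemma \ref{lem-faithful} (i).

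Next I would treat $\calcp_s(S)$ by the same recipe. Given an automorphism $\psi$ of $\calcp_s(S)$, let $\Psi\colon V_s(S)\to V_s(S)$ be the associated bijection; by Theorem \ref{thm-aut-bs} it induces an automorphism of $\calc_s(S)$, and since $g\geq 2$, $p\geq 2$, Theorem 1.2 in \cite{kida-tor} provides $\lambda\in\mod^*(S)$ acting as $\Psi$ on $V_s(S)$. Again $\lambda$ acts simplicially on $\calcp_s(S)$, agrees with $\psi$ on HBC-vertices by the definition of $\Psi$, and sends a separating HBP-vertex $\{\alpha,\beta\}$ to $\{\Psi(\alpha),\Psi(\beta)\}=\psi(\{\alpha,\beta\})$ by the corresponding identity for $\psi$; thus $\lambda$ induces $\psi$.

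At this stage there is no substantial obstacle remaining: the real work was carried out in Sections \ref{sec-basic}--\ref{supseinj_Phi}, where $\Phi$ and $\Psi$ were built and shown to be simplicial on $\calc(S)$ and $\calc_s(S)$. The only genuinely new point is the concluding consistency check --- that the homeomorphism recovered from the curve complex induces the given automorphism of $\calcp(S)$ (resp.\ $\calcp_s(S)$) exactly --- and, as indicated above, this reduces to the vertex-by-vertex comparison using that the $\mod^*(S)$-action on these complexes is determined by the action on curves, together with the identities $\phi(\{\alpha,\beta\})=\{\Phi(\alpha),\Phi(\beta)\}$ and $\psi(\{\alpha,\beta\})=\{\Psi(\alpha),\Psi(\beta)\}$.
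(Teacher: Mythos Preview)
Your proposal is correct and follows exactly the paper's approach: the corollary is stated immediately after the sentence ``Combining Theorem \ref{thm-cc}, we obtain the following,'' and the paper gives no further proof. You have simply spelled out the consistency check (that the element of $\mod^*(S)$ recovered from $\calc(S)$ or $\calc_s(S)$ actually induces the original $\phi$ or $\psi$) which the paper leaves implicit.
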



\section{Characterization of twisting elements}\label{sec-cha}

The aim of this section is to show that any injective homomorphism from a finite index subgroup of $P_s(S)$ into $P(S)$ induces a superinjective map from $\calcp_s(S)$ into $\calcp(S)$.
This map is obtained by characterizing HBC and HBP twists algebraically. 
Our argument is based on the characterization of such twists due to Irmak-Ivanov-McCarthy \cite{iim}.

\subsection{Preliminaries}

Let $S=S_{g, p}$ be a surface with $g\geq 1$ and assume that the Euler characteristic of $S$, denoted by $\chi(S)=-2g-p+2$, is negative.  
Theorem 3.1 in \cite{iim} asserts that any element $x$ of $P(S)$ is {\it pure} in the following sense (see Theorem A.1 in \cite{kida-tor} for a proof): There exists $\sigma \in \Sigma(S)\cup \{ \emptyset \}$ such that
\begin{enumerate}
\item[(I)] $x$ fixes each curve of $\sigma$ and each component of $S_{\sigma}$ and of $\partial S$; and
\item[(II)] $x$ acts on each component of $S_{\sigma}$ as either the identity or a pseudo-Anosov element.
\end{enumerate}
It follows that $P(S)$ is torsion-free.
For $\tau \in \Sigma(S)$, we denote by $P(S)_{\tau}$ the stabilizer of $\tau$ in $P(S)$.
Theorem A.1 of \cite{kida-tor} shows that for any $x\in P(S)$ and $\tau \in \Sigma(S)$, if $x$ fixes $\tau$, then $x$ fixes each curve of $\tau$ and each component of $S_{\tau}$.
Moreover, $x$ preserves an orientation of each curve of $\tau$.
For each $\tau \in \Sigma(S)$, we thus have the natural homomorphism
\[\theta_{\tau}\colon P(S)_{\tau}\rightarrow \prod_Q\pmod(Q),\]
where $Q$ runs through all components of $S_{\tau}$.
We define $\theta_Q\colon P(S)_{\tau}\rightarrow \pmod(Q)$ as the composition of $\theta_{\tau}$ with the projection onto $\pmod(Q)$.

It is known that for each $x\in P(S)$, there exists the minimal element among all $\sigma \in \Sigma(S)\cup \{ \emptyset \}$ satisfying the above conditions (I) and (II). 
The minimal element is called the {\it canonical reduction system (CRS)} for $x$. 
We note that
\begin{itemize}
\item the CRS's for $x$ and its non-zero power are equal; and
\item if $y\in P(S)$ lies in the centralizer of $x$, then $y$ fixes the CRS for $x$.
\end{itemize}
Indeed, one can also define the CRS for any element and any subgroup of $\mod(S)$. 
If $x\in \mod(S)$ fixes an element of $\Sigma(S)$, then $x$ is said to be {\it reducible}. 
It is known that for each element $x\in \mod(S)$ of infinite order, $x$ is reducible if and only if the CRS for $x$ is non-empty. 
We recommend the reader to consult Section 7 of \cite{iva-subgr} for details on CRS's.

Pick $x\in P(S)$ and let $\sigma \in \Sigma(S)\cup \{ \emptyset \}$ be the CRS for $x$.
We mean by a {\it pA component} for $x$ a component $Q$ of $S_{\sigma}$ on which $x$ acts as a pseudo-Anosov element.

\begin{lem}\label{lem-rank}
Let $S=S_{g, p}$ be a surface with $g\geq 1$ and $\chi(S)<0$. 
Then the maximal rank of finitely generated abelian subgroups of $P(S)$ (resp.\ $P_s(S)$) is equal to $p$ if $g\geq 2$, and equal to $p-1$ if $g=1$.
\end{lem}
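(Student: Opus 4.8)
The plan is to prove matching upper and lower bounds. For the lower bound, suppose first that $g\geq 2$. One constructs $p+1$ pairwise disjoint, pairwise non-isotopic and pairwise HBP-equivalent \emph{separating} curves $\beta_0,\beta_1,\ldots,\beta_p$ in $S$ (the existence and structure of such a collection are governed by Lemma \ref{lem-hbp-dec}(iii): a chain with a handle at each end and one boundary component of $S$ inside each intermediate planar piece). Since $t_{\beta_0},\ldots,t_{\beta_p}$ generate a free abelian group of rank $p+1$, the HBP twists $t_{\beta_0}t_{\beta_j}^{-1}$ for $j=1,\ldots,p$ pairwise commute and generate a free abelian group of rank $p$; all of them lie in $P_s(S)\leq P(S)$. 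When $g=1$ one uses instead $p-1$ nested HBCs in $S$, whose Dehn twists lie in $P_s(S)$ and generate a free abelian group of rank $p-1$ (these nested-HBC twists also make sense for $g\geq 2$, but yield only rank $p-1$ there). This gives the lower bounds for both $P(S)$ and $P_s(S)$.

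For the upper bound it suffices to bound the rank of a finitely generated abelian subgroup of $P(S)$, as $P_s(S)\leq P(S)$. By the Birman exact sequence, $P(S)\cong PB_p(\bar S)$ when $g\geq 2$, and $P(S)\cong PB_p(\bar S)/Z$ when $g=1$, where $Z\cong\mathbb{Z}^2$ is the central subgroup generated by simultaneously pushing all points along the two generators of $\pi_1(\bar S)$. I would then induct on $p$ using the Fadell--Neuwirth fibration $\mathrm{Conf}_p(\bar S)\to\mathrm{Conf}_{p-1}(\bar S)$ forgetting the last point, whose fibre is $\bar S$ punctured at $p-1$ points: for $p\geq 2$ this gives $1\to F\to PB_p(\bar S)\to PB_{p-1}(\bar S)\to 1$ with $F$ free. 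For an abelian subgroup $A$ of the total group one has $\mathrm{rank}(A)=\mathrm{rank}(A\cap F)+\mathrm{rank}(A/(A\cap F))$, with $A\cap F$ embedding in the free group $F$ (so of rank at most $1$) and $A/(A\cap F)$ embedding in $PB_{p-1}(\bar S)$; hence the maximal abelian rank of $PB_p(\bar S)$ is at most $1$ more than that of $PB_{p-1}(\bar S)$. Since $Z$ meets $F$ trivially and maps isomorphically onto the corresponding central subgroup downstairs, the same recursion holds for $PB_p(\bar S)/Z$. The base case $PB_1(\bar S)=\pi_1(\bar S)$ has maximal abelian rank $1$ when $g\geq 2$ (a genus-$g$ surface group is torsion-free hyperbolic, so its abelian subgroups are cyclic), while $\pi_1(\bar S)/Z$ is trivial when $g=1$; unwinding the recursion gives the bounds $p$ and $p-1$, respectively.

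The step I expect to require the most care is the $g=1$ case, where one must be precise about the isomorphism $P(S)\cong PB_p(\bar S)/Z$ and about how $Z$ sits inside the Fadell--Neuwirth fibration, the Birman sequence being slightly degenerate for the torus. An alternative argument staying closer to the canonical-reduction-system machinery of Section \ref{sec-cha} would take a finitely generated abelian $A\leq P(S)$, pass to its canonical reduction system $\sigma$, and write $\mathrm{rank}(A)=\mathrm{rank}(A\cap D_\sigma)+\mathrm{rank}(\theta_\sigma(A))$, where $D_\sigma$ is the group generated by the Dehn twists about the curves of $\sigma$; the first summand is controlled by Lemma \ref{lem-multitwist}(i) and the second by the number of pA components for $A$. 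The crux there is to show that every pA component of an element of $P(S)$ is a holed sphere all of whose boundary curves become inessential in $\bar S$, so that such a component is cut off by HBCs of $\sigma$ and contributes nothing beyond $D_\sigma$; an Euler-characteristic count over the components of $S_\sigma$ then again gives $p$ (resp.\ $p-1$).
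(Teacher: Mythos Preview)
Your main argument is correct and is essentially the paper's: both prove the upper bound by induction on $p$, using an exact sequence whose kernel is either free or a hyperbolic surface group (hence of abelian rank at most $1$). The paper does this a bit more directly by capping a single boundary component and restricting the Birman sequence to $P(S)$ to obtain $1\to\pi_1(R)\to P(S)\to P(R)\to 1$ (and an analogous sequence for $P_s$); this works uniformly for all $g\geq 1$, and the genus dichotomy enters only through the base cases $P(S_{g,0})=1$ for $g\geq 2$ and $P(S_{1,1})=1$. Your detour through the identification $P(S)\cong PB_p(\bar S)$ (mod the central $\mathbb{Z}^2$ when $g=1$) and the Fadell--Neuwirth fibration is the same argument in different clothing, at the cost of the extra bookkeeping with $Z$ that you yourself flag. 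Your lower bounds match the paper's.

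Your alternative CRS sketch, however, rests on a false claim. It is not true that every pA component for an element of $P(S)$ is a holed sphere bounded by curves that become inessential in $\bar S$. For instance, take $S=S_{3,1}$ and let $\alpha$ be a separating curve cutting $S$ into $S_{1,1}$ and a piece $Q\cong S_{2,2}$ containing $\partial S$; then $\alpha$ is not an HBC, yet any point-pushing pseudo-Anosov in $P(Q)\cong\pi_1(S_{2,1})$, extended by the identity across $\alpha$ (and adjusted by a power of $t_\alpha$), gives a single-pA element of $P(S)$ whose pA component has genus $2$. Lemma~\ref{lem-trivial-comp}(i) only forces a pA component to meet $\partial S$ or to have an HBC in its boundary, not to be planar; so the ``contributes nothing beyond $D_\sigma$'' step fails, and the Euler-characteristic count as you describe it does not go through.
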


\begin{proof}
If $g\geq 2$, then for each simplex $\sigma$ of $\calcp_s(S)$ of maximal dimension, HBP-twists about HBPs in $\sigma$ generate a subgroup of $P_s(S)$ isomorphic to $\mathbb{Z}^p$.
If $g=1$, then there exists a simplex $\tau$ of $\calc_s(S)$ with $|\tau|=p-1$, and Dehn twists about curves in $\tau$ generate a subgroup of $P_s(S)$ isomorphic to $\mathbb{Z}^{p-1}$.
It is thus enough to show that the rank of any finitely generated abelian subgroup of $P(S)$ is at most $p$ if $g\geq 2$, and at most $p-1$ if $g=1$.
We prove it by induction of $p$.

If either $g\geq 2$ and $p=0$ or $g=1$ and $p=1$, then $P(S)$ is trivial, and the lemma is obvious.
We assume either $g\geq 2$ and $p\geq 1$ or $g=1$ and $p\geq 2$.
In general, for any short exact sequence of groups, $1\rightarrow A\rightarrow B\rightarrow C\rightarrow 1$, the inequality ${\rm rk}B\leq {\rm rk}A+{\rm rk}C$ holds, where for a group $D$, we denote by ${\rm rk}D$ the supremum of the ranks of finitely generated abelian subgroups in $D$. 
Fix a component $\partial$ of $\partial S$, and let $R$ be the surface obtained from $S$ by attaching a disk to $\partial$.
Restricting the associated Birman exact sequence
\[1\rightarrow \pi_1(R)\stackrel{\imath}{\rightarrow}\pmod(S)\rightarrow \pmod(R)\rightarrow 1\]
to $P(S)$ and to $P_s(S)$, we obtain the exact sequences
\[1\rightarrow \pi_1(R)\rightarrow P(S)\rightarrow P(R)\rightarrow 1,\quad 1\rightarrow N\rightarrow P_s(S)\rightarrow P_s(R)\rightarrow 1,\]
where we put $N=\imath^{-1}(P_s(S))$.
Since we have ${\rm rk}\pi_1(R)={\rm rk}N=1$, the induction is completed.
\end{proof}


\subsection{Characterization}

We start with the following observation.

\begin{lem}\label{lem-trivial-comp}
Let $S$ be a surface of genus at least one with $\chi(S)<0$.
Pick $x\in P(S)$ and $\sigma \in \Sigma(S)$ such that $x$ fixes $\sigma$.
Let $Q$ be a component of $S_{\sigma}$, and let $\tau$ be the set of all curves of $\sigma$ corresponding to a component of $\partial Q$.
Then the following assertions hold:
\begin{enumerate}
\item If we have $Q\cap \partial S=\emptyset$ and no curve of $\tau$ is an HBC in $S$, then $\theta_Q(x)$ is neutral.
\item If $\sigma$ is the CRS for $x$ and $Q$ is a pA component for $x$, then there exists $y\in P(S)$ such that $\tau$ is the CRS for $y$, $Q$ is a unique pA component for $y$, and the equality $\theta_Q(x)=\theta_Q(y)$ holds. 
\end{enumerate}
\end{lem}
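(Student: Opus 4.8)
The plan is to derive both assertions from a single computation comparing the restriction of $x$ to $Q$ with the restriction of the trivial element $\iota(x)=\mathrm{id}\in\mod(\bar{S})$ to the corresponding subsurface of $\bar{S}$. Write $\bar{\tau}$ for the set of curves of $\tau$ that remain essential in $\bar{S}$; since an essential curve of $S$ becomes inessential in $\bar{S}$ precisely when it is an HBC in $S$ (it then bounds a disk in $\bar{S}$, using $g\geq 1$), $\bar{\tau}$ is exactly the set of curves of $\tau$ that are not HBCs in $S$. Let $\bar{Q}$ be the subsurface of $\bar{S}$ cut out by $\bar{\tau}$ that contains $Q$; concretely $\bar{Q}$ is obtained from $Q$ by capping with a disk each boundary component of $Q$ that is either an HBC in $S$ or a component of $\partial S$, and capping gives a homomorphism $j\colon\pmod(Q)\to\mod(\bar{Q})$. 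The key point I would establish first is
\[ j(\theta_Q(x))=\mathrm{id}. \]
To prove this, choose a homeomorphism $\phi$ representing $x$ that preserves a tubular neighborhood of a representative of $\sigma$ and the component $Q$ of $S_\sigma$, so that $\phi|_Q$ represents $\theta_Q(x)$; extend $\phi$ over the capping disks of $\bar{S}$ to a homeomorphism $\bar{\phi}$, which represents $\iota(x)=\mathrm{id}$ because $x\in P(S)$. Since $\phi$ fixes each curve of $\tau$ and the component $Q$, the map $\bar{\phi}$ fixes each curve of $\bar{\tau}$ and preserves $\bar{Q}$, and by construction $\bar{\phi}|_{\bar{Q}}$ represents $j(\theta_Q(x))$, because the restriction of $\phi$ to each capped holed sphere (resp.\ collar of $\partial S$) becomes an element of the mapping class group of a disk and hence contributes nothing. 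Applying the restriction homomorphism from the stabilizer of the multicurve $\bar{\tau}$ in $\mod(\bar{S})$ to the components of $\bar{S}$ cut along $\bar{\tau}$ (well defined by the standard theory, cf.\ Section 7 of \cite{iva-subgr}) to $[\bar{\phi}]=[\mathrm{id}]$ shows that $\bar{\phi}|_{\bar{Q}}$ is isotopic to the identity; this is the claim.

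For assertion (i), the hypotheses $Q\cap\partial S=\emptyset$ and that no curve of $\tau$ is an HBC in $S$ give $\bar{\tau}=\tau$ and $\bar{Q}=Q$, so $j$ is just the inclusion $\pmod(Q)\hookrightarrow\mod(Q)$, which is injective. Hence the key point reads $\theta_Q(x)=\mathrm{id}$, as desired.

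For assertion (ii), I would construct $y$ by hand. Since $Q$ is a pA component for $x$, the class $\theta_Q(x)\in\pmod(Q)$ is pseudo-Anosov; pick a pseudo-Anosov homeomorphism $f$ of $Q$ representing it and fixing $\partial Q$ pointwise, extend $f$ by the identity on $S\setminus Q$, and let $y_0\in\pmod(S)$ be the class of the result. Then $\theta_Q(y_0)=\theta_Q(x)$; since $y_0$ is supported on $Q$, its canonical reduction system is contained in $\tau$, and since $\theta_Q(y_0)$ is pseudo-Anosov the canonical reduction system equals $\tau$ and $Q$ is the unique pA component for $y_0$. Now $\iota(y_0)$ is represented by a homeomorphism of $\bar{S}$ that is the identity off $Q$ and whose restriction to $\bar{Q}$ represents $j(\theta_Q(x))=\mathrm{id}$, so $\iota(y_0)$ is a multitwist about the curves of $\bar{\tau}$, say $\iota(y_0)=\prod_{c\in\bar{\tau}}t_{\bar{c}}^{\,m_c}$, where $\bar{c}$ is the curve of $\bar{S}$ coming from $c$. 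Since $\iota(t_c)=t_{\bar{c}}$ for $c\in\bar{\tau}$, the element $y:=y_0\prod_{c\in\bar{\tau}}t_c^{-m_c}$ satisfies $\iota(y)=\mathrm{id}$, i.e.\ $y\in P(S)$. Each $t_c$ with $c\in\tau$ restricts on $Q$ to a product of Dehn twists about curves isotopic to boundary components of $Q$, which is trivial in $\pmod(Q)$ since isotopies may move boundary points; hence $\theta_Q(y)=\theta_Q(y_0)=\theta_Q(x)$. Finally $y$ is still supported on a subsurface isotopic to $Q$, so as for $y_0$ its canonical reduction system is $\tau$ and $Q$ is its unique pA component. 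This $y$ is as required.

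I expect the main obstacle to be the bookkeeping in assertion (ii): one must perturb the obvious candidate $y_0$ into $P(S)$ without damaging its canonical reduction system or the value $\theta_Q(x)$, and this is possible only because the key point $j(\theta_Q(x))=\mathrm{id}$ pins $\iota(y_0)$ down to a multitwist about $\bar{\tau}$ alone, while twisting about curves of $\tau$ is invisible to $\theta_Q$. One must also handle the well-definedness of the various ``cut along a multicurve and restrict'' homomorphisms and a few degenerate configurations (curves of $\tau$ that become isotopic to one another or bound a disk in $\bar{S}$), which is exactly where the hypothesis in (i) that no curve of $\tau$ is an HBC is used.
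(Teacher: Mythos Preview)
Your proposal is correct and follows essentially the same approach as the paper. Your ``key point'' $j(\theta_Q(x))=\mathrm{id}$ is exactly what the paper records (with your $\bar{Q}$ called $R$), and your construction of $y$ in (ii)---extend $\theta_Q(x)$ by the identity outside $Q$, then correct by Dehn twists about the non-HBC curves of $\tau$ to land in $P(S)$---matches the paper's step by step; the paper likewise notes that curves of $\tau$ may become isotopic in $\bar{S}$ and absorbs this into ``appropriate powers'' of twists.
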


\begin{proof}
Let $\bar{S}$ denote the closed surface obtained from $S$ by attaching disks to all components of $\partial S$.
Let $\iota \colon \pmod(S)\rightarrow \mod(\bar{S})$ be the homomorphism associated with the inclusion of $S$ into $\bar{S}$.
On the assumption in assertion (i), any curve of $\tau$ is essential in $\bar{S}$ (although some of them may be isotopic in $\bar{S}$).
Assertion (i) follows because $\iota(x)$ is neutral.

We prove assertion (ii).
Let $F$ and $C$ be representatives of $x$ and $\tau$, respectively, such that $F(C)=C$ and $F$ is the identity on $C\cup \partial S$.
Let $Q^{\circ}$ denote the component of $S\setminus C$ corresponding to $Q$.
Let $R$ denote the surface obtained from $Q$ by attaching disks to all components of $\partial Q$ corresponding to either a component of $\partial S$ or an HBC in $S$.
The element of $\pmod(R)$ induced by $\theta_Q(x)$ is neutral because $\iota(x)$ is neutral.

We define $G$ as the homeomorphism of $S$ obtained by extending the restriction of $F$ to $Q^{\circ}$ so that $G$ is the identity on $S\setminus Q^{\circ}$.
Let $y\in \pmod(S)$ be the isotopy class of $G$.
The equality $\theta_Q(x)=\theta_Q(y)$ then holds. 
Let $\tau'$ be the set of all curves of $\tau$ that are not HBCs in $S$.
Let $\pi \colon \calc(S)\rightarrow \calc^*(\bar{S})$ be the simplicial map associated with the inclusion of $S$ into $\bar{S}$ (see Section \ref{subsec-complex}).
We put $\bar{\tau}=\pi(\tau')$, which is an element of $\Sigma(\bar{S})\cup \{ \emptyset \}$.
The element $\iota(y)$ fixes $\bar{\tau}$ and acts on each component of $\bar{S}_{\bar{\tau}}$ as the identity.
Multiplying $y$ with appropriate powers of Dehn twists about curves in $\tau'$, we may assume that $\iota(y)$ is neutral.
We then have $y\in P(S)$ and obtain assertion (ii).
\end{proof}

Following terminology in \cite{iim}, we say that an element $x$ of $P(S)$ is {\it basic} if the center of the centralizer of $x$ in $P(S)$ is isomorphic to $\mathbb{Z}$.
The following two propositions characterize basic elements and are stated in Proposition 5.1 of \cite{iim}.

\begin{prop}\label{prop-cha}
Let $S$ be a surface of genus at least one with $\chi(S)<0$.
Let $\Gamma$ be a finite index subgroup of $P(S)$.
Pick $x\in \Gamma$ and let $\sigma \in \Sigma(S)\cup \{ \emptyset \}$ be the CRS for $x$.
We assume that $x$ satisfies one of the following three conditions:
\begin{enumerate}
\item[(a)] There exists a unique pA component $Q$ for $x$, and any curve of $\sigma$ corresponds to a component of $\partial Q$.
Moreover, no curve of $\sigma$ is an HBC in $S$, and no two curves of $\sigma$ form an HBP in $S$.
\item[(b)] $x$ is a non-zero power of an HBC twist.
\item[(c)] $x$ is a non-zero power of an HBP twist.
\end{enumerate}
Then the center of the centralizer of $x$ in $\Gamma$ is isomorphic to $\mathbb{Z}$.
In particular, $x$ is basic.
\end{prop}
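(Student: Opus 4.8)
The plan is to analyze the centralizer $Z_\Gamma(x)$ of $x$ in $\Gamma$ case by case, using the purity of elements of $P(S)$ (conditions (I) and (II) before Lemma~\ref{lem-rank}) together with the fact that any element of $Z_{P(S)}(x)$ must fix the CRS $\sigma$ for $x$. Since the CRS for $x$ agrees with that for any non-zero power of $x$, and since $[\Gamma:P(S)]<\infty$ implies that passing to $\Gamma$ only changes centralizers by finite-index subgroups (which does not affect the isomorphism type of the center being $\mathbb{Z}$, as $P(S)$ is torsion-free), it suffices to identify the commensurability class of the center of $Z_{P(S)}(x)$ in each case.

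First I would treat case~(a). Let $y\in Z_{P(S)}(x)$. Then $y$ fixes $\sigma$, hence fixes each curve of $\sigma$ and each component of $S_\sigma$ with orientations, so the homomorphisms $\theta_Q$ and $\theta_R$ (for $R$ running over the other components of $S_\sigma$) are defined on $Z_{P(S)}(x)$. On the pA component $Q$, the projection $\theta_Q$ sends $Z_{P(S)}(x)$ into the centralizer in $\mathrm{Mod}(Q)$ of the pseudo-Anosov element $\theta_Q(x)$, which is virtually cyclic; so $\theta_Q(Z_{P(S)}(x))$ is virtually $\mathbb Z$. On every other component $R$ of $S_\sigma$, the hypothesis that every curve of $\sigma$ bounds $Q$ means $R$ is glued to the rest of $S$ only along curves that also bound $Q$; combined with the conditions that no curve of $\sigma$ is an HBC and no two form an HBP, Lemma~\ref{lem-trivial-comp}(i) forces $\theta_R(x)$ — and I will argue similarly $\theta_R(y)$ for the relevant kernel structure — to be neutral, so the "pure" structure theorem pins down $Z_{P(S)}(x)$ up to finite index inside the preimage under $\theta_Q$ of a virtually cyclic group, with the Dehn twists $t_\gamma$, $\gamma\in\sigma$, together with a pseudo-Anosov-type element on $Q$ generating it. The subtlety is that these twists $t_\gamma$ need not individually lie in $P(S)$: by Lemma~\ref{lem-multitwist}(i), only products that are trivial in $\mathrm{Mod}(\bar S)$ do, and by condition~(a) no HBC twist and no HBP twist is available among curves of $\sigma$, so in fact $D_\sigma\cap P(S)$ is trivial. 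Hence $Z_{P(S)}(x)$ injects (up to finite index) into the virtually-cyclic centralizer of $\theta_Q(x)$, so its center is virtually $\mathbb Z$, i.e. isomorphic to $\mathbb Z$ since $P(S)$ is torsion-free. This last point — correctly accounting for which multitwists survive in $P(S)$, via Lemma~\ref{lem-multitwist} and Lemma~\ref{lem-trivial-comp} — is where I expect the real work to lie.

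For case~(b), $x=t_\alpha^n$ with $\alpha$ an HBC; its CRS is $\{\alpha\}$. Any $y\in Z_{P(S)}(x)$ fixes $\alpha$, hence preserves the holed sphere $P$ cut off by $\alpha$ and its genus-positive complement $R$; via $\theta$ we get $Z_{P(S)}(x)\hookrightarrow \mathrm{PMod}(P)\times\mathrm{PMod}(R)$ up to the ambiguity of the twist $t_\alpha$ itself. One checks (using Lemma~\ref{lem-multitwist}(i) again, and Lemma~\ref{lem-trivial-comp}(i) applied to $P$, which has genus zero so contributes nothing essential after capping) that the only part of the center coming from $R$ or $P$ is trivial, and $\langle t_\alpha\rangle$ is central and infinite cyclic, giving the center $\cong\mathbb Z$. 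Case~(c) is analogous with $x=(t_\gamma t_\delta^{-1})^n$ for an HBP $\{\gamma,\delta\}$: the CRS is $\{\gamma,\delta\}$, and the element $t_\gamma t_\delta^{-1}$ itself (unlike $t_\gamma$ or $t_\delta$ separately) lies in $P(S)$ by Lemma~\ref{lem-multitwist}, generating an infinite cyclic center, while the genus-zero piece $Q$ cut off by the HBP contributes nothing and the other components are killed by Lemma~\ref{lem-trivial-comp}(i). In all three cases the conclusion $Z\cong\mathbb Z$ for the center identifies $x$ as basic by definition. I would present cases (b) and (c) compactly as instances of the general mechanism in (a) — a single pA-or-reducible piece carrying a virtually cyclic centralizer, all other pieces trivialized by Lemma~\ref{lem-trivial-comp} — the only genuine obstacle throughout being the bookkeeping of exactly which Dehn-twist combinations on $\sigma$ remain in $P(S)$.
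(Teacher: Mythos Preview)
Your case (a) contains a genuine gap. You correctly observe that $D_\sigma \cap P(S)$ is trivial under hypothesis (a), and that $\theta_Q(Z_{P(S)}(x))$ lands in a virtually cyclic group. But your conclusion that ``$Z_{P(S)}(x)$ injects (up to finite index) into the virtually-cyclic centralizer of $\theta_Q(x)$'' is false: the kernel of $\theta_Q$ on $P(S)_\sigma$ is not $D_\sigma \cap P(S)$ but all elements of $P(S)_\sigma$ acting trivially on $Q$, and this includes every HBC or HBP twist supported in a component $R\neq Q$ of $S_\sigma$ that meets $\partial S$. Nothing in (a) prevents such components from existing, Lemma~\ref{lem-trivial-comp}(i) does not apply to them, and they make the centralizer $Z_{P(S)}(x)$ large --- far from virtually cyclic. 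What is small is the \emph{center} of the centralizer, and that requires a different argument.

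The paper's mechanism is the opposite of yours: rather than showing the centralizer is small on each $R\neq Q$, one shows it is \emph{large} there --- specifically, that $\theta_R(Z(x))$ either is trivial (when $R\cap\partial S=\emptyset$, via Lemma~\ref{lem-trivial-comp}(i)) or contains two independent pseudo-Anosov elements (when $R\cap\partial S\neq\emptyset$ and $R$ is not a pair of pants). Any element of the center $Z$ must commute with both such pseudo-Anosovs, forcing $\theta_R(Z)$ to be trivial in either case; together with $D_\sigma\cap P(S)=\{1\}$ and the virtually cyclic image on $Q$, this gives $Z\cong\mathbb Z$. The same issue recurs in your (b) and (c): the holed-sphere piece in (b) \emph{does} meet $\partial S$, so Lemma~\ref{lem-trivial-comp}(i) does not apply to it; and when $\alpha$ is a $p$-HBC the genus-positive component $R$ has $\partial R$ consisting only of the HBC $\alpha$, so Lemma~\ref{lem-trivial-comp}(i) again fails (its hypothesis forbids HBCs in $\tau$). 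The paper handles these by exhibiting independent pseudo-Anosov elements of $\theta_R(Z(x))$ --- via an HBP in $R$ when $g\geq 2$, and by a separate observation when $g=1$ --- which is the step your outline is missing.
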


\begin{proof}
We denote by $Z(x)$ the centralizer of $x$ in $\Gamma$ and denote by $Z$ the center of $Z(x)$. 
We assume condition (a). 
By Lemma \ref{lem-multitwist}, it is enough to show that for each component $R$ of $S_{\sigma}$ other than $Q$, the group $\theta_R(Z(x))$ either is trivial or contains a pair of independent pseudo-Anosov elements of $\pmod(R)$, where two pseudo-Anosov elements are said to be {\it independent} if they do not generate a virtually cyclic group. 
If $R$ contains a component of $\partial S$ and is not a pair of pants, then $\theta_R(Z(x))$ contains independent pseudo-Anosov elements. 
If $R$ contains no component of $\partial S$, then $Z(x)$ acts on $R$ trivially by Lemma \ref{lem-trivial-comp} (i).

We assume condition (b). 
In this case, $\sigma$ consists of a single HBC $\gamma$ in $S$ and $x$ is a non-zero power of $t_{\gamma}$. 
Let $R$ be a component of $S_{\gamma}$. 
If $R$ contains a component of $\partial S$ and is not a pair of pants, then $\theta_R(Z(x))$ contains independent pseudo-Anosov elements. 
If $R$ contains no component of $\partial S$, then $\gamma$ is a $p$-HBC in $S$, and $R$ is the component of $S_{\gamma}$ of positive genus.
When the genus of $S$ is at least two, the group $\theta_R(Z(x))$ contains independent pseudo-Anosov elements because $R$ contains an HBP in $S$.
When the genus of $S$ is equal to one, $R$ is homeomorphic to $S_{1, 1}$.
The group $\theta_R(Z(x))$ is then trivial because $Z(x)$ acts trivially on the torus obtained by attaching a disk to $\partial R$.
It thus turns out that $Z$ is contained in the cyclic group generated by $t_{\gamma}$.

Finally, we assume condition (c).
In this case, if $x$ is a non-zero power of the element $t_{\alpha}t_{\beta}^{-1}$ with $\{ \alpha, \beta \}$ an HBP in $S$, then we have the equality $\sigma =\{ \alpha, \beta \}$.
We can then apply the same argument as in the case where we assumed condition (a), and we conclude that $Z$ is contained in the cyclic group generated by $t_{\alpha}t_{\beta}^{-1}$.
\end{proof}

\begin{prop}\label{prop-cha-basic}
Let $S$ be a surface of genus at least one with $\chi(S)<0$.
Pick $x\in P(S)$ and let $\sigma \in \Sigma(S)\cup \{ \emptyset \}$ be the CRS for $x$.
If $x$ is basic, then $x$ satisfies one of conditions (a), (b) and (c) in Proposition \ref{prop-cha}.
\end{prop}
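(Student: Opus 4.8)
The plan is to read off the structure of $x$ from its canonical reduction system $\sigma$ together with its pseudo-Anosov components. The strategy is to bound from below the rank of the center $Z$ of the centralizer $Z(x)$ of $x$ in $P(S)$ by $m+\mathrm{rk}(D_\sigma\cap P(S))$, where $m$ denotes the number of pA components of $x$; since $Z\cong\mathbb{Z}$ by the hypothesis that $x$ is basic, this forces $m+\mathrm{rk}(D_\sigma\cap P(S))\le 1$, after which a short topological analysis of $\sigma$ leaves exactly the three possibilities (a), (b) and (c).

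First, put $A=D_\sigma\cap P(S)$. By Lemma \ref{lem-multitwist} (i), $A$ is generated by the HBC twists about HBCs of $\sigma$ and by the HBP twists about HBPs of two curves of $\sigma$; each such twist commutes with $x$ because $x$ fixes every curve of $\sigma$ together with an orientation of it (by the properties of pure elements recalled above), so $A\le Z(x)$. On the other hand, every element of $Z(x)$ preserves the CRS $\sigma$ of $x$, hence fixes each curve of $\sigma$, hence commutes with every Dehn twist $t_\gamma$ with $\gamma\in\sigma$, and therefore with all of $A$. Thus $A\le Z$, whence $\mathrm{rk}\,A\le 1$.

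Next, for each pA component $Q$ of $x$ I would produce a central element. Let $\tau\subseteq\sigma$ be the set of curves bounding $Q$ and apply Lemma \ref{lem-trivial-comp} (ii) to obtain $y\in P(S)$ whose CRS is $\tau$, for which $Q$ is the unique pA component, and with $\theta_Q(y)=\theta_Q(x)$. Such a $y$ is supported on the closure of $Q$ together with annuli about the curves of $\tau$, all of which are preserved by $x$, so $y\in Z(x)$; and because $y$ acts on $Q$ \emph{exactly} as $x$ does, any $z\in Z(x)$ --- which preserves $Q$, which satisfies $\theta_Q(z)\theta_Q(x)=\theta_Q(x)\theta_Q(z)$, and which fixes each curve of $\tau$ with its orientation --- commutes with $y$; hence $y\in Z$. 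Writing $Q_1,\dots,Q_m$ for the pA components and $y_1,\dots,y_m$ for the central elements so obtained, I note that each generator of $A$ maps to the identity under $\theta_{Q_i}$, since a curve of $\sigma$ adjacent to $Q_i$ is boundary-parallel in $Q_i$ and the boundary may move, so its twist is trivial in $\pmod(Q_i)$; for the same reason $\theta_{Q_i}(y_j)=1$ for $j\ne i$, whereas $\theta_{Q_i}(y_i)=\theta_{Q_i}(x)$ is pseudo-Anosov and hence of infinite order. It follows that the abelian group $B=\langle A,y_1,\dots,y_m\rangle\le Z$ has rank at least $\mathrm{rk}\,A+m$, so that $m+\mathrm{rk}\,A\le 1$.

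Finally I would translate the inequality $m+\mathrm{rk}\,A\le 1$ into conditions (a)--(c), using the following dichotomy: a curve $\gamma\in\sigma$ is either a boundary curve of some pA component of $x$, or a curve about which $x$ twists with a nonzero power, and in the second case the triviality of $\iota(x)$ forces $\gamma$ to be an HBC in $S$ or to pair with a second curve of $\sigma$ into an HBP, whence $\mathrm{rk}\,A\ge 1$. If $m=0$, then $x$ acts trivially on every component of $S_\sigma$, so $x=\prod_{\gamma\in\sigma}t_\gamma^{a_\gamma}\in A$ with every $a_\gamma\ne 0$; the case $\sigma=\emptyset$ gives $x=1$, which is excluded because $P(S)$ has trivial center (a central element commutes with all HBC and HBP twists, hence fixes every HBC and every HBP, hence acts trivially on $\calc(S)$ and is neutral), and the case $\sigma\ne\emptyset$ gives $\mathrm{rk}\,A=1$, which together with $\iota(x)=1$ and Lemma \ref{lem-multitwist} (i) forces $\sigma$ to be a single HBC, so $x$ is a nonzero power of an HBC twist (condition (b)), or a single HBP $\{\alpha,\beta\}$, so $x=(t_\alpha t_\beta^{-1})^{a}$ with $a\ne 0$ (condition (c)). If $m=1$, then $\mathrm{rk}\,A=0$, so $\sigma$ contains no HBC and no HBP, and by the dichotomy every curve of $\sigma$ must bound the unique pA component $Q$; this is exactly condition (a). The main obstacle is the third paragraph --- verifying that the element $y$ supplied by Lemma \ref{lem-trivial-comp} (ii) really lies in the center of $Z(x)$ and contributes an independent direction to $Z$ --- and the point that makes it go through cleanly is that $y$ can be chosen to act on $Q$ identically to $x$, which sidesteps any delicate analysis of centralizers of pseudo-Anosov mapping classes.
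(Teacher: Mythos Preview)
Your overall strategy---bounding $\mathrm{rk}\,Z$ below by $m+\mathrm{rk}(D_\sigma\cap P(S))$ and then reading off (a), (b), (c)---is sound and close in spirit to the paper's argument, but organized differently. The paper first disposes of the cases where $\sigma$ contains an HBC or an HBP (by observing that the corresponding twist lies in $Z$, so $x$ and that twist generate a cyclic group, forcing $\sigma$ to collapse), and only \emph{afterwards} invokes the elements $y_i$ from Lemma~\ref{lem-trivial-comp}\,(ii). The gain from this ordering is that in the residual case one has $D_{\tau_i}\cap P(S)=0$ by Lemma~\ref{lem-multitwist}, which makes the membership $y_i\in Z$ immediate: for $z\in Z(x)$ one has $[y_i,z]\in D_{\tau_i}\cap P(S)=0$.

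This is exactly the point where your argument is incomplete. From the facts you list---$z$ preserves $Q$, $\theta_Q(z)$ commutes with $\theta_Q(x)=\theta_Q(y)$, and $z$ fixes each curve of $\tau$---you can only conclude $[y,z]\in D_\tau\cap P(S)$, not $[y,z]=1$. Since you have not yet excluded HBCs or HBPs from $\tau\subset\sigma$, this intersection need not vanish. Your closing remark that choosing $\theta_Q(y)=\theta_Q(x)$ ``sidesteps any delicate analysis of centralizers of pseudo-Anosov mapping classes'' is therefore misleading: to kill the residual boundary-twist ambiguity you still need to argue that $\theta_Q(z)$, being the restriction of a pure element commuting with a pseudo-Anosov, is itself trivial or pseudo-Anosov sharing a root with $\theta_Q(x)$, and then check that lifts of such elements to $\mod(Q,\partial Q)$ commute. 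This is doable, but it is precisely the centralizer analysis you hoped to avoid, and it is not in your write-up. The paper's ordering of the case split makes this extra work unnecessary.
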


\begin{proof}
We first claim that the center of $P(S)$ is trivial.
When the genus of $S$ is at least two, the claim follows from Lemma \ref{lem-faithful} (ii).
When the genus of $S$ is equal to one, any separating curve in $S$ is an HBC in $S$.
Along argument similar to the proof of Lemma \ref{lem-faithful}, we can show that any element of $\mod^*(S)$ fixing any element of $V_s(S)$ fixes any element of $V(S)$ and that any element of $\mod^*(S)$ that commutes any element of $P(S)$ is neutral. 
The claim thus follows.

We denote by $Z(x)$ the centralizer of $x$ in $P(S)$ and denote by $Z$ the center of $Z(x)$.
The claim shown in the last paragraph implies that $x$ is not neutral.
If $x$ is pseudo-Anosov, then condition (a) holds.
We now suppose that $x$ is reducible, and let $\sigma \in \Sigma(S)$ denote the CRS for $x$.
If $\sigma$ contains an HBC $\gamma$ in $S$, then $t_{\gamma}$ lies in $Z$ because $Z(x)$ fixes $\sigma$.
Since $Z$ is isomorphic to $\mathbb{Z}$, $x$ and $t_{\gamma}$ generate a cyclic group. Pureness of $x$ then shows that condition (b) holds.
We can apply the same argument in the case where $\sigma$ contains an HBP in $S$.

We assume that no curve of $\sigma$ is an HBC in $S$ and no two curves of $\sigma$ form an HBP in $S$.
If there were no pA component for $x$, then $x$ would be neutral by Lemma \ref{lem-multitwist}.
This is a contradiction.
If there were two pA components $R_1$, $R_2$ for $x$, then by Lemma \ref{lem-trivial-comp} (ii), we would have $x_1, x_2\in P(S)$ such that 
\begin{itemize}
\item both $x_1$ and $x_2$ fix $\sigma$; and
\item for each $j=1, 2$, $R_j$ is a unique pA component for $x_j$ and we have the equality $\theta_{R_j}(x_j)=\theta_{R_j}(x)$.
\end{itemize}
The two elements $x_1$ and $x_2$ lie in $Z$, and they generate $\mathbb{Z}^2$.
This contradicts the assumption that $Z$ is isomorphic to $\mathbb{Z}$.
It follows that $x$ has a single pA component.
Condition (a) therefore holds.
\end{proof}

\begin{rem}
Let $S$ be a surface of genus at least one with $\chi(S)<0$.
Pick $x\in P(S)$ and denote by $Z$ the center of the centralizer of $x$ in $P(S)$.
We note that $Z$ is finitely generated because any abelian subgroup of $\mod^*(S)$ is finitely generated by Theorem A in \cite{blm}.
Following the proof of Proposition \ref{prop-cha}, we can compute the rank of $Z$ as follows.
Let $\sigma \in \Sigma(S)\cup \{ \emptyset \}$ be the CRS for $x$.
We define $n_1$ to be the number of pA components for $x$ and define $n_2$ to be the number of curves of $\sigma$ which are HBCs in $S$.
Let $\{ \tau_1,\ldots, \tau_m\}$ be the collection of HBP-equivalence classes in $\sigma$.
The rank of $Z$ is then equal to the sum $n_1+n_2+\sum_{i=1}^{m}(|\tau_i|-1)$. 
This result is stated in Proposition 4.1 of \cite{iim}.
\end{rem}

We define $\mathfrak{C}=\mathfrak{C}(S)$ as the set of all non-zero powers of HBC twists in $P(S)$ and define $\mathfrak{P}=\mathfrak{P}(S)$ as the set of all non-zero powers of HBP twists in $P(S)$.

\begin{lem}\label{lem-basic}
Let $S=S_{g, p}$ be a surface with $g\geq 2$.
Let $\Gamma$ be a finite index subgroup of $P_s(S)$, and let $f\colon \Gamma \rightarrow P(S)$ be an injective homomorphism.
If an element $x$ of $\Gamma$ is basic and lie in a finitely generated abelian subgroup of $\Gamma$ of rank $p$, then $f(x)$ is also basic.
In particular, $f(y)$ is basic for each $y\in \mathfrak{C}\cup \mathfrak{P}$.
\end{lem}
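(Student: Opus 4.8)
The plan is to deduce the ``in particular'' clause from the main statement, and to prove the main statement by comparing centers of centralizers across $f$ via a rank count.

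First, the reduction. Recall (Propositions \ref{prop-cha} and \ref{prop-cha-basic}, with $P(S)$ in the role of $\Gamma$) that for $z\in P(S)$ being basic is equivalent to satisfying one of the conditions (a), (b), (c), and that these conditions depend only on the CRS and the set of pA components of $z$; hence being basic is invariant under passing to non-zero powers. Now let $y\in\mathfrak C\cup\mathfrak P$ be such that some non-zero power lies in $\Gamma$; this holds whenever $y\in P_s(S)$, which by Lemma \ref{lem-multitwist} forces the underlying HBP (if any) to be separating, so $y$ is a non-zero power of an HBC twist or of an HBP twist about a separating HBP. As in the proof of Lemma \ref{lem-rank} one finds a finitely generated abelian subgroup of $P_s(S)$ of rank $p$ containing $y$ (twists about the curves of a suitable maximal-dimension simplex of $\calcp_s(S)$); intersecting it with $\Gamma$ gives a rank-$p$ abelian subgroup of $\Gamma$ containing a non-zero power $y^N$ of $y$. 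By Proposition \ref{prop-cha}, $y^N$ is basic, so the main statement yields that $f(y)^N=f(y^N)$ is basic, whence $f(y)$ is basic.

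Now the main statement. Let $x\in\Gamma$ be basic and contained in a finitely generated abelian subgroup $A\le\Gamma$ of rank $p$. As $P(S)$ is torsion-free, $A\cong\mathbb Z^{p}$, so $f(A)\cong\mathbb Z^{p}$ is an abelian subgroup of $P(S)$ of the maximal possible rank (Lemma \ref{lem-rank}), and $f(x)\in f(A)\subseteq C:=Z_{P(S)}(f(x))$. Assume for contradiction that $f(x)$ is not basic; then by the Remark (which uses Theorem~A of \cite{blm}) the center $Z(C)$ of $C$ is finitely generated, torsion-free abelian, contains $f(x)$, and has rank $r\ge 2$. Every element of $Z(C)$ commutes with all of $C$, in particular with $f(A)$, so $\langle f(A),Z(C)\rangle$ is a finitely generated abelian subgroup; by maximality of the rank it has rank $p$, whence $Z(C)\cap f(A)$ has finite index in $Z(C)$ and rank $r$. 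Set $A_{0}=f^{-1}(Z(C)\cap f(A))\le A$, a subgroup of rank $r\ge 2$. For any $b\in Z_{\Gamma}(x)$ we have $f(b)\in C$ and $f(A_{0})=Z(C)\cap f(A)\subseteq Z(C)$, so $f(b)$ commutes with $f(A_{0})$, and by injectivity $b$ commutes with $A_{0}$. Combined with $A_{0}\subseteq A\subseteq Z_{\Gamma}(x)$ this shows $A_{0}\subseteq Z(Z_{\Gamma}(x))$, so the center of the centralizer of $x$ in $\Gamma$ has rank at least $r\ge 2$.

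It remains to contradict this by proving $Z(Z_{\Gamma}(x))\cong\mathbb Z$. Since $x$ is basic, $x$ satisfies one of (a), (b), (c), and since $x\in P_s(S)$ any HBP involved is separating (Lemma \ref{lem-multitwist}); the rank-$p$ hypothesis, together with the fact that a pseudo-Anosov component consumes genus, is used here to restrict which case occurs. One then reruns the argument of Proposition \ref{prop-cha} with $\Gamma$ in place of a finite-index subgroup of $P(S)$: each complementary piece of the reducing system either carries no boundary component of $S$ and so has trivial image under the relevant $\theta$ by Lemma \ref{lem-trivial-comp}(i), or has positive genus or sufficiently many boundary components, in which case — invoking Lemma \ref{lem-trivial-comp}(ii) to realize suitable pseudo-Anosov mapping classes inside $P_s(S)$ — its image under $\theta$ still contains a pair of independent pseudo-Anosov elements coming from $Z_{\Gamma}(x)$. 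Hence $Z(Z_{\Gamma}(x))$ is the infinite cyclic group generated by the corresponding twist, a contradiction, so $f(x)$ is basic. I expect this last step to be the main obstacle: verifying that the pseudo-Anosov elements supplied by the complementary pieces in the proof of Proposition \ref{prop-cha} survive inside a finite-index subgroup of $P_s(S)$ rather than of $P(S)$, and that the rank-$p$ hypothesis indeed forces $x$ into one of the favorable cases.
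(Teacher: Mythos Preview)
Your approach is the same as the paper's, only unpacked. The paper's entire proof is two lines: first, $Z(Z_\Gamma(x))\cong\mathbb Z$ by Propositions~\ref{prop-cha} and~\ref{prop-cha-basic}; second, ${\rm rk}\,Z(Z_{P(S)}(f(x)))\le 1$ by Lemmas~\ref{lem-rank} and~\ref{lem-irmak}. Your contradiction argument in the middle paragraph (build $\langle f(A),Z(C)\rangle$, intersect, pull back to $A_0$) is precisely the content of Lemma~\ref{lem-irmak}, which the paper cites as a black box. So that part is fine, just redundant.

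Where you diverge is in the last paragraph, and here there are two misfires. First, the rank-$p$ hypothesis plays no role in determining which of (a), (b), (c) holds; it is used \emph{only} in the rank-comparison step (your middle paragraph, the paper's Lemma~\ref{lem-irmak}). Since $x$ is assumed basic, Proposition~\ref{prop-cha-basic} already forces one of (a), (b), (c), full stop. Second, Lemma~\ref{lem-trivial-comp}(ii) produces elements of $P(S)$, not of $P_s(S)$, so invoking it does not help you populate $Z_\Gamma(x)$ when $\Gamma$ sits in $P_s(S)$.

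That said, your closing worry is well placed, and in fact the paper is terse at exactly the same point: Proposition~\ref{prop-cha} is stated for $\Gamma$ of finite index in $P(S)$, but is applied in the proof of Lemma~\ref{lem-basic} with $\Gamma$ of finite index in $P_s(S)$. The argument does go through: on each complementary component $R$ meeting $\partial S$, one produces independent pseudo-Anosov images in $\theta_R(Z_\Gamma(x))$ using HBC twists (when $R$ meets at least two components of $\partial S$) and \emph{separating} HBP twists about pairs $\{\alpha,\gamma\}$ with $\alpha$ a separating curve of $\sigma$ on $\partial R$ and $\gamma\in V(R)$; these lie in $P_s(S)$ and suffice. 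But this is a genuine check, and you should carry it out rather than gesture at Lemma~\ref{lem-trivial-comp}(ii), which does not do the job.
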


To prove this lemma, we use the following general fact, which is essentially verified in Lemma 5.2 in \cite{irmak1} (see Lemma 6.8 in \cite{kida-tor} for a proof).
For a group $A$, we denote by ${\rm rk}A$ the supremum of the ranks of finitely generated abelian subgroups in $A$, and denote by $Z(A)$ the center of $A$.
For each $a\in A$, let $Z_A(a)$ denote the centralizer of $a$ in $A$.

\begin{lem}\label{lem-irmak}
Let $A$ and $B$ be groups with ${\rm rk}A={\rm rk}B<\infty$ and assume that any abelian subgroup of $B$ is finitely generated.
Let $\eta \colon A\rightarrow B$ be an injective homomorphism.
If $a$ is an element of $A$ lying in a finitely generated, free abelian subgroup of $A$ with its rank equal to ${\rm rk}A$, then we have the inequality
\[{\rm rk}Z(Z_B(\eta(a)))\leq {\rm rk}Z(Z_A(a)).\]
\end{lem}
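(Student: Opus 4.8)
The plan is to use that $\eta$ is an isomorphism onto its image and to track how centralizers and their centers interact with $\eta(A)$. Write $n={\rm rk}A={\rm rk}B$, let $M\leq A$ be the finitely generated free abelian subgroup of rank $n$ containing $a$, and set $C=Z_B(\eta(a))$, so that the quantity to bound is ${\rm rk}Z(C)$. First I would record the elementary identity $\eta(Z_A(a))=C\cap \eta(A)$. The inclusion $\subseteq$ is immediate, and the reverse inclusion uses injectivity of $\eta$: if $\eta(g)\in C\cap\eta(A)$ then $\eta(g)$ commutes with $\eta(a)$, so $\eta(ga)=\eta(ag)$, whence $ga=ag$ and $g\in Z_A(a)$. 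Writing $D=C\cap\eta(A)=\eta(Z_A(a))$, the restriction of $\eta$ is an isomorphism $Z_A(a)\xrightarrow{\sim}D$, so $Z(D)=\eta(Z(Z_A(a)))$ and ${\rm rk}Z(D)={\rm rk}Z(Z_A(a))$. The goal thus reduces to proving ${\rm rk}Z(C)\leq {\rm rk}Z(D)$.

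Two inclusions carry the argument. Since $Z(C)\subseteq C$ centralizes all of $C\supseteq D$, any element of $Z(C)\cap\eta(A)$ lies in $C\cap\eta(A)=D$ and centralizes $D$, so $Z(C)\cap\eta(A)\subseteq Z(D)$; consequently ${\rm rk}(Z(C)\cap\eta(A))\leq {\rm rk}Z(D)$. It therefore suffices to show the rank equality ${\rm rk}Z(C)={\rm rk}(Z(C)\cap\eta(A))$, that is, that restricting to $\eta(A)$ loses no rank. This is exactly where the hypotheses on $M$ and on the equality of ranks enter. Because $a\in M$ and $M$ is abelian, $\eta(M)\leq Z_B(\eta(a))=C$ and $\eta(M)\leq\eta(A)$, so $\eta(M)\leq D$ is free abelian of rank $n$. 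As $Z(C)$ is central in $C$ while $\eta(M)\leq C$, the two subgroups commute elementwise, so $\langle Z(C),\eta(M)\rangle$ is an abelian subgroup of $B$; by the standing hypothesis it is finitely generated, of rank at most $n$, and since it contains $\eta(M)$ of rank $n$ it has rank exactly $n$ and $\eta(M)$ sits inside it with finite index. Hence every $z\in Z(C)$ has a nonzero power $z^k\in\eta(M)\leq\eta(A)$, so $z^k\in Z(C)\cap\eta(A)$ and $Z(C)/(Z(C)\cap\eta(A))$ is torsion. Since $Z(C)$ is finitely generated abelian (again by the hypothesis on $B$), this forces ${\rm rk}(Z(C)\cap\eta(A))={\rm rk}Z(C)$. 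Stringing the steps together gives ${\rm rk}Z(Z_B(\eta(a)))={\rm rk}Z(C)={\rm rk}(Z(C)\cap\eta(A))\leq {\rm rk}Z(D)={\rm rk}Z(Z_A(a))$, as required.

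The hard part will be precisely the rank-preservation step ${\rm rk}Z(C)={\rm rk}(Z(C)\cap\eta(A))$; every other step is formal group theory. The decisive leverage is that $a$ lies in a free abelian subgroup of the maximal possible rank $n$, so that $\eta(M)$ is a top-rank abelian subgroup of $B$. Together with ${\rm rk}B=n$ and the finite generation of abelian subgroups of $B$, this is what forces $\eta(M)$ to have finite index in $\langle Z(C),\eta(M)\rangle$ and hence prevents $Z(C)$ from acquiring rank invisible inside $\eta(A)$. Without the maximal-rank hypothesis on the abelian subgroup containing $a$, this control would fail and the inequality could break.
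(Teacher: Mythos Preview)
Your proof is correct. The paper does not actually supply a proof of this lemma; it states that the result is ``essentially verified in Lemma 5.2 in \cite{irmak1}'' and refers to Lemma 6.8 in \cite{kida-tor} for details. Your argument is the standard one in those sources: the key move is exactly the observation that $\langle Z(C),\eta(M)\rangle$ is abelian of rank $n$ in $B$, forcing $\eta(M)$ to have finite index there, so that $Z(C)\cap\eta(A)$ captures the full rank of $Z(C)$.
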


\begin{proof}[Proof of Lemma \ref{lem-basic}]
By Propositions \ref{prop-cha} and \ref{prop-cha-basic}, $Z(Z_{\Gamma}(x))$ is isomorphic to $\mathbb{Z}$.
By Lemmas \ref{lem-rank} and \ref{lem-irmak}, $Z(Z_{P(S)}(f(x)))$ is of rank one.
The lemma thus follows.
\end{proof}

The following lemma characterizes HBP twists among basic elements and is a slight variant of Proposition 6.1 in \cite{iim}.
Let us say that an element $x$ of $P(S)$ is {\it single-pA} if it satisfies condition (a) in Proposition \ref{prop-cha}.

\begin{lem}\label{lem-cha-hbp}
Let $S=S_{g, p}$ be a surface with $g\geq 2$ and $p\geq 2$.
Then the following assertions hold:
\begin{enumerate}
\item For each $x\in \mathfrak{P}\cap P_s(S)$, there exists $y\in \mathfrak{P}\cap P_s(S)$ such that the group generated by $x$ and $y$ is isomorphic to $\mathbb{Z}^2$ and the product $xy$ belongs to $\mathfrak{P}$.
\item Let $z\in P(S)$ be a basic element. If we have a basic element $w\in P(S)$ such that the group generated by $z$ and $w$ is isomorphic to $\mathbb{Z}^2$ and the product $zw$ is basic, then $z$ belongs to $\mathfrak{P}$.
\end{enumerate}
\end{lem}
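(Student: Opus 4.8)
The plan is to prove assertion (i) by an explicit construction and assertion (ii) by a case analysis on the type of a basic element, leaning on the classification of basic elements in Propositions~\ref{prop-cha} and \ref{prop-cha-basic} together with the rank formula recorded in the remark preceding this lemma. That remark gives, for $u\in P(S)$, the equality $\mathrm{rk}\,Z(Z_{P(S)}(u))=n_1(u)+n_2(u)+\sum_i(|\tau_i(u)|-1)$, where $n_1(u)$ counts pA components for $u$, $n_2(u)$ counts curves of the CRS of $u$ that are HBCs in $S$, and the $\tau_i(u)$ are the HBP-equivalence classes in the CRS of $u$; since $Z(Z_{P(S)}(u))$ is finitely generated and $P(S)$ is torsion-free, $u$ is basic if and only if this quantity, which I abbreviate $c(u)$, equals $1$. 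Throughout (ii) I will use that commuting elements of $P(S)$ fix each other's CRS, that the CRS of a product of two commuting elements is contained in the union of their CRS's, that every boundary curve of a pA component lies in the CRS, and that HBC-ness and HBP-ness are properties of curves in $S$ independent of any ambient reduction system.

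For (i), write $x=(t_{\alpha}t_{\beta}^{-1})^{n}$ with $n\neq0$; by Lemma~\ref{lem-multitwist} the pair $\{\alpha,\beta\}$ is a separating HBP, and cutting $S$ along $\alpha\cup\beta$ yields three pieces $A$, $Q$, $B$ with $Q$ of genus zero adjacent to both $\alpha$ and $\beta$, with $A$ (adjacent to $\alpha$) and $B$ (adjacent to $\beta$) of positive genus, and with $Q$ containing at least one component of $\partial S$. I then produce a curve $\gamma$ in $S$, disjoint from $\alpha$ and $\beta$, with $\{\alpha,\gamma\}$ and $\{\beta,\gamma\}$ both separating HBPs, in three cases. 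If $A$ contains a component of $\partial S$, take $\gamma$ in $A$ cutting $A$ into a positive-genus piece, $\gamma$, and a holed sphere adjacent to $\alpha$, and set $y=(t_{\gamma}t_{\alpha}^{-1})^{n}$, so that $xy=(t_{\gamma}t_{\beta}^{-1})^{n}\in\mathfrak{P}$. If instead $B$ contains a component of $\partial S$, the symmetric choice with $y=(t_{\beta}t_{\gamma}^{-1})^{n}$ gives $xy=(t_{\alpha}t_{\gamma}^{-1})^{n}\in\mathfrak{P}$. Otherwise all $p\ge2$ components of $\partial S$ lie in $Q$, and I take $\gamma$ in $Q$ separating these into two nonempty groups, one adjacent to $\alpha$ and one to $\beta$; again $y=(t_{\beta}t_{\gamma}^{-1})^{n}$ works. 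In every case $\alpha,\beta,\gamma$ are pairwise disjoint and pairwise non-isotopic, so $t_{\alpha},t_{\beta},t_{\gamma}$ generate a free abelian group of rank three, whence $\langle x,y\rangle\cong\mathbb{Z}^{2}$ by reading off exponents, and $y\in\mathfrak{P}\cap P_s(S)$ by Lemma~\ref{lem-multitwist}.

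For (ii), let $z$ be basic; by Proposition~\ref{prop-cha-basic} it is single-pA, lies in $\mathfrak{C}$, or lies in $\mathfrak{P}$, and I must exclude the first two. Suppose there is a basic $w$ with $\langle z,w\rangle\cong\mathbb{Z}^{2}$ and $zw$ basic; since $z$ and $zw$ also commute, $zw$ fixes $\sigma_z:=\mathrm{CRS}(z)$ and $\sigma_w:=\mathrm{CRS}(w)$, and $\mathrm{CRS}(zw)\subseteq\sigma_z\cup\sigma_w$. If $z=t_{\delta}^{n}\in\mathfrak{C}$ with $\delta$ an HBC, then $\delta\notin\sigma_w$ for either of the reducible possibilities for $w$, so $\delta$ survives in $\mathrm{CRS}(zw)$ as an HBC (giving $n_2(zw)\ge1$), while $w$ contributes an independent feature to $\mathrm{CRS}(zw)$: a second HBC if $w\in\mathfrak{C}$, an HBP-equivalence class of size two if $w\in\mathfrak{P}$, and a pA component (disjoint from $\delta$, since a pseudo-Anosov fixes no curve) if $w$ is single-pA. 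Hence $c(zw)\ge2$, contradicting that $zw$ is basic. If instead $z$ is single-pA with pA component $Q_z$, then $w$ fixes $\sigma_z\supseteq\partial Q_z$ and does not cross $Q_z$, so $zw$ acts on $Q_z$ as a pseudo-Anosov composed with boundary twists and $Q_z$ remains a pA component of $zw$ (so $n_1(zw)\ge1$); and $w$ again contributes an independent feature to $\mathrm{CRS}(zw)$: an HBC if $w\in\mathfrak{C}$, an HBP-equivalence class of size two if $w\in\mathfrak{P}$, and a second pA component $Q_w$ if $w$ is single-pA. In the last case $Q_w\neq Q_z$ and the two are disjoint, for otherwise $\theta_{Q_z}(z)$ and $\theta_{Q_z}(w)$ would be commuting pseudo-Anosov maps of $Q_z$, forcing a relation $z^{a}w^{b}=1$ with $(a,b)\neq(0,0)$ and contradicting $\langle z,w\rangle\cong\mathbb{Z}^{2}$, so $n_1(zw)\ge2$. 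In all cases $c(zw)\ge2$, a contradiction, so $z\in\mathfrak{P}$.

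I expect the main technical obstacle in (ii) to be the careful verification that the features of $z$ and of $w$ genuinely persist in $\mathrm{CRS}(zw)$ with the correct topological type. Two points need care: first, that $Q_z$ and $Q_w$ do not merge into one pA component of $zw$ — handled by the fact that each is bounded by curves of $\sigma_z\cup\sigma_w$ which, being boundary curves of pA components of $zw$, must themselves lie in $\mathrm{CRS}(zw)$; and second, that an HBP curve of $\sigma_w$ which survives in $\mathrm{CRS}(zw)$ does so \emph{together with its HBP partner}, so that it is counted inside an HBP-equivalence class of size two (contributing $1$) and not alone (contributing $0$). For the latter one checks that both curves of the HBP are forced into $\mathrm{CRS}(zw)$, either because their twist contributions from $z$ and $w$ do not cancel, or, in the boundary case where one of them lies on $\partial Q_z$, because that curve survives as a boundary curve of the pA component $Q_z$. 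The rest of the argument is the routine bookkeeping sketched above.
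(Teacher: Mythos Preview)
Your proof is correct. For assertion (i) your argument is essentially identical to the paper's, only with an explicit case analysis (on which of $A$, $B$, $Q$ meets $\partial S$) to exhibit the third curve $\gamma$; the paper simply asserts the existence of such a $\gamma$ in one line and sets $y=t_\beta^k t_\gamma^{-k}$.

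For assertion (ii) you take a somewhat different route. The paper argues more directly: if $z$ is single-pA with pA component $Q$, it observes (tersely) that since $zw$ is basic, $w$ must also be single-pA with the \emph{same} pA component $Q$; then two commuting pseudo-Anosovs on $Q$ force $\langle z,w\rangle$ to be virtually cyclic, hence cyclic since $P(S)$ is torsion-free, contradicting $\langle z,w\rangle\cong\mathbb Z^2$. Similarly, if $z\in\mathfrak C$ then $w$ must be a power of the same HBC twist. You instead run the full case analysis on the type of $w$ and compute $c(zw)\ge 2$ in every mismatched case via the rank formula. The paper's version is shorter because the single sentence ``$zw$ basic forces $w$ to match $z$'' silently absorbs your whole case split; your version makes that step explicit, and it has the minor advantage of not needing to pass from virtual cyclicity of $\theta_Q(\langle z,w\rangle)$ back to an actual relation $z^aw^b=1$ in $P(S)$ (which uses that $D_{\partial Q}\cap P(S)$ is trivial via Lemma~\ref{lem-multitwist}). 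Both arguments are valid.

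One point worth tightening: in the subcase $z$ single-pA and $w\in\mathfrak P$ with support $\{\alpha,\beta\}$, one of $\alpha,\beta$ may lie on $\partial Q_z$ (though not both, by condition (a)). In that situation $\alpha$ persists in $\mathrm{CRS}(zw)$ as a boundary curve of the pA component $Q_z$ and $\beta$ persists as a twist curve, so the HBP class $\{\alpha,\beta\}$ still contributes $1$ on top of the pA component, giving $c(zw)\ge 2$. You flag exactly this issue in your closing paragraph, so the argument goes through.
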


\begin{proof}
We first prove assertion (i).
Let $\{ \alpha, \beta \}$ be the HBP in $S$ and $k$ the non-zero integer with $x=t_{\alpha}^kt_{\beta}^{-k}$.
Since we have $g\geq 2$ and $p\geq 2$, there exists a curve $\gamma$ in $S$ which is disjoint and distinct from $\alpha$ and $\beta$ and forms an HBP in $S$ with $\beta$ (and thus with $\alpha$).
The element $y=t_{\beta}^kt_{\gamma}^{-k}$ is then a desired one.

We next prove assertion (ii).
Note that $z$ and $w$ fix the CRS's for them because $z$ and $w$ commute.
Since $zw$ is basic, if $z$ were a single-pA element with $Q$ the pA component for $z$, then $w$ would also be a single-pA element with $Q$ the pA component for $w$, and $\theta_Q(z)$ and $\theta_Q(w)$ would generate a virtually cyclic group.
It then follows that $z$ and $w$ generate a virtually cyclic group.
This is a contradiction.
If $z$ were a non-zero power of an HBC twist, then $w$ would be a non-zero power of the same HBC twist since $zw$ is basic.
This also contradicts the assumption that $z$ and $w$ generate $\mathbb{Z}^2$. 
Proposition \ref{prop-cha-basic} then shows $z\in \mathfrak{P}$.
\end{proof}

For each integer $k$ with $1\leq k\leq p$, we denote by $\mathfrak{P}_k=\mathfrak{P}_k(S)$ the subset of $\mathfrak{P}$ consisting of all non-zero powers of HBP twists about $k$-HBPs in $S$.
If $x\in \mathfrak{P}$ is a non-zero power of the HBP twist about an HBP $b$ in $S$, we call $b$ the {\it support} of $x$.
Let us say that two elements $x, y\in \mathfrak{P}$ are {\it equivalent} if the supports of $x$ and $y$ are disjoint and equivalent.

\begin{lem}\label{lem-homo-hbp}
Let $S=S_{g, p}$ be a surface with $g\geq 2$ and $p\geq 2$.
Let $\Gamma$ be a finite index subgroup of $P_s(S)$, and let $f\colon \Gamma \rightarrow P(S)$ be an injective homomorphism.
Then the following assertions hold:
\begin{enumerate}
\item For each $x\in \mathfrak{P}\cap \Gamma$, we have $f(x)\in \mathfrak{P}$.
\item If the supports of two elements $x, y\in \mathfrak{P}\cap \Gamma$ are disjoint and contain a common curve, then the same holds for the supports of $f(x)$ and $f(y)$.
\item If two elements $x, y\in \mathfrak{P}\cap \Gamma$ are equivalent, then $f(x)$ and $f(y)$ are also equivalent.  
\item For each $x\in \mathfrak{P}_p\cap \Gamma$, we have $f(x)\in \mathfrak{P}_p$.
\end{enumerate}
\end{lem}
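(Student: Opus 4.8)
The plan is to establish (i)--(iv) in this order, each from the preceding ones together with the algebraic characterizations in Propositions \ref{prop-cha}, \ref{prop-cha-basic} and Lemma \ref{lem-cha-hbp}, after passing to powers so that all auxiliary elements land in $\Gamma$. I will use three facts throughout. First, by Lemma \ref{lem-multitwist} every element of $\mathfrak{P}\cap P_s(S)$ is a non-zero power of an HBP twist about a \emph{separating} HBP, so the support of any $x\in\mathfrak{P}\cap\Gamma$ is a separating HBP. Second, two disjoint HBPs that share a curve are automatically equivalent, since an HBP edge $\{\gamma_1,\gamma_2\}$ of $\calc(S)$ satisfies $\pi(\gamma_1)=\pi(\gamma_2)\neq\ast$, and equivalence of HBPs is transitive. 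Third, $\mathfrak{P}$ and $\mathfrak{P}_p$ are closed under taking roots in $P(S)$: if $g\in P(S)$ and $g^N$ is a non-zero power of an HBP twist about an HBP (resp.\ $p$-HBP) $\tau$, then the CRS of $g$ is $\tau$, pureness forces $g$ to be a multitwist supported on $\tau$, and $D_\tau\cap P(S)=\langle t_\alpha t_\beta^{-1}\rangle$ for $\tau=\{\alpha,\beta\}$, so $g$ is itself a power of that HBP twist.

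For (i): write $x=(t_\alpha t_\beta^{-1})^k$ with $\{\alpha,\beta\}$ separating and extend $\{\alpha,\beta\}$ to a maximal simplex of $\calcp_s(S)$ realized by a ladder of $p+1$ mutually HBP-equivalent separating curves, as in the proof of Lemma \ref{lem-rank}; then $x$ lies in a rank-$p$ free abelian subgroup of $\Gamma$, so $f(x)$ is basic by Lemma \ref{lem-basic}, and similarly $f$ of any power of $x$. Apply Lemma \ref{lem-cha-hbp}(i) to get $y_0=(t_\beta t_\gamma^{-1})^k$ with $xy_0=(t_\alpha t_\gamma^{-1})^k$, choose $m$ so that $x^m$, $(t_\beta t_\gamma^{-1})^{km}$ and $(t_\alpha t_\gamma^{-1})^{km}$ all lie in $\Gamma$, and observe that $f(x^m)$, $f((t_\beta t_\gamma^{-1})^{km})$ and their product are basic while $f(x^m)$ and $f((t_\beta t_\gamma^{-1})^{km})$ generate a $\mathbb{Z}^2$; then Lemma \ref{lem-cha-hbp}(ii) yields $f(x^m)\in\mathfrak{P}$, whence $f(x)\in\mathfrak{P}$ by the root property. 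For (ii): if the supports of $x=(t_\alpha t_{\beta_1}^{-1})^k$ and $y=(t_\alpha t_{\beta_2}^{-1})^l$ in $\mathfrak{P}\cap\Gamma$ share the curve $\alpha$ and are distinct, then $\{\beta_1,\beta_2\}$ is an HBP and $x^ly^{-k}=(t_{\beta_2}t_{\beta_1}^{-1})^{kl}\in\mathfrak{P}\cap\Gamma$; since $x,y$ (hence $f(x),f(y)$) commute, the supports $c_x,c_y$ of $f(x),f(y)$ are disjoint, and $f(x)^lf(y)^{-k}=f(x^ly^{-k})\in\mathfrak{P}$ by (i); writing this element in $D_{c_x\cup c_y}$ and comparing it with the ``two opposite entries'' shape of a single HBP-twist power forces two of the four curves of $c_x\cup c_y$ to coincide, i.e.\ $c_x$ and $c_y$ share a curve. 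This proves (ii), and, via the second preliminary fact, also the case of (iii) in which the supports of $x,y$ share a curve. In the remaining case of (iii) (disjoint equivalent supports $\{\alpha_1,\alpha_2\},\{\beta_1,\beta_2\}$ with four distinct curves), note that $c=\{\alpha_1,\beta_1\}$ is an HBP disjoint from both supports and sharing a curve with each; take a power $z_c$ of the HBP twist about $c$ lying in $\Gamma$ and apply (ii) to $(x,z_c)$ and to $(z_c,y)$ to see that the supports of $f(x),f(z_c)$ and of $f(z_c),f(y)$ are equivalent, hence so are those of $f(x),f(y)$ by transitivity.

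For (iv) the guiding characterization is that an HBP $b=\{\alpha,\beta\}$ is a $p$-HBP exactly when the positive-genus component of $S_b$ contains no component of $\partial S$; in that case every HBP disjoint from $b$ lies in the genus-zero component and is equivalent to $b$, whereas for a $k$-HBP with $k<p$ the genus component carries a boundary component of $S$ and therefore supports an HBP not equivalent to $b$. Starting from $x\in\mathfrak{P}_p\cap\Gamma$ with support $b$, I would realize $b$ as the extreme pair $\{\beta_1,\beta_{p+1}\}$ of a maximal ladder $\beta_1,\dots,\beta_{p+1}$ of mutually equivalent separating curves with exactly one hole of $S$ between consecutive curves, put $e_i=t_{\beta_i}t_{\beta_{i+1}}^{-1}$ and choose $N$ with all $(t_{\beta_i}t_{\beta_j}^{-1})^N\in\Gamma$, so that the $g_i=e_i^N\in\mathfrak{P}\cap\Gamma$ are commuting HBP-twist powers with pairwise equivalent supports (each equivalent to $b$), generating a $\mathbb{Z}^p$, with $x$ a power of $g_1\cdots g_p$. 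By (i) and (iii) the images $f(g_i)$ are commuting HBP-twist powers in $\mathfrak{P}$, with pairwise equivalent supports all equivalent to the support $b'$ of $f(x)$, generating a $\mathbb{Z}^p$; hence $b'$ and the supports of the $f(g_i)$ are parallel copies of a single curve forming a ladder, and the rank-$p$ bound (Lemma \ref{lem-rank}) forces this image ladder to have exactly $p+1$ curves with one hole of $S$ per gap. The main obstacle is the final step: one must show that $b'$ is the \emph{extreme} pair of the image ladder rather than an interior pair, since merely having all consecutive partial products in $\mathfrak{P}$ does not by itself pin $b'$ down. This is where the maximality of $p$ and the characterization of $p$-HBPs by ``every disjoint HBP is equivalent'' must be used decisively, together with a closer analysis of which HBP twists of $P(S)$ commute with $f(x)$ while failing to be equivalent to it when $b'$ is not a $p$-HBP. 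Once $b'$ is identified as the extreme pair it is a $p$-HBP, and then $f(x)\in\mathfrak{P}_p$ follows from the root property for $\mathfrak{P}_p$.
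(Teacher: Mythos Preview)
Your arguments for (i), (ii) and (iii) are correct and essentially coincide with the paper's proof: the same power-and-root manoeuvre combined with Lemmas \ref{lem-basic} and \ref{lem-cha-hbp} for (i), the same ``product is again in $\mathfrak{P}$ forces a shared curve'' for (ii), and the same bridge through an auxiliary HBP $\{\alpha_1,\beta_1\}$ for (iii).

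Your proof of (iv), however, has a genuine gap, and you have correctly located it yourself: nothing in your ladder argument forces the support $b'$ of $f(x)$ to be the extreme pair of the image ladder. Concretely, with $p=2$ and image ladder $\gamma_1,\gamma_2,\gamma_3$, it is perfectly consistent with (i)--(iii) that $f(g_1)=(t_{\gamma_1}t_{\gamma_2}^{-1})^a$ and $f(g_2)=(t_{\gamma_3}t_{\gamma_1}^{-1})^a$; then $f(g_1g_2)=(t_{\gamma_3}t_{\gamma_2}^{-1})^a$ has support the $1$-HBP $\{\gamma_2,\gamma_3\}$, not the $2$-HBP $\{\gamma_1,\gamma_3\}$. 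The suggestion you make at the end, to use that a $p$-HBP is characterized by ``every disjoint HBP is equivalent to it'', does not close the gap either: that characterization lives in $P(S)$, but you only control HBP-twist powers in $f(\Gamma)$, and $f$ is not assumed surjective, so the existence of a non-equivalent HBP disjoint from $b'$ in $S$ gives you no element of $\Gamma$ to feed back through $f$.

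The paper closes this gap by a different idea: it proves the contrapositive $f((\mathfrak{P}\setminus\mathfrak{P}_p)\cap\Gamma)\subset\mathfrak{P}\setminus\mathfrak{P}_p$ using \emph{single-pA} elements rather than HBP twists. The point is that a $p$-HBP twist power admits no commuting single-pA element (the positive-genus side carries no component of $\partial S$, so Lemma \ref{lem-trivial-comp}(i) kills any would-be pA piece there), whereas for a $k$-HBP with $k<p$ one can build, inside a subsurface $R\cong S_{g_1,2}$ on the positive-genus side, two intersecting curves $\delta,\epsilon$ with $\{\gamma,\delta\},\{\gamma,\epsilon\}$ HBPs, and then a suitable power of $t_\delta^{-1}t_\epsilon$ is a single-pA element of $\Gamma$ commuting with $y$. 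Pushing this through $f$ and invoking Lemma \ref{lem-basic} produces a single-pA element commuting with $f(y)$, which rules out $f(y)\in\mathfrak{P}_p$. Once the contrapositive is in hand, your ladder picture finishes the job by a counting argument: the $\binom{p+1}{2}$ HBPs of the source ladder map bijectively to the $\binom{p+1}{2}$ HBPs of the image ladder (distinctness follows since any two source HBP-twist powers generate $\mathbb{Z}^2$), each family contains exactly one $p$-HBP, and the $\binom{p+1}{2}-1$ non-$p$-HBPs are sent to non-$p$-HBPs, so the unique $p$-HBP must go to the unique $p$-HBP.
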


\begin{proof}
Assertion (i) follows from Lemmas \ref{lem-basic} and \ref{lem-cha-hbp}.
Pick $x, y\in \mathfrak{P}\cap \Gamma$ so that the supports of $x$ and $y$, denoted by $\{ \alpha_1, \alpha_2\}$ and $\{ \beta_1, \beta_2\}$, respectively, are disjoint.
To prove assertion (ii), we first assume $\alpha_1=\beta_1$ and $\alpha_2\neq \beta_2$.
There then exist non-zero integers $j$, $k$ with $x^j y^k\in \mathfrak{P}$.
Since all of $f(x^j)$, $f(y^k)$ and $f(x^j y^k)$ belong to $\mathfrak{P}$ by assertion (i), the supports of $f(x^j)$ and $f(y^k)$ contain a common curve. 
Assertion (ii) is proved.

To prove assertion (iii), we next assume that $\alpha_1$, $\alpha_2$, $\beta_1$ and $\beta_2$ are mutually distinct.
Let $z$ be an HBP twist about the HBP $\{ \alpha_1, \beta_1\}$ and choose non-zero integers $a_1$, $a_2$ and $b$ so that $x^{a_1}z^b$ and $y^{a_2}z^b$ belong to $\mathfrak{P}$ and we have $z^b\in \Gamma$.
Assertion (ii) implies that the supports of $f(x^{a_1})$ and $f(z^b)$ (resp.\ $f(y^{a_2})$ and $f(z^b)$) contain a common curve, and thus $f(x)$ and $f(y)$ are equivalent.

We prove assertion (iv).
It will be shown that for each $y\in (\mathfrak{P}\setminus \mathfrak{P}_p)\cap \Gamma$, we have $f(y)\in \mathfrak{P}\setminus \mathfrak{P}_p$.
Once this is verified, assertion (iv) can be deduced by considering a maximal family of pairwise disjoint and equivalent HBPs in $S$ because such a maximal family contains exactly one $p$-HBP in $S$.

Pick $y\in (\mathfrak{P}\setminus \mathfrak{P}_p)\cap \Gamma$ and let $\{ \gamma, \gamma'\}$ be the support of $y$, which is a separating $k$-HBP in $S$ with $1\leq k\leq p-1$.
There exists a component $Q$ of $S_{\{ \gamma, \gamma'\}}$ such that the genus of $Q$, denoted by $g_1$, is positive and $Q$ contains a component of $\partial S$. 
We may assume that $Q$ contains $\gamma$ as a boundary component.
Choose a separating curve $\delta$ in $Q$ cutting off a surface homeomorphic to $S_{g_1, 1}$ from $Q$.
We then have a curve $\delta'\in V(Q)\cup \{ \gamma \}$ which is separating in $S$, is disjoint from $\delta$ and cuts off from $S$ a surface $R$ homeomorphic to $S_{g_1, 2}$ and containing $\delta$ (see Figure \ref{fig-phbp}).
\begin{figure}
\begin{center}
\includegraphics[width=8cm]{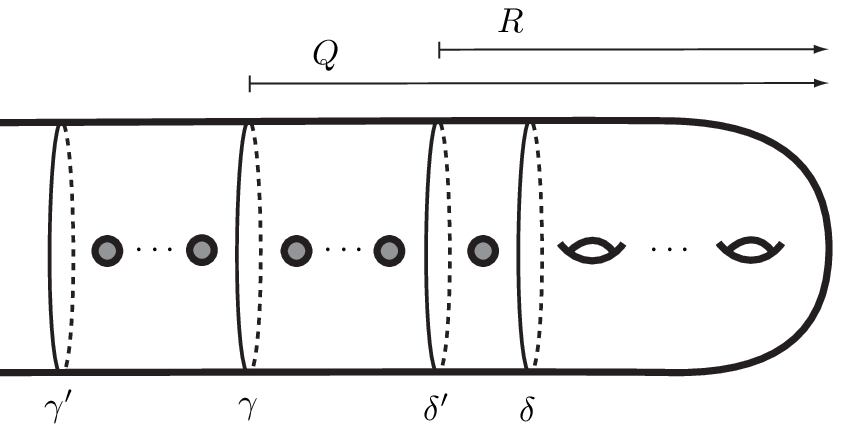}
\caption{}\label{fig-phbp}
\end{center}
\end{figure}
Choose a separating curve $\epsilon$ in $R$ such that $\epsilon$ cuts off a pair of pants from $R$; and $\delta$ and $\epsilon$ fill $R$.
Both $\{ \gamma, \delta \}$ and $\{ \gamma, \epsilon \}$ are then HBPs in $S$.
Assertion (i) implies that $f$ induces a map from the set of separating HBPs in $S$ into the set of HBPs in $S$, which preserves disjointness and non-disjointness.
We denote this map by the same symbol $f$.
We now prove the following:

\begin{claim}
The two HBPs $f(\{ \gamma, \delta \})$ and $f(\{ \gamma, \epsilon \})$ contain a common curve.
\end{claim}

\begin{proof}
Choose a curve $\gamma''$ in $S$ such that $\{ \gamma, \gamma''\}$ is an HBP in $S$ and we have $i(\gamma', \gamma'')\neq 0$. 
We set
\[c'=\{ \gamma, \gamma'\},\quad c''=\{ \gamma, \gamma''\},\quad d=\{ \gamma, \delta\},\quad e=\{ \gamma, \epsilon\}.\]
Note that by assertion (ii), the two HBPs $f(c')$ and $f(d)$ (resp.\ $f(c')$ and $f(e)$) contain a common curve. 
If the claim were not true, then we would have the inclusion $f(c')\subset f(d)\cup f(e)$.
Since $c''$ is disjoint from $d$ and $e$, $f(c'')$ is disjoint from $f(c')$.
This contradicts $i(\gamma', \gamma'')\neq 0$.
\end{proof}

We put $f(\{ \gamma, \delta \})=\{ \gamma_1, \delta_1\}$ and $f(\{ \gamma, \epsilon \})=\{ \gamma_1, \epsilon_1\}$. 
Since $\delta$ and $\epsilon$ intersect, so do $\delta_1$ and $\epsilon_1$.
Fix a non-zero integer $l$ with $t_{\gamma}^lt_{\delta}^{-l}, t_{\gamma}^lt_{\epsilon}^{-l}\in \Gamma$, and let $m$ and $n$ be the non-zero integers with
\[f(t_{\gamma}^lt_{\delta}^{-l})=t_{\gamma_1}^{m}t_{\delta_1}^{-m},\quad f(t_{\gamma}^lt_{\epsilon}^{-l})=t_{\gamma_1}^{n}t_{\epsilon_1}^{-n}.\]
For each integer $q$, we then have $t_{\delta}^{-lq}t_{\epsilon}^{lq}=(t_{\gamma}^lt_{\delta}^{-l})^{q}(t_{\gamma}^lt_{\epsilon}^{-l})^{-q}\in \Gamma$ and
\[f(t_{\delta}^{-lq}t_{\epsilon}^{lq})=f(t_{\gamma}^{lq}t_{\delta}^{-lq})f(t_{\gamma}^{-lq}t_{\epsilon}^{lq})=t_{\gamma_1}^{mq}t_{\delta_1}^{-mq}t_{\gamma_1}^{-nq}t_{\epsilon_1}^{nq}=t_{\gamma_1}^{mq-nq}t_{\delta_1}^{-mq}t_{\epsilon_1}^{nq}.\]
By Theorem 7 in \cite{thurston} or Expos\'e 13, \S III in \cite{flp}, there exists a non-zero integer $r$ satisfying the following three conditions:
\begin{itemize}
\item $t_{\gamma}^{r}t_{\delta}^{-r}$ and $t_{\gamma}^{r}t_{\epsilon}^{-r}$ belong to $\Gamma$, and thus so does $t_{\delta}^{-r}t_{\epsilon}^{r}$;
\item $t_{\delta}^{-r}t_{\epsilon}^{r}$ acts on $R$ as a pseudo-Anosov element; and
\item $f(t_{\delta}^{-r}t_{\epsilon}^{r})$ acts on the subsurface of $S$ filled by $\delta_1$ and $\epsilon_1$ as a pseudo-Anosov element.
\end{itemize}
The element $t_{\delta}^{-r}t_{\epsilon}^{r}$ is single-pA and lies in a free abelian subgroup of $\Gamma$ of rank $p$. 
By Lemma \ref{lem-basic}, $f(t_{\delta}^{-r}t_{\epsilon}^{r})$ is basic and is thus single-pA.

If $f(y)$ were in $\mathfrak{P}_p$, then by Lemma \ref{lem-trivial-comp} (i), there would exist no single-pA element in $P(S)$ commuting $f(y)$. 
This is a contradiction because $f(t_{\delta}^{-r}t_{\epsilon}^{r})$ commutes $f(y)$.
We thus have $f(y)\in \mathfrak{P}\setminus \mathfrak{P}_p$.
\end{proof}

\begin{lem}\label{lem-homo-hbc}
Let $S$, $\Gamma$ and $f\colon \Gamma \rightarrow P(S)$ be the symbols in Lemma \ref{lem-homo-hbp}. 
Then for each $x\in \mathfrak{C}$, we have $f(x)\in \mathfrak{C}\cup \mathfrak{P}$.
\end{lem}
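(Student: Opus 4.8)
\emph{Proof proposal.} The plan is to reduce the statement, via the algebraic characterizations already in place, to the single task of excluding one possibility for $f(x)$. It suffices to treat $x\in\mathfrak{C}\cap\Gamma$, say $x=t_\gamma^k$ with $\gamma$ an HBC in $S$ and $k\neq 0$; the general case follows because $f(x)$ commutes with $f(x^m)$ for any $m$ with $x^m\in\Gamma$, and an element of $P(S)$ one of whose non-zero powers is a non-zero power of an HBC or HBP twist is itself a non-zero power of the same twist, by purity together with Lemma \ref{lem-multitwist} and $P(S)=\ker\iota$. By the ``in particular'' clause of Lemma \ref{lem-basic}, $f(x)$ is basic, so by Proposition \ref{prop-cha-basic} it satisfies one of conditions (a), (b), (c) of Proposition \ref{prop-cha}: it is single-pA, a non-zero power of an HBC twist, or a non-zero power of an HBP twist. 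In the last two cases $f(x)\in\mathfrak{C}\cup\mathfrak{P}$ and we are done, so everything comes down to ruling out that $f(x)$ is single-pA.

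Suppose, for a contradiction, that $f(x)$ is single-pA, with pA component $Q_0$ and canonical reduction system $\sigma_0$. Since $x$, hence $f(x)$, commutes with a free abelian group of rank at least two (namely $\langle t_\gamma,\, t_\alpha t_\beta^{-1}\rangle$ for a separating HBP $\{\alpha,\beta\}$ disjoint from $\gamma$), $f(x)$ is not pseudo-Anosov, so $\sigma_0\neq\emptyset$; and since no curve of $\sigma_0$ is an HBC (condition (a)), Lemma \ref{lem-trivial-comp}(i) forces $Q_0\cap\partial S\neq\emptyset$. Thus $Q_0$ is a proper subsurface of positive complexity. The strategy is to play the ``large'' complement of the curve $\gamma$ — all of $S$ outside an annular neighbourhood of $\gamma$ — against the necessarily strictly smaller complement of a pseudo-Anosov piece. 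Concretely, working case by case according to the type of the $k$-HBC $\gamma$ (the genus-zero piece $S_{0,k+1}$ it cuts off and the genus-$g$ piece on the other side), one produces inside $\Gamma$ a commuting family of separating HBP twists, of maximal abelian rank $p$, all commuting with $x$ and whose supports are not jointly contained in any proper subsurface compatible with a single pA component. Transporting this family by $f$ and invoking Lemma \ref{lem-homo-hbp} (the separating HBP twists go to HBP twist powers, and disjointness, equivalence and membership in $\mathfrak{P}_p$ are preserved) together with Lemma \ref{lem-basic}, one obtains HBP twist powers commuting with $f(x)$ whose supports cannot all avoid the interior of $Q_0$; but a twist power commuting with $f(x)$ must have support disjoint from the interior of its pA component, contradicting Lemma \ref{lem-trivial-comp}. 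An alternative route, closer to the spirit of Section \ref{sec-cha}, is an induction on $p$: pick an HBC or an HBP in $S$ disjoint from $\gamma$, cut along it, and apply the assertion for smaller $p$, exactly as in the proof of Lemma \ref{lem-homo-hbp}(iv).

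The main obstacle is precisely this exclusion of the single-pA case. It cannot be handled by a one-line conjugation-invariant dichotomy: a non-zero power of an HBC twist and a single-pA element both have center of centralizer isomorphic to $\mathbb{Z}$ (both are basic), both lie in free abelian subgroups of $P(S)$ of the maximal rank $p$, and both commute with rank-$(p-1)$ families of HBP twists, so Proposition \ref{prop-cha} and Lemma \ref{lem-cha-hbp} do not separate them. The separation must come from the sharper geometric fact that the complement of a single curve cannot be exhausted by the complement of a pseudo-Anosov subsurface, and making this quantitative — controlling enough of the $f$-image of the chosen configuration to force the contradiction — is the delicate part, analogous to the pseudo-Anosov bookkeeping carried out in the proof of Lemma \ref{lem-homo-hbp}(iv).
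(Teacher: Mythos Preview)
Your overall framework is correct: $f(x)$ is basic by Lemma \ref{lem-basic}, hence by Proposition \ref{prop-cha-basic} it is single-pA or lies in $\mathfrak{C}\cup\mathfrak{P}$, and only the single-pA case needs to be excluded. But you have badly overestimated the difficulty of that exclusion. Contrary to your claim that ``it cannot be handled by a one-line conjugation-invariant dichotomy,'' the paper does exactly that, in four lines.

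The trick you are missing is to use a \emph{single} separating $p$-HBP, not a rank-$p$ family. Since $g\geq 2$, one can choose a separating $p$-HBP $b$ disjoint from the HBC $\alpha$; let $y\in\Gamma$ be a non-zero power of the HBP twist about $b$. By Lemma \ref{lem-homo-hbp}(iv), $f(y)\in\mathfrak{P}_p$. Now recall the observation already made (and used) at the end of the proof of Lemma \ref{lem-homo-hbp}(iv): no single-pA element of $P(S)$ commutes with an element of $\mathfrak{P}_p$. Since $f(x)$ commutes with $f(y)$, it is not single-pA, and we are done.

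The reason a $p$-HBP is decisive is precisely the geometric point you were circling around: the holed sphere cut off by a $p$-HBP contains \emph{all} of $\partial S$, so every other component of the cut surface satisfies the hypothesis of Lemma \ref{lem-trivial-comp}(i), and any commuting element of $P(S)$ acts trivially there; meanwhile there is no room for a legitimate pA piece inside the holed sphere, since any essential curve there is either an HBC or HBP-equivalent to a curve of $b$, violating condition (a). So one well-chosen HBP twist already forces the contradiction.

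Your proposed route via a maximal commuting family of HBP twists has a genuine gap as written: you assert that the $f$-images ``cannot all avoid the interior of $Q_0$,'' but you have no a priori control over where those images land in $S$, so this needs an argument you have not supplied. The induction-on-$p$ sketch is likewise too vague to evaluate. Neither detour is needed once you use the $p$-HBP observation above.
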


\begin{proof}
Pick $x\in \mathfrak{C}$, and let $\alpha$ be the HBC in $S$ with $x$ a non-zero power of $t_{\alpha}$. 
There then exists a separating $p$-HBP $b$ in $S$ disjoint from $\alpha$.
Let $y\in \Gamma$ be a non-zero power of the HBP twist about $b$. 
Since $f(y)$ lies in $\mathfrak{P}_p$ by Lemma \ref{lem-homo-hbp} (iv) and since $f(y)$ commutes $f(x)$, the element $f(x)$ is not single-pA.  
\end{proof}

Lemmas \ref{lem-homo-hbp} and \ref{lem-homo-hbc} show that any injective homomorphism from a finite index subgroup of $P_s(S)$ into $P(S)$ preserves powers of HBC and HBP twists. 
The next lemma proves the same conclusion for any injective homomorphism from a finite index subgroup of $P(S)$ into $P(S)$.

\begin{lem}\label{lem-homo-cp}
Let $S=S_{g, p}$ be a surface with $g\geq 2$ and $p\geq 2$. 
Let $\Gamma$ be a finite index subgroup of $P(S)$, and let $f\colon \Gamma \rightarrow P(S)$ be an injective homomorphism. 
Then for each $x\in \mathfrak{C}\cup \mathfrak{P}$, we have $f(x)\in \mathfrak{C}\cup \mathfrak{P}$.
\end{lem}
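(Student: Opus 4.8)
The plan is to reduce everything to the lemmas already proved together with the single remaining case of a non‑separating HBP twist. Every HBC twist lies in $P_s(S)$, and by Lemma \ref{lem-multitwist} the nonzero powers of HBP twists that lie in $P_s(S)$ are exactly those about separating HBPs. Hence if $x\in(\mathfrak{C}\cup\mathfrak{P})\cap\Gamma$ is a nonzero power of an HBC twist or of a separating HBP twist, then $x$ already lies in $\Gamma\cap P_s(S)$, which is a finite index subgroup of $P_s(S)$; applying Lemma \ref{lem-homo-hbp}(i) and Lemma \ref{lem-homo-hbc} to the restriction $f|_{\Gamma\cap P_s(S)}$ gives $f(x)\in\mathfrak{C}\cup\mathfrak{P}$ in those cases. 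It therefore remains to treat $x=(t_\alpha t_\beta^{-1})^k$ with $\{\alpha,\beta\}$ a non‑separating HBP in $S$ and $k\neq0$, and I will in fact show $f(x)\in\mathfrak{P}$.

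First I would show that $f$ sends every nonzero power of an HBP twist contained in $\Gamma$ to a basic element, by running the argument of Lemma \ref{lem-basic} with $P(S)$ in place of $P_s(S)$. Let $w$ be such an element. By Proposition \ref{prop-cha}(c) the center of the centralizer of $w$ in $\Gamma$ is isomorphic to $\mathbb{Z}$. Moreover $w$ lies in a free abelian subgroup of $\Gamma$ of rank $p$: extend the supporting HBP to a family of $p+1$ mutually HBP‑equivalent curves as in Figure \ref{fig-max}, so that $w$ becomes a power of a telescoping product of the associated consecutive HBP twists, which generate a copy of $\mathbb{Z}^p$; intersecting with $\Gamma$ keeps the rank $p$. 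By Lemma \ref{lem-rank} we have ${\rm rk}\,\Gamma={\rm rk}\,P(S)=p$, and every abelian subgroup of $P(S)$ is finitely generated by Theorem A in \cite{blm}, so Lemma \ref{lem-irmak} yields ${\rm rk}\,Z(Z_{P(S)}(f(w)))\leq1$; since $f(w)$ has infinite order this rank equals $1$, i.e. $f(w)$ is basic.

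Next I would rule out that $f(x)$ is single‑pA, using Lemma \ref{lem-cha-hbp}(ii). As in the proof of Lemma \ref{lem-cha-hbp}(i), the hypotheses $g\geq2$ and $p\geq2$ let one choose a curve $\gamma$ in $S$ disjoint from and distinct from $\alpha$ and $\beta$ that forms an HBP in $S$ with $\beta$, hence also with $\alpha$. Put $y=t_\beta^k t_\gamma^{-k}$, so that $xy=t_\alpha^k t_\gamma^{-k}$ and $\langle x,y\rangle\cong\mathbb{Z}^2$, with $y$ and $xy$ both in $\mathfrak{P}$. Choose $N\geq1$ with $y^N\in\Gamma$; then $x^N$ and $x^N y^N=(xy)^N$ also lie in $\mathfrak{P}\cap\Gamma$. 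By the previous paragraph $f(x^N)$, $f(y^N)$ and $f(x^N y^N)=f(x^N)f(y^N)$ are basic, and $\langle f(x^N),f(y^N)\rangle\cong\mathbb{Z}^2$ because $f$ is injective. Lemma \ref{lem-cha-hbp}(ii) then gives $f(x^N)\in\mathfrak{P}$, say $f(x)^N=f(x^N)=(t_a t_b^{-1})^m$ for an HBP $\{a,b\}$ in $S$ and some $m\neq0$.

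Finally, $f(x)$ lies in the centralizer of its power $(t_a t_b^{-1})^m$, so it fixes the canonical reduction system $\{a,b\}$ of that power (see the proof of Proposition \ref{prop-cha}), hence fixes $a$ and $b$; and by purity of $P(S)$, since a nonzero power of a pseudo‑Anosov map is again pseudo‑Anosov, $f(x)$ acts trivially on every component of $S_{\{a,b\}}$. Thus $f(x)\in\ker\theta_{\{a,b\}}=D_{\{a,b\}}\cap P(S)$, which by Lemma \ref{lem-multitwist}(i) is the cyclic group generated by $t_a t_b^{-1}$, because neither $a$ nor $b$ is an HBC in $S$. Writing $f(x)=(t_a t_b^{-1})^i$ and comparing $N$‑th powers forces $Ni=m\neq0$, so $i\neq0$ and $f(x)\in\mathfrak{P}$, which finishes the proof. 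The main obstacle is the basicness statement of the second paragraph: once $w$ is exhibited inside a rank‑$p$ free abelian subgroup of $\Gamma$, the rest follows the template of Lemmas \ref{lem-basic}, \ref{lem-cha-hbp} and \ref{lem-homo-hbp}, but it is precisely the reduction to the $P_s(S)$ case for HBC and separating‑HBP twists and the reuse of Lemma \ref{lem-irmak} for $P(S)$ that make the non‑separating case go through.
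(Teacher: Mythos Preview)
Your proof is correct and follows essentially the same approach as the paper's. The paper's proof is much terser: it simply asserts that it suffices to handle non-separating HBP twists, notes that the argument of Lemma~\ref{lem-basic} carries over to show $f$ preserves basic elements, produces $y$ as in Lemma~\ref{lem-cha-hbp}(i), and concludes $f(x^k)\in\mathfrak{P}$ via Lemma~\ref{lem-cha-hbp}(ii), ``and thus so does $f(x)$.'' Your four paragraphs spell out exactly these steps, including the reduction via $\Gamma\cap P_s(S)$ and a careful justification that $f(x^N)\in\mathfrak{P}$ forces $f(x)\in\mathfrak{P}$; your last paragraph could be shortened by noting that the CRS of $f(x)$ equals that of $f(x)^N=\{a,b\}$ and then invoking Proposition~\ref{prop-cha-basic} directly, but your argument via $\ker\theta_{\{a,b\}}$ is equally valid.
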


\begin{proof}
It suffices to prove that if $x\in \mathfrak{P}\cap \Gamma$ is a non-zero power of the HBP twist about a non-separating HBP in $S$, then we have $f(x)\in \mathfrak{P}$. 
Along an argument of the same kind as in the proof of Lemma \ref{lem-cha-hbp} (i), we can find a non-zero integer $k$ and an element $y\in \mathfrak{P}\cap \Gamma$ such that the group generated by $x^k$ and $y$ is isomorphic to $\mathbb{Z}^2$ and the product $x^ky$ belongs to $\mathfrak{P}$. 
Following argument in the proof of Lemma \ref{lem-basic}, we can show that $f$ preserves basic elements. 
It then follows from Lemma \ref{lem-cha-hbp} (ii) that $f(x^k)$ belongs to $\mathfrak{P}$, and thus so does $f(x)$. 
\end{proof}

For each HBC $\alpha$ in $S$, we denote by $T_{\alpha}$ the cyclic group generated by $t_{\alpha}$. 
For each HBP $b=\{ \beta, \gamma \}$ in $S$, we denote by $T_{b}$ the cyclic group generated by $t_{\beta}t_{\gamma}^{-1}$. 
As a consequence of Lemmas \ref{lem-homo-hbp}, \ref{lem-homo-hbc} and \ref{lem-homo-cp}, we obtain the following:

\begin{thm}\label{thm-induce}
Let $S=S_{g, p}$ be a surface with $g\geq 2$ and $p\geq 2$.
Then the following assertions hold:
\begin{enumerate}
\item Let $\Gamma$ be a finite index subgroup of $P(S)$, and let $f\colon \Gamma \rightarrow P(S)$ be an injective homomorphism.
Then there exists a superinjective map $\phi \colon \calcp(S)\rightarrow \calcp(S)$ with $f(T_v\cap \Gamma)<T_{\phi(v)}$ for any vertex $v$ of $\calcp(S)$.
\item Let $\Lambda$ be a finite index subgroup of $P_s(S)$, and let $h\colon \Lambda \rightarrow P(S)$ be an injective homomorphism.
Then there exists a superinjective map $\psi \colon \calcp_s(S)\rightarrow \calcp(S)$ with $h(T_v\cap \Lambda)<T_{\psi(v)}$ for any vertex $v$ of $\calcp_s(S)$.
\end{enumerate}
\end{thm}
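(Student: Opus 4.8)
The plan is to read this off from Lemmas \ref{lem-homo-hbp}, \ref{lem-homo-hbc} and \ref{lem-homo-cp}, which already establish that $f$ and $h$ carry nonzero powers of HBC and HBP twists to nonzero powers of HBC and HBP twists. I will spell out assertion (i); assertion (ii) is proved by the same argument, with Lemmas \ref{lem-homo-hbp}(i) and \ref{lem-homo-hbc} (which apply to $\Lambda<P_s(S)$) taking the place of Lemma \ref{lem-homo-cp}.

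First I would define $\phi$ on vertices. Let $v$ be a vertex of $\calcp(S)$, so $v$ is an HBC or an HBP in $S$ and the cyclic group $T_v$ is infinite. Since $\Gamma$ has finite index, $T_v\cap\Gamma$ is a nontrivial (hence infinite cyclic) subgroup of $T_v$; let $x_v$ generate it, so $x_v$ is a nonzero power of the twist generating $T_v$ and therefore $x_v\in\mathfrak{C}\cup\mathfrak{P}$. By Lemma \ref{lem-homo-cp}, $f(x_v)\in\mathfrak{C}\cup\mathfrak{P}$, and $f(x_v)\neq 1$ by injectivity, so $f(x_v)$ is a nonzero power of a Dehn twist about a unique HBC in $S$ or of an HBP twist about a unique HBP in $S$ — namely the HBC or HBP equal to the canonical reduction system of $f(x_v)$. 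I define $\phi(v)$ to be this HBC or HBP; it is a vertex of $\calcp(S)$, and by construction $f(T_v\cap\Gamma)=\langle f(x_v)\rangle<T_{\phi(v)}$. It remains to check that $\phi$ is superinjective.

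The key observation is: \emph{for vertices $a,b$ of $\calcp(S)$ and nontrivial $u\in T_a$, $w\in T_b$, one has $uw=wu$ if and only if $i(a,b)=0$.} The ``if'' direction is immediate, since when $i(a,b)=0$ all the Dehn twists occurring in $u$ and $w$ are about pairwise disjoint curves. For ``only if'', observe that, the curves of $a$ being essential, the canonical reduction system of $u$ is exactly the set of curves of $a$, and likewise that of $w$ is the set of curves of $b$. As $w$ lies in the centralizer of $u$ in $P(S)$, it fixes the canonical reduction system of $u$, and since $w$ is pure (Theorem 3.1 in \cite{iim}, as recalled in Section \ref{sec-cha}) it fixes each curve of $a$. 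But $w$ is a nonzero multitwist supported on the curves of $b$, and such an element can fix a simple closed curve $\delta$ only when $\delta$ is disjoint from every curve of its support — a standard property of multitwists (see \cite{iva-subgr}). Applying this to each curve of $a$ gives $i(a,b)=0$.

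Finally I would combine these. Given vertices $v_1,v_2$ of $\calcp(S)$, the elements $f(x_{v_1})$ and $f(x_{v_2})$ are nontrivial members of $T_{\phi(v_1)}$ and $T_{\phi(v_2)}$ respectively, and since $f$ is injective, $x_{v_1}$ and $x_{v_2}$ commute if and only if $f(x_{v_1})$ and $f(x_{v_2})$ do. Applying the observation once to $x_{v_1},x_{v_2}$ and once to $f(x_{v_1}),f(x_{v_2})$ yields
\[ i(v_1,v_2)=0 \iff x_{v_1}x_{v_2}=x_{v_2}x_{v_1} \iff f(x_{v_1})f(x_{v_2})=f(x_{v_2})f(x_{v_1}) \iff i(\phi(v_1),\phi(v_2))=0. \]
The forward implication shows $\phi$ is simplicial and the contrapositive of the reverse shows $\phi$ is superinjective; by the remark in Section \ref{subsec-complex}, $\phi$ is then automatically injective, proving assertion (i). For assertion (ii), the identical argument — noting that every vertex of $\calcp_s(S)$ is a vertex of $\calcp(S)$ and that $x_v$ is a nonzero power of an HBC twist or of a separating HBP twist, so that Lemmas \ref{lem-homo-hbc} and \ref{lem-homo-hbp}(i) put $h(x_v)$ into $\mathfrak{C}\cup\mathfrak{P}$ — produces a superinjective map $\psi\colon\calcp_s(S)\to\calcp(S)$ with $h(T_v\cap\Lambda)<T_{\psi(v)}$. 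The only non-routine step is the commutation observation, and within it the sole nontrivial input is the standard fact that a nonzero multitwist fixes only curves disjoint from its support; everything genuinely hard is already contained in Lemmas \ref{lem-homo-hbp}--\ref{lem-homo-cp}.
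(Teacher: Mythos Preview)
Your proposal is correct and follows exactly the approach the paper intends: the paper states the theorem ``as a consequence of Lemmas \ref{lem-homo-hbp}, \ref{lem-homo-hbc} and \ref{lem-homo-cp}'' without writing out the details, and you have supplied precisely the standard argument (define $\phi(v)$ via the support of $f(x_v)$, then use that commutation of twist elements is equivalent to disjointness of supports) that the paper leaves implicit.
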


Theorem \ref{thm-comm} can be deduced from Corollary \ref{cor-aut} and Theorem \ref{thm-induce} along the argument in Section 3 of \cite{iva-aut}. 
We omit details of this part because an argument of the same kind appears in many works \cite{bm}, \cite{irmak1}, \cite{kida-tor}, \cite{kork-aut}, etc.


\end{document}